%% LyX 2.3.3 created this file.  For more info, see http://www.lyx.org/.
%% Do not edit unless you really know what you are doing.
\documentclass[10pt,british,english]{article}
\usepackage[T1]{fontenc}
\usepackage[latin9]{inputenc}
\usepackage{geometry}
\geometry{verbose,tmargin=3cm,bmargin=3cm,lmargin=3cm,rmargin=3cm}
\usepackage{color}
\usepackage{babel}
\usepackage{mathrsfs}
\usepackage{amsmath}
\usepackage{amsthm}
\usepackage{amssymb}
\usepackage{stmaryrd}
\usepackage{graphicx}
\usepackage[unicode=true,pdfusetitle,
 bookmarks=true,bookmarksnumbered=false,bookmarksopen=false,
 breaklinks=false,pdfborder={0 0 1},backref=false,colorlinks=false]
 {hyperref}

\makeatletter
%%%%%%%%%%%%%%%%%%%%%%%%%%%%%% Textclass specific LaTeX commands.
\numberwithin{equation}{section}
\theoremstyle{plain}
\newtheorem{thm}{\protect\theoremname}[section]
\theoremstyle{plain}
\newtheorem{cor}[thm]{\protect\corollaryname}
\theoremstyle{remark}
\newtheorem*{acknowledgement*}{\protect\acknowledgementname}
\theoremstyle{definition}
\newtheorem{condition}[thm]{\protect\conditionname}
\theoremstyle{plain}
\newtheorem{prop}[thm]{\protect\propositionname}
\theoremstyle{plain}
\newtheorem{lem}[thm]{\protect\lemmaname}
\theoremstyle{definition}
\newtheorem{defn}[thm]{\protect\definitionname}
\theoremstyle{plain}
\newtheorem{fact}[thm]{\protect\factname}
\theoremstyle{remark}
\newtheorem{notation}[thm]{\protect\notationname}
\theoremstyle{remark}
\newtheorem{rem}[thm]{\protect\remarkname}
\theoremstyle{plain}
\newtheorem{assumption}[thm]{\protect\assumptionname}
\theoremstyle{definition}
\newtheorem{example}[thm]{\protect\examplename}
\theoremstyle{remark}
\newtheorem*{notation*}{\protect\notationname}

\@ifundefined{date}{}{\date{}}
%%%%%%%%%%%%%%%%%%%%%%%%%%%%%% User specified LaTeX commands.

\theoremstyle{plain}
\newtheorem{mythm}{\protect\theoremname}

\usepackage{amssymb}
\usepackage{lmodern}

\@ifundefined{showcaptionsetup}{}{%
 \PassOptionsToPackage{caption=false}{subfig}}
\usepackage{subfig}
\makeatother

\addto\captionsbritish{\renewcommand{\acknowledgementname}{Acknowledgement}}
\addto\captionsbritish{\renewcommand{\assumptionname}{Assumption}}
\addto\captionsbritish{\renewcommand{\conditionname}{Condition}}
\addto\captionsbritish{\renewcommand{\corollaryname}{Corollary}}
\addto\captionsbritish{\renewcommand{\definitionname}{Definition}}
\addto\captionsbritish{\renewcommand{\examplename}{Example}}
\addto\captionsbritish{\renewcommand{\factname}{Fact}}
\addto\captionsbritish{\renewcommand{\lemmaname}{Lemma}}
\addto\captionsbritish{\renewcommand{\notationname}{Notation}}
\addto\captionsbritish{\renewcommand{\propositionname}{Proposition}}
\addto\captionsbritish{\renewcommand{\remarkname}{Remark}}
\addto\captionsbritish{\renewcommand{\theoremname}{Theorem}}
\addto\captionsenglish{\renewcommand{\acknowledgementname}{Acknowledgement}}
\addto\captionsenglish{\renewcommand{\assumptionname}{Assumption}}
\addto\captionsenglish{\renewcommand{\conditionname}{Condition}}
\addto\captionsenglish{\renewcommand{\corollaryname}{Corollary}}
\addto\captionsenglish{\renewcommand{\definitionname}{Definition}}
\addto\captionsenglish{\renewcommand{\examplename}{Example}}
\addto\captionsenglish{\renewcommand{\factname}{Fact}}
\addto\captionsenglish{\renewcommand{\lemmaname}{Lemma}}
\addto\captionsenglish{\renewcommand{\notationname}{Notation}}
\addto\captionsenglish{\renewcommand{\propositionname}{Proposition}}
\addto\captionsenglish{\renewcommand{\remarkname}{Remark}}
\addto\captionsenglish{\renewcommand{\theoremname}{Theorem}}
\providecommand{\acknowledgementname}{Acknowledgement}
\providecommand{\assumptionname}{Assumption}
\providecommand{\conditionname}{Condition}
\providecommand{\corollaryname}{Corollary}
\providecommand{\definitionname}{Definition}
\providecommand{\examplename}{Example}
\providecommand{\factname}{Fact}
\providecommand{\lemmaname}{Lemma}
\providecommand{\notationname}{Notation}
\providecommand{\propositionname}{Proposition}
\providecommand{\remarkname}{Remark}
\providecommand{\theoremname}{Theorem}

\begin{document}
\global\long\def\RR{\mathbb{R}}%
\global\long\def\CC{\mathbb{C}}%
\global\long\def\HH{\mathbb{H}}%
\global\long\def\NN{\mathbb{N}}%
\global\long\def\ZZ{\mathbb{Z}}%
\global\long\def\QQ{\mathbb{Q}}%

\global\long\def\cA{\mathcal{A}}%

\global\long\def\l{\ell}%

\global\long\def\gam{\Gamma}%

\global\long\def\z{\zeta}%
\global\long\def\e{\epsilon}%
\global\long\def\a{\alpha}%
\global\long\def\b{\beta}%
\global\long\def\ga{\gamma}%
\global\long\def\ph{\varphi}%
\global\long\def\om{\omega}%
\global\long\def\lm{\lambda}%
\global\long\def\dl{\delta}%
\global\long\def\t{\theta}%
\global\long\def\s{\sigma}%
\global\long\def\little{\varepsilon}%

\global\long\def\gl#1{\operatorname{GL}_{#1}}%

\global\long\def\sl#1{\operatorname{SL}_{#1}}%

\global\long\def\so#1{\operatorname{SO}_{#1}}%

\global\long\def\ort#1{\operatorname{O}_{#1}}%

\global\long\def\pgl#1{\operatorname{PGL}_{#1}}%

\global\long\def\po#1{\operatorname{PO}_{#1}}%

\global\long\def\Lat{\Gamma}%
\global\long\def\disgrp{\gam}%
\global\long\def\wc{Q}%

\global\long\def\lra{\longrightarrow}%
\global\long\def\del{\partial}%
\global\long\def\half{\frac{1}{2}}%

\global\long\def\transpose{\operatorname{t}}%

\global\long\def\exd#1#2{\underset{#2}{\underbrace{#1}}}%
\global\long\def\exup#1#2{\overset{#2}{\overbrace{#1}}}%

\global\long\def\Onevec{\underline{1}}%
\global\long\def\One{\mathbf{1}}%

\global\long\def\porsmall{\ll}%
\global\long\def\porbig{\gg}%
\global\long\def\porequal{\asymp}%

\global\long\def\diffeo{\simeq}%

\global\long\def\brac#1{(#1)}%
\global\long\def\vbrac#1{|#1|}%
\global\long\def\sbrac#1{[#1]}%
\global\long\def\dbrac#1{\langle#1\rangle}%
\global\long\def\cbrac#1{\{#1\}}%

\global\long\def\norm#1{\left\Vert #1\right\Vert }%
\global\long\def\lilnorm#1{\Vert#1\Vert}%

\global\long\def\manifold{\mathcal{M}}%
\global\long\def\base{\mathbf{B}}%

\global\long\def\sbgrp{Y}%
\global\long\def\linmap{L}%

\global\long\def\id{\operatorname{id}}%

\global\long\def\idmat#1{\operatorname{I}_{#1}}%

\global\long\def\stab{\operatorname{stab}}%

\global\long\def\prim{\operatorname{prim}}%

\global\long\def\interior#1{\operatorname{int}\left(#1\right)}%

\global\long\def\dist#1{\operatorname{dist}\brac{#1}}%

\global\long\def\diag#1{\operatorname{diag}\left(#1\right)}%
 
\global\long\def\lildiag#1{\operatorname{diag}(#1)}%

\global\long\def\rank#1{\operatorname{rank}\left(#1\right)}%

\global\long\def\covol#1{\operatorname{covol}\brac{#1}}%

\global\long\def\Leb#1{\operatorname{Leb}(#1)}%

\global\long\def\vol{\operatorname{vol}}%

\global\long\def\sym{\mbox{Sym}}%

\global\long\def\sp{\mbox{span}}%

\global\long\def\trans#1{#1^{\transpose}}%
\global\long\def\mtrans#1{#1^{-\transpose}}%

\global\long\def\projset#1#2{#1^{\left(#2\right)}}%

\global\long\def\parby#1#2{#1_{#2}}%

\global\long\def\trunc#1#2{#1^{#2}}%

\global\long\def\perpen#1{#1^{\perp}}%

\global\long\def\lieG{\mathfrak{g}}%
\global\long\def\lieA{\mathfrak{a}}%
\global\long\def\lieN{\mathfrak{n}}%

\global\long\def\lieNelement{Z}%
\global\long\def\lieNvec{\underline{\lieNelement}}%
\global\long\def\dimN{p}%

\global\long\def\lieAelement{H}%
\global\long\def\lieAvec{\underline{\lieAelement}}%

\global\long\def\dimA{L}%
\global\long\def\dimAp{l}%

\global\long\def\topindex{q}%

\global\long\def\Ad#1{\operatorname{Ad}_{#1}}%

\global\long\def\ad#1{\operatorname{ad}_{#1}}%

\global\long\def\symfund#1{F_{#1}}%
\global\long\def\groupfund#1{\widetilde{F_{#1}}}%

\global\long\def\funddom{\Omega}%

\global\long\def\short{\operatorname{short}}%
\global\long\def\shortdom{\funddom_{\short}}%
\global\long\def\almostshortdom{\Delta_{\short}}%

\global\long\def\dirdom#1{\mbox{Dir}(#1)}%

\global\long\def\Gset{\mathcal{B}}%
\global\long\def\latset{\Psi}%
\global\long\def\symset{\mathcal{E}}%
\global\long\def\uniset{\widetilde{\symset}}%
\global\long\def\sphereset{\Phi}%

\global\long\def\Kdprimeset{\Phi^{\dprime}}%

\global\long\def\fdomN{\mathscr{D}}%
\global\long\def\domN{{\cal D}}%

\global\long\def\nbhd#1#2{\mathcal{O}_{#1}^{#2}}%

\global\long\def\hex#1{Y\left(#1\right)}%
\global\long\def\fdomhex{\mathscr{Y}}%
\global\long\def\ellipse#1#2{Y^{#2}\brac{#1}}%
\global\long\def\fdomellipse#1#2{\mathscr{Y}_{#2}^{\,#1}}%

\global\long\def\halfplane#1{\mathcal{H}_{#1}}%
 
\global\long\def\line#1{\l_{#1}}%
 
\global\long\def\striplane#1{\mathcal{H}_{\left|#1\right|}}%

\global\long\def\ball#1#2{B_{#1}^{#2}}%

\global\long\def\shapespace#1{\mathcal{X}_{#1}}%

\global\long\def\unilatspace#1{\mathcal{U}_{#1}}%

\global\long\def\latspace#1{\mathcal{L}_{#1}}%

\global\long\def\sphere#1{\mathbb{S}^{#1}}%

\global\long\def\Mat#1{\text{Mat}_{#1}}%

\global\long\def\dprime{\prime\prime}%

\global\long\def\ff#1{\operatorname{err}\,\brac{#1}}%

\global\long\def\nor{\operatorname{norm}}%
\global\long\def\cnorm{C_{\nor}}%

\global\long\def\svec{\underline{s}}%
\global\long\def\Svec{\underline{S}}%

\global\long\def\sumS{\mathbf{S}}%
\global\long\def\sums{\mathbf{s}}%

\global\long\def\rad{\rho}%
\global\long\def\sn{\iota}%

\global\long\def\roundo{r}%
\global\long\def\roundogen{r}%
\global\long\def\maxexp{\textrm{m}}%
\global\long\def\errexp{\tau}%

\global\long\def\volmin{V_{\text{min}}}%
\global\long\def\volmax{V_{\text{max}}}%
\global\long\def\radmax{R}%

\global\long\def\fam#1#2{#1_{#2}}%

\global\long\def\GIcomp{S}%
\global\long\def\zgt#1{z^{#1}}%

\global\long\def\dirfunc{\text{L}_{\a}}%

\global\long\def\sigmn{\underline{\sigma}_{n}}%

\global\long\def\lat{\Lambda}%
\global\long\def\cov{X}%

\global\long\def\shape#1{\operatorname{shape}\left(#1\right)}%

\global\long\def\unilat#1{\left[#1\right]}%

\global\long\def\unisimlat#1{\left\llbracket #1\right\rrbracket }%

\global\long\def\dir#1{\hat{#1}}%

\global\long\def\Asub#1#2{A_{#2}\brac{#1}}%

\title{Equidistribution of primitive vectors, and the shortest solutions
to their GCD equations}
\author{Tal Horesh\thanks{IST Austria, \texttt{tal.horesh@ist.ac.at}.} \and
Yakov Karasik\thanks{Department of Mathematics and Computer Science, Justus-Liebig-Universität Gießen, Germany, \texttt{theyakov@gmail.com}.}}
\maketitle
\begin{abstract}
We prove effective joint equidistribution of several natural parameters
associated to primitive vectors in $\mathbb{Z}^{n}$, as the norm
of these vectors tends to infinity. These parameters include the direction,
the orthogonal lattice, and the length of the shortest solution to
the associated $\gcd$ equation. We show that the first two parameters
equidistribute w.r.t.\ the Haar measure on the corresponding spaces,
which are the unit sphere and the space of unimodular  rank $n-1$
lattices in $\RR^{n}$ respectively. The main novelty is the equidistribution
of the shortest solutions to the $\gcd$ equations: we show that,
when normalized by the covering radius of the orthogonal lattice,
the lengths of these solutions equidistribute in the interval $\left[0,1\right]$
w.r.t. a measure that is Lebesgue only when $n=2$, and non-Lebesgue
otherwise. These equidistribution results are deduced from effectively
counting lattice points in domains which are defined w.r.t. a generalization
of the Iwasawa decomposition in simple algebraic Lie groups, where
we apply a method due to A.\ Gorodnik and A.\ Nevo. 

\tableofcontents{}
\end{abstract}

\section{Introduction\label{sec: Introduction}}

An integral vector $v=\left(a_{1},\ldots,a_{n}\right)$ is called
\emph{primitive} if $\gcd\left(a_{1},\ldots,a_{n}\right)=1$. Equidistribution
problems concerning primitive vectors first arose under the umbrella
of \emph{Linnik type problems} \cite{Linnik_68,Erdos_Hall_99,Duke_03,Duke_07,EMV_13},
a unifying name for questions that concern the distribution of the
projections of integral vectors to the unit sphere. These projections
can also be thought of as \emph{directions} of primitive vectors,
which we denote by $\dir v:=v/\norm v$. Another equidistribution
problem of primitive vectors concerns their orthogonal lattices $\lat_{v}:=\ZZ^{n}\cap\perpen v$,
where $v$ is a primitive vector, and $\perpen v$ is its orthogonal
hyperplane. Note that one can achieve a one-to-one correspondence
between primitive vectors and their orthogonal lattices by either
identifying $v$ with $-v$, or by choosing an orientation on the
lattices $\lat_{v}$; we opt for the latter. With this one-to-one
correspondence in mind, we associate to each primitive vector the
\emph{shape} of the lattice $\lat_{v}$, which is the equivalence
class of rank $n-1$ lattices in $\RR^{n}$ that can be obtained from
$\lat_{v}$ by an orientation preserving linear transformation, i.e.\
by a rotation and multiplication by a positive scalar. The equidistribution
of shapes of $\lat_{v}$, denoted $\shape{\lat_{v}}$, in the finite
volume space 
\[
\shapespace{n-1}:=\so{n-1}\left(\RR\right)\backslash\sl{n-1}\left(\RR\right)/\sl{n-1}\left(\ZZ\right)
\]
has been considered in \cite{Marklof_10,Schmidt_98}; the joint equidistribution
of $\shape{\lat_{v}}$, along with the directions of $v$, denoted
$\dir v$, has been studied in \cite{AES_16A,AES_16B,EMSS_16,ERW17}. 

Another equidistribution question for primitive vectors has been suggested
by Risager and Rudnick in \cite{R&R}, and it concerns the normalized
\emph{shortest solutions} to $\gcd$ equations: given a primitive
$v=\left(a_{1},\ldots,a_{n}\right)$, the \emph{gcd equation} of $v$
is the Diophantine equation 
\begin{equation}
a_{1}x_{1}+\cdots+a_{n}x_{n}=1,\label{eq: GCD equation}
\end{equation}
whose set of solutions is the grid $w+\lat_{v}$, with $w$ being
any solution to (\ref{eq: GCD equation}). Let $w_{v}$ denote the
shortest solution to the equation (\ref{eq: GCD equation}) w.r.t.\
the $L^{2}$ norm. The length $\norm{w_{v}}$ is unbounded as $\norm v\to\infty$,
so in order to formulate an equidistribution question for $\norm{w_{v}}$,
it should be normalized to a bounded quantity. Risager and Rudnick
(see also \cite{Truelsen,HN16_Counting}) have considered the case
of $n=2$, and showed that the quotients $\norm{w_{v}}/\norm v$ uniformly
distribute in the interval $\sbrac{0,\frac{1}{2}}$ as $\norm v\to\infty$.
This raises the question of what would be the analogous phenomenon
in higher dimensions. It turns out that one can not expect equidistribution
of $\norm{w_{v}}/\norm v$ when $n\geq3$, since these quotients tend
to zero on a full-density subset of the set of all $n$\textendash primitive
vectors, denoted $\ZZ_{\prim}^{n}$.

\begin{mythm}

\label{thm: |w_v|/|v| to zero}There exists a subset $\cA$ of $\ZZ_{\prim}^{n}$
with 
\[
\underset{R\to\infty}{\lim}\frac{\#(\cA\cap\ball R{})}{\#(\ZZ_{\prim}^{n}\cap\ball R{})}=1,
\]
where $\ball R{}=\cbrac{v\in\RR^{n}:\norm v\leq R}$, such that for
every sequence $\{v_{m}\}\subset\cA$, the quotients $\norm{w_{v_{m}}}/\norm{v_{m}}$
tend to zero as $m\to\infty$. 

\end{mythm}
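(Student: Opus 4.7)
The plan is to bound the length of $w_v$ by a geometric invariant of $\lat_v$\textemdash its covering radius\textemdash and then to invoke the (by-now classical) equidistribution of the shapes $\shape{\lat_v}$ in $\shapespace{n-1}$ to deduce that, on a density-one subset of primitive vectors, this invariant grows much more slowly than $\norm v$ whenever $n\geq 3$.

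First, I would note that the solution set of \eqref{eq: GCD equation} is the affine grid $w+\lat_v$ inside the hyperplane $\{x\in\RR^n:\dbrac{v,x}=1\}$. Decomposing a solution along $v$ and along $\perpen v$, the $v$-component is the fixed vector $v/\norm v^{2}$ of length $1/\norm v$, while the $\perpen v$-component can be reduced to length at most the covering radius $\rho(\lat_v)$ by translating by an appropriate element of $\lat_v$; hence $\norm{w_v}\leq \rho(\lat_v)+1/\norm v$. Since $\lat_v$ is a rank $n-1$ lattice of covolume $\norm v$ inside $\perpen v$, rescaling by $\norm v^{-1/(n-1)}$ produces a unimodular representative of $\shape{\lat_v}\in \shapespace{n-1}$, and therefore
\[
\frac{\rho(\lat_v)}{\norm v}=\frac{\rho\bigl(\shape{\lat_v}\bigr)}{\norm v^{(n-2)/(n-1)}},
\]
where $\rho(\shape{\lat_v})$ denotes the covering radius of any unimodular representative. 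For $n\geq 3$ the exponent $(n-2)/(n-1)$ is strictly positive, so the right-hand side vanishes as $\norm v\to\infty$ whenever $\rho(\shape{\lat_v})$ stays bounded.

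The map $\rho:\shapespace{n-1}\to\RR_{>0}$ is continuous and proper: its sublevel sets $\{\rho\leq C\}$ are compact by Mahler's criterion, since in covolume $1$ a bound on $\rho$ forces a lower bound on the first successive minimum. Consequently its superlevel sets have Haar measure tending to $0$. Combined with the equidistribution of the shapes $\shape{\lat_v}$ in $\shapespace{n-1}$ with respect to the Haar probability measure \cite{Schmidt_98,Marklof_10}, this yields: for every $\little>0$ there exist $C_{\little}$ and $R_{\little}$ such that
\[
\#\cbrac{v\in\ZZ_{\prim}^{n}\cap\ball R{}:\rho\bigl(\shape{\lat_v}\bigr)>C_{\little}}\leq \little\cdot\#\bigl(\ZZ_{\prim}^{n}\cap\ball R{}\bigr)\quad\text{for all }R\geq R_{\little}.
\]

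To upgrade from density $1-\little$ to density exactly $1$, I would apply a standard diagonal patching: take $\little_k=2^{-k}$ with associated $C_k,R_k$, arrange the $R_k$ to be strictly increasing with $R_{k+1}$ so large that $C_k/R_k^{(n-2)/(n-1)}\to 0$ and $\sum_k(R_k/R_{k+1})^n<\infty$, and set
\[
\cA:=\bigcup_{k\geq 1}\cbrac{v\in\ZZ_{\prim}^{n}:R_k\leq\norm v<R_{k+1},\;\rho(\shape{\lat_v})\leq C_k}.
\]
An elementary tail estimate then gives $\#(\cA\cap\ball R{})/\#(\ZZ_{\prim}^{n}\cap\ball R{})\to 1$, while for any $v\in\cA$ with $\norm v\geq R_k$ the combined bounds above give $\norm{w_v}/\norm v\leq C_k/\norm v^{(n-2)/(n-1)}+\norm v^{-2}<2\little_k$, whence $\norm{w_{v_m}}/\norm{v_m}\to 0$ along every sequence $\{v_m\}\subset\cA$ with $\norm{v_m}\to\infty$. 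The only substantial input is the equidistribution of shapes\textemdash classical in the weak form required here, and reproved with effective rates elsewhere in this paper\textemdash and I expect the main (but modest) obstacle to be the bookkeeping in the diagonal construction that simultaneously secures full density and a vanishing rate; the exponential choice $\little_k=2^{-k}$ is what makes all the required tail sums finite.
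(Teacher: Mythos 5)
Your proposal is correct and reaches the stated conclusion, but by a genuinely different route from the paper. Both proofs share the same pivot --- the inequality $\norm{w_v}\le \rho(\lat_v)+\norm v^{-1}$ and the observation that $\rho(\lat_v)$ should typically grow only like $\norm v^{1/(n-1)}$, which is $o(\norm v)$ once $n\ge 3$ --- but the mechanisms for making ``typically'' precise diverge completely. The paper fixes an explicit set $\cA$ by imposing a linear constraint on the Iwasawa coordinate $s_{n-2}$ (namely $s_{n-2}\le t/2$ in the fundamental domain $\shortdom$), shows the complement has density zero by a direct volume estimate combined with the paper's own effective lattice-point count (Propositions \ref{prop: very few SL(n,Z) points up the cusp} and \ref{prop: Counting with Hexagons (A counting)}), and then controls $\norm{w_v}/\norm v$ via Minkowski's second theorem and the identity $\covol{\lat_v^{n-2}}=e^{(n-2)t/(n-1)-s_{n-2}/2}$. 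You instead rescale to the shape space $\shapespace{n-1}$, where the covering radius becomes a continuous proper function (via Mahler's criterion and Minkowski's second theorem), invoke the classical non-effective equidistribution of shapes (Schmidt, Marklof), and run a diagonal/patching argument to upgrade density $1-\little$ to density $1$ while the threshold $C_k$ drifts off to infinity slowly enough that $C_k/R_k^{(n-2)/(n-1)}\to 0$. Your argument is softer and more self-evidently dimension-free; it needs only the weak shape-equidistribution input and elementary lattice geometry, whereas the paper's version leans on its own counting machinery, which it has developed anyway for the main theorem and which gives an explicit, simple description of $\cA$ rather than one produced by a limiting construction. One cosmetic remark: the tail bookkeeping in your diagonal step is stated a bit loosely (the condition $\sum_k (R_k/R_{k+1})^n<\infty$ is sufficient but the argument one actually runs is the usual one: in the annulus $[R_m,R_{m+1})$ the bad set has proportion $\lesssim \little_m$, and the contribution from $\ball{R_m}{}$ is absorbed once $R/R_m$ is large, which a rapidly increasing choice of $R_k$ guarantees); this is routine but worth writing out.
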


Indeed, the above theorem (as well as Corollary \ref{cor: |w_v|/|v| no ed}
below) suggests that in dimension greater than $2$, the ``correct''
normalization of the shortest solution is not by the norm of $v$.
Hence, approaching Risager and Rudnick's problem in higher dimensions
consists in fact of three questions:
\begin{enumerate}
\item[(i)] What is the correct normalization of the shortest solutions in dimension
$n\geq3$?
\item[(ii)] In which interval do the normalized shortest solutions fall?
\item[(iii)] With respect to which measure on this interval, if any, do the normalized
shortest solutions equidistribute?
\end{enumerate}
\begin{figure}
\begin{centering}
\includegraphics[scale=1.35]{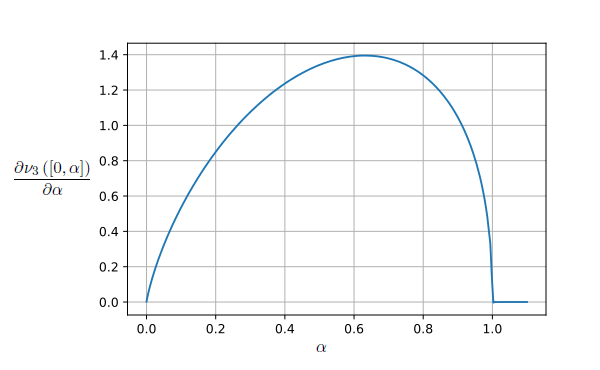}
\par\end{centering}
\caption{\label{fig: measure for gcd 3}The density of $\nu_{3}$}
\end{figure}

In this paper we offer a complete solution to the problem of equidistribution
of the normalized shortest solutions of $\gcd$ equations, answering
all three questions above. We show that the correct normalization
of $\norm{w_{v}}$ is by the \emph{covering radius} $\rad_{v}$ of
the lattice $\lat_{v}$ (the covering radius of a lattice is the radius
of a bounding sphere for its Dirichlet domain), and construct a measure
$\nu_{n}$ with respect to which the quotients $\norm{w_{v}}/\rad_{v}$
equidistribute in the interval $\left[0,1\right]$. It turns out that
in general the measure $\nu_{n}$ on $\left[0,1\right]$ is  non-uniform
(see Figure \ref{fig: measure for gcd 3} for the density function
of $\nu_{3}$), except for the case of $n=2$: there, the measure
$\nu_{2}$ is Lebesgue and the covering radius is $\rad_{v}=\norm v/2$,
hence we recover the result of Risager and Rudnick. 

In fact we do more, and show that the equidistribution of $\norm{w_{v}}/\rad_{v}$
occurs \emph{jointly} with the uniform distribution of $\dir v$ in
$\sphere{n-1}$.  We also obtain the previously known joint equidistribution
of shapes $\shape{\lat_{v}}$ and directions $\dir v$ from the equidistribution
of another parameter of $\lat_{v}$, that encodes information of both
$\shape{\lat_{v}}$ and $\dir v$. Consider the space 
\[
\latspace{n-1,n}:=\sl n\left(\RR\right)/\left(\left[\begin{array}{cc}
\sl{n-1}\left(\ZZ\right) & \RR^{n-1}\\
0_{1\times n} & 1
\end{array}\right]\times\left\{ \left[\begin{array}{cc}
\a^{-\frac{1}{n-1}}\idmat{n-1} & 0_{n\times1}\\
0_{1\times n} & \a
\end{array}\right]:\a>0\right\} \right),
\]
which is the space of homothety classes of $\left(n-1\right)$-lattices
inside $\RR^{n}$. We identify this space with the space of unimodular
(i.e. covolume one) $\left(n-1\right)$-lattices inside $\RR^{n}$,
\begin{equation}
\latspace{n-1,n}\diffeo\so n(\RR)\left[\begin{array}{cc}
P_{n-1} & 0\\
0 & 1
\end{array}\right]/\left[\begin{array}{cc}
\sl{n-1}\left(\ZZ\right) & 0\\
0 & 1
\end{array}\right],\label{eq: right quotient for L_n-1,n}
\end{equation}
where $P_{n-1}<\sl{n-1}(\RR)$ is the group of upper triangular matrices
with positive diagonal entries. The identification is by associating
to each equivalence class $\unilat{\lat}$ the unique representative
of covolume one, which we also denote by $\unilat{\lat}$.  The space
$\latspace{n-1,n}$ is canonically projected to $\shapespace{n-1}$
and to $\sphere{n-1}$, by modding out from the left by $\so n(\RR)$
or by $\so{n-1}(\RR)\left[\begin{smallmatrix}P_{n-1} & 0\\
0 & 1
\end{smallmatrix}\right]$ respectively, and the projections of $\unilat{\lat_{v}}$ to $\shapespace{n-1}$
and $\sphere{n-1}$ are exactly $\shape{\lat_{v}}$ and $\dir v$. 

From the equidistribution of $\unilat{\lat_{v}}$ in $\latspace{n-1,n}$,
we will also conclude the joint equidistribution of the directions
$\dir v$ together with the projections of $\lat_{v}$ to the following
space: 

\[
\unilatspace{n-1}:=\sl{n-1}\left(\RR\right)/\sl{n-1}\left(\ZZ\right),
\]
which is the space of \emph{unimodular} lattices of rank $n-1$. We
denote these projections by $\unisimlat{\lat_{v}}$ (this projection
is in fact not canonical, and depends on a choice of coordinates that
will be made in Section \ref{subsec: RI coordinates of an element }). 

The equidistribution in the spaces $\shapespace{n-1}$, $\unilatspace{n-1}$,
$\latspace{n-1,n}$ and $\sphere{n-1}$ is a uniform distribution,
namely w.r.t.\ a finite uniform invariant measure, which is unique
up to a choice of normalization. We denote these measures by $\mu_{\shapespace{n-1}}$,
$\mu_{\unilatspace{n-1}}$, $\mu_{\latspace{n-1,n}}$ and $\mu_{\sphere{n-1}}$,
and expand about them below, after the statement of our main result.
The measure $\mu_{\sphere{n-1}}$, for example, is the Lebesgue measure
on the sphere. 

The equidistribution of the quotients $\norm{w_{v}}/\rad_{v}$ inside
$[0,1]$ is, as we have already mentioned, not uniform. The proportion
of primitive vectors $v$ for which the quotients $\norm{w_{v}}/\rad_{v}$
fall within the interval $[0,\a]$ with $0\leq\a\leq1$ is given by
the map $\dirfunc:\shapespace{n-1}\to\RR^{+}$ which is defined by
associating to every $z\in\shapespace{n-1}$ the following quantity.
Recall that $z\in\shapespace{n-1}$ is a unimodular lattice in $\RR^{n-1}$
up to rotation. Recall also that the Dirichlet domain of a lattice
is symmetric around the origin, and so the Lebesgue volume of $\dirdom z\cap\ball{}{}$,
where $\dirdom z$ is the Dirichlet domain of any lattice in the class
$z$ and $\ball{}{}$ is a ball centered at the origin, is independent
of the choice of a representative from $z$. Let
\[
\dirfunc(z)=\Leb{\dirdom z\cap\ball{\a\rad\left(z\right)}{}},
\]
where $\text{Leb}$ is the Lebesgue measure, $\rad(z)$ is the covering
radius of (any representative from) $z$, and $\ball{\a\rad\left(z\right)}{}$
is an origin centered ball in $\RR^{n-1}$ with radius $\a\rad\left(z\right)$. 

Finally, we derive our equidistribution results by counting primitive
vectors $v$ (resp.\ primitive $(n-1)$-lattices $\lat_{v}$) whose
projections to the aforementioned spaces lie in subsets that have
\emph{controlled boundary}: this is a rather soft condition on the
boundary of subsets of orbifolds that is defined explicitly in Section
\ref{sec: Fundamental domains}, and is met, e.g., when the boundary
of the set is contained in a finite union of $C^{1}$ submanifolds
of strictly lower dimension than the one of the orbifold. We refer
to a set with controlled boundary as a \emph{boundary controllable
set}, or a BCS. Our main result is the following. 

\begin{mythm}

\label{thm: MainThm}Assume that $\sphereset\subseteq\sphere{n-1}$,
$\symset\subseteq\shapespace{n-1}$ $\ensuremath{\uniset}\subseteq\unilatspace{n-1}$
and $\latset\subset\latspace{n-1,n}$ are BCS's.
\begin{enumerate}
\item \label{enu: MainThm_restricted shape}The number of $v\in\ZZ_{\prim}^{n}$
 with $\norm v\leq e^{T}$, $\dir v\in\sphereset$, $\shape{\lat_{v}}\in\symset$
and $\left\Vert w_{v}\right\Vert /\rad_{v}\in\left[0,\a\right]$ is
\[
\frac{\mu_{\sphere{n-1}}(\sphereset)\cdot\int_{\symset}\dirfunc(z)d\mu_{\shapespace{n-1}}\left(z\right)}{n\prod_{i=2}^{n}\zeta\left(i\right)}\cdot\frac{\prod_{i=1}^{n-2}\Leb{\sphere i}}{\sn\left(n-1\right)}\cdot e^{nT}+\text{error term}
\]
where 
\begin{equation}
\sn\left(m\right)=\left[\so m\left(\RR\right):Z\left(\so m\left(\RR\right)\right)\right]=\begin{cases}
2 & \mbox{if }m\mbox{ is even}\\
1 & \mbox{if }m\mbox{ is odd}
\end{cases}.\label{notation: sign of n}
\end{equation}
\item \label{enu: MainThm_restricted G''}The number of $v\in\ZZ_{\prim}^{n}$
with $\norm v\leq e^{T}$, $\dir v\in\sphereset$, $\unisimlat{\lat_{v}}\in\ensuremath{\uniset}$
and $\left\Vert w_{v}\right\Vert /\rad_{v}\in\left[0,\a\right]$ is
\[
\frac{\mu_{\sphere{n-1}}(\sphereset)\cdot\int_{\ensuremath{\uniset}}\dirfunc(\pi_{_{\unilatspace{}\to\shapespace{}}}\left(\tilde{z}\right))d\mu_{\unilatspace{n-1}}\left(\tilde{z}\right)}{n\prod_{i=2}^{n}\zeta\left(i\right)}\cdot e^{nT}+\text{error term},
\]
where $\pi_{_{\unilatspace{}\to\shapespace{}}}$ is the projection
from $\unilatspace{n-1}$ to $\shapespace{n-1}$.
\item \label{enu: MainThm_restricted Q}The number of $v\in\ZZ_{\prim}^{n}$
with $\norm v\leq e^{T}$, $\unilat{\lat_{v}}\in\latset$ and $\left\Vert w_{v}\right\Vert /\rad_{v}\in\left[0,\a\right]$
is 
\[
\frac{\int_{\latset}\dirfunc(\pi_{_{\latspace{}\to\shapespace{}}}\left(y\right))d\mu_{\latspace{n-1,n}}\left(y\right)}{n\prod_{i=2}^{n}\zeta\left(i\right)}\cdot e^{nT}+\text{error term},
\]
where $\pi_{_{\latspace{}\to\shapespace{}}}$ is the projection from
$\latspace{n-1,n}$ to $\shapespace{n-1}$.
\end{enumerate}
The error term is $O_{\e}(e^{nT\left(1-\errexp_{n}+\e\right)})$
with $\errexp_{n}=\left\lceil \left(n-1\right)/2\right\rceil /4n^{2}$
for every $\e>0$ when $\symset$ (resp.\  $\latset$, $\ensuremath{\uniset}$)
is bounded, and $O_{\e}(e^{nT\left(1-\eta_{n}\errexp_{n}+\e\right)})$
with $\eta_{n}=n^{2}/(2n^{3}-3n^{2}-2n+4)$ when it is not. 

\end{mythm}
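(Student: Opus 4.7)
Write $G=\sl n(\RR)$ and $\Lat=\sl n(\ZZ)$. The strategy is to realize each of the three counts as a lattice-point count of $\Lat$ inside a domain $\Omega_{T}\subset G$ whose shape is dictated by the generalized Iwasawa-type decomposition developed in Section~\ref{subsec: RI coordinates of an element }. In those coordinates, each $g\in G$ factors as an element of $\so n(\RR)$, times a one-parameter diagonal recording the norm of a distinguished row, times a block factor in $P_{n-1}$, times a unipotent factor in $N$. After identifying the primitive vector $v$ with a suitable rescaling of a fixed row of $\mtrans\gamma$, the direction $\dir v$ sits in the $\so n$-factor, the norm $\norm v$ in the diagonal, the ($\so n$-rotated) orthogonal lattice $\lat_{v}$ --- through its shape, its class in $\unilatspace{n-1}$, or its homothety class in $\latspace{n-1,n}$ --- in the $P_{n-1}$-block, and the shortest $\gcd$-solution $w_{v}$ in $N$ modulo the integral stabilizer. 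The theorem's constraints then cut out a product-type region $\Omega_{T}$ whose boundary is controllable in $G$ precisely because $\sphereset,\symset,\uniset,\latset$ are BCS's.

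\textbf{Counting and error term.} With $\Omega_{T}$ in hand we apply the effective lattice-counting method of Gorodnik and Nevo: Möbius inversion over $\gcd$'s descends the integer-vector count to the primitive-vector count and produces the factor $\prod_{i=2}^{n}\zeta(i)^{-1}$, while the $L^{2}$-spectral gap of $G/\Lat$, combined with a Sobolev-type smoothing of $\One_{\Omega_{T}}$ at an optimal scale, supplies a power-saving error of order $\vol(\Omega_{T})^{1-\errexp_{n}+\e}$. When $\symset$, $\uniset$, or $\latset$ is unbounded, $\Omega_{T}$ reaches into the cusps of $G/\Lat$ and the admissible smoothing scale shrinks, explaining the weaker exponent $\eta_{n}\errexp_{n}$. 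Decomposing Haar measure on $G$ through the Iwasawa factors, the diagonal contributes $e^{nT}$, the $\so n$-factor contributes $\mu_{\sphere{n-1}}(\sphereset)$, and the $P_{n-1}$- and $N$-factors combine into an integral over the orthogonal-lattice parameter.

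\textbf{The shift integral and main obstacle.} The heart of the argument --- and the source of the non-Lebesgue density --- is the innermost integration over the unipotent coordinate. For fixed shape $z\in\shapespace{n-1}$, the coset representative $w_{v}$ ranges over the Dirichlet domain $\dirdom z$ of any representative lattice, so that imposing $\norm{w_{v}}/\rad_{v}\in[0,\a]$ restricts $w_{v}$ to $\dirdom z\cap\ball{\a\rad(z)}{}$, whose Lebesgue volume is by definition $\dirfunc(z)$. Integrating this against $d\mu_{\shapespace{n-1}}$ yields the measure in part~(1); parts~(2) and~(3) follow by replacing the $\so{n-1}$-quotient model of $\shapespace{n-1}$ by the finer spaces $\unilatspace{n-1}$ and $\latspace{n-1,n}$, and the extra factor $\prod_{i=1}^{n-2}\Leb{\sphere i}/\sn(n-1)$ in part~(1) is precisely the Haar volume of $\so{n-1}(\RR)/Z(\so{n-1}(\RR))$ that is integrated out when pushing the measure from $\unilatspace{n-1}$ down to $\shapespace{n-1}$. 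The principal obstacle is thus identifying $w_{v}$ with the unipotent factor and verifying that ``shortest'' corresponds to ``lying in $\dirdom z$'' in these coordinates; a secondary but non-trivial obstacle is establishing uniform control on $\partial\Omega_{T}$ and on the map $z\mapsto\dirfunc(z)$ as $z$ escapes to the cusps of $\shapespace{n-1}$, which is what permits the weaker error exponent in the unbounded regime.
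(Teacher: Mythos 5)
Your outline captures the correct high-level architecture --- translate the constraints into the refined Iwasawa coordinates, produce a domain $\Omega_{T}\subset\sl n(\RR)$ whose faces are governed by the BCS hypotheses, apply the Gorodnik--Nevo lattice-point machinery, and recognize $\dirfunc$ as the innermost unipotent integral over $\dirdom z\cap B_{\a\rad(z)}$. However, there is one genuine error and one misattributed mechanism. The factor $\prod_{i=2}^{n}\zeta(i)^{-1}$ does \emph{not} come from a M\"obius inversion descending from integer vectors to primitive vectors. In this approach there is no such sieve at all: Proposition~\ref{prop: primitive vectors correspond to integral matrices} gives a \emph{bijection} between primitive vectors $v$ and $\sl n(\ZZ)$-matrices $\ga_{v}$ in a fundamental domain for $\disgrp$, so primitivity is built in to the lattice-point count from the start. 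The zeta product appears entirely because the Gorodnik--Nevo main term is $\mu(\Omega_{T})/\mu(\sl n(\RR)/\sl n(\ZZ))$ and the covolume of $\sl n(\ZZ)$ equals $\prod_{i=2}^{n}\zeta(i)$ in the chosen normalization; if you tried to execute a M\"obius inversion on top of this you would double-count the arithmetic.

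The second point concerns the unbounded regime. You attribute the degraded exponent $\eta_{n}\errexp_{n}$ to a shrinking Sobolev smoothing scale as $\Omega_{T}$ penetrates the cusp. What the paper actually does is truncate the $A^{\dprime}$-component at height $\underline{\s}T$, bound the number of $\sl n(\ZZ)$-points beyond the truncation by a direct geometry-of-numbers count (Proposition~\ref{prop: very few SL(n,Z) points up the cusp}, which goes via Lemma~\ref{lem: Butz's Lemma}), and then apply the effective count to the compact truncation, whose Lipschitz well-roundedness constant grows like $e^{2\sumS}$ (Lemma~\ref{lem: err(a) in SLn}); the final exponent comes from optimizing the truncation parameter $\dl$ against these two competing error terms. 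Also, the well-roundedness input of Gorodnik--Nevo is verified through the LWR/BLC calculus and a roundomorphism to the Cartesian product $K\times A^{\prime}\times A^{\dprime}\times N^{\dprime}\times N^{\prime}$, not through Sobolev smoothing of indicator functions directly. Finally, the logical flow runs in the opposite direction from what you suggest: the paper proves part~(3) in $\latspace{n-1,n}$ and then derives parts~(2) and~(1) as special cases via Lemma~\ref{lem: lift to G''}, which is what produces the factor $\prod_{i=1}^{n-2}\Leb{\sphere i}/\sn(n-1)$ when descending to $\shapespace{n-1}$.
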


The lattice $\lat_{v}$ has covolume $\norm v$ and it is primitive,
where a lattice $\lat$ in $\ZZ^{n}$ is said to be primitive if it
is of the form $V\cap\ZZ^{n}$, with $V$ being a linear subspace
of $\RR^{n}$ of dimension $\rank{\lat}$. Then, Theorem \ref{thm: MainThm}
can also be read as a counting result for primitive $(n-1)$\textendash lattices,
as their covolume tends to infinity. 

The above theorem solves the question of equidistribution of the normalized
shortest solutions; indeed, for $0\leq\a\leq1$, let 
\[
\nu_{n}\left(\left[0,\a\right]\right)=\int_{z\in\shapespace{n-1}}\dirfunc(z)d\mu_{\shapespace{n-1}}\left(z\right).
\]
The following is now straightforward from part (\ref{enu: MainThm_restricted shape})
of Theorem \ref{thm: MainThm}:
\begin{cor}
For primitive vectors $v\in\ZZ^{n}$ with $n\geq2$, the normalized
shortest solutions $\norm{w_{v}}/\rad_{v}$ and the directions $\dir v$
jointly equidistribute as $\norm v\to\infty$: the quotients $\norm{w_{v}}/\rad_{v}$
inside $[0,1]$ w.r.t.\ $\nu_{n}$, and the directions $\dir v$
inside the unit sphere w.r.t.\ the Lebesgue measure. 
\end{cor}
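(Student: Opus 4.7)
The plan is to derive this corollary as a ratio of two instances of part (\ref{enu: MainThm_restricted shape}) of Theorem \ref{thm: MainThm}; there is essentially no new analytic content, only a bookkeeping check that $\nu_{n}$ is a probability measure under the natural normalization.

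First, I would record two structural observations. The full spaces $\sphere{n-1}$ and $\shapespace{n-1}$ are trivially BCS's (empty boundary), as is any BCS $\sphereset\subseteq\sphere{n-1}$. Furthermore, when $\a=1$ the origin-centered ball $\ball{\rad(z)}{}$ contains the Dirichlet domain $\dirdom z$ by the very definition of the covering radius, so
\[
\dirfunc(z)=\Leb{\dirdom z}=1
\]
for every $z\in\shapespace{n-1}$, because any representative of $z$ is a unimodular lattice and hence its Dirichlet domain has Lebesgue volume one. Consequently $\nu_{n}([0,1])=\mu_{\shapespace{n-1}}(\shapespace{n-1})$. I would normalize $\mu_{\shapespace{n-1}}$ so that $\nu_{n}$ becomes a probability measure on $[0,1]$, and take $\mu_{\sphere{n-1}}$ to be the normalized uniform measure on the sphere.

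Next, for a fixed BCS $\sphereset\subseteq\sphere{n-1}$ and $\a\in[0,1]$, I would apply part (\ref{enu: MainThm_restricted shape}) of Theorem \ref{thm: MainThm} twice: once with the triple $(\sphereset,\shapespace{n-1},\a)$, and once with $(\sphere{n-1},\shapespace{n-1},1)$. The second instance gives the total count of $v\in\ZZ_{\prim}^{n}$ with $\norm v\le e^{T}$. Dividing the two asymptotics, the combinatorial prefactor, the volume factor $\prod_{i=1}^{n-2}\Leb{\sphere i}/\sn(n-1)$, and the leading $e^{nT}$ all cancel, while the power-saving error term of Theorem \ref{thm: MainThm} is swallowed by the leading exponential. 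What remains is
\[
\frac{\#\{v\in\ZZ_{\prim}^{n}:\norm v\le e^{T},\ \dir v\in\sphereset,\ \norm{w_{v}}/\rad_{v}\in[0,\a]\}}{\#\{v\in\ZZ_{\prim}^{n}:\norm v\le e^{T}\}}\xrightarrow[T\to\infty]{}\mu_{\sphere{n-1}}(\sphereset)\cdot\nu_{n}([0,\a]).
\]

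Finally, joint equidistribution on $\sphere{n-1}\times[0,1]$ with respect to the product measure $\mu_{\sphere{n-1}}\times\nu_{n}$ follows by a standard approximation argument: products $\sphereset\times[0,\a]$ with $\sphereset$ a BCS and $\a\in[0,1]$ contain a $\pi$-system (e.g.\ spherical rectangles times subintervals) that generates the product Borel $\sigma$-algebra, and both marginal statements --- uniform equidistribution of $\dir v$ (take $\a=1$) and $\nu_{n}$-equidistribution of $\norm{w_{v}}/\rad_{v}$ (take $\sphereset=\sphere{n-1}$) --- are immediate specializations. The only real ``obstacle'' is the normalization observation above that makes $\nu_{n}$ a probability measure; everything else is an exercise in dividing two asymptotic counts.
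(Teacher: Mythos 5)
Your argument is correct and is exactly what the paper leaves implicit when it calls the corollary ``straightforward from part (1) of Theorem B'': you observe that $\dirfunc(z)=\Leb{\dirdom z}=1$ when $\a=1$ because the Dirichlet domain of a unimodular lattice has volume one and sits inside the ball of radius $\rad(z)$, so $\nu_n([0,1])=\mu_{\shapespace{n-1}}(\shapespace{n-1})$, and then you form the ratio of two applications of the counting theorem (arbitrary BCS $\sphereset$ and level $\a$ over the full data $\sphere{n-1},\ \shapespace{n-1},\ \a=1$), let the prefactors and $e^{nT}$ cancel, and absorb the power-saving error. The only stylistic difference is that the paper keeps the un-normalized measures of Theorem~B, while you explicitly renormalize $\mu_{\sphere{n-1}}$ and $\nu_n$ to probability measures --- this is harmless since only the ratio appears. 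Your final $\pi$-system remark is the standard passage from convergence on boxes to joint equidistribution; it would be slightly cleaner to note that these boxes are continuity sets of the limit measure (BCS boundaries are null for $\mu_{\sphere{n-1}}$ and $\nu_n$ has no atoms since $\dirfunc$ is continuous in $\a$), but this is routine.
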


As we have already mentioned, for the case of $n=2$ the above corollary
recovers the result of Risager and Rudnick for uniform distribution
of $\norm{w_{v}}/\norm v$ in the interval $[0,1/2]$. In particular,
the ${n \choose 2}$ embeddings of $\RR^{2}$ into $\RR^{n}$ that
are of the form 
\[
(x,y)\mapsto(0,\ldots,0,x,0,\ldots,0,y,0,\ldots0)
\]
give birth to ${n \choose 2}$ sequences of primitive vectors $v\in\ZZ^{n}$
for which the quotients $\norm{w_{v}}/\norm v$ uniformly distribute
in the interval $[0,1/2]$ as $\norm v\to\infty$. Combining this
with Theorem \ref{thm: |w_v|/|v| to zero}, we conclude:
\begin{cor}
\label{cor: |w_v|/|v| no ed}For primitive vectors $v\in\ZZ^{n}$
with $n\geq3$, there is no Borel measure on $\RR$ w.r.t.\ which
the quotients $\norm{w_{v}}/\norm v$ equidistribute as  $\norm v\to\infty$.
\end{cor}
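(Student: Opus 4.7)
The plan is to combine Theorem~\ref{thm: |w_v|/|v| to zero} with the two-dimensional Risager--Rudnick theorem via the canonical coordinate embeddings $\iota_{i,j}:\ZZ^{2}\hookrightarrow\ZZ^{n}$ for $1\le i<j\le n$, and thereby exhibit two incompatible limit behaviors of the quotients $\norm{w_{v}}/\norm{v}$ on natural subfamilies of $\ZZ_{\prim}^{n}$. Suppose for contradiction that a Borel probability measure $\mu$ on $\RR$ exists to which $\norm{w_{v}}/\norm{v}$ equidistributes as $\norm{v}\to\infty$ through $\ZZ_{\prim}^{n}$. By Theorem~\ref{thm: |w_v|/|v| to zero}, these quotients tend to zero along every sequence in a subset $\cA\subset\ZZ_{\prim}^{n}$ of full asymptotic density, so the empirical distribution of $\{\norm{w_{v}}/\norm{v}\}_{v\in\ZZ_{\prim}^{n}\cap\ball{R}{}}$ converges weakly to $\delta_{0}$ as $R\to\infty$; hence $\mu=\delta_{0}$.

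For the opposite side of the contradiction I would use the embeddings $\iota_{i,j}$, which place the two entries of a vector in $\ZZ^{2}$ at positions $i$ and $j$ and zeros elsewhere. Each such map preserves the Euclidean norm and sends $\ZZ_{\prim}^{2}$ into $\ZZ_{\prim}^{n}$; moreover, for $v=(a,b)\in\ZZ_{\prim}^{2}$, the shortest $L^{2}$ integer solution of the gcd equation of $\iota_{i,j}(v)$ is the extension of $w_{v}$ by zeros, since any nonzero coordinate outside positions $i,j$ would strictly increase the $L^{2}$ norm of an integer solution. Consequently $\norm{w_{\iota_{i,j}(v)}}/\norm{\iota_{i,j}(v)}=\norm{w_{v}}/\norm{v}$, and by Risager--Rudnick these quotients equidistribute uniformly on $[0,1/2]$ as $\norm{v}\to\infty$. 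No single Borel measure on $\RR$ can simultaneously be $\delta_{0}$ and describe the uniform measure on $[0,1/2]$, yielding the contradiction.

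The main obstacle is interpretive rather than computational: when $n\ge 3$ the embedded subfamily $\{\iota_{i,j}(v):v\in\ZZ_{\prim}^{2}\}$ has asymptotic density zero in $\ZZ_{\prim}^{n}$ (of order $R^{2}$ against $R^{n}$), so by itself it does not affect the weak limit of the full empirical distribution ordered by norm. The corollary therefore implicitly adopts the convention that a Borel measure describing the asymptotic distribution of $\norm{w_{v}}/\norm{v}$ should be consistent with every natural way of letting $\norm{v}\to\infty$ through $\ZZ_{\prim}^{n}$, including the two-dimensional coordinate embeddings; under this convention the two incompatible limits established above deliver the non-existence statement.
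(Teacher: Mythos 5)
Your proof follows the paper's own (sketched) argument exactly: combine Theorem \ref{thm: |w_v|/|v| to zero} with the uniform distribution of $\norm{w_{v}}/\norm{v}$ on $[0,1/2]$ along the ${n \choose 2}$ two-dimensional coordinate embeddings, which is the Risager--Rudnick result transported via the observation (which you correctly justify) that the shortest solution for $\iota_{i,j}(v)$ is the zero-extension of $w_{v}$. The caveat you raise at the end is genuine and the paper does not address it: the embedded families have density zero in $\ZZ_{\prim}^{n}$, so if ``equidistribute'' is read as weak convergence of the norm-ordered empirical measures, then Theorem \ref{thm: |w_v|/|v| to zero} by itself already yields $\delta_{0}$ as the limit law and the corollary would be false as literally stated; the paper is implicitly using the stronger convention you articulate (or, equivalently, asserting nonexistence of a \emph{non-degenerate} limit law), and making that convention explicit is a correct and worthwhile addition.
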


\paragraph{The measures $\mu_{\protect\shapespace{n-1}}$, $\mu_{\protect\unilatspace{n-1}}$,
$\mu_{\protect\latspace{n-1,n}}$ and $\mu_{\protect\sphere{n-1}}$.}

The measure $\mu_{\sphere{n-1}}$ is the Lebesgue measure on the sphere.
The measures $\mu_{\shapespace{n-1}}$ and $\mu_{\unilatspace{n-1}}$
are the unique Radon invariant measures arriving from a Haar measure
on $\sl{n-1}(\RR)$ that are normalized as follows: the $\mu_{\unilatspace{n-1}}$
volume of $\unilatspace{n-1}$ is 
\[
\prod_{i=2}^{n-1}\zeta\left(i\right),
\]
and the $\mu_{\shapespace{n-1}}$-volume of $\shapespace{n-1}$ is
\[
\sn\left(n-1\right)\prod_{i=2}^{n-1}\zeta\left(i\right)/\brac{\prod_{i=1}^{n-2}\Leb{\sphere i}},
\]
where $\sn:\NN\to\left\{ 1,2\right\} $ was defined in (\ref{notation: sign of n}).
The justification for the volume of $\unilatspace{n-1}$ is the computation
in \cite{Volumes} along our choice of Haar measure on $\sl n\left(\RR\right)$
that is explained in Subsection \ref{subsec: Defining RI}. This
choice determines the volumes of $\shapespace{n-1}$, as shown in
Lemma \ref{lem: lift to G''}. On $\latspace{n-1,n}$, however there
is no invariant measure induced from $\sl n(\RR)$, and instead we
view this space as the quotient in (\ref{eq: right quotient for L_n-1,n}),
where a submanifold of $\sl n(\RR)$ quotiented by a discrete group.
This submanifold supports a transitive action of the product group
$\so n(\RR)\times\left[\begin{smallmatrix}P_{n-1} & 0\\
0 & 1
\end{smallmatrix}\right]$, and $\mu_{\latspace{n-1,n}}$ is the unique Radon measure that is
invariant under this action and satisfies that the $\mu_{\latspace{n-1,n}}$-volume
of $\latspace{n-1,n}$ is the product of volumes of $\sphere{n-1}$
and $\unilatspace{n-1}$. 

\paragraph{Comparison with previous work. }

Let us comment on related work that preceded the theorem above. As
already mentioned, equidistribution of the $\norm{w_{v}}/\frac{1}{2}\norm v$
was known for $n=2$; it was first proved in \cite{R&R}, and effective
versions were later established in \cite{Truelsen} and \cite{HN16_Counting},
where the error term coincides with the one of Theorem \ref{thm: MainThm}
for $n=2$. The equidistribution (in a non-effective manner) of shapes
of primitive lattices of any rank was established in \cite{Schmidt_98};
the case of rank $n-1$ was also obtained in \cite{Marklof_10}, using
a dynamical approach. Theorem \ref{thm: MainThm} adds an error term
(i.e. rate of convergence) to two of the aforementioned results, as
well as the consideration of the projections to $\unilatspace{n-1}$
and $\latspace{n-1,n}$ (as apposed to just $\shapespace{n-1}$),
and most importantly, the equidistribution related to the $\gcd$
problem. Another significant addition is the fact that we allow the
projections to the relevant spaces ($\symset$, $\ensuremath{\uniset}$,
$\latset$) to be unbounded; to this end, it is critical that the
counting includes an error term, since it could be compromised to
allow unboundedness. Our method can be used to consider the case of
general co-dimension as well, which we will do in a forthcoming paper.
Effective counting of primitive lattices was done in \cite{Schmidt_68},\cite{Schmidt_15},
but the subsets $\symset$ in the shape space were not general enough
to deduce equidistribution.  Joint equidistribution of shapes and
directions has been studied, e.g. in \cite{AES_16A,AES_16B,EMSS_16},
in the  case where the primitive vectors $v$ are restricted to a
large \emph{sphere} $\norm v=e^{T}$, as apposed to a large \emph{ball}
$\norm v\leq e^{T}$, the latter being the case considered in Theorem
\ref{thm: MainThm}. The sphere case is of course much more delicate,
and this is the reason why almost\footnote{In \cite{ERW17} an error term is established for dimensions $n=4,5$.}
all existing results do not include an error term. The key to proving
Theorem \ref{thm: MainThm} is counting lattice points in the group
$\sl n\left(\RR\right)$ w.r.t.\ the Iwasawa coordinates; in the
context of counting points of discrete subgroups inside simple Lie
groups w.r.t.\ a decomposition of the group, we mention \cite{Good,GN1,GOS_wavefront,Margulis_Mohammadi_Oh}.

\paragraph{Outline of the paper.}

The proof of Theorems \ref{thm: |w_v|/|v| to zero} and \ref{thm: MainThm}
consist of two main ideas, and the paper is divided accordingly:
\begin{enumerate}
\item \textbf{A reduction to a problem of counting lattice points in the
group $\sl n\left(\RR\right)$} (Part I), which is done by finding
``isomorphic'' copies of the spaces $\shapespace{n-1}$, $\unilatspace{n-1}$,
$\latspace{n-1,n}$, $\sphere{n-1}$ inside $\sl n\left(\RR\right)$
(Section \ref{sec: Fundamental domains}) and establishing a correspondence
between primitive vectors $v$ (resp.\  primitive lattices $\lat_{v}$)
and integral matrices in  $\sl n\left(\RR\right)$ (Section \ref{sec: Integral matrices representing primitive vectors}),
such that the projections of the primitive lattices to the spaces
$\shapespace{n-1}$ etc.\ will correspond to the projections of the
integral matrices in their isomorphic copies. This converts Theorem
\ref{thm: MainThm} into a counting lattice points problem in $\sl n\left(\RR\right)$
(Section \ref{sec: Defining a counting problem}). A key role in this
translation is played by a refinement of the Iwasawa coordinates of
$\sl n\left(\RR\right)$, introduced in Section \ref{sec: RI components}.
In section \ref{sec: Number of integral points up the cusp} we simplify
the counting problem by reducing to counting in a family of compact
subsets of $\sl n\left(\RR\right)$, by providing a rather direct
estimate for the number of lattice up to a given covolume that lie
far up the cusp  in the space of $(n-1)$-lattices. In the concluding
section \ref{sec: Almost-a-proof} of Part I we state Proposition
\ref{prop: Counting with Hexagons (A counting)}, which formulates
the final counting question in $\sl n\left(\RR\right)$ that is required
in order to complete the proof Theorem \ref{thm: MainThm}, and then
use it to prove Theorems \ref{thm: |w_v|/|v| to zero} and \ref{thm: MainThm}.
\item \textbf{Solving the counting problems} (Part II ). This part is devoted
to proving the aforementioned Proposition \ref{prop: Counting with Hexagons (A counting)}.
The main ingredient is a method due to A. Gorodnik and A. Nevo \cite{GN1},
which concerns counting lattice points in increasing families $\left\{ \Gset_{T}\right\} _{T>0}$
inside non-compact algebraic simple Lie groups. In Section \ref{sec: Well roundededness}
we describe this method, and sketch a plan for completing the proof
of Proposition \ref{prop: Counting with Hexagons (A counting)} according
to it. In Sections \ref{sec: Regularity-results-forA}, \ref{sec: effective Iwasawa and GI decompositions},
\ref{sec: The base sets are LWR}, \ref{sec: Family for gcd solution is BLC}
we follow that plan, and the proofs are concluded in Section \ref{sec: Concluding the proofs}.
\end{enumerate}

\paragraph*{Notations for inequalities.\label{subsec: Notations and conventions}}

We will use the following conventions for inequalities. If $\underline{a}=\left(a_{1},\ldots,a_{n}\right)$
and $\underline{b}=\left(b_{1},\ldots,b_{n}\right)$ are two $n$-tuples
of real numbers, we denote $\underline{a}\leq\underline{b}$ if $a_{i}\leq b_{i}$
for every $i=1,\ldots,n$. If $f$ and $g$ are two non-negative functions
then we denote $f\porsmall g$ if there exists a positive constant
$C$ and some $t_{0}$ such that for $t_{0}<t$ one has $f(t)\leq Cg(t)$.
We denote $f\porequal g$ if $g\porsmall f\porsmall g$. 
\begin{acknowledgement*}
This work was done when both authors were at IHES (Institut des Hautes
Études Scientifiques, France), and we are grateful for the opportunity
to work there, and for the outstanding hospitality. The authors are
also grateful to Nadav Horesh for his help with numerical estimations
for the measure $\nu_{3}$, and to Ami Paz for his help with preparing
the figures. We would also like to thank Barak Weiss and Amos Nevo
for helpful discussions in early stages of the project, and to Micheal
Bersudsky for referring us to Schmidt's work on effective counting
of primitive lattices. 
\end{acknowledgement*}

\part{From $\protect\ZZ^{n}$ to $\protect\sl n\left(\protect\ZZ\right)$\label{part: Non Technical part}}

\section{The Refined Iwasawa decomposition of $\protect\sl n\left(\protect\RR\right)$\label{sec: RI components} }

\subsection{\label{subsec: Defining RI}Refining the Iwasawa decomposition}

Set $G:=\sl n\left(\RR\right)$ and let $K$ be $\so n\left(\RR\right)$,
$A$ the diagonal subgroup in $G$, and $N$ the subgroup of upper
unipotent matrices. Then, $G=KAN$ is the Iwasawa decomposition of
$G$. Consider yet another subgroup of $G$,
\[
G^{\dprime}:=\left[\begin{array}{c|c}
\sl{n-1}\left(\RR\right) & \begin{array}{c}
0\\
\underset{}{\vdots}
\end{array}\\
\hline \begin{array}{ccc}
0 & \cdots & 0\end{array} & 1
\end{array}\right],
\]
which is clearly an isomorphic copy of $\sl{n-1}\left(\RR\right)$
inside $G$. Write $G^{\dprime}=K^{\dprime}A^{\dprime}N^{\dprime}$
for the Iwasawa decomposition of $G^{\dprime}$, i.e. 
\begin{eqnarray*}
K^{\dprime} & := & K\cap G^{\dprime}=\left[\begin{array}{c|c}
\so{n-1}\left(\RR\right) & \begin{array}{c}
0\\
0
\end{array}\\
\hline \begin{array}{cc}
0 & 0\end{array} & 1
\end{array}\right],\\
A^{\dprime} & := & A\cap G^{\dprime}=\diag{\a_{1},\ldots,\a_{n-1},1}\text{ with }\a_{1}\cdots\a_{n-1}=1,\\
N^{\dprime} & := & N\cap G^{\dprime}=\left[\begin{array}{c|c}
\begin{array}{c}
\text{upper unipotent}\\
\text{of order \ensuremath{n-1}}
\end{array} & \begin{array}{c}
0\\
0
\end{array}\\
\hline \begin{array}{cc}
0 & 0\end{array} & 1
\end{array}\right].
\end{eqnarray*}
The crux of the RI decomposition is that it completes the Iwasawa
decomposition of $G^{\dprime}$ to the Iwasawa decomposition of $G$.
For this we define $K^{\prime},A^{\prime},N^{\prime}$ that complete
$K^{\dprime},A^{\dprime},N^{\dprime}$ to $K$, $A$ and $N$ respectively.
Define 
\[
N^{\prime}:=\left[\begin{array}{c|c}
\idmat{n-1} & \RR^{n-1}\\
\hline \begin{array}{cc}
0 & 0\end{array} & 1
\end{array}\right],\quad A^{\prime}:=\left[\begin{array}{c|c}
a^{-\frac{1}{n-1}}\idmat{n-1} & \begin{array}{c}
0\\
0
\end{array}\\
\hline \begin{array}{cc}
0 & 0\end{array} & a
\end{array}\right]
\]
and note that $N=N^{\dprime}N^{\prime}$, $A=A^{\dprime}A^{\prime}$,
and that $A^{\prime}$ is a one-parameter subgroup of $A$ which commutes
with $G^{\dprime}$. Fix a transversal $K^{\prime}$ of the diffeomorphism
$K/K^{\dprime}\to\sphere{n-1}$ with the following property:
\begin{condition}
\label{cond: K' property}If $\sphereset\subseteq\sphere{n-1}$ and
$\Kdprimeset\subseteq K^{\dprime}$ are BCS, then so does $\Kdprimeset K_{\sphereset}^{\prime}\subseteq K$,
where $K_{\sphereset}$ is the inverse image of $\sphereset$ in $K^{\prime}$.
\end{condition}

The existence of such a transversal $K^{\prime}$ is proved in Lemma
\ref{lem: K'}. Let
\[
P^{\dprime}:=A^{\dprime}N^{\dprime}\text{ and }\wc:=KP^{\dprime};
\]
note that $\wc$ is not a group, but that it is a smooth manifold
that is diffeomorphic to the group $K\times P^{\dprime}$. The RI
decomposition is given by 

\[
G=K^{\prime}G^{\dprime}A^{\prime}N^{\prime}=K^{\prime}K^{\dprime}A^{\dprime}A^{\prime}N^{\dprime}N^{\prime},
\]
and we also have $G=\wc A^{\prime}N^{\prime}$. 

\paragraph*{Parameterizations of the RI components. }

Clearly the groups $A,A^{\prime},A^{\dprime}$ and $N,N^{\prime},N^{\dprime}$
are parameterized by the Euclidean spaces of the corresponding dimensions.
For $t\in\RR$, $\underline{s}=\left(s_{1},\ldots,s_{n-1}\right)\in\RR^{n-2}$
and $\underline{x}\in\RR^{n-1}$, we let $a_{t}^{\prime}:=\lildiag{e^{\frac{t}{n-1}}\idmat{n-1},e^{-t}}$,
$a_{\underline{s}}^{\dprime}:=\lildiag{e^{-\frac{s_{1}}{2}},e^{\frac{s_{1-}s_{2}}{2}},\ldots,e^{\frac{s_{n-2}}{2}},1}$
and $n_{\underline{x}}^{\prime}=\left[\begin{smallmatrix}I_{n-1} & \underline{x}\\
0 & 1
\end{smallmatrix}\right]$. Similarly, since $K^{\prime}$ parameterizes the unit sphere $\sphere{n-1}$,
we let $k_{u}^{\prime}$ denote the element in $K^{\prime}$ corresponding
to a unit vector $u\in\sphere{n-1}$. In addition to the above, we
will show in Section \ref{subsec: Spread models} that certain subsets
of $\wc$, $G^{\dprime}$ and $P^{\dprime}$ parameterize the spaces
$\latspace{n-1,n}$, $\unilatspace{n-1}$ and $\shapespace{n-1}$.
When an RI component $\GIcomp$ (or a subset of it) parameterizes
a space $X$, and $\Gset\subset X$ is a subset, we let $\GIcomp_{\Gset}$
denote the image of $\Gset$ under the parameterization. For example,
if $\domN\subset\RR^{n-1}$, then $N_{\domN}^{\prime}$ denotes its
image in $N^{\prime}$, namely the set of $n_{\underline{x}}^{\prime}$
where $\underline{x}\in\domN$. 

\paragraph*{Measures on the RI components. }

For every $\GIcomp\subset G$ appearing as a component in the Iwasawa
or Refined Iwasawa decompositions of $G$, we let $\mu_{\GIcomp}$
denote a measure on $\GIcomp$ as follows: $\mu_{K},\mu_{N}$ are
Haar measures, and so do $\mu_{K^{\dprime}}$, $\mu_{N^{\dprime}}$,
$\mu_{P^{\dprime}}$, $\mu_{G^{\dprime}},\mu_{G}$ and $\mu_{N^{\prime}}$.
The measures $\mu_{N}$, $\mu_{N^{\prime}}$ and $\mu_{N^{\dprime}}$
are Lebesgue; as $N=N^{\dprime}\ltimes N^{\prime}$ and all three
groups are unimodular, $\mu_{N}=\mu_{N^{\dprime}}\times\mu_{N^{\prime}}$.
Since $K^{\prime}$ parameterizes $\sphere{n-1}$,  we can endow
it with a measure $\mu_{K^{\prime}}$ that is the pullback of the
Lebesgue measure on the sphere. We assume that the Haar measures
$\mu_{K}$ and $\mu_{K^{\dprime}}$ are normalized such that $\mu_{K}=\mu_{K^{\dprime}}\times\mu_{K^{\prime}}$.
Then, by choosing the measure of $K^{\dprime}$ to be $\prod_{i=1}^{n-2}\Leb{\sphere i}$,
we have that the measure of $K$ is $\prod_{i=1}^{n-1}\Leb{\sphere i}$.
The measures $\mu_{A},\mu_{A^{\prime}},\mu_{A^{\dprime}}$ are Radon
measures such that 
\[
\mu_{A^{\prime}}=e^{nt}dt,\quad\mu_{A^{\dprime}}=\prod_{i=1}^{n-2}e^{-s_{i}}ds_{i}
\]
as we compute in Example \ref{exa: SL(n,R)  1}, and $\mu_{A}=\mu_{A^{\prime}}\times\mu_{A^{\dprime}}$
by Remark \ref{rem: H sbgrps of A}. Note that these measures are
non-Haar. Since $\wc$ is diffeomorphic to the group $K\times P^{\dprime}$,
we endow it with the Haar measure on this group: $\mu_{\wc}=\mu_{K}\times\mu_{P^{\dprime}}$.
Since $\mu_{K}=\mu_{K^{\prime}}\times\mu_{K^{\dprime}}$, we also
have that also $\mu_{\wc}=\mu_{K^{\prime}}\times\mu_{G^{\dprime}}$.
All in all, the Haar measure on $G$, which can be written in Iwasawa
coordinates as $\mu_{G}=\mu_{K}\times\mu_{A}\times\mu_{N}$, (e.g.
\cite[Prop. 8.43]{Knapp}), can be also decomposed according to the
Refined Iwasawa coordinates: 
\begin{equation}
\begin{array}{c}
\mu_{G}=\mu_{K^{\prime}}\times\mu_{G^{\dprime}}\times\mu_{A^{\prime}}\times\mu_{N^{\prime}}=\mu_{\wc}\times\mu_{A^{\prime}}\times\mu_{N^{\prime}}\\
=\mu_{K^{\prime}}\times\mu_{K^{\dprime}}\times\mu_{A^{\dprime}}\times\mu_{N^{\dprime}}\times\mu_{A^{\prime}}\times\mu_{N^{\prime}}
\end{array}.\label{eq: Haar measure on G}
\end{equation}
Where it should be clear from the context, we will occasionally denote
$\mu$ instead of $\mu_{G}$

\subsection{\label{subsec: RI coordinates of an element }Explicit RI components
of $g\in\protect\sl n\left(\protect\RR\right)$, and their interpretation}

The following proposition reveals the role of the RI decomposition
of $\sl n\left(\RR\right)$ in studying the parameters $\norm v$,
$\dir v$, $\unilat{\lat_{v}}$, $\unisimlat{\lat_{v}}$ and $\shape{\lat_{v}}$
of a vector $v$. Let us observe that the projection $\unilat{\lat_{v}}$
to $\unilatspace{n-1}$ is now well defined, following the choice
of a transversal $K^{\prime}$, which determines a unique way to rotate
any hyperplane in $\RR^{n}$ to $\sp\cbrac{e_{1},\ldots,e_{n-1}}\cong\RR^{n-1}$.

It will be convenient to set the following notations: for any invertible
matrix $g$, let $\lat_{g}$ denote the lattice spanned by the columns
of $g$, and $\lat_{g}^{j}$ denote the lattice spanned by the first
$j$ columns of $g$. Also, for $0\neq v\in\RR^{n}$ define:

\begin{equation}
G_{v}=\left\{ g=\sbrac{v_{1}|\cdots|v_{n}}\in\sl n\left(\RR\right):v_{1}\wedge\cdots\wedge v_{n-1}=v\right\} .\label{eq: def of G_v}
\end{equation}

\begin{prop}
\label{prop: explicit RI coordinates of g}Let $g=\left[v_{1}|\cdots|v_{n-1}|v_{n}\right]\in\sl n\left(\RR\right)$
and write $g=kan=qa_{t}^{\prime}n_{\underline{x}}^{\prime}$ with
$q=k_{u}^{\prime}g^{\dprime}$ and $g^{\dprime}=k^{\dprime}a_{\underline{s}}^{\dprime}n^{\dprime}$.
Let $w=v_{n}$, and $p^{\dprime}=a_{\underline{s}}^{\dprime}n^{\dprime}=\left[\begin{smallmatrix}z & 0\\
0 & 1
\end{smallmatrix}\right]$. If $g\in G_{v}$, then the RI components of $g$ are as follows:
\[
\begin{array}{ccccc}
(i) & u & = & \dir v\\
(ii) & e^{t} & = & \norm v\\
(iii) & e^{-\frac{s_{i}}{2}+\frac{it}{n-1}} & = & \covol{\lat_{g}^{i}}\\
(iv) & \lat_{q} & \in & \unilat{\lat_{g}^{n-1}}\\
(v) & \lat_{g^{\dprime}} & \in & \unisimlat{\lat_{g}^{n-1}}\\
(vi) & \ensuremath{\lat_{z}} & \in & \shape{\lat_{g}^{n-1}}
\end{array}
\]
and (vii) $\underline{x}=\left(x_{1},\ldots,x_{n-1}\right)$ is such
that $w^{\perpen v}=\sum_{i=1}^{n-1}x_{i}v_{i}$. 
\end{prop}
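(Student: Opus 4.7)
The plan is to expand $g = k'_u\, g''\, a'_t\, n'_{\underline{x}}$ using the block structure of each factor, extract explicit formulas for the columns $v_1,\ldots,v_n$, and then derive each of (i)--(vii) from these formulas. Writing $g'' = \bigl[\begin{smallmatrix} h & 0 \\ 0 & 1 \end{smallmatrix}\bigr]$ with $h \in \sl{n-1}(\RR)$, a direct multiplication (the scalar $e^{t/(n-1)}$ comes from $a'_t$ acting on the first $n-1$ rows, while $n'_{\underline{x}}$ only modifies the last column) yields
\[
v_i \;=\; e^{t/(n-1)}\, k'_u \begin{pmatrix} h_i \\ 0 \end{pmatrix} \quad (i<n), \qquad v_n \;=\; k'_u \begin{pmatrix} e^{t/(n-1)}\, h\underline{x} \\ e^{-t} \end{pmatrix},
\]
where $h_i$ is the $i$-th column of $h$; the transversal convention gives $k'_u e_n = u$.

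Claims (i) and (ii) then follow from $\so{n}(\RR)$-covariance of the Hodge-dual identification $\Lambda^{n-1}\RR^n \cong \RR^n$: the wedge $v_1\wedge\cdots\wedge v_{n-1}$ pulls out the scalar $(e^{t/(n-1)})^{n-1}=e^t$ and the orthogonal factor $k'_u$, reducing to the wedge of the column vectors $\bigl(\begin{smallmatrix} h_i \\ 0 \end{smallmatrix}\bigr)$, $i<n$, which equals $(\det h)\,e_n = e_n$. Hence $v = e^t\,k'_u e_n = e^t u$, yielding both $\norm v = e^t$ and $\dir v = u$. For (iii) I would invoke the standard Iwasawa observation that for $g = kan$ with $a = \diag{\alpha_1,\ldots,\alpha_n}$, the first $i$ columns of $an$ are upper triangular with diagonal $\alpha_1,\ldots,\alpha_i$ (and vanish below row $i$), so their $i$-volume is $\alpha_1\cdots\alpha_i$, and this is preserved by the left multiplication by $k \in \so{n}(\RR)$. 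Combining the diagonal entries of $a''_{\underline{s}}$ (with the convention $s_0 := 0$) and $a'_t$ gives $\alpha_j = e^{(s_{j-1}-s_j)/2 + t/(n-1)}$ for $j \leq n-1$, and a telescoping product yields $\alpha_1 \cdots \alpha_i = e^{-s_i/2 + it/(n-1)}$.

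Claims (iv)--(vi) come from unraveling the equivalence classes using the explicit columns. One reads off $\lat_g^{n-1} = e^{t/(n-1)}\, k'_u\, \lat_h$, with $\lat_h$ embedded in $\RR^n$ via $\RR^{n-1}\hookrightarrow \sp(e_1,\ldots,e_{n-1})$. Rescaling to covolume $1$ gives $k'_u\,\lat_h$, which coincides with the $(n-1)$-lattice $\lat_q$ associated to $q = k'_u g''$, so $\lat_q \in \unilat{\lat_g^{n-1}}$. Applying $(k'_u)^{-1}$ rotates the lattice back to $\RR^{n-1}$ and produces $\lat_h = \lat_{g''}$, establishing $\lat_{g''} \in \unisimlat{\lat_g^{n-1}}$. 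Finally, the Iwasawa factorization $g'' = k''\, p''$ with $p'' = \bigl[\begin{smallmatrix} z & 0 \\ 0 & 1 \end{smallmatrix}\bigr]$ and $k'' \in \so{n-1}(\RR)$ exhibits $\lat_h = k''\,\lat_z$ as a rotate of $\lat_z$, so $\lat_z \in \shape{\lat_g^{n-1}}$.

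For (vii) I would decompose $v_n$ into its components along $u$ and inside $\perpen v$. Since $k'_u$ is orthogonal and $u = k'_u e_n$, one has $\langle v_n, u\rangle = e^{-t}$, so
\[
v_n - e^{-t}u \;=\; k'_u \begin{pmatrix} e^{t/(n-1)}\, h\underline{x} \\ 0 \end{pmatrix} \;=\; \sum_{i=1}^{n-1} x_i v_i,
\]
which is exactly the required identity. The argument is entirely algebraic; the only genuine obstacle is the careful bookkeeping of the block-diagonal structures and the scalars that each RI factor contributes as the product $k'_u\,g''\,a'_t\,n'_{\underline{x}}$ is unpacked from left to right.
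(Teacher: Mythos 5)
Your proof is correct, and it reaches the same conclusions by a somewhat different route than the paper. The paper works from the Iwasawa decomposition $g = kan$ and interprets $k$ via Gram--Schmidt: it identifies $\phi_n = \hat v$ using orthonormality together with Lemma \ref{lem: characterize G_v}, deduces (iii) from $\prod_{j\le i}a_j = \|v_1\wedge\cdots\wedge v_i\|$, and handles (iv)--(vii) by peeling off $n'$ and $a'$ on the right and appealing to Lemma \ref{lem: v-comp of w} for the $\hat v$-component of $w$. You instead expand $g = k'_u\,g''\,a'_t\,n'_{\underline x}$ fully as a block product, extract closed-form expressions for all the columns, and read each claim directly off those formulas, invoking $\SO(n)$-equivariance of the Hodge-star identification $\Lambda^{n-1}\RR^n\cong\RR^n$ for (i)--(ii) in place of the Gram--Schmidt argument. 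This buys you a self-contained, purely computational derivation that bypasses the two preparatory lemmas, at the modest cost of having to track the scalar $e^{t/(n-1)}$ and the block structure carefully through the whole product; the paper's route, by contrast, is more conceptual (each component of the Iwasawa data has a recognisable geometric meaning) and reuses lemmas it has already established. One tiny imprecision on your side: $\lat_{g''}$ in the paper's notation is the full rank-$n$ lattice spanned by all columns of $g''$, so in claim (v) you are silently identifying it with the rank-$(n-1)$ lattice $\lat_h$ spanned by the first $n-1$ columns, under the obvious embedding $\RR^{n-1}\hookrightarrow\RR^{n-1}\times\{0\}$; this is clearly what is meant, but it deserves a remark.
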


The proof of this proposition requires two short lemmas regarding
the elements of $G_{v}$. 
\begin{lem}
\label{lem: characterize G_v}For $g=\left[v_{1}|\cdots|v_{n-1}|v_{n}\right]\in\sl n\left(\RR\right)$,
the following are equivalent:

\begin{enumerate}
\item \label{enu: g in G_v}$g\in G_{v}$.
\item \label{enu: first columns span v perp}The columns $\left\{ v_{1},\ldots v_{n-1}\right\} $
form a basis of co-volume $\left\Vert v\right\Vert $ to $\perpen v$
such that $\left\{ v_{1},\ldots,v_{n-1},v\right\} $ is a positively
oriented basis w.r.t. the standard basis of $\RR^{n}$. 
\item \label{enu: upper row's projection is 1}$\left\langle v_{n},v\right\rangle =1$,
and $\left\langle v_{i},v\right\rangle =0$ for $i=1,\ldots,n-1$. 
\end{enumerate}
\end{lem}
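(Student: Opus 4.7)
The plan is to use the standard identification of the exterior product $v_1\wedge\cdots\wedge v_{n-1}$ with the unique vector $u\in\RR^n$ satisfying
\[
\langle u,w\rangle=\det[v_1|\cdots|v_{n-1}|w]\qquad\text{for every }w\in\RR^n,
\]
i.e.\ $v_1\wedge\cdots\wedge v_{n-1}$ viewed via Hodge duality. With this in hand the equivalences are essentially a bookkeeping exercise in linear algebra; the real content is isolating the correct ingredients rather than any delicate estimate.

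First I will argue (\ref{enu: g in G_v}) $\Leftrightarrow$ (\ref{enu: upper row's projection is 1}). Assuming (\ref{enu: g in G_v}), substituting $w=v_i$ for $i\le n-1$ in the identity above yields $\langle v,v_i\rangle=0$ (the determinant has a repeated column), while $w=v_n$ yields $\langle v,v_n\rangle=\det g=1$. Conversely, assume (\ref{enu: upper row's projection is 1}). Since $g\in\sl{n}(\RR)$ is invertible, $\{v_1,\dots,v_{n-1}\}$ is linearly independent, and the relations $\langle v_i,v\rangle=0$ force $\mathrm{span}\{v_1,\dots,v_{n-1}\}=\perpen v$. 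The vector $v_1\wedge\cdots\wedge v_{n-1}$ is by construction orthogonal to each $v_i$, hence lies in $(\perpen v)^\perp=\RR\cdot v$; writing it as $cv$ and pairing with $v_n$ gives $c=\det g/\langle v,v_n\rangle=1$, so (\ref{enu: g in G_v}) holds.

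Next I will handle (\ref{enu: g in G_v}) $\Leftrightarrow$ (\ref{enu: first columns span v perp}). From $v_1\wedge\cdots\wedge v_{n-1}=v$ it follows immediately that the $v_i$ lie in $\perpen v$, that the covolume of the lattice they generate equals $\|v_1\wedge\cdots\wedge v_{n-1}\|=\|v\|$, and that
\[
\det[v_1|\cdots|v_{n-1}|v]=\langle v_1\wedge\cdots\wedge v_{n-1},v\rangle=\|v\|^2>0,
\]
which is exactly positive orientation relative to the standard basis. For the converse, given (\ref{enu: first columns span v perp}), the vector $v_1\wedge\cdots\wedge v_{n-1}$ again lies in $\RR\cdot v$, so it equals $cv$ for some $c\in\RR$; the covolume condition gives $|c|\,\|v\|=\|v\|$, and the positive orientation hypothesis combined with the same determinant identity forces $c>0$, hence $c=1$.

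I do not foresee a genuine obstacle here; the only minor care needed is matching conventions (sign of the Hodge dual, definition of covolume of a rank $n-1$ lattice in $\RR^n$ as the $(n-1)$-volume of a fundamental parallelepiped, and the orientation convention that $v_1\wedge\cdots\wedge v_{n-1}\wedge v$ be a positive multiple of $e_1\wedge\cdots\wedge e_n$). I would state the Hodge-duality identity once at the start of the proof so that the three short implications can be written uniformly, and I would present the argument in the order (\ref{enu: g in G_v})$\Rightarrow$(\ref{enu: first columns span v perp})$\Rightarrow$(\ref{enu: upper row's projection is 1})$\Rightarrow$(\ref{enu: g in G_v}) to avoid duplicating the ``$v_1\wedge\cdots\wedge v_{n-1}\in\RR v$'' step.
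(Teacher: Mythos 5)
Your proof is correct and follows essentially the same route as the paper: both hinge on the Hodge-duality identity $\langle v_1\wedge\cdots\wedge v_{n-1},w\rangle=\det[v_1|\cdots|v_{n-1}|w]$ and on the observation that once all $v_i\perp v$, the wedge $v_1\wedge\cdots\wedge v_{n-1}$ is proportional to $v$. The only difference is that you spell out the equivalence (\ref{enu: g in G_v})$\Leftrightarrow$(\ref{enu: first columns span v perp}) via the covolume and orientation computations, whereas the paper treats it as the definition of the cross product of $n-1$ vectors in $\RR^n$.
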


\begin{proof}
(\ref{enu: g in G_v}) $\iff$ (\ref{enu: first columns span v perp})
by definition. The direction (\ref{enu: g in G_v}) $\Longrightarrow$
(\ref{enu: upper row's projection is 1}) follows from 
\[
1=\det\brac g=\dbrac{\brac{v_{1}\wedge\cdots\wedge v_{n-1}},v_{n}}=\dbrac{v,v_{n}}.
\]
Conversely, (\ref{enu: upper row's projection is 1}) implies $v=\a\cdot v_{1}\wedge\cdots\wedge v_{n-1}$
for some $\a\neq0$, and that $\dbrac{\a^{-1}v,v_{n}}=\a^{-1}$. But
since (as above) $1=\dbrac{\a^{-1}v,v_{n}}$, this forces $\a=1$.
\end{proof}
\begin{lem}
\label{lem: v-comp of w}If $g\in G_{v}$, the last column of $g$
is $w$ and $w^{\perpen v}$ is the orthogonal projection of $w$
on the hyperplane $\perpen v$, then 
\[
w=w^{\perpen v}+\norm v^{-2}\,v.
\]
\end{lem}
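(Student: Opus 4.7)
The plan is to read off $w = w^{\perpen v} + c\,v$ for some scalar $c$ by the orthogonal decomposition of $\RR^n$ into $\perpen v \oplus \RR v$, and then identify $c = \norm v^{-2}$ using the characterization of $G_v$ already established in Lemma~\ref{lem: characterize G_v}.

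First I would write the orthogonal decomposition: since $\RR^n = \perpen v \oplus \RR v$ and $w^{\perpen v}$ is by definition the $\perpen v$-component of $w$, there exists a unique $c \in \RR$ with $w = w^{\perpen v} + c\,v$. Taking the inner product with $v$ on both sides and using $\dbrac{w^{\perpen v}, v} = 0$ gives $\dbrac{w, v} = c\,\norm v^2$, hence $c = \dbrac{w, v}/\norm v^2$.

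Then I would invoke Lemma~\ref{lem: characterize G_v}(\ref{enu: upper row's projection is 1}): for $g = [v_1 | \cdots | v_{n-1} | v_n] \in G_v$, the last column satisfies $\dbrac{v_n, v} = 1$. Since $w = v_n$ by hypothesis, this gives $\dbrac{w, v} = 1$, so $c = \norm v^{-2}$, completing the identity.

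There is no real obstacle here; the content of the lemma is essentially a bookkeeping restatement of part~(\ref{enu: upper row's projection is 1}) of Lemma~\ref{lem: characterize G_v} in the language of the orthogonal projection. The only thing to be careful about is the direction of the scalar: one must verify it is $\norm v^{-2}$ (not $\norm v^{-1}$ or $\norm v^{-2}\,v/\norm v$) by tracking whether we expand along $v$ itself or along the unit vector $v/\norm v$, but the inner-product computation above resolves this unambiguously.
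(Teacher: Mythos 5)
Your proof is correct and is essentially identical to the paper's: both write $w = w^{\perpen v} + \alpha v$, pair with $v$, and invoke part~(3) of Lemma~\ref{lem: characterize G_v} to get $\langle w,v\rangle = 1$, hence $\alpha = \norm v^{-2}$.
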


\begin{proof}
Write $w=w^{\perpen v}+\a v$. By part (\ref{enu: upper row's projection is 1})
of Lemma \ref{lem: characterize G_v}, $1=\left\langle w,v\right\rangle =\langle w^{\perpen v}+\a v,v\rangle=\left\langle \a v,v\right\rangle $,
hence $\a=\frac{1}{\norm v^{2}}$.
\end{proof}
\begin{proof}[proof of Proposition \ref{prop: explicit RI coordinates of g}]
Write $k=\left[\phi_{1}|\cdots|\phi_{n-1}|\phi_{n}\right]$. Since
the columns of $k$ are the orthonormal basis obtained by the Gram-Schmidt
algorithm on the columns of $g$, we have that $\sp\left\{ \phi_{1},\ldots,\phi_{n-1}\right\} =\sp\left\{ v_{1},\ldots,v_{n-1}\right\} =\perpen v$.
By orthonormality and part (\ref{enu: first columns span v perp})
of Lemma \ref{lem: characterize G_v}, $\phi_{n}=\dir v=v/\norm v$.
Since $k$ and $k^{\prime}$ have the same last column, then $\hat{v}$
is also the last column of $k^{\prime}$, i.e. $k^{\prime}=k_{\hat{v}}^{\prime}$,
which proves (i). 

It is clear that if $a=\lildiag{a_{1},\ldots,a_{n}}$, then ${\scriptstyle \prod_{1}^{i}}a_{j}=\left\Vert v_{1}\wedge\cdots\wedge v_{i-1}\wedge v_{i}\right\Vert =\covol{\lat_{g}^{i}}$.
 Since $g\in G_{v}$, and $a$ has determinant $1$, we get that
the last diagonal entry of $a$ (hence also of $a^{\prime}$) is $1/\norm v$.
This proves (ii) and (iii). In particular $a^{\prime}=\lildiag{\norm v^{1/\left(n-1\right)},\dots,\norm v^{1/\left(n-1\right)},\norm v^{-1}}$.

Write  $g\brac{n^{\prime}}^{-1}\brac{a^{\prime}}^{-1}=k^{\prime}g^{\dprime}=q$;
right multiplication by an element of $N^{\prime}$ does not change
the first $n-1$ columns of $g$, and right multiplication by $\brac{a^{\prime}}^{-1}$
multiplies these columns by $\norm v^{-1/\left(n-1\right)}$. This
proves (iv), and (v), (vi) immediately follow. 

Write $w$ as the sum of its projections to the orthogonal spaces
$\RR v$ and $\perpen v$: $w=w^{v}+w^{\perpen v}$. Observe that
$g\brac{n^{\prime}}^{-1}=kp^{\dprime}a^{\prime}$. The last column
of $kp^{\dprime}a^{\prime}$ is $\phi_{n}/\norm v$, where from the
calculation on $k^{\prime}$ we know that $\phi_{n}=\dir v$; by Lemma
\ref{lem: v-comp of w}, we get that the last column of $kp^{\dprime}a^{\prime}$
is $w^{v}$. The last column of $g\left(n^{\prime}\right)^{-1}$ is
$w-\sum_{i=1}^{n-1}x_{i}v_{i}$, so we conclude that $w-\sum_{i=1}^{n-1}x_{i}v_{i}=w^{v}=w-w^{\perpen v}$,
which implies  (vii). 
\end{proof}

\section{Fundamental domains representing spaces of lattices, shapes and directions\label{sec: Fundamental domains}}

In this section we find ``isomorphic'' copies of the spaces $\shapespace{n-1}$,
$\unilatspace{n-1}$, $\latspace{n-1,n}$, $\sphere{n-1}$ inside
$\sl n\left(\RR\right)$. The property we are after in these isomorphic
copies, is that the images of sets satisfying a boundary condition,
will also satisfy it. This boundary condition is the following:
\begin{defn}
\label{def: BCS}A subset $B$ of an orbifold $\manifold$ will be
called \emph{boundary controllable set}, or a BCS, if for every $x\in\mathcal{M}$
there is an open neighborhood $U_{x}$ of $x$ such that $U_{x}\cap\del B$
is contained in a finite union of embedded $C^{1}$ submanifolds of
$\mathcal{M}$, whose dimension is strictly smaller than $\dim\manifold$.
In particular, $B$ is a BCS if its (topological) boundary consists
of finitely many  subsets of embedded $C^{1}$ submanifolds. 
\end{defn}

The goal of this section is to prove the following:
\begin{prop}
\label{prop: spread models that we need}There exist full sets of
representatives in $\sl n\left(\RR\right)$: 
\begin{itemize}
\item $K^{\prime}\subset K$ parameterizing $\sphere{n-1}\cong K^{\dprime}\backslash K$
\item $\groupfund{n-1}\subset G^{\dprime}$ parameterizing $\unilatspace{n-1}=G^{\dprime}/G^{\dprime}\left(\ZZ\right)$
\item $\symfund{n-1}\subset P^{\dprime}$ parameterizing $\shapespace{n-1}\cong K^{\dprime}\backslash G^{\dprime}/G^{\dprime}\left(\ZZ\right)$
\item $K^{\prime}\groupfund{n-1}\subset\wc$ parameterizing $\latspace{n-1,n}\cong\wc/G^{\dprime}\left(\ZZ\right)$
\end{itemize}
that are BCS's and with the properties that (i) a BCS is parameterized
by a BCS and vice versa; for $K^{\prime}$, a product of BCS's in
$K^{\prime}$ and $K^{\dprime}$ is a BCS in $K$. (ii) The pullbacks
of the invariant measures on the parameterized spaces to their set
of representatives coincide with the measures that the sets of representatives
inherent from their ambient manifolds: for all cases but $K^{\prime}$
it is the restriction of the  measure $\mu_{\GIcomp}$ on the ambient
manifold, and for $K^{\prime}$ it is the measure $\mu_{K^{\prime}}$
defined in Subsection \ref{subsec: Defining RI}.  
\end{prop}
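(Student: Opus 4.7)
The plan is to construct each of the four representative sets explicitly, and then verify the BCS and measure-compatibility properties using classical reduction theory together with some elementary differential-geometric bookkeeping.

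First I would construct $K^{\prime}$ by assigning to each $u\in\sphere{n-1}$ the rotation $k_{u}^{\prime}\in K$ supported in the plane $\sp\{e_{n},u\}$ that sends $e_{n}$ to $u$ (and fixes the orthogonal complement pointwise); at the single antipodal point $u=-e_{n}$ one may fix any lift. The resulting map $u\mapsto k_{u}^{\prime}$ is a $C^{\infty}$ section of the projection $K\to K^{\dprime}\backslash K\cong\sphere{n-1}$ away from $-e_{n}$, so $K^{\prime}$ is a full set of representatives and the pullback of Lebesgue measure on the sphere coincides with the $\mu_{K^{\prime}}$ of Subsection \ref{subsec: Defining RI}. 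For Condition \ref{cond: K' property}, the multiplication map $K^{\dprime}\times K^{\prime}\to K$ is a diffeomorphism off the fibre above $-e_{n}$, which is of positive codimension; since smooth diffeomorphisms preserve BCS's and the exceptional fibre is already of lower dimension, $\Kdprimeset K_{\sphereset}^{\prime}$ is a BCS whenever $\Kdprimeset$ and $\sphereset$ are.

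Next, for the shape space I would take $\symfund{n-1}\subset P^{\dprime}$ to be the Iwasawa-coordinate image of Minkowski's reduction domain. Identifying $P^{\dprime}\cong K^{\dprime}\backslash G^{\dprime}$ via the Iwasawa decomposition, and then with the cone of determinant-one positive-definite symmetric matrices via $p\mapsto p^{\transpose}p$, Minkowski's domain is cut out by finitely many polynomial inequalities of the form $S[\gamma e_{i}]\geq S[e_{i}]$ together with sign conditions on off-diagonal entries; hence its boundary lies in a finite union of real-algebraic hypersurfaces, making $\symfund{n-1}$ a BCS in the sense of Definition \ref{def: BCS}. Classical reduction theory then gives that $\symfund{n-1}$ is a genuine fundamental domain (not merely a Siegel set) for $\sl{n-1}(\ZZ)$ acting on $K^{\dprime}\backslash G^{\dprime}$. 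For the remaining two sets, I would take $\groupfund{n-1}:=K^{\dprime}\symfund{n-1}\subset G^{\dprime}$ and $K^{\prime}\groupfund{n-1}\subset\wc$. Uniqueness of the Iwasawa decomposition of $G^{\dprime}$, combined with the fundamental-domain property of $\symfund{n-1}$, shows that every right $\sl{n-1}(\ZZ)$-orbit in $G^{\dprime}$ meets $\groupfund{n-1}$ exactly once outside a measure-zero algebraic subset; the analogous statement in $\wc\cong K\times P^{\dprime}$ then follows using $K=K^{\dprime}K^{\prime}$. The BCS property transfers from the factors to the products by smoothness of multiplication, and the measure identities in (ii) are immediate from the decompositions $\mu_{G^{\dprime}}=\mu_{K^{\dprime}}\times\mu_{P^{\dprime}}$ and $\mu_{\wc}=\mu_{K^{\prime}}\times\mu_{G^{\dprime}}$ recorded in Subsection \ref{subsec: Defining RI}, together with Fubini.

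I expect the main obstacle to be twofold. First, verifying Condition \ref{cond: K' property} for $K^{\prime}$ requires a careful description of how the unavoidable singularity of the section at the antipodal fibre is absorbed by the codimension gain when one forms products $\Kdprimeset K_{\sphereset}^{\prime}$; this must be done by hand, since no smooth global section exists. Second, stating the BCS property of Minkowski's reduction domain with the precise care the paper requires demands invoking its explicit polynomial description rather than simply citing finite-volume or bounded-overlap properties from the reduction-theory literature. Both facts are classical in spirit but must be reformulated in the language of Definition \ref{def: BCS}.
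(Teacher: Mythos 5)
Your construction of $\groupfund{n-1}$ is incorrect, and this is where the proof breaks. You define $\groupfund{n-1}:=K^{\dprime}\symfund{n-1}$, i.e.\ a full product over the compact factor. But this set is not a fundamental domain for the right $G^{\dprime}(\ZZ)=\sl{n-1}(\ZZ)$-action on $G^{\dprime}$, even up to measure zero. Concretely: two points $k_{1}z$ and $k_{2}z$ of $K^{\dprime}z$ (with $z\in\symfund{n-1}$) lie in the same $\sl{n-1}(\ZZ)$-orbit exactly when $z\gamma z^{-1}=k_{1}^{-1}k_{2}$ for some $\gamma\in\sl{n-1}(\ZZ)$, i.e.\ when $k_{1}^{-1}k_{2}\in\sym^{+}(\lat_{z})$, the point group of the lattice spanned by the columns of $z$. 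Thus the fiber of $K^{\dprime}\symfund{n-1}$ over a generic $z$ hits each orbit $|\sym^{+}(\lat_{z})|$ times. When $n-1$ is even, $-\idmat{n-1}\in\so{n-1}(\RR)\cap\sl{n-1}(\ZZ)$, so $\sym^{+}(\lat_{z})\supseteq\{\pm\idmat{n-1}\}$ for every $z$, and your set double-covers $\unilatspace{n-1}$ on a set of full measure — in particular $\mu_{G^{\dprime}}(K^{\dprime}\symfund{n-1})=2\prod_{i=2}^{n-1}\zeta(i)$, not $\prod_{i=2}^{n-1}\zeta(i)$. The case $n=3$ is already a counterexample. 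The paper handles this (Definition \ref{def: Fund dom F_m} and Proposition \ref{prop: F of G'' from F of H}) by replacing the constant factor $K^{\dprime}$ with a $z$-dependent fiber $K_{z}\subset K^{\dprime}$ which is a fundamental domain for $\sym^{+}(\lat_{z})$; Proposition \ref{prop: generic fiber} then says that on $\interior{\symfund{n-1}}$ the fiber is a fundamental domain for the center $Z(\so{n-1}(\RR))$, which accounts for the factor $\sn(n-1)$ appearing throughout the paper. The same error then propagates to your construction of $K^{\prime}\groupfund{n-1}$ for $\latspace{n-1,n}$.

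Your construction of $K^{\prime}$ is a genuine alternative to the paper's Lemma \ref{lem: K'}. The paper builds $K^{\prime}$ by patching finitely many local sections of the bundle $K\to K^{\dprime}\backslash K$ over a disjoint BCS cover, which gives Condition \ref{cond: K' property} essentially for free since the product map is a local trivialization on each patch. Your single geodesic section is more explicit but has an unavoidable singular fiber over $-e_{n}$, and you correctly identify that verifying Condition \ref{cond: K' property} there needs a separate argument; this is doable (the entire fiber is a single lower-dimensional submanifold, so it can be absorbed into the controlled boundary), but the paper's piecewise route avoids the issue entirely. Likewise, your choice of Minkowski's reduction domain for $\symfund{n-1}$ in place of the paper's Schmidt/Siegel domain is legitimate in itself, though the paper relies on the specific structure of Schmidt's domain (notably Proposition \ref{prop: generic fiber} and Lemma \ref{lem: BLC. facts about z in RS domain}) later on, so if you switch domains you would also have to re-derive those facts.
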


A full proof of Proposition \ref{prop: spread models that we need}
can be found in \cite[Prop. 8.1]{HK_WellRoundedness} Here, we will
only prove it fully for $K^{\prime}$ and $\sphere{n-1}$ (this case
is easier since $K^{\dprime}/K$ is compact), and for the remaining
spaces we will settle for constructing fundamental domains that are
BCS, with the property that the Haar measure restricted to them coincides
with the unique (up to a scalar) invariant measure on the quotient
(which is the space parameterized by the fundamental domain in question).
We start by constructing sets of representatives for the sphere (Subsection
\ref{subsec: K'}), then for the spaces of lattices (Subsection \ref{subsec: Siegel reduced bases}),
and we conclude with a partial proof of Proposition \ref{prop: spread models that we need}
in Subsection \ref{subsec: Spread models}.

\subsection{\label{subsec: K'} A set of representatives for the sphere }

In order to construct a set of representatives $K^{\prime}$ for $\sphere{n-1}$,
we observe the following. 
\begin{fact}
\label{rem: intersection and union of BCS}Since $\del\left(A\cup B\right),\del\left(A\cap B\right)\subseteq\del A\cup\del B$,
the union, intersection and subtraction of BCSs are in themselves
BCS's. Also, a finite product of BCSs is a BCS in the product of
the ambient manifolds, and a diffeomorphic image of a BCS is a BCS.
\end{fact}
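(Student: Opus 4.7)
The plan is to treat each of the three assertions separately, in each case reducing to the local BCS condition at a point $x \in \manifold$: that there exists an open neighborhood $U_x$ such that $U_x \cap \del A$ is contained in a finite union of embedded $C^1$ submanifolds of dimension strictly less than $\dim \manifold$. None of the three steps should present a real obstacle; the work is essentially bookkeeping around standard point-set and differential-topological identities.

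First I would handle the Boolean operations. The decisive input is the elementary inclusion $\del(A\cup B), \del(A\cap B), \del(A\setminus B) \subseteq \del A \cup \del B$, already quoted in the statement. Given $x \in \manifold$, I would take $U_x$ to be the intersection of the two neighborhoods $U_x^A, U_x^B$ supplied by the BCS condition for $A$ and $B$ respectively. On $U_x$, the union of the two finite witnessing families of $C^1$ submanifolds contains $U_x \cap \del A \cup U_x \cap \del B$, and hence also $U_x$ intersected with the boundary of any Boolean combination of $A$ and $B$. Induction extends this to any finite Boolean combination.

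Next I would handle finite products. For $A \subseteq \manifold_1$ and $B \subseteq \manifold_2$, the key identity is $\del(A\times B) \subseteq (\del A \times \overline{B}) \cup (\overline{A} \times \del B)$. At a point $(x,y) \in \manifold_1\times\manifold_2$, I would pick a product neighborhood $U_x \times U_y$ with $U_x, U_y$ witnessing the BCS condition for $A, B$. If $U_x \cap \del A \subseteq \bigcup_i M_i$ with each $M_i \subset \manifold_1$ an embedded $C^1$ submanifold of dimension strictly smaller than $\dim\manifold_1$, then $(U_x \cap \del A) \times U_y \subseteq \bigcup_i M_i \times \manifold_2$, and each $M_i \times \manifold_2$ is an embedded $C^1$ submanifold of $\manifold_1\times\manifold_2$ whose dimension is $\dim M_i + \dim\manifold_2 < \dim(\manifold_1\times\manifold_2)$. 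A symmetric argument treats the other piece, and induction extends to finitely many factors.

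Finally I would handle diffeomorphic images. If $f\colon \manifold \to \manifold'$ is a diffeomorphism and $A \subseteq \manifold$ is a BCS, then for $y = f(x) \in \manifold'$ I would take $U_y := f(U_x)$. Since $f$ is a homeomorphism we have $f(\del A) = \del f(A)$, and since $f$ is a $C^1$ diffeomorphism it sends embedded $C^1$ submanifolds to embedded $C^1$ submanifolds of the same dimension. Hence $U_y \cap \del f(A) = f(U_x \cap \del A)$ lies in the $f$-image of the finite witnessing family, which is again a finite collection of embedded $C^1$ submanifolds of $\manifold'$ of dimension strictly less than $\dim\manifold' = \dim\manifold$. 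The only thing to watch throughout is that the local witnessing families combine cleanly under intersection of neighborhoods, which is automatic because finite unions of finite families remain finite.
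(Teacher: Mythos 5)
Your proof is correct and is precisely the routine unpacking one would expect; the paper states this as a Fact with no proof beyond the quoted boundary inclusion, and your three steps (Boolean operations via $\del(A\ast B)\subseteq\del A\cup\del B$ with intersected neighborhoods, products via $\del(A\times B)\subseteq(\del A\times\overline{B})\cup(\overline{A}\times\del B)$ with product neighborhoods and $M_i\times\manifold_2$ as the witnessing submanifolds, and diffeomorphisms via preservation of boundaries and embedded $C^1$ submanifolds) supply exactly the intended verification.
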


Now the existence of a transversal $K^{\prime}$ for $\sphere{n-1}$
is a consequence of the lemma below. 
\begin{lem}
\label{lem: K'}Let $K$ be a Lie group. Assume that $K^{\dprime}<K$
a closed subgroup such that the quotient space $K/K^{\dprime}$ is
compact. There exists subset $K^{\prime}\subseteq K$ which is a BCS
such that:
\begin{enumerate}
\item $\pi|_{K^{\prime}}:K^{\prime}\to K/K^{\dprime}$ is a bijection;
\item if $\sphereset\subseteq K/K^{\dprime}$ and $\Kdprimeset\subseteq K^{\dprime}$
are BCS, then the product $\exd{\pi|_{K^{\prime}}^{-1}\brac{\sphereset}}{\subset K^{\prime}}\cdot\exd{\Kdprimeset}{\subset K^{\dprime}}$
in $K$ is also a BCS.
\end{enumerate}
\end{lem}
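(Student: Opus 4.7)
}

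The plan is to build $K^{\prime}$ piecewise, using the fact that the quotient map $\pi\colon K\to K/K^{\dprime}$ is a principal $K^{\dprime}$-bundle (since $K^{\dprime}$ is a closed subgroup of a Lie group) and that $K/K^{\dprime}$ is a compact smooth manifold. The construction will exploit a smooth triangulation of the base together with smooth local sections over each simplex.

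First I would fix a smooth triangulation of $K/K^{\dprime}$ into finitely many closed smooth simplices $\Delta_{1},\ldots,\Delta_{m}$, where we may assume the ordering is by increasing dimension. Because each $\Delta_{j}$ is contractible and $\pi$ is a principal bundle, there exists a smooth section $\sigma_{j}\colon U_{j}\to K$ defined on an open neighborhood $U_{j}\supset\Delta_{j}$ in $K/K^{\dprime}$. Now define, inductively,
\[
E_{1}:=\Delta_{1},\qquad E_{j}:=\Delta_{j}\setminus\brac{E_{1}\cup\cdots\cup E_{j-1}}\quad(j\geq 2),
\]
so that $K/K^{\dprime}$ is the disjoint union of the $E_{j}$'s, each $E_{j}$ is the interior of $\Delta_{j}$ minus finitely many lower-dimensional closed simplices, and $\del E_{j}$ is contained in a finite union of images of smooth simplices of strictly smaller dimension. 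Set
\[
K^{\prime}:=\bigsqcup_{j=1}^{m}\sigma_{j}\brac{E_{j}}.
\]
By construction $\pi|_{K^{\prime}}$ is a bijection onto $K/K^{\dprime}$. Since each $\sigma_{j}$ is a smooth embedding, $\sigma_{j}(E_{j})$ is a smooth submanifold of $K$ whose boundary in $K$ lies in a finite union of smooth submanifolds of dimension at most $\dim(K/K^{\dprime})-1=\dim K-\dim K^{\dprime}-1<\dim K$; putting this together with Fact \ref{rem: intersection and union of BCS} shows that $K^{\prime}$ is a BCS in $K$.

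For the second claim, note that on each open set $U_{j}$ the section $\sigma_{j}$ yields a smooth trivialization
\[
\Phi_{j}\colon U_{j}\times K^{\dprime}\,\lra\,\pi^{-1}\brac{U_{j}},\qquad\Phi_{j}\brac{x,h}=\sigma_{j}(x)\cdot h,
\]
which is a diffeomorphism. Under $\Phi_{j}^{-1}$, the intersection of $\pi|_{K^{\prime}}^{-1}(\sphereset)\cdot\Kdprimeset$ with $\pi^{-1}(U_{j})$ corresponds to $(\sphereset\cap U_{j}\cap E_{j}^{\mathrm{cl}})\times\Kdprimeset$ up to the pieces coming from other simplices meeting $U_{j}$; since $\sphereset$, $E_{j}$, and $\Kdprimeset$ are BCS's in their respective ambient manifolds, Fact \ref{rem: intersection and union of BCS} (products and finite unions/intersections of BCS's are BCS's, and diffeomorphic images of BCS's are BCS's) shows that $(\pi|_{K^{\prime}}^{-1}(\sphereset)\cdot\Kdprimeset)\cap\pi^{-1}(U_{j})$ is a BCS in $\pi^{-1}(U_{j})$. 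Finally, BCS is a local property (Definition \ref{def: BCS} is phrased in terms of a neighborhood of each point), so gluing over the finite cover $\{\pi^{-1}(U_{j})\}$ yields that the product $\pi|_{K^{\prime}}^{-1}(\sphereset)\cdot\Kdprimeset$ is a BCS in $K$.

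The main obstacle I anticipate is bookkeeping at the overlaps of the simplices: the sets $E_{j}$ are neither open nor closed, and one must check that when several pieces $\sigma_{j}(E_{j})$ meet along images of boundary faces, the boundary of $K^{\prime}$ and of the product $K^{\prime}_{\sphereset}\cdot\Kdprimeset$ remain controlled. Using a smooth (rather than merely topological) triangulation, together with the smoothness of each local section $\sigma_{j}$ on an open neighborhood $U_{j}\supset\Delta_{j}$, is what keeps all the relevant boundary pieces inside finite unions of $C^{1}$ submanifolds of lower dimension.
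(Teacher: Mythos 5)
Your construction is essentially the one in the paper: both build $K^{\prime}$ by finding a finite \emph{disjoint} BCS partition of the compact base $K/K^{\dprime}$ over whose pieces smooth local sections exist, then taking $K^{\prime}$ to be the disjoint union of the section images; you obtain the partition from a smooth triangulation (with a sequential-subtraction trick to make it disjoint), whereas the paper shrinks to finitely many open balls inside trivializing charts and subtracts sequentially, but this difference is cosmetic. One small point is worth flagging in part (2): the paper decomposes $\pi|_{K^{\prime}}^{-1}(\sphereset)\cdot\Kdprimeset$ over the \emph{disjoint} pieces $W_{\alpha}$ of the base, so that over each piece $K^{\prime}$ coincides with the graph of a single section $\tau_{\alpha}^{-1}(\cdot,\id)$ and the set is literally $\tau_{\alpha}^{-1}\bigl((\sphereset\cap W_{\alpha})\times\Kdprimeset\bigr)$, a clean product; you instead work over the overlapping $U_{j}$'s, which introduces the cross-terms $\sigma_{i}(E_{i})\cdot\Kdprimeset$, $i\neq j$, inside $\pi^{-1}(U_{j})$ that you acknowledge (``up to the pieces coming from other simplices'') but do not actually control. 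Those cross-terms are indeed BCS's --- under $\Phi_{j}^{-1}$ they become graphs of the smooth transition maps $x\mapsto\sigma_{j}(x)^{-1}\sigma_{i}(x)$ over BCS subsets of the base, and the graph of a $C^{1}$ map over a BCS is the diffeomorphic image of a product of BCS's --- but you would need to say this explicitly; decomposing over the disjoint $E_{j}$'s, as the paper does with the $W_{\alpha}$'s, avoids the issue entirely.
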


\begin{proof}[Proof of Lemma \ref{lem: K'}]
Since $\pi:K\to K/K^{\dprime}$ is a principal $K^{\dprime}$ fiber
bundle, there exists an open covering $\left\{ U_{\alpha}\right\} $
of $K/K^{\dprime}$ with $K^{\dprime}$-equivariant diffeomorphisms
\[
\tau_{\alpha}:\pi^{-1}\brac{U_{\alpha}}\to U_{\alpha}\times K^{\dprime},
\]
where $\tau_{\alpha}(x)=(\pi(x),*)$. We can assume that there is
a BCS covering $\left\{ W_{\a}\right\} $ of $K/K^{\dprime}$ such
that $\overline{W_{\alpha}}\subseteq U_{\alpha}$ (e.g., by reducing
to open balls contained in $U_{\a}$); by compactness, we may also
assume that this covering is finite. Finally, by replacing every $W_{\a}$
with $W_{\alpha}\setminus\cup_{i=1}^{\alpha-1}W_{i}$, we may assume
that the sets $W_{\a}$ are disjoint, maintaining the BCS property
(Remark \ref{rem: intersection and union of BCS}). Set 
\[
K^{\prime}=\sqcup_{\alpha}\tau_{\alpha}^{-1}\left(W_{\alpha}\times\id_{K^{\dprime}}\right)
\]
(note that the interior is a manifold). Since the union is disjoint,
$\pi|_{K^{\prime}}:K^{\prime}\to K/K^{\dprime}$ is a bijection. Moreover,
since $W_{\alpha}$ is a BCS, then so does $W_{\alpha}\times\id_{K^{\dprime}}$,
and then so does $\tau_{\alpha}^{-1}\left(W_{\alpha}\times\id_{K^{\dprime}}\right)$;
by Remark \ref{rem: intersection and union of BCS}, $K^{\prime}$
is a BCS. 

Finally, by definition of $K^{\prime}$ one has that $k^{\prime}\in U_{\a}\cap K^{\prime}$
maps under $\tau_{\a}$ to $\left(\pi\brac{k^{\prime}},\id_{K^{\dprime}}\right)$.
If $\sphereset\subseteq K/K^{\dprime}$ and $\sphereset\subseteq K^{\dprime}$
then
\[
\pi|_{K^{\prime}}^{-1}\left(\sphereset\cap W_{\a}\right)\cdot\sphereset=\tau_{\a}^{-1}\left(\left(\sphereset\cap W_{\a}\right)\times\sphereset\right),
\]
where by Remark \ref{rem: intersection and union of BCS} the right
hand side is a BCS. Then $\pi|_{K^{\prime}}^{-1}\brac{\sphereset}\cdot\sphereset$
is a BCS, as a finite union of such. 
\end{proof}

\subsection{\label{subsec: Siegel reduced bases}Fundamental domains for $\protect\sl m\left(\protect\ZZ\right)$}

We recall a construction for fundamental domains for the $\sl m\left(\ZZ\right)$
action on $\sl m\left(\RR\right)$ and on $\so m\left(\RR\right)\backslash\sl m\left(\RR\right)$),
and list some of their properties.
\begin{defn}
\label{def: Siegel reduced basis}Let $\left\{ v_{1},\ldots,v_{m}\right\} $
be a basis for $\RR^{m}$, and let $\left\{ \phi_{1},\ldots,\phi_{m}\right\} $
be the orthonormal basis obtained from it by the Gram-Schmidt orthogonalization
algorithm. We say that $\left\{ v_{1},\ldots,v_{m}\right\} $ is \emph{reduced}
if 

\begin{enumerate}
\item the projection of $v_{j}$ to $V_{j-1}^{\perp}$ has minimal non-zero
length $a_{j}$ (here $V_{0}=\left\{ 0\right\} $), where $V_{j-1}=\sp_{\RR}\left\{ v_{1},\ldots,v_{j-1}\right\} $; 
\item the projection of $v_{j}$ to $V_{j-1}$ is $\sum_{i=1}^{j-1}n_{i,j}a_{i}\cdot\phi_{i}$
with $\left|n_{ij}\right|\leq\frac{1}{2}$ for all $i=1,\ldots,j-1$. 
\end{enumerate}
An $m\times m$ matrix with a reduced basis in its columns is also
called reduced.
\end{defn}

Observe that if a real $m\times m$ matrix $g$ is reduced, then it
lies in $\sl m\left(\RR\right)$ and satisfies $g=kan$ where $k=\sbrac{\begin{matrix}\phi_{1} & \cdots & \phi_{m}\end{matrix}}$,
$a=\diag{a_{1},\ldots,a_{m}}$ and $n=\left[\begin{smallmatrix}1 & n_{i,j}\\
0 & 1
\end{smallmatrix}\right]$, with $\phi_{j}$, $a_{j}$ and $n_{i,j}$ as in the definition above.
In particular, whether $g$ is reduced or not, depends only on $an$.
By the work of Siegel \cite{Bekka_Mayer}, the set of reduced matrices
contains a fundamental domain for the action of $\sl m\left(\ZZ\right)$.
A specific choice of such a domain was made by Schmidt \cite{Schmidt_98}
(see also \cite{Grenier_93}), and it is defined as follows; we will
use the notation  $\sym^{+}\left(\lat\right)$ for the group of orientation
preserving isometries of $\lat$ (sometimes referred to a the ``point
group'' of $\lat$).

\begin{figure}
\begin{centering}
\includegraphics[scale=0.5]{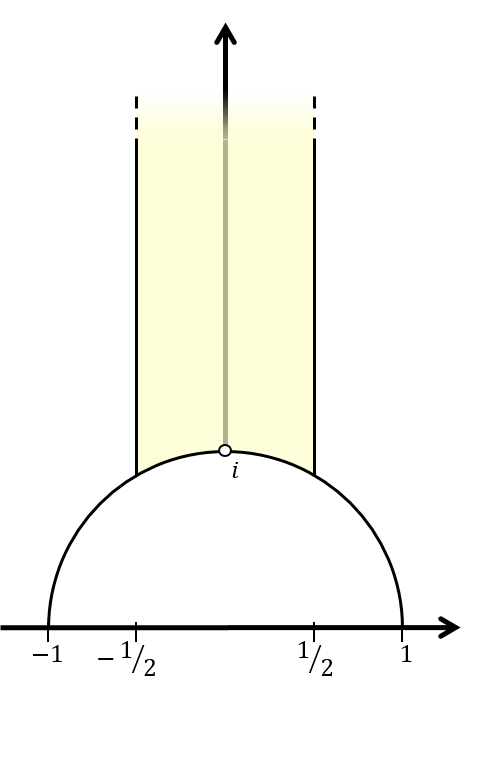}
\par\end{centering}
\caption{\label{fig: fund dom SL(2,Z)} $\protect\symfund 2$: a fundamental
domain for $\protect\sl 2\left(\protect\ZZ\right)$ in $P_{2}$ (the
hyperbolic upper half plane).}
\end{figure}

\begin{defn}
\label{def: Fund dom F_m}We let $\groupfund m\subset\sl m\left(\RR\right)=\so m\left(\RR\right)P_{m}$,
where $P_{m}$ is the subgroup consisting of upper triangular matrices,
denote a choice of a fundamental domain lying inside the set of $g=kan\in\sl m\left(\RR\right)$
such that: (i) $g$ is reduced; (ii) $n_{1,j}\geq0$ for $j>\sn\left(m\right)$
(see Notation (\ref{notation: sign of n})); (iii) $k$ lies inside
a fundamental domain of $\sym^{+}\left(\lat_{an}\right)<\so m\left(\RR\right)$,
where $\lat_{an}$ is the lattice spanned by the columns of $an$.
The projection of $\groupfund m$ to $P_{m}$ is denoted $\symfund m$
(Figure \ref{fig: fund dom SL(2,Z)}).
\end{defn}

Note that conditions (i) and (ii) are on $an$, whereas condition
(iii) is on $k$. Thus, the projection of $\groupfund m$ to $P_{m}\cong\so m\left(\RR\right)\backslash\sl m\left(\RR\right)$
is a fundamental domain for the action of $\sl m\left(\ZZ\right)$
on $\so m\left(\RR\right)\backslash\sl m\left(\RR\right)$ that lies
inside the set of triangular $m\times m$ matrices satisfying conditions
(i) and (ii) in Definition \ref{def: Fund dom F_m}, and the relation
between $\groupfund m$ and $\symfund m$ is given by: 
\begin{prop}
\label{prop: F of G'' from F of H}\label{def: Notation for K_z}The
relation between the fundamental domains $\groupfund m$ and $\symfund m$
is given by
\[
\groupfund m=\bigcup_{z\in\symfund m}K_{z}\cdot z,
\]
where $K_{z}$ is a fundamental domain for the finite group $\sym^{+}\left(\lat_{z}\right)$.
\end{prop}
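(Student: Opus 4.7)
The plan is to read off the equality directly from Definition \ref{def: Fund dom F_m}, by observing that the three defining conditions of $\groupfund m$ split cleanly along the Iwasawa decomposition $g = k \cdot (an)$: conditions (i) and (ii) depend only on the reduced-basis data $(a_j, n_{i,j})$ packaged in $z := an \in P_m$, whereas condition (iii) constrains only the orthogonal factor $k$ relative to that $z$. In particular, the content of the proposition is that $\groupfund m$ fibers over $\symfund m$ with fiber $K_z$, and this is essentially forced by how the conditions were stratified in the definition.

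For the forward inclusion I would take $g \in \groupfund m$, write its Iwasawa decomposition $g = kan$, and set $z := an$. Because (i) and (ii) are intrinsic to $z$, the element $z$ satisfies the defining conditions of the projected domain, hence $z \in \symfund m$. Condition (iii) then asserts precisely that $k$ lies in a fundamental domain $K_z \subset \so m(\RR)$ for the finite point group $\sym^{+}(\lat_z) = \sym^{+}(\lat_{an})$, so $g = k \cdot z \in K_z \cdot z$. For the reverse inclusion, given $z \in \symfund m$ and $k \in K_z$, the product $kz$ has Iwasawa $an$-component equal to $z$, so (i) and (ii) are automatic, and (iii) holds by hypothesis on $k$; hence $kz \in \groupfund m$. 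This establishes both inclusions.

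The only nontrivial point, which I expect to be the main obstacle, is that the above argument presupposes a coherent, measurable choice of fundamental domains $K_z$ as $z$ varies in $\symfund m$, so that condition (iii) is unambiguous and the union on the right-hand side is a Borel subset of $\sl m(\RR)$ compatible with the Haar-measure decomposition used in later sections. To handle this I would stratify $\symfund m$ according to the finitely many conjugacy classes in $\so m(\RR)$ that can appear as $\sym^{+}(\lat_z)$, and on each stratum choose $K_z$ as the intersection of $\so m(\RR)$ with a fixed Dirichlet-type fundamental domain for the acting finite subgroup of $\so m(\RR)$, translated by a measurable conjugating element. This furnishes the measurable family implicit in Definition \ref{def: Fund dom F_m} and reduces the proposition to the tautological unpacking above.
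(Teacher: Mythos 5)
Your proof is correct and takes essentially the same route the paper takes: the paper states this proposition immediately after remarking that ``conditions (i) and (ii) are on $an$, whereas condition (iii) is on $k$,'' i.e.\ it treats the decomposition as a direct unpacking of Definition \ref{def: Fund dom F_m}, exactly as you do. Your third paragraph on the measurable selection of $K_z$ flags a real (if implicit) subtlety in the definition, but it is not required to establish the displayed set equality, which the paper regards as immediate.
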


Note that $\groupfund m$ is not a product of $\symfund m\subset P_{m}$
with a subset of $\so m\left(\RR\right)$, since different lattices
$\lat_{z}$ have different point groups $\sym^{+}\left(\lat_{z}\right)$.
However, there is only a finite number (that depends on $m$) of possible
fibers, since there are finitely many possible symmetry groups for
lattices in $\RR^{m}$. Moreover, for generic $z$'s the point groups
are identical:

\begin{prop}[\cite{Schmidt_98}]
\label{prop: generic fiber}For $z\in\interior{\symfund m}$,  $\sym^{+}\left(\lat_{z}\right)=Z\left(\so m\left(\RR\right)\right)$,
the center of $\so m\left(\RR\right)$.
\end{prop}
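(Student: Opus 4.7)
The plan is to translate the existence of a symmetry $\gamma$ into an identity in $\sl m(\RR)$, then exploit the uniqueness of the Iwasawa decomposition together with the defining property of $\symfund m$ as a fundamental domain. The reverse inclusion $Z(\so m(\RR))\subseteq\sym^{+}(\lat_{z})$ is trivial (the center acts as $\pm\idmat m$, which preserves every lattice), so I focus on $\sym^{+}(\lat_{z})\subseteq Z(\so m(\RR))$.

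Given $\gamma\in\sym^{+}(\lat_{z})$, the columns of $\gamma z$ form another positively oriented $\ZZ$-basis of $\lat_{z}$, hence $\gamma z=z\eta$ for some $\eta\in\sl m(\ZZ)$. Reading this as the Iwasawa decomposition of $z\eta\in\sl m(\RR)$, with $K$-factor $\gamma\in\so m(\RR)$ and $P_{m}$-factor $z$, uniqueness of the decomposition forces the $P_{m}$-projection of $z\eta$ to coincide with $z$. Setting $Q:=\trans zz$, this is equivalent to the algebraic identity $\trans{\eta}Q\eta=Q$, so that $\eta$ lies in the integer orthogonal group of the Gram form of $z$ and corresponds to $\gamma$ via $\eta\mapsto z\eta z^{-1}$.

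Now I would invoke the fundamental-domain property. Since $\symfund m$ is a fundamental domain for the right $\sl m(\ZZ)$-action on $P_{m}$ and both $z$ and the $P_{m}$-projection of $z\eta$ lie in $\symfund m$ and coincide, $\eta$ must stabilise $z$ under this action. The key step is to show that for $z\in\interior{\symfund m}$ this stabiliser reduces to the scalars $\pm\idmat m$, and I would argue this using the specifics of Schmidt's reduction. The strict inequalities in conditions (i)--(ii) of Definition \ref{def: Fund dom F_m} (minimality of each projected length $a_{j}$ and the range $|n_{i,j}|\le\tfrac{1}{2}$), combined with the sign convention on $n_{1,j}$ and the choice of a fundamental domain for $\sym^{+}(\lat_{z})$ inside $\so m(\RR)$ in (iii), determine the reduced representative uniquely in a neighbourhood of an interior point. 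Consequently any $\eta\in\sl m(\ZZ)$ stabilising $z$ must preserve the ordered lengths $a_{j}$, the Gram--Schmidt coefficients $n_{i,j}$ and the sign constraints exactly; together these force $\eta$ to be a signed permutation that in fact preserves the ordering of columns, i.e.\ a diagonal matrix with entries $\pm1$, and then the rigidity of the Gram--Schmidt data (nontrivial sign flips would change some $n_{1,j}$ from non-negative to negative) collapses this further to $\eta=\pm\idmat m$, subject to $\det\eta=1$.

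From $\eta\in\{\pm\idmat m\}\cap\sl m(\ZZ)$ one obtains $\gamma=z\eta z^{-1}=\eta$, which equals $\idmat m$ when $m$ is odd and lies in $\{\pm\idmat m\}$ when $m$ is even; in either case $\gamma\in Z(\so m(\RR))$, as required. The main obstacle is the rigidity argument in the previous paragraph: one has to unpack Schmidt's reduction conditions carefully enough to see that any non-central integer symmetry of $Q$ would force the violation of at least one of the defining inequalities of $\symfund m$ and hence push $z$ to its boundary. This is precisely the feature that distinguishes Schmidt's fundamental domain from a looser Siegel set, and for the full verification I would appeal to the detailed reduction analysis in \cite{Schmidt_98}.
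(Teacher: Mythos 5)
The paper itself does not prove this proposition---it is cited directly to Schmidt---so there is no internal argument to compare against; I assess your sketch on its own. The skeleton is sound: from $\gamma\in\sym^{+}(\lat_{z})$ you correctly obtain $\eta=z^{-1}\gamma z\in\sl m(\ZZ)$ with $\trans{\eta}Q\eta=Q$ for $Q=\trans{z}z$, and the goal becomes showing $\eta=\pm\idmat m$ when $z\in\interior{\symfund m}$.

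The gap is precisely the rigidity step, which is the whole content of the proposition and which your sketch asserts rather than establishes. The deduction ``together these force $\eta$ to be a signed permutation'' does not follow: preserving a positive definite Gram form does not in general force an integral automorphism to be a signed permutation (the hexagonal lattice already shows this), and the preceding statement that $\eta$ preserves the $a_{j}$ and $n_{i,j}$ is a tautology, since both $z$ and the $P_{m}$-component of $z\eta$ \emph{are} $z$. What the minimality in condition (i) actually yields---applied inductively, using that for interior $z$ each successive minimum is attained uniquely up to sign---is that $\eta$ is \emph{upper triangular} with $\pm1$ on the diagonal. Forcing $\eta=\pm\idmat m$ from there takes a genuine induction on the diagonal signs $\epsilon_{j}$ combining $|n_{i,j}|<1/2$, $n_{1,j}>0$ for $j>\sn(m)$, integrality of $\eta$, and $\det\eta=1$; the one-line remark that sign flips would make some $n_{1,j}$ negative does nothing when $n_{1,j}=0$, and for $m$ even it places no constraint at all on $\epsilon_{2}$ (there the determinant condition is what closes the argument). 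Finally, invoking condition (iii) of Definition \ref{def: Fund dom F_m} is a red herring: it constrains the $\so m(\RR)$-fibre of $\groupfund m$, not $\symfund m\subset P_{m}$, and is irrelevant to the stabiliser computation. So your route is the right one, but you have not carried out the step that the proposition is actually about.
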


Thus suggests that for a full-measure set of $z\in\symfund m$, a
uniform fiber in $K_{m}$ can be chosen; hence $\groupfund m$ can
be approximated by $\symfund m$ times that generic fiber. 
\begin{lem}
\label{lem: lift to G''}Let $G=\sl n\left(\RR\right)$ and $P<G$
the subgroup of upper triangular matrices. Assume $\ensuremath{\uniset}\subseteq\latspace n$
is the lift of $\symset\subseteq\shapespace n$. If $\symset$ is
a BCS then $\ensuremath{\uniset}$ is, and $\text{\ensuremath{\mu_{\latspace n}}\ensuremath{\brac{\ensuremath{\uniset}}}}=\mu_{\shapespace n}\brac{\symset}\cdot{\scriptstyle \prod}_{i=1}^{n-1}\Leb{\sphere i}/\sn\left(n\right)$.
Assume $\latset\subseteq\latspace{n-1,n}$ projects to $\ensuremath{\uniset}\subseteq\unilatspace{n-1}$
and $\sphereset\subseteq\sphere{n-1}$, in the sense that $\parby{\wc}{\latset}=\parby{K^{\prime}}{\sphereset}\parby{G^{\dprime}}{\ensuremath{\uniset}}$
(e.g. if $\latset$ is the inverse image of $\ensuremath{\uniset}$).
If $\ensuremath{\uniset}$ and $\sphereset$ are BCS's, then so is
$\parby{\wc}{\latset}$, and $\mu_{\latspace{n,n-1}}\brac{\latset}=\ensuremath{\mu_{\latspace m}}\brac{\ensuremath{\uniset}}\mu_{\sphere{n-1}}\brac{\sphereset}$. 
\end{lem}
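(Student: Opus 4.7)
The plan is to handle the two claims separately, using the measure-theoretic structure supplied by the fundamental-domain parameterizations of Proposition \ref{prop: spread models that we need}. For the first statement, $\groupfund n \subset \sl n(\RR)$ parameterizes $\unilatspace n$, $\symfund n \subset P$ parameterizes $\shapespace n$, and by Proposition \ref{prop: F of G'' from F of H} they fit into the fiber decomposition $\groupfund n = \bigcup_{z \in \symfund n} K_z \cdot z$. Let $F_\symset \subseteq \symfund n$ be the BCS representing $\symset$; the lift $\uniset$ is then parameterized by $\bigcup_{z \in F_\symset} K_z \cdot z$. Applying Fubini with the Iwasawa splitting $\mu_G = \mu_K \times \mu_P$ on $G = KP$ gives
\[
\mu_{\unilatspace n}(\uniset) \;=\; \int_{F_\symset} \mu_K(K_z)\, d\mu_P(z).
\]
Since $K_z$ is a fundamental domain for the finite group $\sym^+(\lat_z) \leq \so n(\RR)$, one has $\mu_K(K_z) = \mu_K(\so n(\RR))/|\sym^+(\lat_z)|$, and by Proposition \ref{prop: generic fiber} this order equals $\sn(n)$ on the full-measure interior of $\symfund n$. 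Combined with $\mu_K(\so n(\RR)) = \prod_{i=1}^{n-1}\Leb{\sphere i}$ and $\mu_P(F_\symset) = \mu_{\shapespace n}(\symset)$ (from the measure-preservation clause of Proposition \ref{prop: spread models that we need}), the volume identity follows.

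For the BCS property of $\uniset$: realizing it in $\sl n(\RR)$ as $\bigcup_{z \in F_\symset} K_z \cdot z$, its boundary is contained in $K \cdot \del F_\symset \,\cup\, K \cdot \Sigma$, where $\Sigma \subseteq \symfund n$ is the closed locus of $z$ with $\sym^+(\lat_z) \supsetneq Z(\so n(\RR))$. The first piece is a BCS because $\del F_\symset$ is, and smearing a BCS by the smooth compact $K$ preserves the BCS property. For $\Sigma$: only finitely many finite subgroups of $\so n(\RR)$ arise as point groups of rank-$n$ lattices, and each fixed nontrivial element imposes a proper algebraic constraint on $z$, so $\Sigma$ lies in a finite union of $C^1$ submanifolds of strictly lower dimension in $\symfund n$. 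Thus $K \cdot \Sigma$ is contained in lower-dimensional submanifolds of $\sl n(\RR)$, and by Fact \ref{rem: intersection and union of BCS} the lift $\uniset$ is a BCS.

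For the second statement, the hypothesis $\wc_\latset = K'_\sphereset \cdot G''_\uniset$ together with the Refined Iwasawa measure decomposition $\mu_\wc = \mu_{K'} \times \mu_{G''}$ from (\ref{eq: Haar measure on G}) gives, via Fubini,
\[
\mu_{\latspace{n-1,n}}(\latset) \;=\; \mu_\wc(\wc_\latset) \;=\; \mu_{K'}(K'_\sphereset) \cdot \mu_{G''}(G''_\uniset) \;=\; \mu_{\sphere{n-1}}(\sphereset) \cdot \mu_{\unilatspace{n-1}}(\uniset),
\]
the last equality using the parameterizations of Proposition \ref{prop: spread models that we need}. The BCS property of $\wc_\latset$ follows from Condition \ref{cond: K' property} (which ensures $K'_\sphereset$ is a BCS in $K^\prime$ when $\sphereset$ is a BCS in $\sphere{n-1}$), the assumption that $G''_\uniset$ is a BCS, and Fact \ref{rem: intersection and union of BCS} applied in the product diffeomorphism $\wc \cong K \times P^{\dprime}$.

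The main obstacle I foresee is the BCS portion of the first claim, specifically pinning down the dimension and smoothness of the exceptional-symmetry locus $\Sigma \subseteq \symfund n$. The measure calculation reduces essentially to Fubini once Proposition \ref{prop: generic fiber} is invoked, whereas the BCS argument requires a genuine dimension count on lattices admitting a nontrivial orientation-preserving isometry; this rests on the finiteness of possible point groups in rank $n$ and the structure theory underlying Schmidt's fundamental domain.
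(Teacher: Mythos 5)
Your treatment of the measure identity and of the second ($\latset$) claim mirrors the paper's approach; for the latter the paper simply cites \cite[Propositions 6.15 and 6.16]{HK_WellRoundedness}, so the Fubini argument you give via $\mu_{\wc}=\mu_{K^{\prime}}\times\mu_{G^{\dprime}}$ is a reasonable elaboration. The volume computation in the first claim likewise matches the paper (Fubini plus Proposition \ref{prop: generic fiber} on the full-measure generic locus).

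The BCS argument for $\uniset$, however, has a gap. You assert that $\del\parby{G}{\uniset}\subseteq K\cdot\del F_{\symset}\cup K\cdot\Sigma$, but this misses a piece. Over any $z_0\in\interior{F_{\symset}}\cap\interior{\symfund n}$ (so in particular $z_0\notin\Sigma$), the fiber is the fixed set $K_{\text{gen}}$, and when $n$ is even $K_{\text{gen}}$ is a \emph{proper} fundamental domain for $\{\pm I\}$ acting on $\so n\left(\RR\right)$, so $\del K_{\text{gen}}\neq\emptyset$. Points $(k_0,z_0)$ with $k_0\in\del K_{\text{gen}}$ lie on $\del\parby{G}{\uniset}$ but are in neither $K\cdot\del F_{\symset}$ nor $K\cdot\Sigma$. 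This missing piece is still lower-dimensional (since $\del K_{\text{gen}}$ is, $K_{\text{gen}}$ being a BCS by Lemma \ref{lem: K'}), so the final conclusion survives, but the boundary decomposition you rely on is incorrect as written. The paper sidesteps the issue by decomposing the \emph{set} rather than its boundary: $K_{\text{gen}}\cdot\brac{\parby{P}{\symset}\cap\interior{\symfund n}}$ is a product of two BCS's and hence a BCS by Fact \ref{rem: intersection and union of BCS}, and the non-generic pieces lie over $\del\symfund n$.

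Separately, your direct lower-dimensionality argument for $\Sigma$ (``each fixed nontrivial element imposes a proper algebraic constraint'') is both delicate --- there are uncountably many candidate rotations, and one must control which of them can arise as lattice symmetries on a given region of $\symfund n$ --- and unnecessary. Proposition \ref{prop: generic fiber}, which you already invoke for the volume computation, gives $\Sigma\subseteq\del\symfund n$ outright, so lower-dimensionality (indeed, inclusion in the BCS $\del\symfund n$) is immediate. That is the route the paper takes, and you should use it rather than re-deriving it.
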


\begin{proof}
By Proposition \ref{prop: F of G'' from F of H}, $\parby G{\ensuremath{\uniset}}=\bigcup_{z\in P_{\symset}}K_{z}\cdot z$.
Since there are only finitely many possible fibers, then by Proposition
\ref{prop: generic fiber}
\[
\parby G{\ensuremath{\uniset}}=(K_{\text{gen}}\cdot\brac{\parby P{\symset}\cap\interior{\symfund n}})\cup(\bigcup_{i=1}^{\topindex\left(n\right)}K_{z_{i}}\cdot\left\{ z\in\parby P{\symset}\cap\del\symfund m:\sym^{+}\left(\lat_{z}\right)=\sym^{+}\left(\lat_{z_{i}}\right)\right\} )
\]
where $K_{\text{gen}}$ is the generic fiber and $\left\{ z\in\parby P{\symset}\cap\del\symfund n:\sym^{+}\brac{\lat_{z}}=\sym^{+}\brac{\lat_{z_{i}}}\right\} $
is contained in $\del\symfund n$, and is therefore a BCS of measure
zero in $P$. Since the fibers in $\so n\left(\RR\right)$ are BCS's
in $\so n\left(\RR\right)$ due to Lemma \ref{lem: K'}, and since
$\parby P{\symset}\cap\interior{\symfund n}$ is a BCS by Proposition
\ref{prop: spread models that we need} and Fact \ref{rem: intersection and union of BCS},
and since $\so n\left(\RR\right)\times P$ is diffeomorphic to $\sl n\left(\RR\right)$
with $\mu_{\sl n\left(\RR\right)}=\mu_{\so n\left(\RR\right)}\times\mu_{P}$,
we have that $\parby G{\ensuremath{\uniset}}$ is a BCS in $\sl n\left(\RR\right)$
and has the same measure as $K_{\text{gen}}\cdot\brac{\parby P{\symset}\cap\interior{\symfund n}}$,
which is $\mu_{\so n\left(\RR\right)}\brac{K_{\text{gen}}}\cdot\mu_{P}\brac{\parby P{\symset}}=\mu_{\so n\left(\RR\right)}\brac{\so n\left(\RR\right)}\mu_{P}\brac{\parby P{\symset}}/\sn\left(n\right)=\mu_{P}\brac{\parby P{\symset}}\cdot{\scriptstyle \prod}_{i=1}^{n-1}\Leb{\sphere i}/\sn\left(n\right)$
(recall choice of the volume of $\so n\left(\RR\right)$ in Subsection
\ref{subsec: Defining RI}). According to Proposition \ref{prop: spread models that we need},
which says that BCS's and the measures in the ``good'' sets of representatives
and in the spaces that they represent correspond, we get that we get
that $\ensuremath{\uniset}$ is a BCS and that $\text{\ensuremath{\mu_{\unilatspace n}}\ensuremath{\brac{\ensuremath{\uniset}}}}=\mu_{\shapespace n}\brac{\symset}\cdot{\scriptstyle \prod}_{i=1}^{n-1}\Leb{\sphere i}/\sn\left(n\right)$.

The proof for $\latset$ is a direct consequence of \cite[Propositions 6.15 and 6.16]{HK_WellRoundedness}
\end{proof}
For future reference, we list some properties of $\groupfund m,\symfund m$
that will be useful in the proof of our main theorem; in fact, the
following applies to every reduced matrix, and in particular to the
elements of $\groupfund m,\symfund m$. The notations for $a_{j}$
and $V_{j}$ are as in Definition \ref{def: Siegel reduced basis}.
 
\begin{lem}
\label{lem: BLC. facts about z in RS domain}Suppose $g=kan$ is reduced
and that its columns span a lattice $\lat$. Then

\begin{enumerate}
\item \label{enu: entries of n in =00005B-0.5,0.5=00005D}$n$ is a unipotent
upper triangular matrix with non-diagonal entries in $\left[-1/2,1/2\right]$;
in particular, $\norm{n^{\pm1}},\norm{n^{\pm\transpose}}\porsmall1$. 
\item \label{enu: entries of a increasing}$a=\diag{a_{1},\ldots,a_{m}}$
satisfies that $a_{1}\porsmall\cdots\porsmall a_{m}$. Specifically,
$\frac{\sqrt{3}}{2}a_{j}\leq a_{j+1}$. 
\item \label{enu: norm out of E_(j-1)}If $\lm\in\lat$  satisfies $\lm\notin V_{j-1}$,
then $\left\Vert \lm\right\Vert \geq\dist{\lm,V_{j-1}}\geq\dist{v_{j},V_{j-1}}=a_{j}$.
\item \label{enu: norm in E_j}If $x\in V_{j}$, then $\left\Vert ax\right\Vert \porsmall a_{j}\left\Vert x\right\Vert $. 
\end{enumerate}
\end{lem}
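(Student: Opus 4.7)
My plan is to prove the four claims in the order (1), (2), (3), (4), with (4) depending on (2); each part turns out to be a short repackaging of the reduction conditions in Definition~\ref{def: Siegel reduced basis}.

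Part (1) is immediate: condition~(2) of Definition~\ref{def: Siegel reduced basis} says precisely that the off-diagonal Gram--Schmidt coefficients---and hence the off-diagonal entries of $n$ in the decomposition $g=kan$---lie in $[-1/2,1/2]$. Since the entries of $n-\idmat m$ are bounded by a constant depending only on $m$ and $n$ is upper unipotent, the finite expansion $n^{-1}=\sum_{k=0}^{m-1}(-1)^{k}(n-\idmat m)^{k}$ is a sum of matrices with uniformly bounded entries, yielding the norm bounds for $n^{\pm1}$ and $n^{\pm\transpose}$.

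The heart of the lemma is part (2), and the single key idea is to apply the minimality clause of Definition~\ref{def: Siegel reduced basis} not to $v_{j}$ itself (which merely defines $a_{j}$) but to the next basis vector $v_{j+1}$. Working in the orthonormal Gram--Schmidt frame $\{\phi_{i}\}$, the orthogonal projection of $v_{j+1}$ onto $V_{j-1}^{\perp}$ is $n_{j,j+1}a_{j}\phi_{j}+a_{j+1}\phi_{j+1}$, so its squared length equals $n_{j,j+1}^{2}a_{j}^{2}+a_{j+1}^{2}$. Since $v_{j+1}\notin V_{j-1}$, the minimality of $a_{j}$ forces this to exceed $a_{j}^{2}$, and combined with $|n_{j,j+1}|\leq 1/2$ this rearranges to
\[
a_{j+1}^{2}\geq\tfrac{3}{4}a_{j}^{2},
\]
i.e.\ $a_{j+1}\geq\tfrac{\sqrt{3}}{2}a_{j}$. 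Iterating yields $a_{i}\leq(2/\sqrt{3})^{j-i}a_{j}\porsmall a_{j}$ for every $i\leq j$, with implied constant depending only on $m$.

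The remaining two parts are short. For (3), the definition of $a_{j}$ says precisely that $a_{j}$ is the infimum of $\norm{\operatorname{proj}_{V_{j-1}^{\perp}}\lambda}$ over $\lambda\in\lat\setminus V_{j-1}$, and this projection length equals $\dist{\lambda,V_{j-1}}$, which is bounded above by $\norm\lambda$; chaining these three relations gives exactly the two inequalities stated. For (4), expand $x\in V_{j}=\sp\{\phi_{1},\ldots,\phi_{j}\}$ in the Gram--Schmidt frame as $x=\sum_{i=1}^{j}c_{i}\phi_{i}$; under the natural identification of $a$ with its diagonal action in this frame coming from the decomposition $g=kan$, one has $\norm{ax}^{2}=\sum_{i=1}^{j}a_{i}^{2}c_{i}^{2}\leq(\max_{i\leq j}a_{i})^{2}\norm x^{2}$, and the uniform bound $\max_{i\leq j}a_{i}\porsmall a_{j}$ from (2) closes the argument. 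I do not foresee any real obstacle: the only step with any content is the minimality trick used in part (2), and the rest is a dictionary between Definition~\ref{def: Siegel reduced basis} and the refined Iwasawa coordinates.
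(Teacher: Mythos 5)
The paper states this lemma without proof, citing only that it ``applies to every reduced matrix'' as a standard consequence of Siegel reduction (Definition~\ref{def: Siegel reduced basis}), so there is no in-text argument to compare against. Your proof is correct and supplies the standard derivation: part~(1) is a transcription of Definition~\ref{def: Siegel reduced basis}(2) plus the finite nilpotent expansion of $n^{-1}$; the identity $\operatorname{proj}_{V_{j-1}^{\perp}}v_{j+1}=n_{j,j+1}a_{j}\phi_{j}+a_{j+1}\phi_{j+1}$ combined with the minimality clause of Definition~\ref{def: Siegel reduced basis}(1) applied to the lattice vector $v_{j+1}\notin V_{j-1}$ is precisely the classical Minkowski--Siegel argument giving $a_{j+1}^{2}\geq(1-n_{j,j+1}^{2})a_{j}^{2}\geq\tfrac{3}{4}a_{j}^{2}$; and parts~(3), (4) follow as you say. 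One small point worth being explicit about: in the form the paper actually applies part~(4), $g=z=a_{z}n_{z}$ is already upper-triangular (so $k=\idmat{m}$, $\phi_{i}=e_{i}$, $V_{j}=E_{j}$), in which case $\norm{ax}^{2}=\sum_{i\leq j}a_{i}^{2}x_{i}^{2}$ is a literal computation rather than an ``identification''; your phrasing for general $k$ is fine but the intended reading is this triangular one.
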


\subsection{Relation between fundamental domains and quotient spaces\label{subsec: Spread models}}

In order to deduce that the invariant measures on the fundamental
domains $K^{\prime}$, $\groupfund{n-1}$, $\symfund{n-1}$ etc.\
are the Haar measures on the spaces that they represent, we require
the following result: 
\begin{thm}[{\cite[Thm 2.2]{Jus18}}]
\label{thm: measure thing}Let $G$ be a unimodular Radon lcsc group
with a Haar measure $\mu_{G}$, and let $\nu$ be a $G$-invariant
Radon measure on an lcsc space $Y$. Assume that the $G$ action on
$Y$ is strongly proper. Then there exists a unique Radon measure
$\overline{\nu}$ on $G\backslash Y$ such that for all $f\in L^{1}\left(Y,\nu\right)$,
\[
\int_{Y}f\left(y\right)d\nu\left(y\right)=\int_{G\backslash Y}\left(\int_{G}f\left(gy\right)d\mu_{G}\left(g\right)\right)d\overline{\nu}\left(Gy\right).
\]
\end{thm}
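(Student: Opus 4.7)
The plan is to construct $\overline{\nu}$ by first defining a positive linear functional on $C_c(G\backslash Y)$ via an averaging procedure, and then applying Riesz--Markov--Kakutani. Concretely, I would introduce the averaging map $P\colon C_c(Y)\to C_c(G\backslash Y)$ by
\[
P(f)(Gy):=\int_G f(gy)\,d\mu_G(g).
\]
Strong properness of the action ensures that for each $f\in C_c(Y)$ the set $\{g\in G:gy\in\operatorname{supp}(f)\}$ is relatively compact, uniformly as $y$ ranges over a compact set, so that $P(f)$ is well-defined, continuous, and compactly supported on $G\backslash Y$.

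The first key step, and what I expect to be the main obstacle, is surjectivity of $P$: every $\bar h\in C_c(G\backslash Y)$ should equal $P(f)$ for some $f\in C_c(Y)$. The idea is a Bruhat-style argument: strong properness provides, for each compact $\bar K\subset G\backslash Y$, a compact lift $K\subset Y$ with $\pi(K)=\bar K$, and a local partition-of-unity construction on $G\backslash Y$ then yields a cutoff $\phi\in C_c(Y)^{+}$ with $P(\phi)\equiv 1$ on a neighborhood of $\bar K$. Setting $f(y):=\phi(y)\,\bar h(\pi(y))$ gives $P(f)=\bar h$. This is the step where the lcsc and strong-properness hypotheses carry most of the weight; without strong properness one cannot guarantee that local averages have compact support or that every compact subset of $G\backslash Y$ lifts to a compact subset of $Y$.

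With surjectivity in hand, I would define $\Lambda(\bar h):=\int_Y f\,d\nu$ for any $f$ with $P(f)=\bar h$. Well-definedness requires both $G$-invariance of $\nu$ and unimodularity of $\mu_G$: if $P(f)=0$, pick $\phi$ as above with $P(\phi)\equiv 1$ on $\pi(\operatorname{supp}(f))$ and rewrite
\[
\int_Y f\,d\nu=\int_Y f(y)\,P(\phi)(\pi(y))\,d\nu(y)=\int_G\int_Y f(y)\phi(g^{-1}y)\,d\nu(y)\,d\mu_G(g);
\]
after the substitutions $y\mapsto gy$ (invariance of $\nu$) and $g\mapsto g^{-1}$ (unimodularity) the integrand becomes $\phi\cdot(P(f)\circ\pi)\equiv 0$. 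Positivity of $\Lambda$ comes from the possibility of choosing $\phi\geq 0$, so that any $\bar h\geq 0$ admits a non-negative preimage. Riesz--Markov--Kakutani then produces a unique Radon measure $\overline{\nu}$ on $G\backslash Y$ representing $\Lambda$.

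Finally I would verify the Fubini identity. By construction it holds on $C_c(Y)$; the standard three-step extension (monotone convergence through non-negative lower semicontinuous functions, then approximation by simple functions for non-negative $L^1$ integrands, then decomposition into real and imaginary, positive and negative parts) promotes it to all of $L^1(Y,\nu)$. Uniqueness of $\overline{\nu}$ satisfying the stated formula is then automatic, since the values $\int_{G\backslash Y}\bar h\,d\overline{\nu}$ are determined on $C_c(G\backslash Y)=P(C_c(Y))$, and Radon measures on an lcsc space are determined by their action on $C_c$.
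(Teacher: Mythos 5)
The paper does not actually prove this statement: it is imported verbatim as Theorem~2.2 from the cited reference [Jus18], with no argument given. So there is no in-paper proof to compare against. Evaluated on its own terms, your proposal is correct and is the classical Weil quotient-integration argument (averaging map $P\colon C_c(Y)\to C_c(G\backslash Y)$, Bruhat cutoff for surjectivity, well-definedness of the induced functional via unimodularity and $G$-invariance, Riesz--Markov--Kakutani, then the usual monotone-class extension to $L^1$). One remark worth making explicit: the very first step, namely that $P(f)(Gy)$ does not depend on the choice of representative $y$ in the orbit, already uses unimodularity of $G$ (right-invariance of $\mu_G$ under $g\mapsto gh$), not only the later substitution $g\mapsto g^{-1}$; and the surjectivity step deserves slightly more care than ``a local partition-of-unity construction,'' since the Bruhat function $\phi$ must be divided by $P(\psi)\circ\pi$ only where a companion Urysohn cutoff on $G\backslash Y$ is supported, so that $\phi$ stays both continuous and compactly supported. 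Neither of these is a gap, merely places where the sketch compresses standard but nontrivial bookkeeping.
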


\begin{proof}[Proof of Proposition \ref{prop: spread models that we need}]
By construction, $K^{\prime}$, $\groupfund{n-1}$ and $\symfund{n-1}$
are sets of representatives for $\sphere{n-1}$, $\unilatspace{n-1}$
and $\shapespace{n-1}$ respectively, and $K^{\prime}\subset K$ is
a BCS according to Lemma \ref{lem: K'}. $\symfund{n-1}\subset P^{\dprime}$
is a BCS since its boundary is contained in a finite union of lower-dimeansional
manifolds in $P^{\dprime}$ (see \cite[pp. 48-49]{Schmidt_98}, and
$\groupfund{n-1}\subset G^{\dprime}$ is a BCS by Lemma \ref{lem: lift to G''}.
Finally, $K^{\prime}\groupfund{n-1}\subset\wc$, it is a set of representatives
for $\latspace{n-1,n}$ since 
\[
\sl n\left(\RR\right)/N^{\prime}G^{\dprime}\left(\ZZ\right)A^{\prime}\diffeo K^{\prime}G^{\dprime}A^{\prime}N^{\prime}/G^{\dprime}\left(\ZZ\right)A^{\prime}N^{\prime}\diffeo K^{\prime}G^{\dprime}/G^{\dprime}\left(\ZZ\right)
\]
and $\groupfund{n-1}$ is a set of representatives for $G^{\dprime}/G^{\dprime}\left(\ZZ\right)\cong\unilatspace{n-1}$.\textcolor{red}{{}
} It is a BCS by Lemma \ref{lem: lift to G''}. For part (i) of the
proposition, a BCS in $\sphere{n-1}$, $\unilatspace{n-1}$, $\shapespace{n-1}$
or $\latspace{n-1,n}$ is mapped to a BCS in $K^{\prime}$, $\groupfund{n-1}$,
$\symfund{n-1}$ and $K^{\prime}\times\groupfund{n-1}$ respectively:
for $K^{\prime}$ it holds because of Lemma \ref{lem: K'}, and for
the remaining sets this is proved in \cite[Prop. 8.1]{HK_WellRoundedness}.
The correspondence of measures is a consequence of Theorem \ref{thm: measure thing}
above (but one can find more details in \cite[Prop. 6.10]{HK_WellRoundedness}).
\end{proof}

\section{\label{sec: Integral matrices representing primitive vectors}Integral
matrices representing primitive vectors }

We begin in Subsection \ref{subsec: Correspondence: SL(n,Z) <--> primitive vectors (Q and Q(Z))}
by establishing a $1$ to $1$ correspondence between primitive vectors
in $\ZZ^{n}$ and integral matrices in fundamental domains for the
discrete subgroup defined as 
\[
\disgrp:=\left(N^{\prime}\rtimes G^{\dprime}\right)\left(\ZZ\right)=\left[\begin{array}{cc}
\sl{n-1}\left(\ZZ\right) & \ZZ^{n}\\
0 & 1
\end{array}\right].
\]
Then, in Subsection \ref{subsec: Fundamental domains for Q(Z)}, we
define an explicit such fundamental domain in which the integral representative
of a primitive vector $v$, has the shortest solution $w_{v}$ in
its last column. 

\subsection{\label{subsec: Correspondence: SL(n,Z) <--> primitive vectors (Q and Q(Z))}Correspondence
between primitive vectors and matrices in $\protect\sl n\left(\protect\ZZ\right)$}

Recall $G_{v}$ was defined in Formula \ref{eq: def of G_v}. We first
prove: 
\begin{prop}
\label{prop: primitive vectors correspond to integral matrices}If
$\funddom\subset\sl n\left(\RR\right)$ is a fundamental domain for
the right action of $\disgrp$, then there exists a bijection that
depends on $\funddom$
\[
\left(\ZZ^{n}\cap\perpen v\right)\leftrightarrow v\leftrightarrow\ga_{v}\left(\funddom\right):=\mbox{the unique element in }\funddom\cap G_{v}\left(\ZZ\right),
\]
between
\[
\left\{ \substack{\mbox{primitive oriented}\\
\mbox{\ensuremath{\left(n-1\right)}-lattices in }\ZZ^{n}
}
\right\} \leftrightarrow\left\{ \substack{\mbox{primitive vectors}\\
\mbox{in }\ZZ^{n}
}
\right\} \leftrightarrow\left\{ \substack{\mbox{integral matrices}\\
\mbox{ in \ensuremath{\funddom}}
}
\right\} .
\]
\end{prop}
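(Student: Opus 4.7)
The plan is to realize each set $G_v(\ZZ)$ as a single right-$\disgrp$ orbit inside $\sl n(\ZZ)$, so that the partition $\sl n(\ZZ)=\sqcup_v G_v(\ZZ)$ indexed by primitive $v$ translates the choice of a fundamental domain into a selection of one integer matrix per $v$. First I would handle the bijection between primitive vectors and primitive oriented $(n-1)$-lattices: given primitive $v$, take $\lat_v=\ZZ^n\cap\perpen v$; conversely, given a primitive oriented lattice $\lat$, choose a positively oriented $\ZZ$-basis $v_1,\dots,v_{n-1}$ and set $v:=v_1\wedge\cdots\wedge v_{n-1}$. This is independent of the basis (two oriented $\ZZ$-bases of $\lat$ differ by an element of $\sl{n-1}(\ZZ)$), and $v$ is primitive (otherwise $\lat$ would be properly contained in an integer lattice of $\perpen v$, contradicting its primitivity).

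For the second bijection, fix a primitive $v\in\ZZ_{\prim}^{n}$. I would first show that $G_v(\ZZ)\neq\emptyset$: pick an oriented $\ZZ$-basis $v_1,\dots,v_{n-1}$ of $\lat_v$, apply Bezout's identity to the components of $v$ to produce an integer vector $v_n$ with $\dbrac{v_n,v}=1$, and note that $g:=[v_1|\cdots|v_n]$ lies in $G_v(\ZZ)$ by part (\ref{enu: upper row's projection is 1}) of Lemma \ref{lem: characterize G_v}. Next, given $\gamma=\left[\begin{smallmatrix}A & b\\ 0 & 1\end{smallmatrix}\right]\in\disgrp$ and $g\in G_v(\ZZ)$, the first $n-1$ columns of $g\gamma$ are $[v_1|\cdots|v_{n-1}]A$, a new $\ZZ$-basis of $\lat_v$ with the same orientation (since $\det A=1$), and the last column is $v_n+\sum b_i v_i$; so $g\gamma\in G_v(\ZZ)$. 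Conversely, given $g,g'\in G_v(\ZZ)$, part (\ref{enu: first columns span v perp}) of Lemma \ref{lem: characterize G_v} says that the first $n-1$ columns of either are oriented $\ZZ$-bases of $\lat_v$, so they are related by a unique $A\in\sl{n-1}(\ZZ)$; once the first columns of $g'$ are replaced by those of $g$, the remaining discrepancy in the last column is an integer vector $w$ with $\dbrac{w,v}=0$, hence an element of $\lat_v$, which admits a unique expression $w=\sum b_iv_i$ with $b\in\ZZ^{n-1}$. This proves that $\disgrp$ acts simply transitively on $G_v(\ZZ)$ from the right.

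From here the conclusion is immediate: since $\funddom$ is a fundamental domain for the right action of $\disgrp$ on $\sl n(\RR)$, each right $\disgrp$-orbit meets $\funddom$ in exactly one element, so $\funddom\cap G_v(\ZZ)$ is a singleton $\gamma_v(\funddom)$. Conversely, any integer $g=[v_1|\cdots|v_n]\in\funddom\cap\sl n(\ZZ)$ lies in $G_v(\ZZ)$ for $v:=v_1\wedge\cdots\wedge v_{n-1}$, and this $v$ is primitive because $\dbrac{v,v_n}=\det g=1$, so any common divisor of its entries divides $1$. I expect the only real point requiring care to be the transitivity argument in the middle paragraph, where the $\ZZ$-structure of $\lat_v$ is used twice (first, that the column-change matrix $A$ is integral and unimodular; second, that the residual shift $w$ has integer coefficients in the $v_i$); every other step is a direct unpacking of Lemma \ref{lem: characterize G_v} and the definition of a fundamental domain.
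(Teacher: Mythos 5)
Your proposal is correct and follows essentially the same route as the paper: show $G_v(\ZZ)\neq\emptyset$ precisely when $v$ is primitive, identify $G_v(\ZZ)$ as a single right $\disgrp$-orbit, and then invoke the fundamental-domain property to pick out the unique representative. The only cosmetic difference is that you verify the orbit claim by an explicit column-by-column computation showing $\disgrp$ acts (simply) transitively on $G_v(\ZZ)$, whereas the paper observes that $G_v$ is a right $N'G''$-orbit and extracts the same fact from the identity $G_v\cap\sl n(\ZZ)=(\ga\cdot N'G'')\cap\sl n(\ZZ)=\ga\cdot\disgrp$ for any $\ga\in G_v(\ZZ)$.
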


\begin{proof}
The correspondence $\left(\ZZ^{n}\cap\perpen v\right)\leftrightarrow v$
is explained in the Introduction, and it suffices to show the correspondence
$v\leftrightarrow\ga_{v}\left(\funddom\right).$ We first claim that
\begin{equation}
G_{v}\cap\sl n\left(\ZZ\right)\neq\left\{ \emptyset\right\} \Longleftrightarrow v\in\ZZ^{n}\mbox{ primitive. }\label{eq: v is primitive iff exists integral point in G_v}
\end{equation}
The direction $\Longrightarrow$ is a consequence of (\ref{enu: g in G_v})$\Rightarrow$(\ref{enu: upper row's projection is 1})
in Lemma \ref{lem: characterize G_v}. Conversely, if $v$ is primitive,
then there exists $w\in\ZZ^{n}$ such that $\left\langle v,w\right\rangle =1$.
Let $\left\{ v_{1},\ldots,v_{n-1}\right\} $ be an integral basis
for $\perpen v$ such that $\left\{ v_{1},\ldots,v_{n-1},v\right\} $
is a positively oriented basis for $\RR^{n}$. Then, by (\ref{enu: upper row's projection is 1})
$\Rightarrow$ (\ref{enu: g in G_v}) in Lemma \ref{lem: characterize G_v},
the resulting matrix $\sbrac{v_{1}|\cdots|v_{n-1}|w}$ is in $G_{v}$.
Since its columns are integral, it is also in $\sl n\left(\ZZ\right)$. 

Observe that $G_{v}$ is an orbit of the group $N^{\prime}\rtimes\brac{G^{\dprime}\left(\ZZ\right)}$,
acting by right multiplication on $G=\sl n\left(\RR\right)$, and
that $\disgrp$ is the subgroup of integral elements in this group.
According to (\ref{eq: v is primitive iff exists integral point in G_v}),
$v\in\ZZ^{n}$ is primitive if and only if there exists an integral
$\ga$ in $G_{v}$. This is equivalent to all the points in the orbit
$\ga\cdot\disgrp$ being integral. Since $\funddom$ is a fundamental
domain for $\disgrp$, the coset $\ga\cdot\disgrp$ intersects $\funddom$
in a single point $\left\{ \ga_{v}\right\} =\funddom\cap\brac{\ga\cdot\disgrp}$.
We claim that $\ga\cdot\disgrp=G_{v}\left(\ZZ\right)$; indeed,
\[
G_{v}\left(\ZZ\right)=G_{v}\cap\sl n\left(\ZZ\right)=\left(\ga\cdot N^{\prime}G^{\dprime}\right)\cap\sl n\left(\ZZ\right)=\ga\cdot\brac{\left(N^{\prime}G^{\dprime}\right)\cap\sl n\left(\ZZ\right)}=\ga\cdot\disgrp.\tag*{\qedhere}
\]
\end{proof}

\subsection{\label{subsec: Fundamental domains for Q(Z)}\label{subsec: Fundamental domain for shortest solutions}A
fundamental domain for $\protect\disgrp$ that captures the shortest
solutions}

Having shown that the primitive vectors in $\RR^{n}$ correspond to
integral matrices in a fundamental domain of $\disgrp$, we proceed
to construct a specific such domain, with the property that every
representative $\ga_{v}$ has in its last column the shortest solution
$w_{v}$ to the $\gcd$ equation of $v$. We begin with a more general
(even if not as general as possible) construction for a fundamental
domain of $\disgrp$; but first, a notation. 
\begin{notation}
For $g\in\sl n\left(\RR\right)$, we let $\zgt g$ denote the upper
triangular $\left(n-1\right)\times\left(n-1\right)$ matrix such that
the $P^{\dprime}$ component of $g$ is $\left[\begin{smallmatrix}\zgt g & 1\\
1 & 0
\end{smallmatrix}\right]$.
\end{notation}

\begin{prop}
\label{cor: Fund dom for Q(Z)}Let $\groupfund{}\subset\sl{n-1}\left(\RR\right)$
be a fundamental domain of $\sl{n-1}\left(\ZZ\right)$, and $\fdomN=\left\{ \domN\left(z\right)\right\} _{z\in\symfund{n-1}}$
be a family of fundamental domains for $\ZZ^{n-1}$ in $\RR^{n-1}$.
Then
\[
\funddom=\funddom_{\fdomN}:=\bigcup_{g^{\dprime}\in\groupfund{}}K^{\prime}\cdot g^{\dprime}\cdot A^{\prime}\cdot\parby{N^{\prime}}{\domN\brac{\zgt{g^{\dprime}}}}
\]
is a fundamental domain for the action of $\disgrp$ on $\sl n\left(\RR\right)$
by multiplication from the right.
\end{prop}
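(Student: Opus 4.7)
The plan is to verify the two defining properties of a fundamental domain — that every $g\in\sl n(\RR)$ has a $\disgrp$-translate in $\funddom_\fdomN$, and that no two distinct points of $\funddom_\fdomN$ are $\disgrp$-equivalent — by working out how the right action of $\disgrp$ is expressed in the RI coordinates $(k',g^{\dprime},a',x)$ of $g = k' g^{\dprime} a' n'_x$.

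The first step is to track the action of a generic $\ga\in\disgrp$, which factors uniquely as $\ga = \ga^{\dprime}\cdot n'_m$ with $\ga^{\dprime}\in\sl{n-1}(\ZZ)=G^{\dprime}(\ZZ)$ (identified with its image in $G^{\dprime}$) and $m\in\ZZ^{n-1}$. A direct $2\times 2$ block computation gives the key identity $n'_x\cdot\ga^{\dprime} = \ga^{\dprime}\cdot n'_{(\ga^{\dprime})^{-1}x}$, and combining it with the commutation of $A'$ with $G^{\dprime}$ from Subsection~\ref{subsec: Defining RI} yields
\[
g\cdot\ga \;=\; k'\cdot (g^{\dprime}\ga^{\dprime})\cdot a'\cdot n'_{(\ga^{\dprime})^{-1}x + m}.
\]
Thus the right action of $\disgrp$ fixes the $K'$- and $A'$-components, multiplies the $G^{\dprime}$-component on the right by $\ga^{\dprime}$, and simultaneously twists the $N'$-parameter from $x$ to $(\ga^{\dprime})^{-1}x+m$.

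For existence, given $g$ I would first use that $\groupfund{}$ is a fundamental domain for the right action of $\sl{n-1}(\ZZ)$ on $\sl{n-1}(\RR)$ to pick $\ga^{\dprime}\in\sl{n-1}(\ZZ)$ with $g^{\dprime}\ga^{\dprime}\in\groupfund{}$. With that choice fixed, I pick $m\in\ZZ^{n-1}$ so that $(\ga^{\dprime})^{-1}x + m$ lies in the $\ZZ^{n-1}$-fundamental domain $\domN(\zgt{g^{\dprime}\ga^{\dprime}})$. By the displayed identity the translate $g\cdot(\ga^{\dprime} n'_m)$ then belongs to $\funddom_\fdomN$ by construction.

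For uniqueness, suppose $g_1,g_2\in\funddom_\fdomN$ satisfy $g_2 = g_1\cdot\ga$ with $\ga = \ga^{\dprime} n'_m\in\disgrp$. Uniqueness of the RI decomposition together with the displayed action formula forces $g_2^{\dprime} = g_1^{\dprime}\ga^{\dprime}$ with both sides in $\groupfund{}$, which forces $\ga^{\dprime}=\mathrm{id}$; the $N'$-parameters then satisfy $x_2 = x_1 + m$ with both lying in the common set $\domN(\zgt{g_1^{\dprime}})$, which forces $m=0$. The only subtle point is the coupling between the $G^{\dprime}(\ZZ)$ and $N'(\ZZ)$ factors of $\disgrp$ via the twist $x\mapsto(\ga^{\dprime})^{-1}x$; this is precisely what the block identity encodes, and is also the reason the family $\fdomN$ in the definition of $\funddom_\fdomN$ must be indexed by the base point $\zgt{g^{\dprime}}$.
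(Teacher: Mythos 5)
The paper offers no proof here: immediately after stating this proposition it writes ``The proof is rather standard, and we skip it,'' so there is nothing in-text to compare against. Your argument is precisely the standard proof being referred to, and it is correct: the block identity $n_{\underline{x}}^{\prime}\ga^{\dprime}=\ga^{\dprime}n_{(\ga^{\dprime})^{-1}\underline{x}}^{\prime}$, the commutation of $A^{\prime}$ with $G^{\dprime}$, and the uniqueness of the refined Iwasawa decomposition $g=k^{\prime}g^{\dprime}a^{\prime}n_{\underline{x}}^{\prime}$ together yield the clean right-action formula, from which existence (choose $\ga^{\dprime}$ to place $g^{\dprime}\ga^{\dprime}$ in $\groupfund{}$, then $m$ to place $(\ga^{\dprime})^{-1}\underline{x}+m$ in $\domN(\zgt{g^{\dprime}\ga^{\dprime}})$) and uniqueness (the $G^{\dprime}$-coordinate pins down $\ga^{\dprime}$, then the $N^{\prime}$-coordinate pins down $m$) follow immediately. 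One minor qualification on your closing remark: the twist $\underline{x}\mapsto(\ga^{\dprime})^{-1}\underline{x}$ does not \emph{force} the family $\fdomN$ to depend on $\zgt{g^{\dprime}}$ --- your argument works verbatim when all $\domN(z)$ coincide, which is precisely Remark~\ref{rem: Fund dom for Q(Z) is product set} --- rather it is an extra generality that the proof tolerates and that the paper later exploits in Subsection~\ref{subsec: Fundamental domain for shortest solutions} by taking $\domN(z)=\hex z$.
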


The proof is rather standard, and we skip it. 

\begin{rem}
\label{rem: Fund dom for Q(Z) is product set}Clearly, if all the
domains $\domN\left(z\right)$ are the same domain $\domN$, then
$\funddom$ is the product set $K^{\prime}\parby{G^{\dprime}}{\groupfund{}}A^{\prime}\parby{N^{\prime}}{\domN}$.
\end{rem}

For $g$ in $\sl n\left(\RR\right)$, consider the linear map $\linmap_{g}$
that sends the first $n-1$ columns of $g$ to the (ordered) standard
basis for $\RR^{n-1}$. Note that the $\left(n-1\right)$-lattice
$\lat_{g}^{n-1}$, spanned by the first $n-1$ columns of $g$, is
mapped under $\linmap_{g}$ onto $\ZZ^{n-1}=\sp_{\ZZ}\left\{ e_{1}\ldots,e_{n-1}\right\} $.
As a result, a fundamental domain for $\lat_{g}^{n-1}$ in $\perpen v$
is mapped under $\linmap_{g}$ onto a fundamental domain of $\ZZ^{n-1}$
in $\RR^{n-1}$. We consider the image of the Dirichlet domain for
$\lat_{g}^{n-1}$, which is $\hex{\zgt g}:=\linmap_{g}\brac{\dirdom{\lat_{g}^{n-1}}}$.
Note that indeed the right-hand side depends only on the $P^{\dprime}$
component of $g$: since $g=ka^{\prime}p^{\dprime}n^{\prime}$, then
the RHS is $\linmap_{n^{\prime}}\linmap_{p^{\dprime}}\linmap_{ka^{\prime}}\brac{\dirdom{\lat_{g}^{n-1}}}$.
Now $\linmap_{ka^{\prime}}$ acts as a rotation and multiplication
by scalar such that $\lat_{g}^{n-1}$ maps to $\lat_{p^{\dprime}}^{n-1}=\lat_{\zgt g}$
and the Dirichlet domains map to one another. Since $\linmap_{n^{\prime}}$
is identity map, then $\linmap_{g}\brac{\dirdom{\lat_{g}^{n-1}}}$
equals $\linmap_{p^{\dprime}}\brac{\dirdom{\lat_{p^{\dprime}}^{n-1}}}$.
Then 
\begin{equation}
\fdomhex_{\symfund{n-1}}:=\left\{ \hex z\right\} _{z\in\symfund{n-1}}\label{eq: Hex family}
\end{equation}
is a family of fundamental domains for $\ZZ^{n-1}$ in $\RR^{n-1}$,
and so by Proposition \ref{cor: Fund dom for Q(Z)} and by the notation
for $K_{z}^{\dprime}$ appearing in Proposition \ref{def: Notation for K_z},
the following is a fundamental domain for $\disgrp$: 
\begin{equation}
\shortdom:=\funddom_{\fdomhex}=\bigcup_{g^{\dprime}\in\groupfund{n-1}}K^{\prime}\cdot g^{\dprime}\cdot A^{\prime}N_{\hex{\zgt{g^{\dprime}}}}^{\prime}=\bigcup_{z\in\symfund{n-1}}K^{\prime}K_{z}^{\dprime}\cdot\exd{\left[\begin{smallmatrix}z & 0\\
0 & 1
\end{smallmatrix}\right]}{\in P^{\dprime}}\cdot A^{\prime}N_{\hex z}^{\prime}.\label{eq: Omega_short}
\end{equation}
Recall from Proposition \ref{prop: explicit RI coordinates of g}
that 
\[
g=\left(|\cdots|w\right)\in G_{v}\cap\funddom\brac{\fdomhex}\implies w^{\perpen v}\in\linmap_{g}^{-1}\brac{\hex{\zgt g}}\iff w^{\perpen v}\in\dirdom{\lat_{g}^{n-1}},
\]
namely $w^{\perpen v}$ (such that $w=v/\norm v^{2}+w^{\perpen v}$,
see Lemma \ref{lem: v-comp of w}) is the shortest representative
of the coset $w^{\perpen v}+\lat_{g}^{n-1}$ in the hyperplane $\perpen v$.
This means that $w$ is the shortest representative of the coset $w+\lat_{g}^{n-1}$
(which lies in the affine hyperplane $\left\{ u:\left\langle u,v\right\rangle =1\right\} $).
As a result, for every primitive vector $v$, the representative $\ga_{v}=G_{v}\left(\ZZ\right)\cap\shortdom$,
has last column $w$ which is 
\begin{eqnarray*}
w_{v} & := & \mbox{the shortest integral \ensuremath{w\;}which satisfies \ensuremath{\left\langle w,v\right\rangle =1}}.
\end{eqnarray*}
The relation between the norm of $w_{v}$, which is what we are interested
in for Theorems \ref{thm: |w_v|/|v| to zero} and \ref{thm: MainThm},
and between the norm of $w_{v}^{\perpen v}$, which is what is  captured
by $\shortdom$, is given by the following lemma.
\begin{lem}
\label{lem: w and w perp}If $\{v_{n}\}$ is a divergent sequence
of primitive vectors, then 
\[
\underset{n\to\infty}{\lim}\left|\norm{w_{v_{m}}}-\norm{w_{v_{m}}^{\perpen{v_{m}}}}\right|=O(\norm{v_{m}}^{-1}).
\]
\end{lem}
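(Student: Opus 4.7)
The plan is to reduce the lemma to a one-line computation using the orthogonal decomposition already identified in Lemma \ref{lem: v-comp of w}. Namely, for a primitive $v_m$ the corresponding $w_{v_m}$ satisfies $w_{v_m} = w_{v_m}^{\perpen{v_m}} + \norm{v_m}^{-2} v_m$, and these two summands are orthogonal by construction. Applying the Pythagorean theorem therefore yields the exact identity
\[
\norm{w_{v_m}}^{2} - \bigl\|w_{v_m}^{\perpen{v_m}}\bigr\|^{2} = \norm{v_m}^{-2}.
\]
In particular $\norm{w_{v_m}}\geq \bigl\|w_{v_m}^{\perpen{v_m}}\bigr\|$, so the absolute value in the statement can be dropped.

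Next, I would factor the left-hand side as a difference of squares to obtain
\[
\norm{w_{v_m}} - \bigl\|w_{v_m}^{\perpen{v_m}}\bigr\| \;=\; \frac{\norm{v_m}^{-2}}{\norm{w_{v_m}} + \bigl\|w_{v_m}^{\perpen{v_m}}\bigr\|},
\]
so it only remains to produce a lower bound of order $\norm{v_m}^{-1}$ on the denominator. This is immediate from Cauchy--Schwarz applied to the defining relation $\langle w_{v_m}, v_m\rangle = 1$ (see part (\ref{enu: upper row's projection is 1}) of Lemma \ref{lem: characterize G_v}): $1 \leq \norm{w_{v_m}}\cdot\norm{v_m}$, hence $\norm{w_{v_m}} \geq \norm{v_m}^{-1}$, which in turn bounds the full denominator from below by $\norm{v_m}^{-1}$.

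Combining the two displays then gives the inequality $\bigl|\norm{w_{v_m}} - \bigl\|w_{v_m}^{\perpen{v_m}}\bigr\|\bigr| \leq \norm{v_m}^{-1}$, which is even slightly stronger than the stated $O(\norm{v_m}^{-1})$ bound and holds uniformly in $m$ without any use of the divergence of $\{v_m\}$. There is no real obstacle to overcome here; the only point worth double-checking is the Pythagorean step, which relies on the orthogonality $w_{v_m}^{\perpen{v_m}} \perp v_m$ built into the notation, so that the cross term vanishes.
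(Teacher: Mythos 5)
Your proof is correct, but it takes a longer route than the paper's. The paper simply applies the reverse triangle inequality to the identity from Lemma \ref{lem: v-comp of w}: since $w_{v_m} - w_{v_m}^{\perpen{v_m}} = \norm{v_m}^{-2} v_m$, one gets $\bigl|\norm{w_{v_m}} - \norm{w_{v_m}^{\perpen{v_m}}}\bigr| \leq \norm{w_{v_m} - w_{v_m}^{\perpen{v_m}}} = \norm{v_m}^{-1}$ in one line, without ever invoking orthogonality. Your argument instead leans on orthogonality to obtain the Pythagorean identity, factors a difference of squares, and then calls on Cauchy--Schwarz (via $\langle w_{v_m}, v_m\rangle = 1$) to lower-bound the denominator. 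All three steps are sound and you land on the same bound $\norm{v_m}^{-1}$, so this is a valid alternative; the only trade-off is that it takes three observations where one suffices, and it uses the orthogonality of the decomposition in an essential way whereas the paper's argument does not need it. Your remark that the divergence hypothesis is never used is accurate for both proofs.
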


\begin{proof}
By Lemma \ref{lem: v-comp of w},
\[
|\norm{w_{v_{m}}}-\norm{w_{v_{m}}^{\perpen{v_{m}}}}|\leq\norm{w_{v_{m}}-w_{v_{m}}^{\perpen{v_{m}}}}=\frac{1}{\norm{v_{m}}}.
\]
\end{proof}

\section{\label{sec: Defining a counting problem} Defining a counting problem }

The goal of this section is to reduce the proof of Theorem \ref{thm: MainThm}
into a problem of counting integral matrices in subsets of $\sl n\left(\RR\right)$,
and specifically of $\shortdom$. We begin by defining these subsets.
First, consider the covering radius of the lattice spanned by the
first $n-1$ columns:
\begin{eqnarray*}
\rad\brac{\lat_{g}^{n-1}} & = & \mbox{radius of bounding circle for \ensuremath{\dirdom{\lat_{g}^{n-1}}}}.
\end{eqnarray*}
Clearly, the norm $\Vert w^{\perpen v}\Vert$ lies in the interval
$\sbrac{0,\rad\brac{\lat_{g}}}$, i.e. 
\[
\frac{\Vert w^{\perpen v}\Vert}{\rad\left(\lat_{g}\right)}\in\left[0,1\right].
\]
We consider sub-families $\fdomhex^{\,\a}\subseteq\fdomhex$ for which
this quotient is restricted to a sub-interval $\left[0,\a\right]$,
with $0\leq\a\leq1$. Let $\ball r{\perpen v}$ denote an origin-centered
$n-1$ dimensional ball in $\perpen v$ whose radius is $r$. For
$\a\in\left[0,1\right]$, let
\begin{eqnarray}
\ellipse{\zgt g}{\a} & = & \linmap_{g}\brac{\ball{\a\rad\left(\lat_{g}\right)}{\perpen v}\cap\dirdom{\lat_{g}}}\nonumber \\
\fdomellipse{\a}{\symfund{n-1}} & = & \left\{ \ellipse z{\a}\right\} _{z\in\symfund{n-1}}.\label{eq: family of elipses int hexagons}
\end{eqnarray}
We now turn to define the subsets of $\shortdom$ such that the integral
matrices inside them represent via the bijection $\ga_{v}\leftrightarrow v$
the primitive vectors that are counted in Theorem \ref{thm: MainThm}. 
\begin{notation}
\label{nota: Definitoin of subsets to count in}For $T>0$, $\sphereset\subseteq\sphere{n-1}$,
$\ensuremath{\uniset}\subseteq\unilatspace{n-1}$, $\symset\subseteq\shapespace{n-1}$,
$\latset\subseteq\latspace{n-1,n}$ and $\a\in\left[0,1\right]$,
recall the notation in \ref{eq: Omega_short} and consider
\[
\left(\shortdom\right)_{T}\brac{\sphereset,\symset,\a}=\,\shortdom\,\cap\,\left\{ g=k^{\prime}k^{\dprime}p^{\dprime}a^{\prime}n^{\prime}:\begin{matrix}k^{\prime}\in\parby{K^{\prime}}{\sphereset},a^{\prime}\in\parby{A^{\prime}}{\left[0,T\right]},\\
p^{\dprime}\in\parby{P^{\dprime}}{\symset},n^{\prime}\in\parby{N^{\prime}}{\ellipse{\zgt g}{\a}}
\end{matrix}\right\} =\bigcup_{p^{\dprime}\in\parby{P^{\dprime}}{\symset}}\parby{K^{\prime}}{\sphereset}\cdot K_{z^{p^{\dprime}}}^{\dprime}\cdot p^{\dprime}\cdot A_{T}^{\prime}\parby{N^{\prime}}{\ellipse{\zgt{p^{\dprime}}}{\a}},
\]
\[
\left(\shortdom\right)_{T}\brac{\sphereset,\ensuremath{\uniset},\a}=\,\shortdom\cap\left\{ g=k^{\prime}g^{\dprime}a^{\prime}n^{\prime}:\begin{matrix}k^{\prime}\in\parby{K^{\prime}}{\sphereset},a^{\prime}\in\parby{A^{\prime}}{\left[0,T\right]},\\
g^{\dprime}\in\parby{G^{\dprime}}{\ensuremath{\uniset}},n^{\prime}\in\parby{N^{\prime}}{\ellipse{\zgt g}{\a}}
\end{matrix}\right\} =\bigcup_{g^{\dprime}\in\parby{G^{\dprime}}{\ensuremath{\uniset}}}\parby{K^{\prime}}{\sphereset}\cdot g^{\dprime}\cdot A_{T}^{\prime}\parby{N^{\prime}}{\ellipse{\zgt{g^{\dprime}}}{\a}},
\]
and 
\[
\left(\shortdom\right)_{T}\brac{\latset,\a}=\,\shortdom\,\cap\,\left\{ g=qa^{\prime}n^{\prime}:\begin{matrix}q\in\parby{\wc}{\latset},a^{\prime}\in\parby{A^{\prime}}{\left[0,T\right]},\\
n^{\prime}\in\parby{N^{\prime}}{\ellipse{\zgt g}{\a}}
\end{matrix}\right\} =\bigcup_{q\in\parby{\wc}{\latset}}q\cdot A_{T}^{\prime}\parby{N^{\prime}}{\ellipse{\zgt q}{\a}}.
\]
\end{notation}

\begin{figure}
\begin{centering}
\includegraphics[scale=0.5]{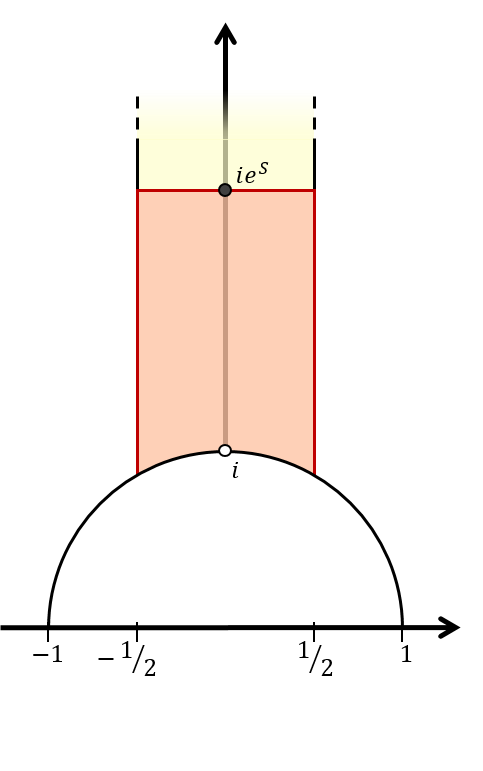}
\par\end{centering}
\caption{\label{fig: Fund dom for SL(2,Z): Truncated } The domain $\protect\trunc{\protect\symfund 2}S$. }
\end{figure}

The following notation is for sets in $G$ whose $A^{\dprime}$ component
is restricted to a compact box. 
\begin{notation}
\label{nota: truncated}For every $\Svec=\brac{S_{1},\ldots,S_{n-2}}>\underline{0}$
and a subset $\Gset\subset G$, let $\trunc{\Gset}{\Svec}$ denote
the subset $\Gset\cap\cbrac{g:\pi_{A_{i}^{\prime\prime}}\left(g\right)\leq S_{i}\,\forall i}$,
where $\pi_{A_{i}^{\prime\prime}}$ is the projection to the $A_{i}^{\prime\prime}$
component (see Figure \ref{fig: Fund dom for SL(2,Z): Truncated }
for $\trunc{\symfund 2}S$).
\end{notation}

Recall from the Introduction that $\lat_{v}=\lat_{\ga_{v}}^{n-1}$
and $\rad_{v}=\rad\brac{\lat_{v}^{n-1}}$. The following is now immediate
from Proposition \ref{prop: primitive vectors correspond to integral matrices},
Proposition \ref{prop: explicit RI coordinates of g}, and the construction
of $\shortdom$, and concludes the translation of Theorem \ref{thm: MainThm}
into a problem of counting lattice points in $\sl n\left(\RR\right)$: 
\begin{cor}
\label{cor: the sets we should count in}Consider the correspondence
$v\leftrightarrow\ga_{v}$ where $\ga_{v}=\left(G_{v}\left(\ZZ\right)\right)\cap\shortdom$.
For $T>0$, $\sphereset\subseteq\sphere{n-1}$ $\ensuremath{\uniset}\subseteq\unilatspace{n-1}$
, $\symset\text{\ensuremath{\subseteq\shapespace{n-1}}}$, $\latset\subseteq\latspace{n-1,n}$
and $\a\in\left[0,1\right]$:
\begin{enumerate}
\item The $\sl n\left(\ZZ\right)$ matrices in $\left(\shortdom\right)_{T}\brac{\sphereset,\symset,\a}$
correspond under $\ga_{v}\leftrightarrow v$ to the elements of 
\[
\left\{ v\in\ZZ^{n}\mbox{ primitive}:\norm v\leq e^{T},\hat{v}\in\sphereset,\shape{\lat_{v}}\in\symset,\lilnorm{w^{v^{\perp}}}/\rad_{v}\in\left[0,\a\right]\right\} .
\]
\item The $\sl n\left(\ZZ\right)$ matrices in $\left(\shortdom\right)_{T}\brac{\sphereset,\ensuremath{\uniset},\a}$
correspond under $\ga_{v}\leftrightarrow v$ to the elements of 
\[
\left\{ v\in\ZZ^{n}\mbox{ primitive}:\norm v\leq e^{T},\hat{v}\in\sphereset,\unisimlat{\lat_{v}}\in\ensuremath{\uniset},\lilnorm{w^{v^{\perp}}}/\rad_{v}\in\left[0,\a\right]\right\} .
\]
\item The $\sl n\left(\ZZ\right)$ matrices in $\left(\shortdom\right)_{T}\brac{\latset,\a}$
correspond under $\ga_{v}\leftrightarrow v$ to the elements of
\[
\left\{ v\in\ZZ^{n}\mbox{ primitive}:\norm v\leq e^{T},\unilat{\lat_{v}}\in\latset,\lilnorm{w^{v^{\perp}}}/\rad_{v}\in\left[0,\a\right]\right\} .
\]
\end{enumerate}
For the cases where $\symset$ (resp.\  $\ensuremath{\uniset}$,
$\latset$) is the set parametertized by $\trunc{\symfund{n-1}}{\Svec}$
(resp.\  $\trunc{\groupfund{n-1}}{\Svec}$, $K^{\prime}\trunc{\groupfund{n-1}}{\Svec}$)
with $\Svec=\brac{S_{1},\dots,S_{n-2}}$, then the condition $\shape{\lat_{v}}\in\symset$
(resp.\ $\unisimlat{\lat_{v}}\in\ensuremath{\uniset}$, $\unilat{\lat_{v}}\in\latset$)
is equivalent to $\frac{\left\Vert \left(v_{1}\wedge\cdots\wedge v_{i}\right)\right\Vert }{\norm v^{i/\left(n-1\right)}}\geq e^{-S_{i}/2}$
for every $1\leq i\leq n-2$. 
\end{cor}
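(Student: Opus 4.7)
The proof is essentially a matter of assembling the earlier propositions, with Proposition 4.1 furnishing the bijection $v \leftrightarrow \gamma_v$ and Proposition 3.1 translating each geometric parameter of $v$ into the corresponding RI component of $\gamma_v$. My plan is to verify each of the three bulleted correspondences by matching conditions on the RI factorization of $\gamma_v = k'k^{\dprime}p^{\dprime}a'n' = k'g^{\dprime}a'n' = qa'n'$ against the conditions on $v$. By Proposition 3.1(ii), $e^t = \|v\|$ so $\|v\| \le e^T$ iff $a'_t \in A'_{[0,T]}$; by 3.1(i), $u = \hat v$, so $\hat v \in \sphereset$ iff $k' \in K'_\sphereset$; by 3.1(vi), (v), (iv), the conditions $\shape{\lat_v}\in\symset$, $\unisimlat{\lat_v}\in\uniset$, $\unilat{\lat_v}\in\latset$ translate respectively to $p^{\dprime}\in P^{\dprime}_\symset$, $g^{\dprime}\in G^{\dprime}_\uniset$, $q \in \wc_\latset$. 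This accounts for everything except the $\|w^{v^\perp}\|/\rad_v$ condition.

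The key observation for the $N'$-condition is that, by construction of $\shortdom$ (via the family $\fdomhex_{\symfund{n-1}}$ in \eqref{eq: Hex family} and \eqref{eq: Omega_short}), if $\gamma_v \in \shortdom \cap G_v(\ZZ)$ then $w^{v^\perp} \in \dirdom{\lat_g^{n-1}}$; equivalently, by 3.1(vii), $\underline{x} = \linmap_g(w^{v^\perp}) \in \hex{z^g}$. Since $\linmap_g$ is linear, rotates $v^\perp$ onto $\RR^{n-1}$, and sends $\lat_g^{n-1}$ to $\ZZ^{n-1}$ — a lattice-isometry on $\perpen v \to \RR^{n-1}$ up to the factor reflecting the covolume —the condition $\|w^{v^\perp}\|/\rad_v \le \alpha$ becomes $w^{v^\perp} \in \ball{\alpha\rad_v}{\perpen v} \cap \dirdom{\lat_g^{n-1}}$, which under $\linmap_g$ is exactly $\underline{x} \in \ellipse{z^g}{\alpha}$, i.e.\ $n' \in N'_{\ellipse{z^g}{\alpha}}$. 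Plugging these equivalences into the three descriptions of $(\shortdom)_T(\cdots,\alpha)$ in Notation \ref{nota: Definitoin of subsets to count in} gives parts (1)--(3).

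For the truncated statement, I use Proposition 3.1(iii): if $g = kan$ has $a = a^{\dprime}_{\underline{s}}a'_t$, then $e^{-s_i/2 + it/(n-1)} = \covol{\lat_g^i} = \|v_1 \wedge \cdots \wedge v_i\|$, and $e^t = \|v\|$, so
\[
\frac{\|v_1 \wedge \cdots \wedge v_i\|}{\|v\|^{i/(n-1)}} = e^{-s_i/2}.
\]
Therefore the truncation $s_i \le S_i$ defining $\trunc{\symfund{n-1}}{\Svec}$, $\trunc{\groupfund{n-1}}{\Svec}$, $K'\trunc{\groupfund{n-1}}{\Svec}$ is equivalent to the asserted inequality $\|v_1 \wedge \cdots \wedge v_i\|/\|v\|^{i/(n-1)} \ge e^{-S_i/2}$ for $1 \le i \le n-2$.

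The proof is mostly bookkeeping; the only place where care is needed is checking that $\linmap_g$ carries the origin-centered Euclidean ball in $\perpen v$ of radius $\alpha \rad_v$ to the set used in defining $\ellipse{z^g}{\alpha}$, which follows because $\linmap_g$ restricted to $\perpen v$ is an isometry composed with the inverse rescaling coming from the RI coordinates — precisely the map implicit in the definition \eqref{eq: family of elipses int hexagons}. No other step presents a genuine obstacle.
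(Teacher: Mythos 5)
Your proposal is correct and follows exactly the route the paper intends: the corollary is declared to be ``immediate'' from the bijection $v\leftrightarrow\ga_v$ (Proposition~\ref{prop: primitive vectors correspond to integral matrices}), the explicit RI coordinates of $\ga_v$ (Proposition~\ref{prop: explicit RI coordinates of g}), and the construction of $\shortdom$, and you simply supply the bookkeeping the paper omits. Two small inaccuracies in your write-up, neither affecting correctness: you cite the RI-coordinates result as ``Proposition 3.1'' when it is Proposition~\ref{prop: explicit RI coordinates of g} (in Section 2 of the paper), and $\linmap_g|_{\perpen v}$ is not an isometry-plus-rescaling but an arbitrary linear map sending $v_j\mapsto e_j$ --- though, as you note in passing, this is irrelevant because $\ellipse{\zgt g}{\a}$ is \emph{defined} as $\linmap_g\brac{\ball{\a\rad_v}{\perpen v}\cap\dirdom{\lat_g^{n-1}}}$, so the equivalence $n'\in N'_{\ellipse{\zgt g}{\a}}\iff \|w^{\perpen v}\|/\rad_v\le\a$ (given $\ga_v\in\shortdom$) is immediate from injectivity of $\linmap_g$ and needs no metric statement at all.
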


\section{\label{sec: Number of integral points up the cusp}Simplifying the
counting problem by restricting to compacts}

In the previous section we reduced the proof of Theorem \ref{thm: MainThm}
to counting $\sl n\left(\ZZ\right)$ points inside the subsets $\left(\shortdom\right)_{T}$
as $T\to\infty$. These sets have the disadvantage of not being compact,
despite their finite volume; this is apparent from the fact that they
contain $A^{\dprime}$, which is unbounded. Since our counting method
(described in Subsection \ref{subsec: GN method}) does not allow
non-compact sets, the aim of this section is to reduce counting in
$\left(\shortdom\right)_{T}$ to counting in a compact subset of it.
Here we will allow a fundamental domain of $\disgrp$ as general as
in Corollary \ref{cor: Fund dom for Q(Z)}, and not restrict just
to $\shortdom$.
\begin{notation}
\label{nota: def of H(T,S)}Let $\funddom$ be a fundamental domain
for $\disgrp$ in $\sl n\left(\RR\right)$ as in Corollary \ref{cor: Fund dom for Q(Z)}.
For every $T,\underline{S}>0$, define (in accordance with Notation
\ref{nota: truncated}) 
\[
\funddom_{T}^{\underline{S}}:=\funddom\cap\left\{ g=ka_{t}^{\prime}a_{\svec}^{\dprime}n:\begin{matrix}s_{j}\leq S_{j},\:t\in\left[0,T\right]\end{matrix}\right\} .
\]
Note that $\mu\brac{\funddom_{T}-\funddom_{T}^{\underline{S}}}$ is
in $O\brac{e^{nT-S_{\min}}}$, where $S_{\min}=\min_{j}S_{j}$. 
\end{notation}

The goal of this section is to prove the following:
\begin{prop}
\label{prop: very few SL(n,Z) points up the cusp}Let $\funddom$
a fundamental domain for $\disgrp$ in $G$, and $\underline{\s}=\left(\s_{1},\ldots,\s_{n-2}\right)$
where $0<\s_{i}<1$ $\forall i$. Denote $\funddom_{T}^{\left[\underline{\s}T;\infty\right]}:=\funddom_{T}-\funddom_{T}^{\underline{\s}T}$
and $\sigma_{\min}=\min\left(\sigma_{1},\dots,\sigma_{n-1}\right)$.
Then for every $\e>0$
\[
\#\brac{\,\funddom_{T}^{\left[\underline{\s}T;\infty\right]}\cap\sl n\left(\ZZ\right)}=O_{\e}\,\brac{e^{T\left(n-\sigma_{\min}+\e\right)}}.
\]
\end{prop}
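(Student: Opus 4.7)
The plan is to translate the count into one over primitive $j$-sublattices of $\ZZ^n$ via the correspondence of Proposition \ref{prop: primitive vectors correspond to integral matrices}, and then to bound the resulting double sum using classical counts of primitive sublattices and of primitive vectors in a lattice.

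By the bijection $v \leftrightarrow \ga_v$, the quantity $\#(\funddom_T^{[\underline{\s}T;\infty]} \cap \sl n(\ZZ))$ equals the number of primitive $v \in \ZZ^n$ with $\norm v \leq e^T$ such that $s_j(v) > \s_j T$ for some $j \in \{1,\dots,n-2\}$. By Proposition \ref{prop: explicit RI coordinates of g}(iii), $\covol{\lat_g^j} = e^{-s_j/2 + jt/(n-1)}$ with $t = \log\norm v \leq T$, so the condition $s_j > \s_j T$ is equivalent to
\[
\covol{\lat_g^j} < C_j := e^{jT/(n-1) - \s_j T/2}.
\]
Moreover $\lat_g^j$ is automatically a \emph{primitive} rank-$j$ sublattice of $\ZZ^n$, since its generators are the first $j$ columns of a matrix in $\sl n(\ZZ)$ and hence extend to a full $\ZZ$-basis of $\ZZ^n$. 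A union bound over $j$ reduces matters to bounding, for each fixed $j$, the number $N_j$ of primitive $v$ with $\norm v \leq e^T$ whose orthogonal lattice $\lat_v$ contains at least one primitive $j$-sublattice $M \subset \ZZ^n$ with $\covol{M} \leq C_j$.

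For each $j$, I would bound $N_j$ by summing over such $M$ and counting primitive $v \in \ZZ^n$ with $v \perp M$ and $\norm v \leq e^T$. Since $M$ is primitive, the orthogonal lattice $\Lambda_M := \perpen{(\sp M)} \cap \ZZ^n$ has rank $n-j$ and covolume $\covol{M}$, and a primitive $v \in \ZZ^n$ orthogonal to $M$ is exactly a primitive element of $\Lambda_M$. A standard lattice-point estimate then yields $O(e^{(n-j)T}/\covol{M})$ such primitive vectors (the main term dominates since $C_j \leq e^{(n-j)T}$ for $j \leq n-2$). Combining with Schmidt's asymptotic \cite{Schmidt_98}, which gives $\#\{M\text{ prim.\ }j\text{-sublat.\ of }\ZZ^n : \covol{M} \leq C\} = O(C^n)$, an integration-by-parts argument yields $\sum_{M:\,\covol{M} \leq C_j} 1/\covol{M} \ll C_j^{n-1}$. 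Hence
\[
N_j \ll e^{(n-j)T}\, C_j^{n-1} = e^{T\left(n - (n-1)\s_j/2\right)} \leq e^{T(n-\s_j)},
\]
the last inequality using $(n-1)/2\geq 1$ for $n\geq 3$. Taking the minimum over $j$ yields the desired bound, and the $\e$ in the statement absorbs any polylogarithmic slack coming from effective versions of these counts.

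The main technical obstacle is to make the primitive-vector count in $\Lambda_M$ uniform as $\covol{M}$ varies over $[1, C_j]$ and the shape of $\Lambda_M$ is arbitrary. This will be handled by combining the crude estimate $\#\{v \in \Lambda : \norm v \leq R\} \ll (R/\lambda_1(\Lambda)+1)^{\rank{\Lambda}}$ with Minkowski's bound $\lambda_1(\Lambda_M) \ll \covol{M}^{1/(n-j)}$, both of which hold uniformly over all primitive sublattices $M$.
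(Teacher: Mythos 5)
Your overall strategy is correct and takes a genuinely different route from the paper. The paper proves a direct recursive count (Lemma~\ref{lem: Butz's Lemma}): it builds a basis $v_1,\dots,v_{n-1}$ of $\lat_v\subset\ZZ^n$ one vector at a time, bounding the number of choices for each $v_i$ given $\lat^{i-1}$, then handles the full range of intermediate covolumes via a subdivision argument (Corollary~\ref{cor: very few lattices with very short vector}), which is the source of the $\e$ in the exponent. You instead decouple $\lat_v$ into the pair consisting of a small primitive $j$-sublattice $M$ and the vector $v\perp M$, sum over $M$ using Schmidt's asymptotic $\#\{M\ \text{prim.~$j$-sublat.} : \covol{M}\le C\}\ll C^n$, and count primitive vectors in the orthogonal lattice $\Lambda_M$. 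Your approach imports two external ingredients (Schmidt's count, and a uniform lattice-point estimate) where the paper is self-contained, but in exchange it avoids the subdivision step entirely, so a careful version of your argument even yields the bound without the $+\e$. The interchange of summation over-counts pairs $(v,M)$, but since only an upper bound is needed this is harmless.

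There is, however, a genuine flaw in your final paragraph on uniformity: combining $\#\{v\in\Lambda:\norm{v}\le R\}\ll (R/\lambda_1(\Lambda)+1)^{\operatorname{rk}\Lambda}$ with the Minkowski \emph{upper} bound $\lambda_1(\Lambda_M)\ll\covol{M}^{1/(n-j)}$ does not yield $\ll e^{(n-j)T}/\covol{M}$; an upper bound on $\lambda_1$ makes $R/\lambda_1$ larger, hence only produces a \emph{lower} bound on the count. The correct uniformity argument instead uses: (i) $\lambda_i(\Lambda_M)\ge 1$ for all $i$, since $\Lambda_M\subset\ZZ^n$; and (ii) Minkowski's \emph{second} theorem, $\prod_i\lambda_i(\Lambda_M)\asymp\covol{\Lambda_M}=\covol{M}$. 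Together these force $\lambda_{n-j}(\Lambda_M)\ll\covol{M}\le C_j\ll e^T=R$, so every successive minimum is $\ll R$ and the standard estimate $\#\{v\in\Lambda_M:\norm{v}\le R\}\asymp\prod_i(R/\lambda_i)\asymp R^{n-j}/\covol{M}$ holds with implied constants depending only on $n$, which is exactly the uniformity you need. With that replacement, the proof goes through.
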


Two auxiliary claims are required for the proof. 

\begin{figure}
\hspace*{\fill}\subfloat[\label{fig: Butz Lemma - in plane v perp}$v_{1}$, $v_{2}$ and $a_{2}$
in the plane $\protect\sp_{\protect\RR}\left\{ v_{1},v_{2}\right\} =\protect\sp_{\protect\RR}\left(\protect\lat\right)$]{\includegraphics[scale=0.6]{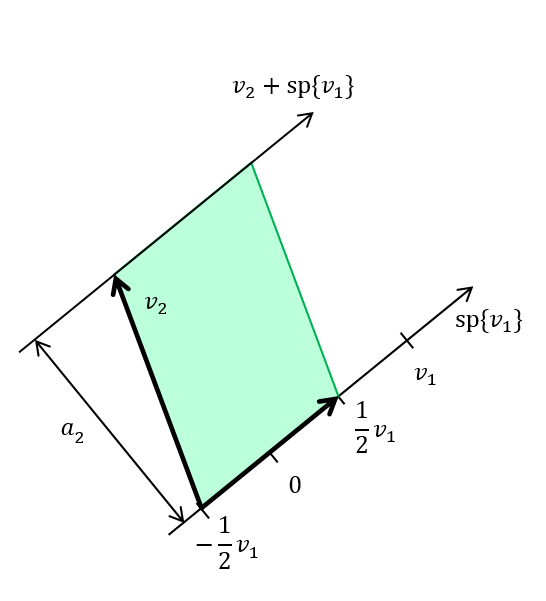}}\hspace*{\fill}\subfloat[{\label{fig: Butz Lemma - cylinder}A Dirichlet domain for $\tilde{\protect\lat}_{1}$,
$\left[-1/2,1/2\right]v_{1}$, multiplied by a ball of radius $R_{2}$.
}]{\includegraphics[scale=0.55]{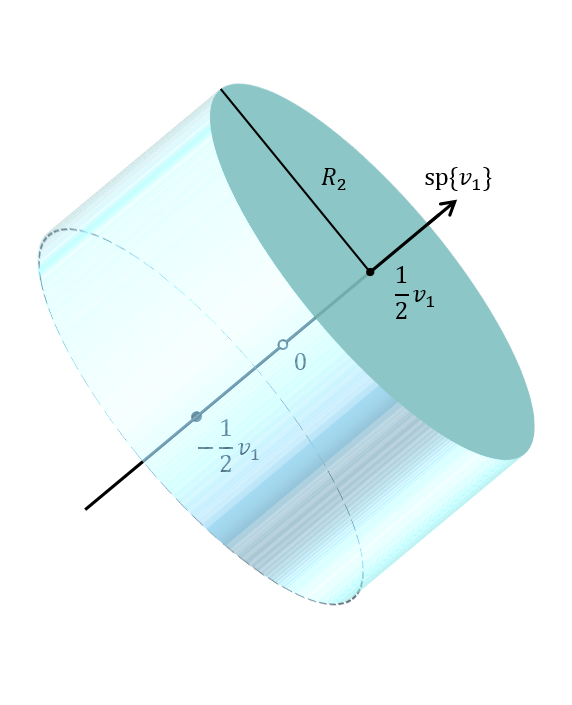}}\hspace*{\fill}

\caption{Lemma \ref{lem: Butz's Lemma}.}
\end{figure}

\begin{lem}
\label{lem: Butz's Lemma}Assume that $\lat_{\#}$ is a full lattice
in $\RR^{n}$ with covolume $\mathbf{v}$, and set $1\leq d\leq n$.
Let there be $d-1$ intervals $\left[\a_{i},\b_{i}\right]$ with $0\leq\a_{i}<\b_{i}$.
The number of rank $d$ subgroups of $\lat_{\#}$ whose covolume is
$\leq\cov$ and who satisfy that $\covol{\lat^{i}}\in\sbrac{\cov^{\a_{i}},\cov^{\b_{i}}}$
for some reduced basis $\cbrac{v_{1},\ldots,v_{d}}$ such that $\lat^{i}:=\sp_{\ZZ}\cbrac{v_{1},\ldots,v_{i}}$,
is $O_{n}\brac{\mathbf{v}^{-d}\cov^{e\left(\underline{\a},\underline{\b}\right)}}$,
where 
\[
e\brac{\underline{\a},\underline{\b}}=n-d+1+2\sum_{i=1}^{d-1}\b_{i}+\sum_{i=1}^{d-1}\brac{n-i}\brac{\b_{i}-\a_{i}}.
\]
\end{lem}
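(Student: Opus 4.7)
The plan is to upper-bound the number of admissible sublattices by counting ordered reduced bases $(v_1, \ldots, v_d) \in \lat_\#^d$ satisfying the covolume constraints, since each such sublattice contributes at least one reduced basis.

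At step $i$, with $v_1, \dots, v_{i-1}$ fixed, set $V_{i-1} = \sp_\RR\{v_1, \dots, v_{i-1}\}$ and decompose $v_i = u_i + w_i$ with $u_i \in V_{i-1}$ and $w_i \in \perpen{V_{i-1}}$. By Definition \ref{def: Siegel reduced basis}, $u_i$ lies in a parallelepiped of volume $\covol{\lat^{i-1}} \leq \cov^{\b_{i-1}}$, while $\|w_i\| = a_i = \covol{\lat^i}/\covol{\lat^{i-1}} \leq \cov^{\b_i - \a_{i-1}}$. Consequently $v_i$ lies in a cylinder $\mathcal{C}_i \subset \RR^n$ (the box in $V_{i-1}$ crossed with a ball of radius $a_i$ in the $(n{-}i{+}1)$-dimensional complement $\perpen{V_{i-1}}$) whose volume is $\porsmall_n \cov^{\b_{i-1} + (n-i+1)(\b_i - \a_{i-1})}$. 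A standard lattice-point estimate then bounds $\#(\lat_\# \cap \mathcal{C}_i) \porsmall_n \vol(\mathcal{C}_i)/\mathbf{v}$ whenever $\mathcal{C}_i$ is wide enough in every direction.

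Taking the product over $i = 1, \dots, d$, with the boundary conventions $\a_0 = \b_0 = 0$ and $\b_d = 1$ (encoding $\covol{\lat^0} = 1$ and $\covol{\lat^d} \leq \cov$), gives
\[
\#\{\text{ordered reduced bases}\} \porsmall_n \mathbf{v}^{-d}\, \cov^{\sum_{i=1}^{d}[\b_{i-1} + (n-i+1)(\b_i - \a_{i-1})]},
\]
and a short reindexing rewrites the exponent as $n - d + 1 + 2\sum_{i=1}^{d-1}\b_i + \sum_{i=1}^{d-1}(n-i)(\b_i - \a_i) = e(\underline\a, \underline\b)$, matching the claim.

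The hard part will be the thin-cylinder regime, where some direction of $\mathcal{C}_i$ is smaller than the corresponding successive minimum of $\lat_\#$ and the clean bound $\vol(\mathcal{C}_i)/\mathbf{v}$ fails, leaving only an additive $O(1)$ from the boundary. I expect to resolve this by inducting on the projected lattice $\pi_{\perpen{V_{i-1}}}(\lat_\#)$, whose covolume is $\mathbf{v}/\covol{\lat_\# \cap V_{i-1}}$; this distributes exactly one factor of $\mathbf{v}^{-1}$ per step and controls both the Minkowski successive minima and the boundary contribution uniformly. The possible strict containment $\lat^{i-1} \subsetneq \lat_\# \cap V_{i-1}$ only decreases the count, and is thus absorbed into the $O_n$-constant.
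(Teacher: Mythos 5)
Your proposal is essentially the same proof as the paper's: both count ordered reduced bases one vector at a time, confine $v_i$ to the product of a box of volume $\covol{\lat^{i-1}}$ inside $V_{i-1}$ (your parallelepiped is precisely the Dirichlet domain of the rectangular lattice $\tilde{\lat}^{i-1}=\sp_\ZZ\cbrac{a_1\phi_1,\ldots,a_{i-1}\phi_{i-1}}$ that the paper uses) with a ball of radius $R_i=\cov^{\b_i-\a_{i-1}}$ in $\perpen{V_{i-1}}$, and multiply the resulting volume bounds using the boundary conventions $\a_0=0$, $\b_d=1$.

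Where you go beyond the paper is in flagging the thin-cylinder regime; the paper applies $\#(\lat_\#\cap\mathcal{C}_i)=O\brac{\vol(\mathcal{C}_i)/\mathbf{v}}$ without comment. You do not need the projected-lattice induction you sketch to close this: reducedness already gives a lower bound on every width of $\mathcal{C}_i$. Indeed, by Lemma \ref{lem: BLC. facts about z in RS domain}(\ref{enu: entries of a increasing}) one has $a_j\geq(\sqrt3/2)^{j-1}a_1\geq(\sqrt3/2)^{n-1}\mathfrak{m}_1(\lat_\#)$, and $R_i\geq a_i$ whenever the count is nonzero, so each side of the cylinder is bounded below in terms of $\mathfrak{m}_1(\lat_\#)$. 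In the paper's only application (Proposition \ref{prop: very few SL(n,Z) points up the cusp} via Corollary \ref{cor: very few lattices with very short vector}) one has $\lat_\#=\ZZ^n$, hence $\mathfrak{m}_1=1$, so every side of $\mathcal{C}_i$ is bounded below by an $n$-dependent constant and the naive volume bound holds with an $O_n$ constant. For a general $\lat_\#$ the implied constant degrades as $\mathfrak{m}_1(\lat_\#)\to0$, so the uniform $O_n$ asserted in the statement is not literally justified by the paper's argument either; your caution is warranted for the lemma as stated, but the remedy is the lower bound on the $a_j$'s rather than a further induction.
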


\begin{proof}
Let $\lat<\lat_{\#}$ be a rank $d$ subgroup, and write $\lat=\sp_{\ZZ}\left\{ v_{1},\ldots,v_{n-1}\right\} $
where $\left\{ v_{1},\ldots,v_{n-1}\right\} $ is a reduced basis
for $\lat$. We use the notations introduced\textcolor{magenta}{{} }in
Definition \ref{def: Siegel reduced basis}: $\left\{ \phi_{1},\dots,\phi_{n-1}\right\} $
is the Gram-Schmidt basis obtained from $\left\{ v_{1},\ldots,v_{n-1}\right\} $,
$V_{i}$ is $\sp\left\{ v_{1},\ldots,v_{i}\right\} =\sp\left\{ \phi_{1},\dots,\phi_{i}\right\} $,
and $a_{i}$ is the projection of $v_{i}$ on the line orthogonal
to $V_{i-1}$ inside the space $V_{i}$, where $\sp\left\{ \emptyset\right\} $
is set to be the trivial subspace $\left\{ 0\right\} $. In other
words, $a_{i}$ is the distance of $v_{i}$ from the subspace $V_{i-1}$
(Figure \ref{fig: Butz Lemma - in plane v perp}). If $\lat$ is such
that $\covol{\lat^{i}}\in\left[\cov^{\a_{i}},\cov^{\b_{i}}\right]$,
then $a_{i}\leq R_{i}=X^{\b_{i}-\a_{i-1}}$. Denote the number of
possibilities for choosing $v_{i}$ given that $\lat^{i-1}$ is known
by $\#v_{i}\vert_{\lat^{i-1}}$. We first claim that for every $1\leq i\leq d$
\begin{equation}
\#v_{i}\vert_{\lat^{i-1}}=O\left(\brac{R_{i}}^{n-i+1}\cdot\mathbf{v}^{-1}\cdot\covol{\lat^{i-1}}\right).\label{eq: =000023 of possibilities for v_i}
\end{equation}
Indeed, for $i=1$, the number $\#v_{i}\vert_{\lat^{i-1}}$ is simply
the number of possibilities for choosing a $\lat_{\#}$ vector $v_{1}$
inside a ball of radius $a_{1}=\norm{v_{1}}$ in $\RR^{n}$, and therefore
\[
\#v_{1}\vert_{\lat^{0}}=\#\brac{\lat_{\#}\cap\ball{R_{1}}{}}=O\brac{\mathbf{v}^{-1}\cdot R_{1}^{n}}.
\]
For $i>1$, the orthogonal projection of $v_{i}$ to the subspace
$V_{i-1}$ must lie inside a Dirchlet domain of the lattice $\tilde{\lat}^{i-1}:=\sp_{\ZZ}\left\{ a_{1}\phi_{1},\dots,a_{i-1}\phi_{i-1}\right\} $.
Thus, $v_{i}$ has to be chosen from the set of $\lat_{\#}$ points
which are of distance $\leq a_{i}\leq R_{i}$ from the Dirichlet domain
for $\tilde{\lat}^{i-1}$ in $\sp_{\RR}\brac{\lat^{i-1}}$. These
are the $\lat_{\#}$ points that lie in a domain which is the product
of the Dirichlet domain for $\tilde{\lat}^{i-1}$ (in $V_{i-1}$)
with a ball of radius $R_{i}$ in the $n-\left(i-1\right)$ dimensional
subspace $V_{i-1}^{\perp}$ (Figure \ref{fig: Butz Lemma - cylinder}).
Denote this ball by $\ball{R_{i}}{n-\left(i-1\right)}$, and then

\begin{eqnarray*}
\#v_{i}\vert_{\lat^{i-1}} & \leq & \#\brac{\lat_{\#}\cap\cbrac{\ball{R_{i}}{n-\left(i-1\right)}\times\mbox{Dirichlet domain for \ensuremath{\tilde{\lat}^{i-1}}}}}\\
 & = & O\brac{\mathbf{v}^{-1}\cdot\vol(\ball{R_{i}}{n-\left(i-1\right)})\cdot\covol{\lat^{i-1}}}\\
 & = & O\brac{\mathbf{v}^{-1}\cdot\brac{R_{i}}^{n-i+1}\cdot\covol{\lat^{i-1}}}.
\end{eqnarray*}
This establishes Equation (\ref{eq: =000023 of possibilities for v_i}).
Now, the number of possibilities for $\lat$ is given by:
\[
\prod_{i=1}^{d}\brac{\#v_{i}\vert_{\lat^{i-1}}}=O\brac{\prod_{i=1}^{d}\brac{\mathbf{v}^{-1}\cdot\brac{R_{i}}^{n-i+1}\cdot\covol{\lat^{i-1}}}}
\]
\[
=O\brac{\mathbf{v}^{-d}\prod_{i=1}^{d}\brac{X^{\left(\b_{i}-\a_{i-1}\right)\left(n-i+1\right)}\cdot X^{\b_{i-1}}}}
\]
where $\a_{0}=0$ and $\b_{d}=1$ (as $\covol{\lat^{1}}=\left\Vert v_{1}\right\Vert \geq\cov^{0}$,
and $\covol{\lat^{d}}=\covol{\lat}\leq\cov^{1}$). Since 
\[
\sum_{i=1}^{d}\brac{\brac{n-i+1}\brac{\b_{i}-\a_{i-1}}+\b_{i-1}}=n-d+1+\sum_{i=1}^{d-1}\brac{n-i}\brac{\b_{i}-\a_{i}}+2\sum_{i=1}^{d-1}\b_{i}=e\brac{\underline{\a},\underline{\b}}
\]
then the number of lattices $\lat$ is bounded by $O\brac{\mathbf{v}^{-d}X^{e\brac{\underline{\a},\underline{\b}}}}$.
\end{proof}

\begin{cor}
\label{cor: very few lattices with very short vector}Assume that
$\lat_{\#}$ is a full lattice in $\RR^{n}$ with covolume $\mathbf{v}$,
let $1\leq d\leq n$, and $0<\om_{1}<\cdots<\om_{d-1}$. For every
$\e>0$, the number of rank $d$ subgroups of $\lat_{\#}$ with covolume
$\leq\cov$ which satisfy $\covol{\lat^{i}}\in\sbrac{1,\cov^{\om_{i}}}$
is $O_{\e}\,\brac{\mathbf{v}^{-d}\cdot\cov^{n-d+1+2\om+\e}}$, where
$\om:=\sum_{i=1}^{d-1}\om_{i}$.
\end{cor}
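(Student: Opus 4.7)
The corollary is the specialization of Lemma \ref{lem: Butz's Lemma} where each covolume is only bounded from above, i.e.\ where one would naively set $\a_i=0$ and $\b_i=\om_i$. Doing this directly in Lemma \ref{lem: Butz's Lemma} yields the exponent
\[
n-d+1+2\om+\sum_{i=1}^{d-1}(n-i)\om_i,
\]
which is larger than the target $n-d+1+2\om+\e$ because the term $\sum(n-i)(\b_i-\a_i)$ does not vanish. The plan is therefore to apply the lemma after subdividing each covolume range $[1,\cov^{\om_i}]$ into dyadic-style windows whose width in the exponent is a small parameter $\delta>0$.

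\textbf{Execution.} Fix $\delta>0$, to be chosen later as a function of $\e$. For each $1\le i\le d-1$, partition the interval $[0,\om_i]$ of allowed exponents into $J_i:=\lceil\om_i/\delta\rceil$ sub-intervals $[j\delta,(j+1)\delta]$ for $j=0,\ldots,J_i-1$. For every choice $\mathbf{j}=(j_1,\ldots,j_{d-1})$ I set $\a_i:=j_i\delta$ and $\b_i:=(j_i+1)\delta$, so that $\b_i-\a_i=\delta$ and $\b_i\le\om_i+\delta$. Every sublattice $\lat$ satisfying the hypotheses of the corollary has, for exactly one choice of $\mathbf{j}$, its partial covolumes $\covol{\lat^i}$ lying in $[\cov^{\a_i},\cov^{\b_i}]$ (with an at-most-finite overcounting on the boundaries, harmless for an upper bound). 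Applying Lemma \ref{lem: Butz's Lemma} to each such tuple gives
\[
e(\underline{\a},\underline{\b})=n-d+1+2\sum_{i=1}^{d-1}\b_i+\sum_{i=1}^{d-1}(n-i)(\b_i-\a_i)\le n-d+1+2\om+C_{n,d}\,\delta,
\]
with $C_{n,d}:=2(d-1)+\sum_{i=1}^{d-1}(n-i)$, uniformly in $\mathbf{j}$.

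\textbf{Summing up.} The number of tuples $\mathbf{j}$ is $\prod_i J_i\le\prod_i(\om_i/\delta+1)$, a constant depending only on $n,d,\om_1,\ldots,\om_{d-1}$ and $\delta$, but not on $\cov$. Hence the total count of admissible rank $d$ sublattices is bounded by
\[
\Bigl(\prod_i J_i\Bigr)\cdot O\!\left(\mathbf{v}^{-d}\,\cov^{n-d+1+2\om+C_{n,d}\,\delta}\right).
\]
Given $\e>0$, choose $\delta=\e/C_{n,d}$; the implied constant then depends on $\e$ (and on $n,d,\underline{\om}$), and the exponent becomes $n-d+1+2\om+\e$, as required.

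\textbf{Main obstacle.} There is no real obstacle—the argument is a clean $\e$-thickening of Lemma \ref{lem: Butz's Lemma}. The only point requiring care is that the exponent $2\sum\b_i$ overshoots $2\om$ by at most $2(d-1)\delta$ because the last window in each direction may stick out past $\om_i$; this is absorbed into the constant $C_{n,d}$ and is therefore controlled by the same choice of $\delta$.
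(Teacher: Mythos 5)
Your proposal is correct and follows essentially the same route as the paper's own proof: both subdivide each allowed exponent range $[0,\om_i]$ into windows of width $O(\delta)$ (the paper uses windows of width at most $\e$ ending exactly at $\om_i$, you use uniform width $\delta$ windows and absorb the overshoot into the constant), apply Lemma \ref{lem: Butz's Lemma} window-by-window, and sum over the finitely many tuples of window indices. The cosmetic differences (your $C_{n,d}$ is slightly larger because of the overshoot term, and your partition is uniform) do not change the argument.
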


\begin{proof}
Divide every interval $\sbrac{0,\om_{i}}$ into $N_{i}=N_{i}\brac{\om_{i}}$
sub-intervals
\[
0=\b_{0}^{i}<\b_{1}^{i}<\ldots<\b_{N_{i}}^{i}=\om_{i}
\]
such that $\vbrac{\b_{j}^{i}-\b_{j-1}^{i}}\leq\e$ for every $j=1,\ldots,N_{i}$.
By refining these partitions, we may assume without loss of generality
that $N_{1}=\ldots=N_{d-1}:=N$. Fix $j\in\left\{ 1,\ldots,N\right\} $;
according to Lemma \ref{lem: Butz's Lemma}, the number of rank $d$
subgroups $\lat$ of $\lat_{\#}$ with $\covol{\lat}\leq\cov$ and
$\covol{\lat^{i}}\in\sbrac{\cov^{\b_{j-1}^{i}},\cov^{\b_{j}^{i}}}$
for every $i=1\ldots d-1$ is of order $\cov$ to the power of 
\begin{eqnarray*}
n-d+1+2\sum_{i=1}^{d-1}\b_{j}^{i}+\sum_{i=1}^{d-1}\left(n-i\right)\brac{\b_{j}^{i}-\b_{j-1}^{i}} & \leq & n-d+1+2\sum_{i=1}^{d-1}\om_{i}+\sum_{i=1}^{d-1}\left(n-i\right)\cdot\e\\
 & = & n-d+1+2\om+\e\cdot\left(d-1\right)\left(n-d/2\right),
\end{eqnarray*}
where we have used $\vbrac{\b_{j}^{i}-\b_{j-1}^{i}}\leq\e$ and $\b_{j}^{i}\leq\om_{i}$. 

Let $\lat<\lat_{\#}$ be as in the statement. Since $\covol{\lat^{i}}$
lies in $\sbrac{\cov^{0},\cov^{\om_{i}}}$ for every $i=1,\ldots,d-1$,
then for every $i$ there exist $j_{1}^{i},\ldots j_{d-1}^{i}$ such
that $\covol{\lat_{i}}\in\sbrac{X^{\b_{j-1}^{i}},X^{\b_{j}^{i}}}$.
It follows that 
\[
\#\lat=O_{\e}\brac{\mathbf{v}^{-d}\sum_{\substack{\left\{ j_{1},\ldots j_{n-2}\right\} \\
\subset\left\{ 1,\ldots,N\right\} 
}
}X^{n-d+1+2\om+\e\cdot O_{n}\left(1\right)}}=O_{\e}\brac{\mathbf{v}^{-d}X^{n-d+1+2\om+\e}}.\tag*{\qedhere}
\]
\end{proof}

\begin{proof}[Proof of Proposition \ref{prop: very few SL(n,Z) points up the cusp}]
Let $\ga_{v}=ka_{\underline{s}}^{\prime\prime}a_{t}^{\prime}n\in\ga\in\funddom_{T}\cap\sl n\left(\ZZ\right)$.
By definition of $\funddom_{T}^{\left[\underline{\s}T;\infty\right]}$
we have that $\ga\in\funddom_{T}^{\left[\underline{\s}T;\infty\right]}$
if and only if $t\in\left[0,T\right]$ and $s_{i}\geq\s_{i}T$ for
some $i$. According to Proposition \ref{prop: explicit RI coordinates of g},
(by which $\covol{\lat_{v}^{i}}=e^{\frac{it}{n-1}-\frac{s_{i}}{2}}$),
and since $\lat_{v}^{i}$ is integral, we have that $1\leq\covol{\lat_{v}^{i}}\leq e^{(\frac{i}{n-1}-\frac{\s_{i}}{2})T}$.
Thus, the number of $\sl n\left(\ZZ\right)$-elements $\ga$ in $\funddom_{T}^{\left[\underline{\s}T;\infty\right]}$
is bounded by  the number of $\left(n-1\right)$-dimensional subgroups
$\lat_{v}$ of $\ZZ^{n}$ of co-volume $\leq e^{T}:=\cov$, for which
there exists $i\in\left\{ 1,\ldots,n-2\right\} $ such that $\covol{\lat_{v}^{i}}\in\sbrac{1,X^{\frac{i}{n-1}-\frac{\s_{i}}{2}}}$,
where for $j\neq i$ $\covol{\lat_{v}^{j}}\in\sbrac{1,X^{\frac{j}{n-1}}}$.
In other words, 
\[
\#\brac{\funddom_{T}^{\left[\underline{\s}T;\infty\right]}\cap\sl n\left(\ZZ\right)}=\#\brac{\bigcup_{\substack{\underline{u}=\brac{u_{1},\ldots,u_{n-2}}\\
\in\left\{ 0,1\right\} ^{n-2}-\left\{ 0\right\} 
}
}\cbrac{\lat_{v}:\forall i,\,\covol{\lat_{v}^{i}}\in\sbrac{1,X^{\frac{i}{n-1}-\frac{\sigma_{i}u_{i}}{2}}}}}
\]
which by Corollary \ref{cor: very few lattices with very short vector}
with $e^{T}=\cov$, $d=n-1$ and $\frac{i}{n-1}-\frac{\sigma_{i}u_{i}}{2}=\om_{i}$
equals to 
\[
\sum_{\underline{u}\in\left\{ 0,1\right\} ^{n-2}-\left\{ 0\right\} }O_{\e}\brac{X^{2+n-2+\e-\sum_{i=1}^{n-2}\sigma_{i}u_{i}}}=O_{\e}\brac{X^{n-\sigma_{\min}+\e}}
\]
where $\sigma_{\min}=\min\left\{ \sigma_{i}\right\} $. 
\end{proof}

\section{\label{sec: Almost-a-proof}Almost a proof for Theorems A and B}

In Section \ref{sec: Defining a counting problem} (see Corollary
\ref{cor: the sets we should count in}), the proof of Theorem \ref{thm: MainThm}
was reduced to counting integral matrices in three families of subsets
of $\shortdom\subset\sl n\left(\RR\right)$. But, as we shall see,
it is in fact sufficient to count integral matrices in only one of
these families, the one corresponding to part \ref{enu: MainThm_restricted Q}
of Theorem \ref{thm: MainThm}: $\left(\shortdom\right)_{T}\brac{\latset,\a}$.
The content of this section is a proof of Theorem \ref{thm: MainThm},
assuming the following (yet to be proved) counting statement in this
family:
\begin{prop}
\textcolor{green}{\label{prop: Counting with Hexagons (A counting)}}For
$\a\in\left(0,1\right)$, assume that $\latset\subseteq\latspace{n-1,n}$
is BCS. Set $\lm_{n}=n^{2}/\left(2\left(n^{2}-1\right)\right)$, and
let $\errexp_{n}$ be as in Theorem \ref{thm: MainThm}. 
\begin{enumerate}
\item For $\e\in\brac{0,\errexp_{n}}$, $\Svec=\brac{S_{1},\dots,S_{n-2}}$,
$\sumS=\sum_{i=1}^{n-2}S_{i}$ and every $T\geq\frac{\sumS}{n\lm_{n}\errexp_{n}}+O\left(1\right)$,
\[
\#\brac{\trunc{\brac{\shortdom}_{T}}{\Svec}\brac{\latset,\a}\cap\sl n\left(\ZZ\right)}=\frac{\mu\brac{\trunc{\brac{\shortdom}_{T}}{\Svec}\brac{\latset,\a}}}{\mu\left(\sl n\left(\RR\right)/\sl n\left(\ZZ\right)\right)}+O_{\e,\latset}\brac{e^{\sumS/\lm_{n}}e^{nT\left(1-\errexp_{n}+\e\right)}}.
\]
\item For $\e>0$, $\dl\in\left[0,\errexp_{n}-\e\right)$, $T\geq O\left(1\right)$
and $\Svec\left(T\right)$ such that $\sum S_{i}\left(T\right)\leq n\dl\lm_{n}T+O_{\latset}(1)$,
\[
\#\brac{\trunc{\brac{\shortdom}_{T}}{\Svec\left(T\right)}\brac{\latset,\a}\cap\sl n\left(\ZZ\right)}=\frac{\mu\brac{\trunc{\brac{\shortdom}_{T}}{\Svec\left(T\right)}\brac{\latset,\a}}}{\mu\left(\sl n\left(\RR\right)/\sl n\left(\ZZ\right)\right)}+O_{\e,\latset}\brac{e^{nT\left(1-\errexp_{n}+\dl+\e\right)}}.
\]
\end{enumerate}
\end{prop}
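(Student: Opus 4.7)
The plan is to deduce Proposition~\ref{prop: Counting with Hexagons (A counting)} from the effective lattice-point counting method of Gorodnik and Nevo applied to the ambient group $G=\sl n(\RR)$ and its arithmetic lattice $\sl n(\ZZ)$, which will be reviewed in Section~\ref{sec: Well roundededness}. That method provides an asymptotic of the form $\#(\sl n(\ZZ)\cap\Gset_T)=\mu(\Gset_T)/\mu(\sl n(\RR)/\sl n(\ZZ))+O(\mu(\Gset_T)^{1-\errexp_n+\e})$ for any family $\{\Gset_T\}$ that is Lipschitz well-rounded, i.e.\ the inner and outer $\eta$-thickenings $\nbhd{\eta}{+}\Gset_T$ and $\nbhd{\eta}{-}\Gset_T$ differ in measure from $\Gset_T$ by at most a factor $1+C\eta$, uniformly in $T$ as $\eta\to 0$. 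The exponent $\errexp_n=\lceil(n-1)/2\rceil/(4n^2)$ reflects the best known bound toward Ramanujan for $\sl n$.

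First, I would compute the main term. In the refined Iwasawa coordinates of Section~\ref{sec: RI components}, Notation~\ref{nota: Definitoin of subsets to count in} and the measure factorisation~(\ref{eq: Haar measure on G}) give
\begin{equation*}
\mu\brac{\trunc{\brac{\shortdom}_T}{\Svec}\brac{\latset,\a}}
\;=\;\brac{\int_{\trunc{\parby{\wc}{\latset}}{\Svec}}\Leb{\ellipse{\zgt q}{\a}}\,d\mu_{\wc}(q)}\cdot\int_0^T e^{nt}\,dt,
\end{equation*}
whose leading order is $\porequal e^{nT}$ and agrees with the main term of Theorem~\ref{thm: MainThm}(\ref{enu: MainThm_restricted Q}) once one projects via Lemma~\ref{lem: lift to G''} and invokes the definition of $\dirfunc$.

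Second, I would verify Lipschitz well-roundedness of the family $\{\trunc{\brac{\shortdom}_T}{\Svec}\brac{\latset,\a}\}_T$. An $\eta$-perturbation in $G$ induces, under the RI parameterisation, perturbations of the $K^{\prime}$ and $A^{\prime}$ components by $O(\eta)$ uniformly, whereas the $N^{\prime}$ and $G^{\dprime}$ components can be distorted by factors polynomial in the entries of the $A^{\dprime}$-factor; the truncation by $\Svec$ bounds these distortions by $e^{O(\sumS)}$. Combined with the Lipschitz dependence of the Dirichlet ellipses $\ellipse{\zgt q}{\a}$ on $q$ (whose Lipschitz constant is polynomial in the covering radius, which is in turn bounded on $\trunc{\parby{\wc}{\latset}}{\Svec}$ via Lemma~\ref{lem: BLC. facts about z in RS domain}), this yields the well-roundedness estimate
\begin{equation*}
\mu\brac{\nbhd{\eta}{\pm}\trunc{\brac{\shortdom}_T}{\Svec}\brac{\latset,\a}}\;\le\;\brac{1+C\,\eta\,e^{\sumS/\lm_n}}\,\mu\brac{\trunc{\brac{\shortdom}_T}{\Svec}\brac{\latset,\a}},
\end{equation*}
with $\lm_n=n^2/(2(n^2-1))$ arising from the optimal balance between $A^{\prime}$ and $A^{\dprime}$ distortions through the RI decomposition. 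Feeding this into the Gorodnik--Nevo counting theorem with the optimal choice $\eta\porequal e^{-nT\errexp_n}$ yields part~(1) directly, producing the error $O_{\e,\latset}(e^{\sumS/\lm_n}e^{nT(1-\errexp_n+\e)})$. Part~(2) then follows from part~(1) by specialising $\Svec=\Svec(T)$: the hypothesis $\sum S_i(T)\le n\dl\lm_n T+O(1)$ converts $e^{\sumS/\lm_n}$ into $e^{n\dl T}$, which absorbs into the stated error $O_{\e,\latset}(e^{nT(1-\errexp_n+\dl+\e)})$.

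The main obstacle will be the quantitative verification of Lipschitz well-roundedness, specifically (i) the explicit control of how the Dirichlet ellipses $\ellipse{\zgt q}{\a}$ vary with $q$ (a map that is only locally Lipschitz, with a constant that deteriorates near the cusp of $\symfund{n-1}$), and (ii) the change-of-variables estimate between Haar measure on $G$ and the product measure on its RI components under $\eta$-perturbation, which requires the precise Jacobian bounds that justify the constant $\lm_n$. These two ingredients are exactly the content that the outline defers to Sections~\ref{sec: Regularity-results-forA}--\ref{sec: Family for gcd solution is BLC}; my plan consumes them as inputs and combines them with the Gorodnik--Nevo machinery to conclude.
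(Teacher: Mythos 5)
Your overall plan is the same as the paper's: compute the main term via the RI measure factorisation, verify Lipschitz well-roundedness via the roundomorphism to $K\times A'\times A''\times N''\times N'$, feed the Lipschitz constant into the Gorodnik--Nevo counting theorem, and deduce part (2) from part (1) by specialising $\Svec=\Svec(T)$. That is indeed the structure of the paper's proof (Section \ref{sec: Concluding the proofs}, reducing to Theorem \ref{prop: counting in fibered sets over F_m}).

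However, your account of where $\lm_n$ enters is misplaced. You write the LWR constant as $e^{\sumS/\lm_n}$ and attribute $\lm_n$ to ``the optimal balance between $A'$ and $A''$ distortions.'' Neither is right. By Assumption \ref{assum: assumption that 2rho(H'')<0} and Lemma \ref{lem: err(a) in SLn}, the $A'$-part contributes nothing to the roundomorphism error $f$: only the $A''$-coordinates enter, giving the Lipschitz constant $C_{\Gset}\prec e^{2\sumS}$ (not $e^{\sumS/\lm_n}$). The factor $\lm_n$ appears \emph{afterwards}, through the exponent $\tfrac{\dim G}{1+\dim G}=\tfrac{n^2-1}{n^2}=\tfrac{1}{2\lm_n}$ in the error term of Theorem \ref{thm: GN Counting thm}: $C_{\Gset}^{\dim G/(1+\dim G)}\asymp e^{2\sumS\cdot(n^2-1)/n^2}=e^{\sumS/\lm_n}$. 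The same exponent is what produces the threshold $T\ge \sumS/(n\lm_n\errexp_n)+O(1)$, via the condition $\mu(\Gset_T)^{\errexp}\gg C_{\Gset}^{\dim G/(1+\dim G)}$ defining $T_1$ in the counting theorem --- a condition your sketch does not mention. Finally, the phrase ``feeding this into GN with the optimal choice $\eta\asymp e^{-nT\errexp_n}$'' suggests a free parameter to optimize, but Theorem \ref{thm: GN Counting thm} is already a packaged black box whose error depends only on $C_{\Gset}$ and $\mu(\Gset_T)$; no further choice of $\eta$ is made by the user. These are bookkeeping rather than conceptual errors, but they do affect whether the stated error exponent $\errexp_n$ and the hypothesis on $T$ in part (1) come out correctly.
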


Notice that the difference between parts 1 and 2 of the proposition
above is that in the first part $\Svec$ is fixed, while in the second
part, at the cost of compromising the error term, we allow the sum
of $S_{i}$-s to grow proportionally to $T$. 
\begin{rem}
\label{rem: Counting when shapes are restricted}If $\latset\subset\latspace{n-1,n}$
is also bounded, then for suitable $\Svec$ one has that $\brac{\shortdom}_{T}^{\Svec}\brac{\latset,\a}=\brac{\shortdom}_{T}\brac{\latset,\a}$;
thus in this case part 1 of Proposition \ref{prop: Counting with Hexagons (A counting)}
can be written as
\[
\#\left(\brac{\shortdom}_{T}\cap\sl n\left(\ZZ\right)\right)=\mu\left(\brac{\shortdom}_{T}\right)/\mu\left(\sl n\left(\RR\right)/\sl n\left(\ZZ\right)\right)+O_{\e}(e^{nT\left(1-\errexp_{n}+\e\right)}),
\]
where the implied constant depends on $\latset$. 
\end{rem}

The proof of Proposition \ref{prop: Counting with Hexagons (A counting)}
is in Section \ref{sec: Concluding the proofs}. Let us now prove
Theorem \ref{thm: MainThm} based on this proposition:
\begin{proof}[Proof of Theorem \ref{thm: MainThm}]
According to Corollary \ref{cor: the sets we should count in} and
to Lemma \ref{lem: w and w perp}, the quantities we seek to estimate
in parts (\ref{enu: MainThm_restricted shape}), (\ref{enu: MainThm_restricted G''})
and (\ref{enu: MainThm_restricted Q}) of the theorem is in one to
one correspondence with the integral matrices in the following subsets
of $\sl n\left(\RR\right)$: (1) $\brac{\shortdom}_{T}\brac{\sphereset,\symset,\a}$,
(2) $\brac{\shortdom}_{T}\brac{\sphereset,\ensuremath{\uniset},\a}$,
or (3) $\brac{\shortdom}_{T}\brac{\latset,\a}$. Observe that, indeed
the main terms in the theorem are the volumes of these sets, divided
by the measure of $\sl n\left(\RR\right)/\sl n\left(\ZZ\right)$.
Let us demonstrate the computation for the case of the family (1),
for which we recall the notation for the fibers $K_{z_{i}}^{\dprime}$
and the generic fiber $K_{\text{gen}}^{\dprime}$ appearing in the
proof of Lemma \ref{lem: lift to G''}:
\[
\mu\left(\left(\shortdom\right)_{T}\brac{\sphereset,\symset,\a}\right)=\mu\left(\bigcup_{p^{\dprime}\in\parby{P^{\dprime}}{\symset}}\parby{K^{\prime}}{\sphereset}\cdot K_{z^{p^{\dprime}}}^{\dprime}\cdot p^{\dprime}\cdot A_{T}^{\prime}\parby{N^{\prime}}{\ellipse{\zgt{p^{\dprime}}}{\a}}\right)=
\]
\[
=\mu_{K^{\prime}}\left(K_{\sphereset}^{\prime}\right)\mu_{K^{\dprime}}\left(K_{\text{gen}}^{\dprime}\right)\mu_{A^{\prime}}\left(A_{T}^{\prime}\right)\int_{\parby{P^{\dprime}}{\symset}\cap\interior{\symfund{n-1}}}\mu_{N^{\prime}}\left(\parby{N^{\prime}}{\ellipse{\zgt{p^{\dprime}}}{\a}}\right)d\mu_{P^{\dprime}}\left(p^{\dprime}\right)
\]
\[
+\sum_{i}\mu_{K}\left(K_{\sphereset}^{\prime}K_{z_{i}}^{\dprime}\right)\mu_{A^{\prime}}\left(A_{T}^{\prime}\right)\int_{\substack{\parby{P^{\dprime}}{\symset}\cap\del\symfund{n-1}\cap\\
\left\{ p^{\dprime}:\sym^{+}\left(\lat_{p^{\dprime}}\right)=\sym^{+}\left(\lat_{z_{i}}\right)\right\} 
}
}\mu_{N^{\prime}}\left(\parby{N^{\prime}}{\ellipse{\zgt{p^{\dprime}}}{\a}}\right)d\mu_{P^{\dprime}}\left(p^{\dprime}\right),
\]
where we have used: the definition \ref{nota: Definitoin of subsets to count in}
for $\left(\shortdom\right)_{T}\brac{\sphereset,\symset,\a}$, Formula
\ref{eq: Haar measure on G} for the decomposition of $\mu$ to RI
components and Proposition \ref{prop: generic fiber} which tells
us that all the interior points in $\symfund{n-1}$ have the generic
fiber. Now, the second summand is of measure zero, since the boundary
of $\symfund{n-1}$ is such, so we are left only with the first summand. 

Since, by ``Measures on the RI components'' in Section \ref{sec: RI components},
$\mu_{N^{\prime}}$ is the Lebesgue measure on $\RR^{n-1}$, $\mu_{K^{\prime}}$
is the Lebesgue measure on $\sphere{n-1}$, the volume of $\so{n-1}\left(\RR\right)$
is $\prod_{i=1}^{n-2}\Leb{\sphere i}$ (implying that the measure
of $K_{\text{gen}}^{\dprime}$ is $\prod_{i=1}^{n-2}\Leb{\sphere i}/\sn\left(n-1\right)$),
and the $\mu_{A^{\prime}}\left(A_{T}^{\prime}\right)=e^{nT}/n$,
and since by Proposition \ref{prop: spread models that we need}
we can pass from integration on $\symfund{n-1}$ to integration on
$\shapespace{n-1}$, we have that the above equals to 
\[
\frac{e^{nT}\cdot\mu_{\sphere{n-1}}\left(\sphereset\right)\cdot\prod_{i=1}^{n-2}\Leb{\sphere i}}{n\cdot\sn\left(n-1\right)}\int_{\symset}\dirfunc(z)d\mu_{\shapespace{n-1}}\left(z\right),
\]
as wanted.

We claim that it is sufficient to prove part (\ref{enu: MainThm_restricted Q})
of the theorem, since parts (\ref{enu: MainThm_restricted shape})
and (\ref{enu: MainThm_restricted G''}) are special cases. Indeed,
family (1) is a special case of family (2), when taking $\ensuremath{\uniset}\subseteq\unilatspace{n-1}$
to be the inverse image of $\symset\subseteq\unilatspace{n-1}$.
This is because Lemma \ref{lem: lift to G''} gives that the lift
$\ensuremath{\uniset}$ is a BCS when $\symset$ is, so the assumption
of part 1 of the theorem implies the assumption in part 2 for the
lifts; moreover, this Lemma gives that $\text{\ensuremath{\mu_{\unilatspace{n-1}}\brac{\ensuremath{\uniset}}}}=\mu_{\shapespace{n-1}}\brac{\symset}{\scriptstyle \prod}_{i=1}^{n-2}\Leb{\sphere i}/\sn\left(n-1\right)$,
so the main term provided in part 2 for $\ensuremath{\uniset}$ and
$\sphereset$ coincides with the one provided in part 1 of this theorem
for $\symset$ and $\sphereset$. Similarly, family (2) is a special
case of family (3), when taking $\latset$ such that $\parby{\wc}{\latset}=\parby{K^{\prime}}{\sphereset}\parby{G^{\dprime}}{\ensuremath{\uniset}}$.
By Lemma \ref{lem: lift to G''}, $\latset$ is a BCS when $\ensuremath{\uniset}$
and $\sphereset$ are, and $\mu_{\latspace{n,n-1}}\brac{\latset}=\mu_{\unilatspace{n-1}}\brac{\ensuremath{\uniset}}\mu_{\sphere{n-1}}\brac{\sphereset}$.
We therefore prove only part 3 of Theorem \ref{thm: MainThm}.  

\textbf{(i)} Let us first consider the case where $\latset$ is \textbf{\emph{not
bounded}}, and therefore $\brac{\shortdom}_{T}\brac{\latset,\a}$
is not bounded also. Fix $\e\in\brac{0,\errexp_{n}}$, $\delta\in\brac{0,\errexp_{n}-\e}$,
and $\sigmn:=\brac{\frac{\dl n\lm_{n}}{n-2}-\e}\cdot\Onevec$, where
$\lm_{n}=\frac{n^{2}}{2\left(n^{2}-1\right)}$; note that the sum
of the coordinates of $\sigmn$ is $\dl n\lm_{n}-\left(n-2\right)\e$,
which is smaller than $\dl n\lm_{n}+O_{\latset}(1)/T$ for $T$ large
enough. Using Proposition \ref{prop: very few SL(n,Z) points up the cusp},
we reduce to counting in compact sets $\brac{\shortdom}_{T}^{\sigmn T}\brac{\latset,\a}$,
and pay with an error term of $O_{\e}(e^{nT\brac{1-\frac{\dl\lm_{n}}{n-2}+\frac{\e}{n}+\e}})$,
which we can write as $O_{\e}(e^{nT\brac{1-\frac{\dl\lm_{n}}{n-2}+\e}})$
since $\e$ is arbitrary. \textbf{(ii)} Counting integral matrices
in the sets $\brac{\shortdom}_{T}^{\sigmn T}\brac{\latset,\a}$ will
complete the proof, and it is performed using (the second part of)
Proposition \ref{prop: Counting with Hexagons (A counting)}, based
on which it equals
\[
\mu(\,\brac{\shortdom}_{T}^{\sigmn T}\brac{\latset,\a})+O_{\e}(e^{nT\left(1-\errexp_{n}+\dl+\e\right)}).
\]
Since $\mu\left(\brac{\shortdom}_{T}\brac{\latset,\a}\right)=\mu(\brac{\shortdom}_{T}^{\sigmn T}\brac{\latset,\a})+O(e^{nT(1-\frac{\dl\lm_{n}}{n-2})})$
(see remark about the measure in Notation \ref{nota: def of H(T,S)}),
and the error term is swallowed in the one obtained in step (i), we
obtain
\[
\#\left(\brac{\shortdom}_{T}\brac{\latset,\a}\cap\sl n\left(\ZZ\right)\right)=\mu\left(\brac{\shortdom}_{T}\brac{\latset,\a}\right)+O_{\e}(e^{nT\left(1-\errexp_{n}+\dl+\e\right)})+O_{\e}(e^{nT(1-\frac{\dl\lm_{n}}{n-2}+\e)}).
\]
\textbf{(iii)} We now choose $\dl$ that will balance the two error
terms above: $1-\errexp_{n}+\dl=1-\frac{\dl\lm_{n}}{n-2}$ if and
only if $\dl=\errexp_{n}/(1+\frac{\lm_{n}}{n-2})=\errexp_{n}\cdot\left(1-\frac{n^{2}}{2n^{3}-3n^{2}-2n+4}\right)$
. Then the final error term for non bounded $\latset$ is $e^{nT\left(1-\errexp_{n}\cdot\frac{n^{2}}{2n^{3}-3n^{2}-2n+4}\right)}$.
\textbf{(iv)} Moving forward to \textbf{\emph{bounded}} $\latset$,
we repeat a similar strategy as in the unbounded case, performing
only step (ii). Fix $\e\in\left(0,\errexp_{n}\right)$ and apply Proposition
\ref{prop: Counting with Hexagons (A counting)} (case of Remark \ref{rem: Counting when shapes are restricted})
to obtain that the number of integral matrices in $\brac{\shortdom}_{T}\brac{\latset,\a}$
is $\mu\left(\brac{\shortdom}_{T}\brac{\latset,\a}\right)+O_{\e}\left(e^{nT\left(1-\errexp+\e\right)}\right)$.
This completes the proof for the bounded case in the theorem. 
\end{proof}

\begin{proof}[Proof of Theorem \ref{thm: |w_v|/|v| to zero}]
Let 
\[
\Gset=\,\shortdom\,\cap\,\left\{ g=ka_{\svec}^{\dprime}a_{t}^{\prime}n^{\prime}:s_{n-2}\in[0,t/2]\right\} .
\]
We define 
\[
\cA=\cbrac{v\in\ZZ_{\prim}^{n}:\ga_{v}\in\Gset},
\]
and claim that it is a set of full density in $\ZZ_{\prim}^{n}$.
In fact, we show that $\ZZ_{\prim}^{n}-\cA$ is a set of density zero.
For this, we note that the set $\shortdom-\Gset$ is contained in
the set 
\[
\widetilde{\Gset}=\lim_{T\to\infty}\widetilde{\Gset}_{T}
\]
where
\begin{align*}
\widetilde{\Gset}_{T} & =\coprod_{t=1}^{T}\brac{\left(\shortdom\right)_{t}-\left(\shortdom\right)_{t-1}}\cap\left\{ g=ka^{\prime}a_{\svec}^{\dprime}n:\begin{matrix}0\leq s_{1},\ldots,s_{n-3},\\
\frac{1}{2}(t-1)\leq s_{n-2}
\end{matrix}\right\} .
\end{align*}
 Note that $\widetilde{\Gset}_{T}$ can also be written as the disjoint
union 
\begin{equation}
\widetilde{\Gset}_{T}=\coprod_{t=1}^{T}\brac{\left(\shortdom\right)_{t}-\trunc{\left(\shortdom\right)_{t}}{(0,\ldots,0,\frac{1}{2}(t-1))}}-\brac{\left(\shortdom\right)_{t-1}-\trunc{\left(\shortdom\right)_{t-1}}{(0,\ldots,0,\frac{1}{2}(t-1))}}.\label{eq: Thm_A_aux}
\end{equation}
As a result, the volume of $\widetilde{\Gset}_{T}$ can be bounded
as follows: 
\begin{align*}
\mu\left(\widetilde{\Gset}_{T}\right) & \leq\sum_{t=1}^{T}\mu\brac{\left(\shortdom\right)_{t}-\trunc{\left(\shortdom\right)_{t}}{(0,\ldots,0,\frac{1}{2}(t-1))}}\\
 & \porsmall\sum_{t=1}^{T}\left(\int_{\frac{1}{2}\brac{t-1}}^{\infty}e^{-s_{n-2}}ds_{n-2}\right)\int_{t-1}^{t}e^{n\tau}d\tau\\
 & =\frac{1}{n}\sum_{t=1}^{T}e^{-\frac{1}{2}t+\frac{1}{2}}(e^{nt}-e^{n(t-1)})\\
 & \leq Te^{(n-\frac{1}{2})T}.
\end{align*}
The presentation in (\ref{eq: Thm_A_aux}) can also be used to estimate
the number of $\sl n\left(\ZZ\right)$ elements in $\widetilde{\Gset}_{T}$,
by counting $\sl n\left(\ZZ\right)$ elements in each of the summands
separately. For this, Let $\dl$ and $\sigmn=\brac{\s_{1},\ldots,\s_{n-2}}$
 be as in the proof of the unbounded case in Theorem \ref{thm: MainThm}.
Going along the lines of this proof, we can reduce counting in each
(non-compact) summand to counting in the truncated set 
\begin{eqnarray*}
 &  & \trunc{\brac{\shortdom}_{t}}{\sigmn t}-\trunc{\brac{\shortdom}_{t}}{(\s_{1}t,\ldots,\s_{n-3}t,\min\cbrac{\s_{n-2}t,\frac{1}{2}(t-1)})}\\
 &  & \left.\right.\hfill\left.\right.\hfill-\left(\trunc{\brac{\shortdom}_{t-1}}{\sigmn t}-\trunc{\brac{\shortdom}_{t-1}}{(\s_{1}t,\ldots,\s_{n-3}t,\min\cbrac{\s_{n-2}t,\frac{1}{2}(t-1)})}\right),
\end{eqnarray*}
since according to Proposition \ref{prop: very few SL(n,Z) points up the cusp},
the difference in the amount of lattice points inside each summand
and its truncation lies in $O_{\e}(e^{nt\brac{1-\frac{\dl\lm_{n}}{n-2}+\e}})$,
and the difference between their measures is also swallowed in this
error estimate (see remark about the measure in Notation \ref{nota: def of H(T,S)}).
Using Proposition \ref{prop: Counting with Hexagons (A counting)}(ii)
to estimate the amount of lattice points in each truncated summand,
we obtain that the number of lattice points in each (full) summand
is its measure divided by $\mu(\sl n(\RR)/\sl n(\ZZ))$, up to an
error term of order $O_{\e}(e^{nt\brac{1-\frac{\dl\lm_{n}}{n-2}+\e}})$.
As a result, 
\[
\#\brac{\widetilde{\Gset}_{T}\cap\sl n(\ZZ)}=\frac{\mu(\widetilde{\Gset}_{T})}{\mu(\sl n(\RR)/\sl n(\ZZ))}+O_{\e}(Te^{nT\brac{1-\frac{\dl\lm_{n}}{n-2}+\e}}).
\]

According to Corollary \ref{cor: the sets we should count in},
\[
\lim_{T\to\infty}\frac{\#\brac{\brac{\ZZ_{\prim}^{n}-\cA}\cap\ball{e^{T}}{}}}{\#\brac{\ZZ_{\prim}^{n}\cap\ball{e^{T}}{}}}=\lim_{T\to\infty}\frac{\#\brac{\brac{\brac{\shortdom}_{T}-\Gset}\cap\sl n(\ZZ)}}{\#\brac{\brac{\shortdom}_{T}\cap\sl n(\ZZ)}}
\]
which by definition of $\widetilde{\Gset}$ is at most
\[
\leq\lim_{T\to\infty}\frac{\#\brac{\widetilde{\Gset}_{T}\cap\sl n(\ZZ)}}{\#\brac{\brac{\shortdom}_{T}\cap\sl n(\ZZ)}}.
\]
The denominator in the above limit is, according to Proposition \ref{prop: primitive vectors correspond to integral matrices}
and Theorem \ref{thm: MainThm} , asymptotic to 
\[
\frac{\mu(\brac{\shortdom}_{T})}{\mu(\sl n(\RR)/\sl n(\ZZ))}\porequal e^{nT}.
\]
We then have that 
\[
\lim_{T\to\infty}\frac{\#\brac{\brac{\ZZ_{\prim}^{n}-\cA}\cap\ball{e^{T}}{}}}{\#\brac{\ZZ_{\prim}^{n}\cap\ball{e^{T}}{}}}\leq\lim_{T\to\infty}\frac{\mu(\widetilde{\Gset}_{T})}{\mu(\brac{\shortdom}_{T})}=\lim_{T\to\infty}\frac{Te^{(n-\frac{1}{2})T}}{e^{nT}}=0,
\]
which establishes that $\cA$ is a set of full density in $\ZZ_{\prim}^{n}$. 

For a primitive vector $v$ with large enough norm, we have by Lemma
\ref{lem: w and w perp} and definition of $w_{v}$ that
\[
0<\frac{\norm{w_{v}}}{\norm v}\porsmall\frac{\norm{\perpen{w_{v}}}}{\norm v}\leq\frac{\rad_{v}}{\norm v}=\frac{\rad_{v}}{\covol{\lat_{v}}}.
\]
Minkowski's 2nd Theorem gives us that $\rad_{v}=\rad(\lat_{v})\porequal\mathfrak{m}_{n-1}(\lat_{v})$,
where $\mathfrak{m}_{i}$ denotes the $i$th successive minima. From
\cite[Theorem 7.9]{GM02} we have that $\covol{\lat_{v}}\porequal\mathfrak{m}_{1}(\lat_{v})\cdots\mathfrak{m}_{n-1}(\lat_{v})$.
Thus the above can be further estimated as: 
\[
\porsmall\frac{\mathfrak{m}_{n-1}(\lat_{v})}{\mathfrak{m}_{1}(\lat_{v})\cdots\mathfrak{m}_{n-1}(\lat_{v})}=\frac{1}{\mathfrak{m}_{1}(\lat_{v})\cdots\mathfrak{m}_{n-2}(\lat_{v})}\porsmall\covol{\lat_{v}^{n-2}}^{-1},
\]
where by Proposition \ref{prop: explicit RI coordinates of g}(iii),
\[
\porsmall e^{\frac{s_{n-2}}{2}-\frac{n-2}{n-1}t}\leq e^{-\frac{3n-6}{4(n-1)}t}.
\]
The above decays to $0$ if (and only if) $n>2$, and we are done
\textendash{} since if $\{v_{m}\}\subset\cA$ diverges, then $\ga_{v_{m}}=k_{m}a_{m}^{\dprime}a_{t_{m}}^{\prime}n_{m}$
with $t_{m}\to\infty$ as $m\to\infty$, implying that $\norm{w_{v_{m}}}/\norm{v_{m}}\to0$. 
\end{proof}

\part{Counting lattice points\label{part: Technical Part}}

This second part is the technical part of the paper, where we prove
Proposition \ref{prop: Counting with Hexagons (A counting)}, in order
to conclude the proof of Theorem \ref{thm: MainThm}. This proposition
concerns counting lattice points in $\sl n\left(\RR\right)$; our
main tool for this purpose is a method introduced in \cite{GN1} for
counting lattice points in increasing families of sets inside semisimple
Lie groups. The advantages of this method is that it produces an error
term, and that it allows counting in quite general families, requiring
only that these families are \emph{well rounded}, which is a regularity
condition. The cost of this generality is that the property of well
roundedness is often hard to verify. In \cite{HK_WellRoundedness}
we develop a machinery to somewhat simplify this process, mainly by
allowing us to replace the underlying simple group $G=KAN$ with the
much-easier-to-work-in Cartesian product $K\times A\times N$; we
will refer to some technical results from there in the course of Part
II. 

\section{\label{sec: Well roundededness} Counting lattice points in well
rounded families of sets inside Lie groups }

We begin in Subsection \ref{subsec: GN method} by describing the
counting lattice points method that we will use, and proceed in Subsection
\ref{subsec: Plan of proof} with laying out a plan of proof for Proposition
\ref{prop: Counting with Hexagons (A counting)}. From now on, we
use $\gam$ to denote a general lattice in a Lie group, hence abandoning
the notation in  Section \ref{sec: Integral matrices representing primitive vectors}.

\subsection{A method for lattice points counting in Well rounded families \label{subsec: GN method}}

In this subsection we briefly describe the counting method developed
in \cite{GN1}. This approach, aimed at counting lattice points in
increasing families of sets inside non-compact algebraic simple Lie
groups, consists of two ingredients: a regularity condition on the
sets involved, and a spectral estimate concerning the unitary $G$
representation $\pi_{G/\Lat}^{0}:G\to L_{0}^{2}\left(G/\Lat\right)$
(the orthogonal complement of the $G$ invariant $L^{2}$ functions).
Before stating the counting theorem \ref{thm: GN Counting thm} from
\cite{GN1}, we describe the two ingredients, starting with the regularity
condition. 
\begin{defn}
\label{def: well--roundedness}Let $G$ be a Lie group with a Borel
measure $\mu$, and let $\cbrac{\nbhd{\e}{}}_{\e>0}$ be a family
of identity neighborhoods in $G$. Assume $\left\{ \Gset_{T}\right\} _{T>0}\subset G$
is a family of measurable domains and denote 
\[
\Gset_{T}^{\left(+\e\right)}:=\nbhd{\e}{}\Gset_{T}\nbhd{\e}{}=\bigcup_{u,v\in\nbhd{\e}{}}u\,\Gset_{T}\,v,
\]
\[
\Gset_{T}^{\left(-\e\right)}:=\bigcap_{u,v\in\nbhd{\e}{}}u\,\Gset_{T}\,v
\]
(see Figure \ref{fig: Well-Roundedness}). The family $\left\{ \Gset_{T}\right\} $
is \emph{Lipschitz well rounded (LWR)} with (positive) parameters
$\brac{\mathcal{C},T_{0}}$ if for every $0<\e<1/\mathcal{C}$ and
$T>T_{0}$: 
\begin{equation}
\mu\left(\Gset_{T}^{\left(+\e\right)}\right)\leq\left(1+\mathcal{C}\e\right)\:\mu\left(\Gset_{T}^{\left(-\e\right)}\right).\label{eq:LWReq}
\end{equation}
The parameter $\mathcal{C}$ is called the \emph{Lipschitz constant}
of the family $\left\{ \Gset_{T}\right\} $. 
\end{defn}

\begin{figure}
\hspace*{\fill}\subfloat[\foreignlanguage{british}{The set $\protect\Gset_{T}$}]{\includegraphics[scale=0.5]{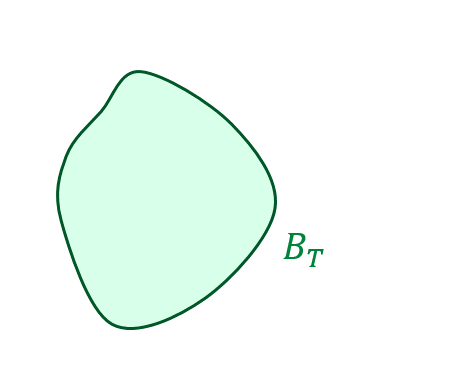}}\hspace*{\fill}\subfloat[\foreignlanguage{british}{The set $\protect\Gset_{T}$ is perturbed by ${\cal O}_{\protect\e}$}]{\hspace*{5mm}\includegraphics[scale=0.5]{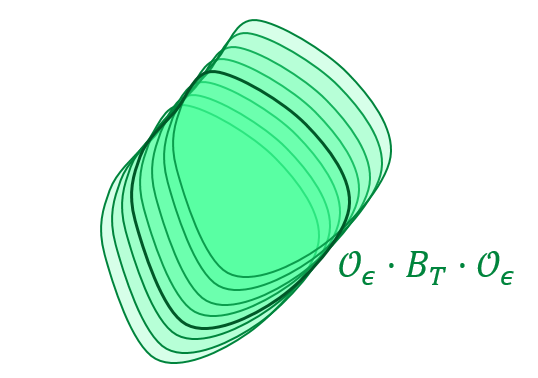}\hspace*{5mm}}\hspace*{\fill}\subfloat[$\protect\Gset_{T}^{\left(-\protect\e\right)}$ and $\protect\Gset_{T}^{\left(+\protect\e\right)}$]{\includegraphics[scale=0.5]{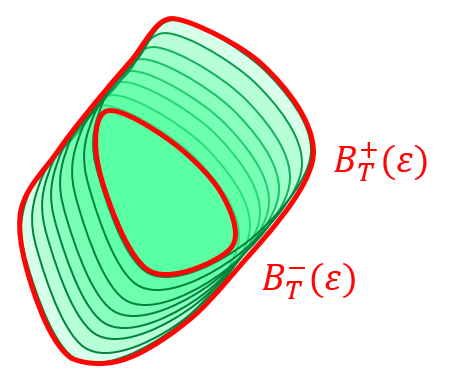}}\hspace*{\fill}

\caption{Well roundedness.\label{fig: Well-Roundedness}}
\end{figure}

The definition above allows any family $\left\{ \nbhd{\e}{}\right\} _{\e>0}$
of identity neighborhoods; in this paper we shall restrict to the
following:
\begin{assumption}
\label{assu: Our O_e }We will assume that $\nbhd{\e}G=\exp\left(\ball{\e}{}\right)$,
where $B_{\e}$ is an origin-centered $\e$-ball inside the Lie algebra
of $G$, and $\exp$ is the Lie exponent. 
\end{assumption}

\begin{rem}
\label{rem: LWR set}We allow the case of a constant family $\left\{ \Gset_{T}\right\} =\Gset$:
we say that $\Gset$ is a Lipschitz well rounded \emph{set} (as apposed
to a Lipschitz well rounded \emph{family}) with parameters $\brac{\mathcal{C},\e_{0}}$
if  $\mu\brac{\Gset^{\left(+\e\right)}}\leq\brac{1+\mathcal{C}\e}\:\mu\brac{\Gset^{\left(-\e\right)}}$
for every $0<\e<\e_{0}$. It is proved in \cite[Prop. 3.5]{HK_WellRoundedness}
that if a set $\Gset$ is BCS and bounded, then it is LWR. 
\end{rem}

We now turn to describe the second ingredient, which is the spectral
estimation. In certain Lie groups, among which algebraic simple Lie
groups $G$, there exists $p\in\mathbb{N}$ for which the matrix coefficients
$\langle\pi_{G/\Lat}^{0}u,v\rangle$ are in $L^{p+\e}\left(G\right)$
for every $\e>0$, with $u,v$ lying in a dense subspace of $L_{0}^{2}\left(G/\Lat\right)$
(see \cite[Thm 5.6]{GN_book}). Let $p\left(\Lat\right)$ be the smallest
among these $p$'s, and denote 
\[
m\left(\Lat\right)=\begin{cases}
1 & \text{if \ensuremath{p=2},}\\
2\left\lceil p\left(\Lat\right)/4\right\rceil  & \text{otherwise.}
\end{cases}
\]
The parameter $m\left(\Lat\right)$ appears in the error term exponent
of the counting theorem below, which is the cornerstone of the counting
results in this paper. 
\begin{thm}[{\cite[Theorems 1.9, 4.5, and Remark 1.10]{GN1}}]
\label{thm: GN Counting thm}Let $G$ be an algebraic simple Lie
group with Haar measure $\mu$, and let $\Lat<G$ be a lattice. Assume
that $\{\Gset_{T}\}\subset G$ is a family of finite-measure domains
which satisfy $\mu(\Gset_{T})\to\infty$ as $T\to\infty$. If the
family $\{\Gset_{T}\}$ is Lipschitz well rounded with parameters
$(C_{\Gset},T_{0})$, then $\exists T_{1}>0$ such that for every
$\delta>0$ and $T>T_{1}$:
\[
\#\brac{\Gset_{T}\cap\Lat}-\mu\brac{\Gset_{T}}/\mu\left(G/\Lat\right)\underset{G,\Lat,\delta}{\porsmall}C_{\Gset}^{\frac{\dim G}{1+\dim G}}\cdot\mu\brac{\Gset_{T}}^{1-\errexp\left(\Lat\right)+\delta},
\]
where $\mu\left(G/\Lat\right)$ is the measure of a fundamental domain
of $\Lat$ in $G$ and
\[
1-\errexp\left(\Lat\right)=1-\brac{2m\left(\Lat\right)\left(1+\dim G\right)}^{-1}\in\left(0,1\right).
\]
The parameter $T_{1}$ is such that $T_{1}\geq T_{0}$ and for every
$T\geq T_{1}$ 
\begin{equation}
\mu\left(\Gset_{T}\right)^{\errexp\left(\Lat\right)}\underset{G,\Lat}{\porbig}C_{\Gset}^{\frac{\dim G}{1+\dim G}}.\label{eq: def of T_1 in GN thm}
\end{equation}
\end{thm}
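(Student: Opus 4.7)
The plan is to combine the spectral input of \cite{GN1} (integrability of matrix coefficients on $L_0^2(G/\Lat)$) with the geometric input of Lipschitz well--roundedness through the standard ``smoothing and spectral gap'' device used in effective mean ergodic theorems. To a measurable domain $\Gset\subset G$ associate the automorphic periodization $F_\Gset(g\Lat)=\sum_{\gam\in\Lat}\chi_\Gset(g\gam)$, so that $F_\Gset(e\Lat)=\#(\Gset\cap\Lat)$; unfolding gives $\int_{G/\Lat} F_\Gset\,d\mu = \mu(\Gset)$, hence the mean of $F_\Gset$ is $\mu(\Gset)/\mu(G/\Lat)$, and a direct computation bounds $\|F_\Gset\|_{L^2(G/\Lat)}^2$ by $\mu(\Gset)/\mu(G/\Lat)$ provided $\Gset$ has bounded ``multiplicity'' at depth $\e$, which holds for any reasonable family.

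First I would pick a smooth approximate identity $\phi_\e\ge 0$ supported in $\nbhd{\e}{}$ with $\int\phi_\e = 1$ and $\|\phi_\e\|_2\porequal\e^{-\dim(G)/2}$. Applying the $G$--action by convolution to $F_{\Gset_T^{(\pm\e)}}$ and evaluating at $e\Lat$, Definition~\ref{def: well--roundedness} yields the two-sided sandwich
\[
(\pi(\phi_\e)F_{\Gset_T^{(-\e)}})(e\Lat) \;\le\; F_{\Gset_T}(e\Lat) \;\le\; (\pi(\phi_\e)F_{\Gset_T^{(+\e)}})(e\Lat),
\]
whose two outer means agree with $\mu(\Gset_T)/\mu(G/\Lat)$ up to a multiplicative factor $(1\pm\mathcal{C}\e)$. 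Decomposing $F_{\Gset_T^{(\pm\e)}}$ into its constant part plus its projection to $L_0^2(G/\Lat)$, the problem reduces to bounding the operator norm of $\pi^0_{G/\Lat}(\phi_\e)$ acting on $L_0^2$.

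For the operator norm, the spectral hypothesis says that matrix coefficients of $\pi^0_{G/\Lat}$ lie in $L^{p(\Lat)+\delta}(G)$ for every $\delta>0$. The Cowling--Haagerup--Howe type interpolation, applied as in \cite[\S~3]{GN1}, then produces the estimate
\[
\bigl\|\pi^0_{G/\Lat}(\phi_\e)\bigr\|_{L_0^2\to L_0^2} \;\underset{G,\Lat,\delta}{\porsmall}\; \|\phi_\e\|_2^{\,1/m(\Lat)-\delta} \;\porequal\; \e^{-\dim(G)/(2m(\Lat))+\delta'}.
\]
Combining Cauchy--Schwarz with this spectral bound gives
\[
\bigl|F_{\Gset_T}(e\Lat) - \mu(\Gset_T)/\mu(G/\Lat)\bigr| \;\porsmall\; \mathcal{C}\e\cdot\mu(\Gset_T) \;+\; \e^{-\dim(G)/(2m(\Lat))+\delta'}\cdot\mu(\Gset_T)^{1/2}.
\]

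Finally, I would optimize in $\e$ by balancing the two contributions; this forces the choice $\e\porequal(\mathcal{C}\mu(\Gset_T)^{1/2})^{-2m(\Lat)/(2m(\Lat)+\dim G)}$, and plugging back yields the exponent $1-\errexp(\Lat)=1-(2m(\Lat)(1+\dim G))^{-1}$ together with the Lipschitz-constant factor $\mathcal{C}^{\dim G/(1+\dim G)}$ stated in the theorem. The lower bound (\ref{eq: def of T_1 in GN thm}) on $\mu(\Gset_T)$ is precisely what guarantees that the optimal $\e$ lies below $1/\mathcal{C}$, so that the LWR bound~(\ref{eq:LWReq}) is legitimately applicable; this determines $T_1$. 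The main obstacle is the spectral/operator-norm step, which rests on the deep property that $p(\Lat)<\infty$ for algebraic simple $G$; the geometric sandwich and the final balancing are, by comparison, routine.
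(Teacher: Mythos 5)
This theorem is quoted from Gorodnik--Nevo \cite{GN1}; the paper does not reprove it, so there is no in-paper argument to compare against. Judged on its own terms, your sketch has the correct overall architecture (periodization, well-roundedness sandwich, constant-plus-$L^2_0$ decomposition, optimization over $\e$), but the spectral step is wrong, and the error propagates to the final exponents.

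The bound you claim, $\|\pi^0_{G/\Lat}(\phi_\e)\|\porsmall\|\phi_\e\|_2^{1/m(\Lat)-\delta}\porequal\e^{-\dim G/(2m(\Lat))+\delta'}$, cannot hold: since $\phi_\e\ge 0$ and $\int\phi_\e=1$, one always has $\|\pi^0_{G/\Lat}(\phi_\e)\|\le\|\phi_\e\|_1=1$, while your right-hand side is $\ge 1$ and diverges as $\e\to 0$, so the estimate is vacuous. There is no spectral decay to be extracted from an approximate identity: $\pi^0(\phi_\e)$ converges strongly to the identity on $L^2_0$ as $\e\to 0$. In the Gorodnik--Nevo argument the spectral input is applied to the \emph{large} set, not to the mollifier: one uses the normalized averaging operator $\beta_T=\chi_{\Gset_T}/\mu(\Gset_T)$ and the effective mean ergodic estimate $\|\pi^0_{G/\Lat}(\beta_T)\|\porsmall_\delta\mu(\Gset_T)^{-1/(2m(\Lat))+\delta}$, obtained from the Cowling--Haagerup--Howe inequality $\|\pi^0(\beta_T)\|\le\|\lambda_G(\beta_T)\|^{1/m(\Lat)}$ together with Kunze--Stein, $\|\lambda_G(\beta_T)\|\porsmall_\delta\|\beta_T\|_{2-\delta}\porsmall\mu(\Gset_T)^{-1/2+\delta}$. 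The mollifier $\phi_\e$ has a purely geometric role: through its periodization $\Phi_\e$ it converts the pointwise count into an inner product $\langle\pi(\beta_T)\Phi_\e,\Phi_\e\rangle$, contributing the factor $\|\Phi_\e^0\|_2^2\porequal\e^{-\dim G}$ in the Cauchy--Schwarz step and the multiplicative $1\pm O(\mathcal{C}\e)$ from LWR. (As a side remark, $\|F_{\Gset_T}\|_2^2$ grows like $\mu(\Gset_T)^2/\mu(G/\Lat)$ for large $\Gset_T$, not like $\mu(\Gset_T)/\mu(G/\Lat)$, so that auxiliary claim of yours is also off; but the quantity that actually enters is $\|\Phi_\e\|_2$, not $\|F_{\Gset_T}\|_2$.)

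The correct intermediate inequality is therefore
\[
\left|\#(\Gset_T\cap\Lat)-\frac{\mu(\Gset_T)}{\mu(G/\Lat)}\right|\ \porsmall\ \mathcal{C}\,\e\,\mu(\Gset_T)\ +\ \e^{-\dim G}\,\mu(\Gset_T)^{1-1/(2m(\Lat))+\delta},
\]
and balancing in $\e$ yields $\e\porequal\mathcal{C}^{-1/(1+\dim G)}\mu(\Gset_T)^{-1/(2m(\Lat)(1+\dim G))}$, which recovers exactly the theorem's exponents $1-\errexp(\Lat)=1-(2m(\Lat)(1+\dim G))^{-1}$ and $\mathcal{C}^{\dim G/(1+\dim G)}$. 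Your bound $\mathcal{C}\e\,\mu(\Gset_T)+\e^{-\dim G/(2m)}\mu(\Gset_T)^{1/2}$, taken at face value, balances to exponents $1-m/(2m+\dim G)$ in $\mu(\Gset_T)$ and $\dim G/(2m+\dim G)$ in $\mathcal{C}$; these agree with the theorem's only when $\dim G=0$. So even granting the intermediate step, the optimization does not land on the claimed conclusion.
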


Bounds on the parameter $p\left(\Lat\right)$ (i.e. on $m\left(\Lat\right)$)
clearly imply bounds on the parameter $\errexp\left(\Lat\right)$
appearing in the error term exponent. We refer to \cite{Li95}, \cite{Li_Zhu_96}
and \cite{Scaramuzzi90} for upper bounds on $p\left(\Lat\right)$
in simple Lie groups. Specifically for the group $\sl n\left(\RR\right)$,
the current known bound for $n>2$ and any lattice $\Lat$ in $\sl n\left(\RR\right)$
is $2\leq p\left(\Lat\right)\leq2n-2$ \cite{Li95}. For the lattice
$\Lat=\sl n\left(\ZZ\right)$, $p\left(\sl n\left(\ZZ\right)\right)=2n-2$
\cite{DRS93} which implies that $m\left(\sl n\left(\ZZ\right)\right)=2\left\lceil \left(n-1\right)/2\right\rceil $
and therefore $\errexp\left(\sl n\left(\ZZ\right)\right)$ is exactly
$\errexp_{n}$ from Theorem \ref{thm: MainThm}. 

\subsection{\label{subsec: Plan of proof}Plan of proof for Proposition \ref{prop: Counting with Hexagons (A counting)}}

Proposition \ref{prop: Counting with Hexagons (A counting)} is concerned
with counting in the sets: 
\[
\left(\shortdom\right)_{T}^{\Svec}\brac{\latset,\a}=\bigcup_{q\in\trunc{\brac{\wc_{\latset}}}{\Svec}}q\cdot A_{T}^{\prime}\parby{N^{\prime}}{\ellipse{\zgt q}{\a}}.
\]
According to Theorem \ref{thm: GN Counting thm}, in order to prove
Proposition \ref{prop: Counting with Hexagons (A counting)}, it is
sufficient to claim that the families above are LWR with parameters
that do not depend on $\Svec$. This will be done by following the
two steps below. In each step, we mention technical results from \cite{HK_WellRoundedness},
and conclude with a summary of how and where the goal of the step
will be proved in this paper, and which role will it assume in the
proof of Propostion \ref{prop: Counting with Hexagons (A counting)}. 

\subsubsection*{Step 1: Reduction from LWR in $\protect\sl n\left(\protect\RR\right)=KA^{\prime}A^{\protect\dprime}N^{\protect\dprime}N^{\prime}$
to LWR in $K\times A^{\prime}\times A^{\protect\dprime}\times N^{\protect\dprime}\times N^{\prime}$.}

It is much easier to verify well roundedness in the (resp.\: compact,
abelian, unipotent) subgroups $K,A,N$ of $\sl n\left(\RR\right)$,
and their subgroups, than in the simple $\sl n\left(\RR\right)$.
Let $\roundo$ denote the map from $\sl n\left(\RR\right)$ to the
product, that sends $g=ka^{\prime}a^{\dprime}n^{\dprime}n^{\prime}$
to $\left(k,a^{\prime},a^{\dprime},n^{\dprime},n^{\prime}\right)$\footnote{When a component is omitted, it means that it is the identity.}.
Then
\[
\roundo\left(\left(\shortdom\right)_{T}^{\Svec}\brac{\latset,\a}\right)=\bigcup_{\substack{\left((k,a^{\dprime},n^{\dprime}),a^{\prime}\right)\in\\
\roundo(\trunc{\brac{\wc_{\latset}}}{\Svec})\times A_{T}^{\prime}
}
}\left(k,a^{\dprime},n^{\dprime},a^{\prime}\right)\times\parby{N^{\prime}}{\ellipse{a^{\dprime}n^{\dprime}}{\a}}.
\]
We will apply the following result from \cite{HK_WellRoundedness},
that will enable us to reduce to verifying the well roundedness of
$\roundo\left(\left(\shortdom\right)_{T}^{\Svec}\brac{\latset,\a}\right)$;
but first, a definition. 
\begin{defn}[{\cite[Def. 4.1]{HK_WellRoundedness}}]
\label{def: roundomorphism}Let $G$ and $\sbgrp$ be two Lie groups
with Borel measures $\mu_{G}$ and $\mu_{\sbgrp}$. A Borel measurable
map $\roundogen:G\to\sbgrp$ will be called an \emph{$f$-roundomorphism}
if it is:
\begin{enumerate}
\item \textbf{Measure preserving:} $\roundogen_{*}\brac{\,\mu_{G}}=\mu_{\sbgrp}$.
\item \textbf{Locally Lipschitz:} $\roundogen(\nbhd{\e}Gg\nbhd{\e}G)\subseteq\nbhd{f\e}{\sbgrp}\roundogen(g)\nbhd{f\e}{\sbgrp}$
for some continuous $f=f\left(g\right):G\to\RR_{>0}$ and for every
$0<\e<\frac{1}{f}$.
\end{enumerate}

\end{defn}

In \cite[Prop. 4.2]{HK_WellRoundedness} we prove that if a family
$\Gset_{T}\subseteq\sbgrp$ is LWR and $\roundogen:G\to\sbgrp$ is
a roudomorphism such that $\roundogen^{-1}\left(\Gset_{T}\right)$
is bounded uniformly in $T$, than $\roundogen^{-1}\left(\Gset_{T}\right)$
is LWR. Here we only need the case where $\sbgrp$ is a direct product
of groups:
\begin{prop}[{\cite[Corollary 4.3]{HK_WellRoundedness}}]
\label{cor: roundo to a produt grp}\label{rem: product of LWR is LWR}Let
$\roundogen:G\to\sbgrp=\sbgrp^{1}\times\cdots\times\sbgrp^{\topindex}$
be an $f$-roundomorphism and let $\Gset_{T}=\Gset_{T}^{1}\times\cdots\times\Gset_{T}^{\topindex}\subseteq\sbgrp$.
Set $\mu_{\sbgrp}=\mu_{\sbgrp_{1}}\times\cdots\times\mu_{\sbgrp_{\topindex}}$,
$\nbhd{\e}{\sbgrp}=\nbhd{\e}{\sbgrp_{1}}\times\cdots\times\nbhd{\e}{\sbgrp_{\topindex}}$
and assume that
\begin{enumerate}
\item For $j=1,\dots,\topindex$: $\Gset_{T}^{j}\subseteq\sbgrp^{j}$ is
LWR w.r.t. the parameters $\brac{T_{j},C_{j}}$;
\item $f$ is bounded uniformly by a real number $F$ on the sets $\roundogen^{-1}\brac{\Gset_{T}}$.
\end{enumerate}
Then $\roundogen^{-1}\left(\Gset_{T}\right)$ is LWR, w.r.t. the parameters
\[
T=\max\left\{ T_{1},\dots,T_{\topindex}\right\} ,\;C\asymp_{\topindex}F\cdot\max\left\{ C_{1},\dots,C_{\topindex},1\right\} .
\]
In particular, a direct product of LWR families is LWR in the direct
product of the corresponding group.
\end{prop}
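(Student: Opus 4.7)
The plan is to split the argument into two stages: first establish the final (``in particular'') sentence by a direct computation, and then deduce the main statement by transferring LWR from $\sbgrp$ back to $G$ via the preimage principle of \cite[Prop.~4.2]{HK_WellRoundedness}.

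For the product claim I would exploit the factorization $\nbhd{\e}{\sbgrp} = \nbhd{\e}{\sbgrp_{1}} \times \cdots \times \nbhd{\e}{\sbgrp_{\topindex}}$, which, by unwrapping Definition~\ref{def: well--roundedness}, forces
\[
\Gset_{T}^{(\pm\e)} = (\Gset_{T}^{1})^{(\pm\e)} \times \cdots \times (\Gset_{T}^{\topindex})^{(\pm\e)}
\]
(equality in the $+\e$ case because componentwise multiplication preserves products, and in the $-\e$ case because an intersection of product sets over product indices is a product of intersections). Since $\mu_{\sbgrp}$ is the product measure, Fubini gives
\[
\frac{\mu_{\sbgrp}\bigl(\Gset_{T}^{(+\e)}\bigr)}{\mu_{\sbgrp}\bigl(\Gset_{T}^{(-\e)}\bigr)} = \prod_{j=1}^{\topindex} \frac{\mu_{\sbgrp_{j}}\bigl((\Gset_{T}^{j})^{(+\e)}\bigr)}{\mu_{\sbgrp_{j}}\bigl((\Gset_{T}^{j})^{(-\e)}\bigr)} \leq \prod_{j=1}^{\topindex}(1 + C_{j}\e),
\]
whenever $T > \max_{j} T_{j}$ and $\e < 1/\max_{j} C_{j}$. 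An elementary estimate linearizes $\prod_{j}(1+C_{j}\e) \leq 1 + C'\e$ with $C' \porequal_{\topindex} \max\{C_{1},\ldots,C_{\topindex},1\}$ on any fixed subinterval of $(0,1/\max_{j}C_{j})$, giving LWR of the product family in $\sbgrp$ with the advertised parameters.

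For the main statement I would apply \cite[Prop.~4.2]{HK_WellRoundedness}, whose proof reduces to the two inclusions
\[
\roundo^{-1}(\Gset_{T})^{(+\e)} \subseteq \roundo^{-1}\bigl(\Gset_{T}^{(+F\e)}\bigr), \qquad \roundo^{-1}\bigl(\Gset_{T}^{(-F\e)}\bigr) \subseteq \roundo^{-1}(\Gset_{T})^{(-\e)},
\]
valid for $\e < 1/F$ by hypothesis~(2). The first follows directly from the locally Lipschitz property. The second is dual: if $\roundo(g) \in \Gset_{T}^{(-F\e)}$, then for every $u,v \in \nbhd{\e}{G}$ the locally Lipschitz property writes $\roundo(ugv) = u'\,\roundo(g)\,v'$ for some $u',v' \in \nbhd{F\e}{\sbgrp}$, whence $\roundo(ugv) \in \Gset_{T}$. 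Combining the two inclusions with the measure-preservation of $\roundo$ yields
\[
\mu_{G}\bigl(\roundo^{-1}(\Gset_{T})^{(+\e)}\bigr) \leq \mu_{\sbgrp}\bigl(\Gset_{T}^{(+F\e)}\bigr) \leq (1 + C'F\e)\,\mu_{\sbgrp}\bigl(\Gset_{T}^{(-F\e)}\bigr) \leq (1 + C'F\e)\,\mu_{G}\bigl(\roundo^{-1}(\Gset_{T})^{(-\e)}\bigr),
\]
which is precisely LWR of $\roundo^{-1}(\Gset_{T})$ with Lipschitz constant $C \porequal_{\topindex} F\cdot\max\{C_{1},\ldots,C_{\topindex},1\}$ and threshold $\max_{j}T_{j}$.

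The main obstacle is purely bookkeeping of $\e$-thresholds: one must simultaneously keep $\e < 1/F$ in order to apply the locally Lipschitz property and $\e$ small enough that the product $\prod_{j}(1+C_{j}\e)$ linearizes to $1 + O_{\topindex}(\max_{j} C_{j}\cdot\e)$. Hypothesis~(2) is indispensable here: without a uniform bound on $f$ over the preimages, the effective ``Lipschitz radius'' of $\roundo$ could degenerate along the family, the constant $F$ could blow up, and the chain of inequalities above would collapse.
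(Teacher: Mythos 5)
Your proof is correct. Since the paper cites this proposition externally (as \cite[Cor.~4.3]{HK_WellRoundedness}) rather than proving it, there is no in-paper proof to compare against; but your two-stage argument—first showing a direct product of LWR families is LWR via the componentwise factorization $\Gset_T^{(\pm\e)}=\prod_j(\Gset_T^j)^{(\pm\e)}$ and Fubini, then pulling LWR back through $\roundo$ via the two inclusions $\roundo^{-1}(\Gset_T)^{(+\e)}\subseteq\roundo^{-1}(\Gset_T^{(+F\e)})$ and $\roundo^{-1}(\Gset_T^{(-F\e)})\subseteq\roundo^{-1}(\Gset_T)^{(-\e)}$ together with measure preservation—is precisely the mechanism that the surrounding text indicates the cited reference uses. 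The bookkeeping of thresholds is handled correctly: $\Gset_T^{(-F\e)}\subseteq\Gset_T$ guarantees $f(g)\leq F$ is available where you invoke the Lipschitz property, and both $\e<1/F$ and the linearization of $\prod_j(1+C_jF\e)$ are subsumed under the advertised $\e<1/C$ with $C\asymp_\topindex F\max\{C_1,\dots,C_\topindex,1\}$.
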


\paragraph*{For the proof of Proposition \ref{prop: Counting with Hexagons (A counting)}:}

In Section \ref{sec: effective Iwasawa and GI decompositions} we
will prove that the map $\roundo$ is a roundomorphism and establish
a bound on $f$, reducing well roundedness of $\left(\shortdom\right)_{T}^{\Svec}\brac{\latset,\a}$
to well roundedness of $\roundo(\left(\shortdom\right)_{T}^{\Svec}\brac{\latset,\a})$. 

\subsubsection*{Step 2: Verifying LWR property in a product of groups. }

The sets $\roundo\left(\left(\shortdom\right)_{T}^{\Svec}\brac{\latset,\a}\right))$
in Step 1 are of the general form 
\[
\Gset_{T}=\bigcup_{z\in\symset_{T}}z\times\domN_{z}\subseteq P\times\RR^{m},\quad\left(\star\right)
\]
where $P$ is a Lie group. We require the following Lipschitzity condition
on the family $\left\{ \domN_{z}\right\} $: 
\begin{defn}[{\cite[Definition 5.1 and Proposition 5.6]{HK_WellRoundedness}}]
\label{def: BLC family}Let $P$ be a Lie group and $\nbhd{\e}{}$
a family of coordinate balls. Let $\symset$ be a subset of $P$,
and consider the family $\fam{\mathcal{\fdomN}}{\symset}=\left\{ \domN_{z}\right\} _{z\in\symset}$,
where $\domN_{z}\subseteq\mathbb{R}^{m}$ ($m$ is uniform for all
$z$). We say that the family $\fam{\mathcal{\fdomN}}{\symset}$ is
\emph{bounded Lipschitz continuous }(or \textbf{BLC}) w.r.t $\nbhd{\e}{}$
if there exists $C>0$ such that for every $0<\e<C^{-1}$ the following
hold:
\begin{enumerate}
\item For a norm ball $\ball{\e}{}\subset\mathbb{R}^{m}$ of radius $\e$,
$\domN_{z}+\ball{\e}{}\subseteq\brac{1+C\e}\domN_{z}$.
\item If $z^{\prime}\subseteq\nbhd{\e}{}z\nbhd{\e}{}$ for $z,z^{\prime}\in\symset$,
then $\domN_{z^{\prime}}\subseteq\brac{1+C\e}\domN_{z}$.
\item The Lebesgue volume of $\domN_{z}$ is bounded uniformly from below
by a positive constant \textbf{$\volmin$}. 
\item $\domN_{z}\subseteq\ball{\radmax}{}$ for some uniform $\radmax>0$
and every $z\in\symset$. 
\end{enumerate}
\end{defn}

The following result relates the BLC property of the family $\left\{ \domN_{z}\right\} $,
to the LWR property of the sets in $\left(\star\right)$. 
\begin{prop}[{\cite[Proposition 5.5]{HK_WellRoundedness}}]
\label{prop: Fibered sets are LWR in direct product}Let $\left\{ \symset_{T}\right\} _{T>0}$
be an increasing family inside a Lie group $P$, and $\symset:=\cup_{T>0}\symset_{T}$.
Let $\fam{\fdomN}{\symset}=\left\{ \domN_{z}\right\} _{z\in\symset}$
where $\domN_{z}\subset\RR^{m}$, and consider the family 
\[
\Gset_{T}=\bigcup_{z\in\symset_{T}}z\times\domN_{z}\subseteq P\times\RR^{m}.
\]
If $\left\{ \symset_{T}\right\} _{T>0}$ is LWR with parameters $\left(T_{0},C_{\symset}\right)$,
and $\mathcal{\fdomN}_{\symset}$ is BLC w.r.t. the family $\{\nbhd{\e}P\}_{\e>0}$
and with parameters $\left(C_{\mathcal{\fdomN}},\volmin,\radmax\right)$,
then $\Gset_{T}$ is LWR w.r.t the family $\nbhd{\e}P\times\ball{\e/2}{\RR^{m}}\subset P\times\RR^{m}$
and with parameters $\left(T_{0},C_{\Gset}\right)$ where
\[
C_{\Gset}\prec C_{\fdomN}+\brac{\volmax/\volmin}C_{\symset}+1
\]
and $\volmax=\mu_{\RR^{m}}\brac{\ball{\radmax}{}}$. 
\end{prop}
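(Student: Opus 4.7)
My plan is a Fubini decomposition of $\mu(\Gset_T^{(\pm\e)})$ over the first factor $P$: writing $F_z^{\pm}$ for the fiber of $\Gset_T^{(\pm\e)}$ over $z\in P$, one has $\mu(\Gset_T^{(\pm\e)})=\int_{P}\mu_{\RR^{m}}(F_z^{\pm})\,d\mu_P(z)$. Because the perturbing neighborhood is a product $\nbhd{\e}P\times\ball{\e/2}{\RR^{m}}$ and $\ball{\e/2}{}+\ball{\e/2}{}=\ball{\e}{}$ in the abelian $\RR^{m}$, the fibers decouple:
\[
F_z^{+}=\bigcup_{z'\in\symset_T\cap\nbhd{\e}Pz\nbhd{\e}P}\bigl(\domN_{z'}+\ball{\e}{}\bigr),
\]
supported on $z\in\symset_T^{(+\e)}$, while for $z\in\symset_T^{(-\e)}$,
\[
F_z^{-}=\bigl\{x\in\RR^{m}:\ball{\e}{}(x)\subseteq\domN_{z''}\text{ for every }z''=u_1^{-1}zv_1^{-1}\text{ with }u_1,v_1\in\nbhd{\e}P\bigr\}.
\]

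For $z\in\symset_T^{(-\e)}$, both $z$ and all relevant $z',z''$ lie in $\symset$, so BLC~(2), applied in both directions, sandwiches $(1+C_{\fdomN}\e)^{-1}\domN_z\subseteq\domN_{z'},\domN_{z''}\subseteq(1+C_{\fdomN}\e)\domN_z$. Combined with BLC~(1) this gives $F_z^{+}\subseteq(1+C_{\fdomN}\e)^{2}\domN_z$, hence $\mu_{\RR^{m}}(F_z^{+})\leq(1+C_{1}\e)\mu_{\RR^{m}}(\domN_z)$ with $C_{1}\porsmall mC_{\fdomN}$. For the lower bound on $\mu_{\RR^{m}}(F_z^{-})$ I rescale BLC~(1): with $\alpha=(1+C_{\fdomN}\e)^{-1}$, multiplying $\domN_z+\ball{\e}{}\subseteq(1+C_{\fdomN}\e)\domN_z$ through by $\alpha$ yields $\alpha\domN_z+\ball{\alpha\e}{}\subseteq\domN_z$, i.e.\ $\alpha\domN_z\subseteq\domN_z\ominus\ball{\alpha\e}{}$. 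Applied to the set $(1+C_{\fdomN}\e)^{-1}\domN_z$ (which satisfies BLC~(1) with comparable constants) this yields $\mu_{\RR^{m}}(F_z^{-})\geq(1-C_{2}\e)\mu_{\RR^{m}}(\domN_z)$ with $C_{2}\porsmall mC_{\fdomN}$.

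On the ``boundary collar'' $\symset_T^{(+\e)}\setminus\symset_T^{(-\e)}$ of the base, the LWR hypothesis on $\{\symset_T\}$ gives $\mu_P$-mass at most $C_{\symset}\e\,\mu_P(\symset_T^{(-\e)})$, and on this set I use the trivial fiber bound $\mu_{\RR^{m}}(F_z^{+})\leq\volmax$ from BLC~(4). Assembling,
\[
\mu(\Gset_T^{(+\e)})\leq(1+C_{1}\e)\int_{\symset_T^{(-\e)}}\mu_{\RR^{m}}(\domN_z)\,d\mu_P(z)+C_{\symset}\e\volmax\,\mu_P(\symset_T^{(-\e)}),
\]
\[
\mu(\Gset_T^{(-\e)})\geq(1-C_{2}\e)\int_{\symset_T^{(-\e)}}\mu_{\RR^{m}}(\domN_z)\,d\mu_P(z).
\]
Dividing and using $\mu_{\RR^{m}}(\domN_z)\geq\volmin$ from BLC~(3), the ratio $\mu(\Gset_T^{(+\e)})/\mu(\Gset_T^{(-\e)})$ is bounded by $1+O\bigl(\e(C_{\fdomN}+(\volmax/\volmin)C_{\symset}+1)\bigr)$, which yields the claimed LWR inequality with $C_{\Gset}\prec C_{\fdomN}+(\volmax/\volmin)C_{\symset}+1$.

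The main obstacle is the erosion lower bound on $\mu_{\RR^{m}}(F_z^{-})$: BLC~(1) controls the outward thickening $\domN_z+\ball{\e}{}$ but not directly the inward erosion $\domN_z\ominus\ball{\e}{}$, and without some form of the rescaling trick above there is no way to convert the $+$ estimate into the matching $-$ estimate. The rest of the argument is bookkeeping, provided one is careful that all auxiliary base points produced by perturbations lie in $\symset_T\subseteq\symset$ so that BLC~(2) is legally applicable in both directions.
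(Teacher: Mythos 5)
Your overall strategy — Fubini over the base $P$, a fiber-by-fiber analysis using BLC to control the $\RR^m$ fibers, the LWR hypothesis on $\{\symset_T\}$ together with BLC (3)--(4) to handle the contribution of the boundary collar $\symset_T^{(+\e)}\setminus\symset_T^{(-\e)}$ — is the correct and natural approach, and it produces the claimed Lipschitz constant $C_{\Gset}\prec C_{\fdomN}+(\volmax/\volmin)C_{\symset}+1$. Your identification of the erosion lower bound as the crux is also accurate, and your Fubini characterizations of the two fibers $F_z^{\pm}$ are right.

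However, the rescaling you use to close that gap does not work as written, and the stated fix does not repair it. Multiplying $\domN_z+\ball{\e}{}\subseteq(1+C_{\fdomN}\e)\domN_z$ by $\alpha=(1+C_{\fdomN}\e)^{-1}$ gives $\alpha\domN_z\subseteq\domN_z\ominus\ball{\alpha\e}{}$; but $\alpha\e<\e$, so $\domN_z\ominus\ball{\alpha\e}{}$ is a \emph{larger} set than the $\domN_z\ominus\ball{\e}{}$ that actually appears in $F_z^{-}$, and the inclusion therefore runs the wrong way. Your suggestion to re-apply the argument ``to the set $(1+C_{\fdomN}\e)^{-1}\domN_z$'' makes things worse, not better: shrinking the set before rescaling only further shrinks the erosion radius. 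The remedy is to apply BLC~(1) at an \emph{inflated} scale before rescaling. Concretely, for $\e<1/(2C_{\fdomN})$, apply BLC~(1) with parameter $\e/(1-C_{\fdomN}\e)$ to get $\domN_z+\ball{\e/(1-C_{\fdomN}\e)}{}\subseteq(1-C_{\fdomN}\e)^{-1}\domN_z$, and multiply through by $(1-C_{\fdomN}\e)$ to obtain $(1-C_{\fdomN}\e)\domN_z+\ball{\e}{}\subseteq\domN_z$, i.e.\ $(1-C_{\fdomN}\e)\domN_z\subseteq\domN_z\ominus\ball{\e}{}$. Applying this to each $\domN_{z''}$ with $z''\in\nbhd{\e}{P}z\nbhd{\e}{P}\cap\symset_T$ (all such $z''$ lie in $\symset$, so BLC applies), and combining with BLC~(2) in the form $\domN_{z''}\supseteq(1+C_{\fdomN}\e)^{-1}\domN_z$, yields $F_z^{-}\supseteq\frac{1-C_{\fdomN}\e}{1+C_{\fdomN}\e}\,\domN_z$, hence the required bound $\mu_{\RR^m}(F_z^{-})\geq(1-O(mC_{\fdomN}\e))\,\mu_{\RR^m}(\domN_z)$. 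With this step patched, the rest of your argument goes through.
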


\paragraph*{For the proof of Proposition \ref{prop: Counting with Hexagons (A counting)}: }

Following Proposition \ref{prop: Fibered sets are LWR in direct product},
in order to prove that the sets $r(\left(\shortdom\right)_{T}^{\Svec}\brac{\latset,\a})$
from Step 1 are LWR, one should show that:
\begin{itemize}
\item The family $\left\{ \ellipse{a^{\dprime},n^{\dprime}}{\a}\right\} _{\left(a^{\dprime},n^{\dprime}\right)\in\roundo\brac{\trunc{\symfund{n-1}}{\Svec}}}$
is BLC (Definition \ref{def: BLC family}), which is done in Section
\ref{sec: Family for gcd solution is BLC}.
\item The family $\symset_{T}=\roundo(\trunc{\wc_{\latset}}{\Svec})\times A_{T}^{\prime}$
over which the union is taken is LWR (Definition \ref{def: well--roundedness}).
For this, by Remark \ref{rem: product of LWR is LWR}, it is sufficient
to show that each of the factors is LWR. The two factors will be handled
as follows: 
\begin{itemize}
\item In Section \ref{sec: Regularity-results-forA} we show that $\cbrac{A_{T}^{\prime}}$
is LWR;
\item in Section \ref{sec: The base sets are LWR} we show that $\roundo\brac{\trunc{\wc_{\latset}}{\Svec})}$
 is LWR. 
\end{itemize}
\end{itemize}
The proof of Proposition \ref{prop: Counting with Hexagons (A counting)}
is completed in Section \ref{sec: Concluding the proofs}.

\section{\label{sec: Regularity-results-forA}Well roundedness in subgroups
of $A$ }

In this section and the one that follows, we extend our discussion
from $G=\sl n\left(\RR\right)$ to $G$ being a real semi-simple Lie
group with finite center and Iwasawa decomposition $G=KAN$.  Here
we focus on the subgroup $A$, and consider subgroups of it that are
the image of subspaces in $\lieA$, the Lie algebra of $A$, under
the exponent map. To introduce them, we first set some notations. 
\begin{notation}
\label{nota: "scalar product"}For vectors $\lieAelement_{1},\dots,\lieAelement_{\topindex}\in\lieA$,
we write
\[
\lieAvec:=\left(\lieAelement_{1},\dots,\lieAelement_{\topindex}\right)\in\lieA^{\topindex}.
\]
If $\svec=\left(s_{1},\dots,s_{\topindex}\right)\in\RR^{\topindex}$
we let $\svec\cdot\lieAvec:=\sum_{i=1}^{\topindex}s_{i}\lieAelement_{i}$.
We say that $\lieAvec$ is linearly independent if $\lieAelement_{1},\dots,\lieAelement_{\topindex}$
are. 
\end{notation}

We let $\left\{ \phi_{1},\ldots\phi_{\dimN}\right\} \subset\lieA^{*}$
denote the \emph{positive} roots, counted with multiplicities,  and
we use the standard notation for their sum: 
\[
2\rho=\sum_{i=1}^{\dimN}\phi_{i}\in\lieA^{*}.
\]

\begin{defn}
\label{def: H sbgrps of A}Given linearly independent $\lieAvec=\left(\lieAelement_{1},\dots,\lieAelement_{\topindex}\right)$,
we define the subgroup $\Asub{\lieAvec}{}<A$ to be 
\[
\Asub{\lieAvec}{}:=\left\{ \exp\left(\svec\cdot\lieAvec\right):\svec\in\RR^{\topindex}\right\} ,
\]
and endow it with the (non-Haar!) measure 
\[
\mu_{\Asub{\lieAvec}{}}:=e^{2\rho\left(\lieAelement_{1}\right)s_{1}}\cdots e^{2\rho\left(\lieAelement_{\topindex}\right)s_{\topindex}}ds_{1}\cdots ds_{\topindex}.
\]
When $\topindex=1$, we omit the underlines: $\lieAvec=\lieAelement$
and $\svec=s$.
\end{defn}

\begin{rem}
\label{rem: H sbgrps of A}Every closed connected subgroup of $A$
is of the form $\Asub{\lieAvec}{}$. Furthermore, $\Asub{\lieAvec}{}\cap\Asub{\lieAvec^{\prime}}{}=\cbrac{1_{A}}$
if and only if $\lieAvec$ is linearly independent of $\lieAvec^{\prime}$.
In that case, $\Asub{\lieAvec\times\lieAvec^{\prime}}{}=\Asub{\lieAvec}{}\times\Asub{\lieAvec^{\prime}}{}$
as both groups and measure spaces. In particular, if $\lieAvec$ is
a basis for $\lieA$, then $\Asub{\lieAvec}{}=A$ and $\mu_{\Asub{\lieAvec}{}}=\mu_{A}$. 
\end{rem}

\begin{example}
\label{exa: SL(n,R)  1}In the case of $G=\sl n\left(\RR\right)$,
$N=\left[\begin{smallmatrix}1 & \cdots & \RR\\
 & \ddots & \vdots\\
0 &  & 1
\end{smallmatrix}\right]$  and $A=\left[\begin{smallmatrix}e^{\a_{1}} &  & 0\\
 & \ddots\\
0 &  & e^{\a_{n}}
\end{smallmatrix}\right]$, where $\sum\a_{i}=0$. The roots $\phi_{i,j}\in\lieA^{*}$ are defined
via $\phi_{i,j}\brac{{\displaystyle {\scriptstyle \sum}}_{k=1}^{n}\alpha_{k}e_{k,k}}=\alpha_{j}-\alpha_{i}$,
where the positive roots (w.r.t. which $N$ is defined) are the ones
with $j<i$. For $\lieAelement={\displaystyle {\scriptstyle \sum}}_{k=1}^{n}\alpha_{k}e_{k,k}\in\lieA$,
\[
2\rho\left(\lieAelement\right)=2\rho\left(\sum_{k=1}^{n}\alpha_{k}e_{k,k}\right)=\sum_{k=1}^{n}\left(n+1-2k\right)\alpha_{k}.
\]
For $A^{\prime}$ and $A^{\dprime}$ as defined in Section \ref{sec: RI components},
the bases for the Lie algebras are $\lieAelement^{\prime}=(1/\left(n-1\right),\dots,1/\left(n-1\right),-1)$
and $\lieAelement_{i}^{\dprime}=(-e_{i,i}+e_{i+1,i+1})/2$ for $i=1,\dots,n-2$.
For $A^{\prime}$, according to the formula above for $2\rho$, we
have that $2\rho\brac{H^{\prime}}=n$ and therefore 
\[
\mu_{A^{\prime}}=\mu_{\Asub{\lieAvec^{\prime}}{}}=e^{nt}dt,
\]
and for $A^{\dprime}$, $2\rho\brac{\lieAelement_{i}^{\dprime}}=-1$
for all $i$ and therefore 
\[
\mu_{A^{\dprime}}=\prod_{i=1}^{n-2}e^{-s_{i}}ds_{i}.
\]
\end{example}

\begin{defn}
We consider the following subsets of $A$: 
\begin{enumerate}
\item For $\Svec=\brac{S_{1},\ldots,S_{\topindex}}$, 
\[
\Asub{\lieAvec}{\Svec}=\cbrac{\exp\brac{\svec\cdot\lieAvec}:\svec\in{\scriptstyle \prod\limits _{i=1}^{\topindex}}\sbrac{0,S_{i}}}\subseteq\Asub{\lieAvec}{}.
\]
\item When all $S_{i}$ are equal to $T$, we simply write $\Asub{\lieAvec}T\subseteq\Asub{\lieAvec}{}$.
\end{enumerate}
\end{defn}

The goal of this subsection is to prove the following:
\begin{prop}
\label{prop: A cubes are well rounded}The \textbf{family} $\{\Asub{\lieAvec}T\}_{T>0}$
is LWR with parameters which depend only on $\lieAvec$, and the \textbf{fixed
set} $\Asub{\lieAvec}{\Svec}$ is well rounded with parameters which
depend only on $\lieAvec$, when $S_{1},\dots,S_{\topindex}$ are
larger from some $\dl>0$. E.g. $\dl=4/2\rho\left(\lieAelement_{i}\right)$
if $2\rho\left(\lieAelement_{i}\right)\neq0$, and $\dl=1$ otherwise.
\end{prop}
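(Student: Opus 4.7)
The plan is to reduce the LWR estimate to an elementary one-variable computation by passing to the logarithmic coordinates on $\Asub{\lieAvec}{}$. Since $\Asub{\lieAvec}{}$ is abelian, the map $\svec\mapsto\exp(\svec\cdot\lieAvec)$ is a diffeomorphism $\RR^{\topindex}\to\Asub{\lieAvec}{}$ intertwining vector addition with group multiplication. Under this identification $\Asub{\lieAvec}{\Svec}$ becomes the axis-aligned box $R_{\Svec}:=\prod_{i=1}^{\topindex}[0,S_{i}]$; the identity neighborhood $\nbhd{\e}{\Asub{\lieAvec}{}}=\exp(B_{\e})$ is a fixed-norm $\e$-ball in $\RR^{\topindex}$; and the measure $\mu_{\Asub{\lieAvec}{}}$ is the product measure $\prod_{i}e^{\a_{i}s_{i}}\,ds_{i}$ with $\a_{i}:=2\rho(\lieAelement_{i})$.

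Fix constants $c,C>0$ (depending only on the chosen norm on $\lieA_{\lieAvec}$) such that $B_{\e}+B_{\e}$ is contained in the coordinate box $\prod[-C\e,C\e]$ and contains $\prod[-c\e,c\e]$. Then, directly from the abelianness,
\[
R_{\Svec}^{(+\e)}\subseteq\prod_{i=1}^{\topindex}[-C\e,S_{i}+C\e],\qquad R_{\Svec}^{(-\e)}\supseteq\prod_{i=1}^{\topindex}[C\e,S_{i}-C\e]
\]
(the second inclusion provided $S_{i}>2C\e$ for every $i$). Because $\mu_{\Asub{\lieAvec}{}}$ is a product measure, the LWR ratio splits as
\[
\frac{\mu_{\Asub{\lieAvec}{}}(R_{\Svec}^{(+\e)})}{\mu_{\Asub{\lieAvec}{}}(R_{\Svec}^{(-\e)})}\le\prod_{i=1}^{\topindex}\frac{\int_{-C\e}^{S_{i}+C\e}e^{\a_{i}s}\,ds}{\int_{C\e}^{S_{i}-C\e}e^{\a_{i}s}\,ds},
\]
and it suffices to bound each one-dimensional factor by $1+O_{\a_{i}}(\e)$.

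A routine calculation does this, separating into three cases according to the sign of $\a_{i}$. For $\a_{i}>0$ the integrals behave like $\a_{i}^{-1}e^{\a_{i}(S_{i}\pm C\e)}$ once $e^{\a_{i}(S_{i}-C\e)}\gg e^{C\a_{i}\e}$, giving a factor asymptotic to $e^{2C\a_{i}\e}=1+O(\a_{i}\e)$; for $\a_{i}<0$ the same manipulation (after pulling out $|\a_{i}|^{-1}$) shows that both integrals converge to positive limits and the ratio tends to $1$ at the rate $O(|\a_{i}|\e)$; for $\a_{i}=0$ the ratio is $(S_{i}+2C\e)/(S_{i}-2C\e)=1+O(\e/S_{i})$. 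In every case the estimate becomes a clean $1+O_{\a_{i}}(\e)$ as soon as $S_{i}$ exceeds an explicit threshold depending only on $\a_{i}$; the values $\dl=4/|2\rho(\lieAelement_{i})|$ (resp.\ $\dl=1$ when $\a_{i}=0$) given in the statement are amply sufficient. Multiplying the $\topindex$ factors yields LWR with a Lipschitz constant depending only on $\lieAvec$. For the growing family $\{\Asub{\lieAvec}{T}\}_{T>0}$ every $S_{i}=T$ eventually exceeds all thresholds, so LWR holds uniformly for $T\geq T_{0}(\lieAvec)$; for the fixed set the threshold conditions on the individual $S_{i}$'s give the statement directly.

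There is no serious obstacle: the only bookkeeping point is that the exponential weights $\a_{i}$ can be positive, negative, or zero, and the asymptotic behavior of $\int_{0}^{S_{i}}e^{\a_{i}s}\,ds$ (growing, bounded, or linear) is correspondingly different, which is what forces the lower bound on $S_{i}$. The abelian structure removes any non-commutative complication, so no Baker--Campbell--Hausdorff estimates enter the picture.
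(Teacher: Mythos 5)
Your proposal is correct and proceeds essentially as the paper does: reduce to the one-dimensional problem, then split into cases according to the sign of $2\rho(\lieAelement_i)$ and perform the elementary integral computation, obtaining thresholds and Lipschitz constants depending only on the $\a_i=2\rho(\lieAelement_i)$. The only (cosmetic) difference is that you handle the reduction to one variable by observing directly that the product-measure ratio factors, whereas the paper delegates this to its general roundomorphism-for-products corollary; both amount to the same fact.
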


\begin{rem}
Notice that the sets $\Asub{\lieAvec}{\Svec}$ are clearly BCS and
bounded, and are therefore (Remark \ref{rem: LWR set}) LWR; hence
the content of the proposition for these sets is that their LWR parameters
are uniform (i.e., do not depend on $\Svec$).
\end{rem}

\begin{proof}
We only prove the proposition for the family $\{\Asub{\lieAvec}T\}_{T>\dl}$
since the proof for the set $\Asub{\lieAvec}{\Svec}$ is identical.
Moreover, it is sufficient to consider the case of $\topindex=1$,
and then the general case follows from Proposition \ref{cor: roundo to a produt grp}.
Notice that 
\begin{eqnarray*}
\ln\brac{\projset{(\Asub HT}{+\e}} & = & \left[-\e\,,T+\e\right],\\
\ln\brac{\projset{(\Asub HT)}{-\e}} & = & \left[\e\,,T-\e\right].
\end{eqnarray*}
We shall prove LWR of $\cbrac{\Asub HT}_{T>0}$ computationally, by
splitting to different cases according to the sign of $\rho\left(\lieAelement\right)$.
Assume first that $2\rho\left(\lieAelement\right)\neq0$, and then
\[
\mu_{\Asub H{}}\brac{\projset{(\Asub HT)}{+\e}}=\int_{t=-\e}^{t=T+\e}e^{2\rho\left(\lieAelement\right)t}dt=\brac{e^{2\rho\left(\lieAelement\right)\left(T+\e\right)}-e^{-2\rho\left(\lieAelement\right)\e}}/2\rho\left(\lieAelement\right),
\]
and
\[
\mu_{\Asub H{}}\brac{\projset{(\Asub HT)}{-\e}}=\int_{t=\e}^{t=T-\e}e^{2\rho\left(\lieAelement\right)t}dt=\brac{e^{2\rho\left(\lieAelement\right)\left(T-\e\right)}-e^{2\rho\left(\lieAelement\right)\e}}/2\rho\left(\lieAelement\right).
\]
It follows that,
\begin{eqnarray*}
\frac{\mu_{\Asub H{}}\brac{\projset{(\Asub HT)}{+\e}}-\mu_{A_{\lieAelement}}\brac{\projset{(\Asub HT)}{-\e}}}{\mu_{\Asub H{}}\brac{\projset{(\Asub HT)}{-\e}}} & = & \frac{\left(e^{2\rho\left(\lieAelement\right)\left(T+\e\right)}-e^{-2\rho\left(\lieAelement\right)\e}\right)-\left(e^{2\rho\left(\lieAelement\right)\left(T-\e\right)}-e^{2\rho\left(\lieAelement\right)\e}\right)}{e^{2\rho\left(\lieAelement\right)\left(T-\e\right)}-e^{2\rho\left(\lieAelement\right)\e}}.
\end{eqnarray*}
\begin{itemize}
\item If $2\rho\left(\lieAelement\right)>0$ we continue in the following
way 
\[
=\exd{\frac{e^{2\rho\left(\lieAelement\right)T}+1}{e^{2\rho\left(\lieAelement\right)T}}}{\leq2}\cdot\frac{e^{2\rho\left(\lieAelement\right)\e}-e^{-2\rho\left(\lieAelement\right)\e}}{e^{-2\rho\left(\lieAelement\right)\e}-e^{-2\rho\left(\lieAelement\right)T}\cdot e^{2\rho\left(\lieAelement\right)\e}}.
\]
For $\e\leq\frac{1}{2\cdot2\rho\left(\lieAelement\right)}$ and $T\geq\frac{4}{2\rho\left(\lieAelement\right)}$
it holds that $e^{2\rho\left(\lieAelement\right)\e}-e^{-2\rho\left(\lieAelement\right)\e}\leq3\cdot2\rho\left(\lieAelement\right)\e$
and $e^{-2\rho\left(\lieAelement\right)\e}-e^{-2\rho\left(\lieAelement\right)T}\cdot e^{2\rho\left(\lieAelement\right)\e}\geq1/2$;
then, 
\[
\frac{\mu_{\Asub H{}}\brac{\projset{\brac{\Asub HT}}{+\e}}-\mu_{\Asub H{}}\left(\brac{\projset{\brac{\Asub HT}}{-\e}}\right)}{\mu_{\Asub H{}}\brac{\projset{\left(\Asub HT\right)}{-\e}}}\leq2\cdot\frac{3\cdot2\rho\left(\lieAelement\right)\e}{1/2}=12\cdot2\rho\left(\lieAelement\right).
\]
\item If $2\rho\left(\lieAelement\right)<0$, we have
\begin{eqnarray*}
 & = & \frac{(e^{-2\rho\left(\lieAelement\right)\e}-e^{2\rho\left(\lieAelement\right)\left(T+\e\right)})-(e^{2\rho\left(\lieAelement\right)\e}-e^{2\rho\left(\lieAelement\right)\left(T-\e\right)})}{e^{2\rho\left(\lieAelement\right)\e}-e^{2\rho\left(\lieAelement\right)\left(T-\e\right)}}\\
 & = & \frac{(e^{2\rho\left(-\lieAelement\right)\e}-e^{-2\rho\left(-\lieAelement\right)\e})+(e^{-2\rho\left(-\lieAelement\right)\left(T-\e\right)}-e^{-2\rho\left(-\lieAelement\right)\left(T+\e\right)})}{e^{2\rho\left(\lieAelement\right)\e}-e^{2\rho\left(\lieAelement\right)\left(T-\e\right)}}\\
 & = & \exd{(1+e^{-2\rho\left(-\lieAelement\right)T})}{\leq2}\cdot\frac{e^{2\rho\left(-\lieAelement\right)\e}-e^{-2\rho\left(-\lieAelement\right)\e}}{e^{-2\rho\left(-\lieAelement\right)\e}-e^{-2\rho\left(-\lieAelement\right)T}\cdot e^{2\rho\left(-\lieAelement\right)\e}}.
\end{eqnarray*}
So, the same computation as in the previous case shows that the last
expression is $\leq2\cdot\frac{3\cdot2\left|\rho\left(\lieAelement\right)\right|\e}{1/2}=12\cdot2\left|\rho\left(\lieAelement\right)\right|\e$
when $\e\leq\frac{1}{2\cdot2\rho\left(\lieAelement\right)}$ and $T\geq\frac{4}{2\rho\left(\lieAelement\right)}$.
\end{itemize}
Finally, when $2\rho\left(\lieAelement\right)=0$,
\[
\frac{\mu_{\Asub H{}}\brac{\projset{(\Asub HT)}{+\e}}}{\mu_{\Asub H{}}\brac{\projset{(\Asub HT)}{-\e}}}=\frac{T+2\e}{T-2\e}=1+\frac{4}{T-2\e}\e\leq1+4\e,
\]
 when $T-2\e>1$, which holds when for $\e<1/4$ and $T>1$.
\end{proof}

\section{\label{sec: effective Iwasawa and GI decompositions}The Iwasawa
roundomorphism}

In Subsection \ref{subsec: Plan of proof} we defined maps called
roundomorphisms, for which the pre-image of a well rounded family
is in itself well rounded. We also introduced a map $\roundo$ on
$\sl n\left(\RR\right)$, and the aim of this section is to prove
that $\roundo$ is a roundomorphism, allowing us to reduce the well
roundedness of families in $\sl n\left(\RR\right)$ to well roundedness
of their projections to $K$, $A^{\prime}$, $A^{\dprime}$ $N^{\prime}$
and $N^{\dprime}$. We begin by showing that (the more crude) map
$G\to K\times A\times N$ projecting to the Iwasawa coordinates of
a semisimple group is a roundomorphism. 

\subsection{Effective Iwasawa decomposition}

Recall that we let $G$ denote a semisimple Lie group with finite
center and Iwasawa decomposition $G=KAN$. The subgroups $K$, $A$
and $N$ are equipped with measures $\mu_{K}$, $\mu_{A}$ and $\mu_{N}$
respectively, such that for a given Haar measure $\mu_{G}$ of $G$,
$\mu_{G}=\mu_{K}\times\mu_{A}\times\mu_{N}$. Note that while $\mu_{K}$
and $\mu_{N}$ are Haar measures of their corresponding group, $\mu_{A}$
is not (see Definition \ref{def: H sbgrps of A} for $\mu_{A}$). 

Let $\lieA$ be the Lie algebra of $A$, $\lieN$ the Lie algebra
of $N$, and recall that $\left\{ \phi_{1},\ldots\phi_{\dimN}\right\} \subset\lieA^{*}$
are the positive (restricted) roots w.r.t. $\lieN$. Here the $\phi_{i}$'s
are not necessarily different, but with multiplicities.  For $a=\exp\left(\lieAelement\right)\in A$
define 
\begin{equation}
\begin{array}{c}
\maxexp\left(\lieAelement\right):=\max_{i}\left\{ -\phi_{i}\left(\lieAelement\right),0\right\} ,\\
\ff a:=\cnorm^{2}e^{\maxexp\left(\lieAelement\right)},\qquad\quad
\end{array}\label{eq: m(H) and err(a)}
\end{equation}
where $\cnorm\geq1$ is a constant which depends on the specific
choice of norm $\norm{\cdot}$ on $\lieN$ in the following manner:
$\left(1/\cnorm\right)\norm{\lieNelement}_{\infty}\leq\norm{\lieNelement}\leq\cnorm\norm{\lieNelement}_{\infty}$
for every $\lieNelement\in\lieN$.
\begin{rem}
\label{rem: err sub multiplicative}Notice that $\ff{\cdot}$ is sub-multiplicative:
\[
\ff{a_{1}a_{2}}\leq\ff{a_{1}}\ff{a_{2}}.
\]
 
\end{rem}

The goal of this section is to prove the following proposition.
\begin{prop}[Effective Iwasawa decomposition]
\label{prop: Effective Iwasawa decomposition} Let $G$ be a semisimple
Lie group with finite center. The diffeomorphism defining the Iwasawa
decomposition $\roundogen:G\to K\times A\times N$, $\roundogen\left(g\right)=\left(k,a,n\right)$
is a $f$-roundomorphism w.r.t. $\nbhd{\e}G$, $\nbhd{\e}{K\times A\times N}$
and 
\[
f\left(g\right)\porsmall C\left(n\right)\cdot\ff a^{2},
\]
where $C\left(n\right)=\left\Vert \mbox{Ad}\,n\right\Vert _{\mbox{op}}$\textup{. }
\end{prop}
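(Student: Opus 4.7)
My plan has two parts, one for each property of an $f$-roundomorphism. The first, measure preservation, is almost automatic: the measures on $K,A,N$ in Subsection \ref{subsec: Defining RI} are chosen precisely so that $\mu_G=\mu_K\times\mu_A\times\mu_N$ in Iwasawa coordinates, the non-Haar factor $e^{2\rho\brac{\lieAelement}}\,d\lieAelement$ on $\mu_A$ being exactly the Iwasawa Jacobian (the standard integration formula, see e.g.\ \cite[Prop.\ 8.43]{Knapp}). Hence $\roundogen_{*}\mu_{G}=\mu_{K}\times\mu_{A}\times\mu_{N}$ by construction.

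For the locally Lipschitz property, the plan is to compute the pointwise Lipschitz constant of $\roundogen$ at $g_{0}=k_{0}a_{0}n_{0}$ by linearising, and then to extend to the finite-neighborhood inequality by a mean-value-type argument along curves joining $g_{0}$ to elements of $\nbhd{\e}G\,g_{0}\,\nbhd{\e}G$, where the sub-multiplicativity of $\ff{\cdot}$ (Remark \ref{rem: err sub multiplicative}) keeps the constant uniform over the neighborhood. For the linearisation, I differentiate the inverse map $\mu\brac{k,a,n}=kan$ along the curve $\brac{k_{0}\exp\brac{tY_{K}},a_{0}\exp\brac{tY_{A}},n_{0}\exp\brac{tY_{N}}}$ at $t=0$: translated to the left-invariant frame at $g_{0}$, a direct calculation gives
\[
d\mu_{g_{0}}\brac{Y_{K},Y_{A},Y_{N}}=\Ad{n_{0}^{-1}}\Ad{a_{0}^{-1}}Y_{K}+\Ad{n_{0}^{-1}}Y_{A}+Y_{N},
\]
so the proposition reduces to bounding $\norm{\brac{d\mu_{g_{0}}}^{-1}}_{\operatorname{op}}\porsmall C\brac{n_{0}}\,\ff{a_{0}}^{2}$.

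To invert $d\mu_{g_{0}}$, I apply $\Ad{n_{0}}$ and project onto the Iwasawa summands of $\lieG=\mathfrak{k}\oplus\lieA\oplus\lieN$. The critical input is the $\lieA$-root decomposition $\mathfrak{k}=\mathfrak{m}\oplus\bigoplus_{\alpha>0}\mathfrak{k}_{\alpha}$ with $\mathfrak{k}_{\alpha}=\cbrac{X+\theta X:X\in\mathfrak{g}_{\alpha}}$: a direct computation shows that for $Y_{K}^{\alpha}=X_{\alpha}+\theta X_{\alpha}\in\mathfrak{k}_{\alpha}$, the element $\Ad{a_{0}^{-1}}Y_{K}^{\alpha}$ has $\mathfrak{k}$-component $e^{\alpha\brac{\lieAelement_{0}}}Y_{K}^{\alpha}$, trivial $\lieA$-component, and $\lieN$-component $\brac{e^{-\alpha\brac{\lieAelement_{0}}}-e^{\alpha\brac{\lieAelement_{0}}}}X_{\alpha}$. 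Inverting the $\mathfrak{k}$-equation thus costs one factor of $\sup_{\alpha>0}e^{-\alpha\brac{\lieAelement_{0}}}\leq e^{\maxexp\brac{\lieAelement_{0}}}$, and substituting back into the $\lieN$-equation to solve for $Y_{N}$ costs a further such factor; the $\cnorm^{2}$ built into $\ff{a_{0}}^{2}$ arises from passing between the chosen norm on $\lieN$ and the $\ell^{\infty}$-norm on root coefficients, while the prefactor $C\brac{n_{0}}$ comes from the initial application of $\Ad{n_{0}}$.

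The main obstacle is to decompose $\mathfrak{k}$ along $\lieA$-roots rather than treat $\Ad{a_{0}^{-1}}|_{\mathfrak{k}}$ as a generic operator: the naive operator-norm bound $\norm{\Ad{a_{0}^{-1}}}_{\operatorname{op}}=e^{\max_{i}\vbrac{\phi_{i}\brac{\lieAelement_{0}}}}$ on all of $\lieG$ is generally strictly larger than $\ff{a_{0}}$, since $\ff{a_{0}}$ is sensitive only to the expansion of $\Ad{a_{0}^{-1}}$ on the positive root spaces $\mathfrak{g}_{\phi_{i}}\subseteq\lieN$; only the per-root analysis yields the sharper $e^{\maxexp\brac{\lieAelement_{0}}}$-bound at each step, and hence the claimed $C\brac{n_{0}}\,\ff{a_{0}}^{2}$ rather than the much weaker $C\brac{n_{0}}\,e^{2\max_{i}\vbrac{\phi_{i}\brac{\lieAelement_{0}}}}$.
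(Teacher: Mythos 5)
Your plan takes a genuinely different route from the paper's. The paper never linearises: it manipulates the identity neighborhood $\nbhd{\e}{G}$ directly, decomposing it in Iwasawa coordinates (and, for right perturbations, in Bruhat coordinates $MN^{-}AN$), pushing each piece past $k$, $a$ and $n$ one at a time, with the key conjugation estimate $a^{-1}\nbhd{\e}{N}a\subseteq\nbhd{\ff{a}\e}{N}$ (Lemma \ref{lem: A Conj  N }) supplying the $\ff{a}$ factors. Your infinitesimal approach — differentiate the multiplication map, invert the differential using the $\lieA$-root decomposition $\mathfrak{k}=\mathfrak{m}\oplus\bigoplus_{\alpha>0}\mathfrak{k}_{\alpha}$, then pass to the finite statement by a mean-value argument — is sound in spirit, and your central observation is correct and important: a naive bound $\norm{\Ad{a^{-1}}}_{\mathrm{op}}$ on all of $\lieG$ would be $e^{\max_{i}|\phi_{i}(\lieAelement)|}$ and hence far too weak, whereas decomposing $\mathfrak{k}$ along $\lieA$-roots isolates the only contributions, $e^{-\alpha(\lieAelement_{0})}\leq e^{\maxexp(\lieAelement_{0})}$, that actually enter. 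Your formula for $d\mu_{g_{0}}$ and your decomposition of $\Ad{a_{0}^{-1}}Y_{K}^{\alpha}$ into $\mathfrak{k}$- and $\lieN$-components are both correct.

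There are, however, two points at which the sketch falls short of the claimed bound and of a complete proof. First, a bookkeeping error: with your curve $n_{0}\exp(tY_{N})$, applying $\Ad{n_{0}}$ turns the equation into $\Ad{a_{0}^{-1}}Y_{K}+Y_{A}+\Ad{n_{0}}Y_{N}=\Ad{n_{0}}Z$, so after solving for $\Ad{n_{0}}Y_{N}$ you must apply $\Ad{n_{0}^{-1}}$ once more to extract $Y_{N}$, giving $\norm{\Ad{n_{0}^{-1}}}_{\mathrm{op}}\cdot\norm{\Ad{n_{0}}}_{\mathrm{op}}\cdot\ff{a_{0}}^{2}$ rather than $C(n_{0})\ff{a_{0}}^{2}$; this step is silently dropped in your account. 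The fix is cheap: the definition of $f$-roundomorphism permits left perturbations of the image, so if you parametrise $n$ by $\exp(tY_{N})n_{0}$ instead, the differential becomes $\Ad{n_{0}^{-1}}\left(\Ad{a_{0}^{-1}}Y_{K}+Y_{A}+Y_{N}\right)$, and after one $\Ad{n_{0}}$ the $\lieN$-equation reads $(\Ad{a_{0}^{-1}}Y_{K})_{N}+Y_{N}=(\Ad{n_{0}}Z)_{N}$ with no inversion needed; this gives the stated $C(n_{0})\ff{a_{0}}^{2}$ on the nose. Second, the mean-value extension is asserted but not carried out, and it is not a formality: the domain neighborhood $\nbhd{\e}{G}g_{0}\nbhd{\e}{G}$ is two-sided, and your differential only describes right perturbations $g_{0}\exp(tZ)$. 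Left perturbations $\exp(tZ)g_{0}$ correspond in the left-invariant frame to $\Ad{g_{0}^{-1}}Z$, which is \emph{not} controlled by $\norm{Z}$; one must repeat the inversion for this input (it costs only $\porsmall\ff{a_{0}}$ thanks to the same cancellations) before the curve argument — in which sub-multiplicativity of $\ff{\cdot}$ keeps $\ff{a_{t}}\asymp\ff{a_{0}}$ — can be run. The paper sidesteps all of this because it handles left and right perturbations of $g$ in two explicitly separate steps, keeping the $A$- and $N$-factors of the perturbation on whichever side of $a$ and $n$ avoids the costly conjugations. So: viable approach, correct key idea, but the $n$-bookkeeping and the two-sided finite-neighborhood reduction both need to be written out before this is a proof.
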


The proof requires the following auxiliary lemma. 
\begin{lem}
\label{lem: A Conj  N }Let $N^{-}:=\Theta\left(N\right)$, where
$\Theta$ is a global Cartan involution compatible with the given
Iwasawa decomposition. Then $A$ acts on both $N,N^{-}$ by conjugation
such that the following holds: 
\begin{align*}
a^{-1}\nbhd{\e}Na & \subseteq\nbhd{\ff a\e}N,\\
a\nbhd{\e}{N^{-}}a^{-1} & \subseteq\nbhd{\ff a\e}{N^{-}}.
\end{align*}
\end{lem}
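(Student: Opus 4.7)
The plan is to translate both inclusions to a single linear estimate on the adjoint representation, and then read the constant $\ff{a}$ off the restricted root-space decomposition. First I would unfold the definition $\nbhd{\e}{G}=\exp(\ball{\e}{})$ restricted to $N$ and $N^{-}$: since $\exp$ is a diffeomorphism from the nilpotent algebras $\lieN$ and $\Theta(\lieN)$ onto $N$ and $N^{-}$, and since $a\exp(\lieNelement)a^{-1}=\exp(\Ad{a}\lieNelement)$, both claims reduce to showing that $\Ad{a^{-1}}$ maps the $\e$-ball of $\lieN$ into the $\ff{a}\e$-ball of $\lieN$, and that $\Ad{a}$ maps the $\e$-ball of $\Theta(\lieN)$ into the $\ff{a}\e$-ball of $\Theta(\lieN)$.

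Next I would run the eigenvalue computation. Decompose $\lieN=\bigoplus_{i=1}^{\dimN}\lieN_{\phi_{i}}$ into restricted root spaces (counted with multiplicities, so each summand is one-dimensional), and use that $\Theta$ is a Lie algebra automorphism with $\Theta|_{\lieA}=-\id$ to conclude $\Theta(\lieN)=\bigoplus_{i=1}^{\dimN}\lieN_{-\phi_{i}}$. Writing $a=\exp(\lieAelement)$, the operator $\Ad{a^{-1}}$ acts on $\lieN_{\phi_{i}}$ by the scalar $e^{-\phi_{i}(\lieAelement)}$, and $\Ad{a}$ acts on $\lieN_{-\phi_{i}}$ by the same scalar $e^{-\phi_{i}(\lieAelement)}$. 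In both cases the operator is diagonal, and each eigenvalue is bounded in absolute value by $e^{\maxexp(\lieAelement)}$ by the very definition of $\maxexp(\lieAelement)=\max_{i}\{-\phi_{i}(\lieAelement),0\}$.

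Finally I would convert the eigenvalue bound to a norm bound on the given norm. The sup-norm $\norm{\cdot}_{\infty}$ that controls $\cnorm$ is understood to be the one adapted to this root-space decomposition, so a diagonal operator has operator sup-norm equal to the largest absolute eigenvalue. The chain
\[
\norm{\Ad{a^{\pm 1}}\lieNelement}\leq\cnorm\,\norm{\Ad{a^{\pm 1}}\lieNelement}_{\infty}\leq\cnorm\,e^{\maxexp(\lieAelement)}\norm{\lieNelement}_{\infty}\leq\cnorm^{2}e^{\maxexp(\lieAelement)}\norm{\lieNelement}=\ff{a}\norm{\lieNelement}
\]
together with an application of $\exp$ yields both inclusions. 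There is no genuine obstacle here: the only place that benefits from care is the choice of sup-norm in a basis respecting the root decomposition, so that the two factors of $\cnorm$ appear symmetrically and assemble into the stated constant $\ff{a}=\cnorm^{2}e^{\maxexp(\lieAelement)}$; the rest is a one-line eigenvalue bound.
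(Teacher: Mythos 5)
Your proof is correct and follows essentially the same path as the paper's: compute the adjoint action of $a^{\pm 1}$ on the root-space eigenvectors to get the diagonal scalars $e^{-\phi_i(\lieAelement)}$, bound them by $e^{\maxexp(\lieAelement)}$, and pass from the sup-norm to the given norm twice, producing the $\cnorm^2$ factor in $\ff{a}$. The only cosmetic difference is in the $N^-$ case: the paper gets the second inclusion by applying $\Theta$ to the first, whereas you run the eigenvalue computation on $\Theta(\lieN)=\bigoplus_i\lieN_{-\phi_i}$ directly; this is fine, but it implicitly assumes the same constant $\cnorm$ works for the norm restricted to $\Theta(\lieN)$ in the basis $\Theta(Z_i)$, which is automatic if one simply applies the isometry $\Theta$ as the paper does.
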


\begin{proof}
 First we introduce some notations. Let $\lieNelement_{1},\ldots,\lieNelement_{\dimN}$
be the corresponding linearly independent eigenvectors in $\lieG$
of $\phi_{1},\ldots\phi_{\dimN}$ respectively. Denote 
\[
n_{\underline{x}}=n_{\sbrac{x_{1},\ldots,x_{\dimN}}}:=\exp(\,{\scriptstyle \sum\limits _{i=1}^{p}}x_{i}Z_{i}).
\]
Then
\[
N=\{n_{\underline{x}}:\underline{x}\in\RR^{\dimN}\};\quad N^{-}=\{n_{\underline{x}}^{-}=\Theta(n_{\underline{x}}):\underline{x}\in\RR^{\dimN}\}.
\]

For every $\lieAelement\in\lieA$ and $\lieNelement\in\lieN$ the
action of $a^{-1}=\exp\left(-\lieAelement\right)$ on $\exp\left(\lieNelement\right)$
is given by 
\[
\mbox{Conj}_{\,\exp\left(-\lieAelement\right)}\left(\exp\left(\lieNelement\right)\right)=\exp(\Ad{e^{-\lieAelement}}\left(\lieNelement\right))=\exp(e^{\ad{-\lieAelement}}\left(\lieNelement\right)).
\]
In particular, if $\lieNelement=\sum_{i=1}^{\dimN}x_{i}\lieNelement_{i}$
then (since $\ad{-\lieAelement}\left(\lieNelement_{i}\right)=\left[-\lieAelement,\lieNelement_{i}\right]=\phi_{i}\left(-\lieAelement\right)\cdot\lieNelement_{i}$
and therefore $e^{\ad{-\lieAelement}}\left(\lieNelement_{i}\right)=e^{\phi_{i}\left(-\lieAelement\right)}\cdot\lieNelement_{i}$):
\begin{align*}
\mbox{Conj}_{\,\exp\left(-\lieAelement\right)}(\exp\brac{\ensuremath{{\scriptstyle \sum\limits _{i=1}^{p}}x_{i}Z_{i}}}) & =\exp(\Ad{e^{-\lieAelement}}\brac{\ensuremath{{\scriptstyle \sum\limits _{i=1}^{p}}x_{i}Z_{i}}})=\exp(\ensuremath{{\scriptstyle \sum\limits _{i=1}^{p}}x_{i}}\Ad{e^{-\lieAelement}}\left(\lieNelement_{i}\right))\\
= & \exp(\ensuremath{{\scriptstyle \sum\limits _{i=1}^{p}}x_{i}}\cdot e^{\ad{-\lieAelement}}\left(\lieNelement_{i}\right))=\exp(\ensuremath{{\scriptstyle \sum\limits _{i=1}^{p}}x_{i}}\cdot e^{\phi_{i}\left(-\lieAelement\right)}\cdot\lieNelement_{i}).
\end{align*}
As a result, 
\[
a^{-1}\cdot n_{\underline{x}}\cdot a=\exp\left(-\lieAelement\right)\cdot n_{\underline{x}}\cdot\exp\left(\lieAelement\right)=n_{[x_{1}e^{\phi_{1}\left(-\lieAelement\right)},\ldots,x_{\dimN}e^{\phi_{\dimN}\left(-\lieAelement\right)}]}=n_{\dbrac{\underline{x},(e^{-\phi_{i}\left(\lieAelement\right)})_{i=1}^{\dimN}}}.
\]
 If $a^{-1}\cdot n_{\underline{x}}\cdot a=n_{\underline{y}}$, then
for $n_{\underline{x}}\in\nbhd{\e}N$ and $\norm x<\e$ it holds for
$\underline{y}$ that 
\[
\lilnorm{\underline{y}}=\lilnorm{\dbrac{\underline{x},(e^{-\phi_{i}\left(\lieAelement\right)})_{i=1}^{\dimN}}}\leq\cnorm\lilnorm{\dbrac{\underline{x},(e^{-\phi_{i}\left(\lieAelement\right)})_{i=1}^{\dimN}}}_{\infty}\leq\cnorm\lilnorm{\underline{x}}_{\infty}\lilnorm{(e^{-\phi_{i}\left(\lieAelement\right)})_{i=1}^{\dimN}}_{\infty}\leq\e\cdot\ff a.
\]
Thus, 
\[
a^{-1}\nbhd{\e}Na\subseteq\nbhd{\ff a\e}N.
\]
The second part follows from the first by applying $\Theta$ (the
global Cartan involution) to the above.
\end{proof}
As a final preparation to the proof of Proposition \ref{prop: Effective Iwasawa decomposition},
we list some properties of the families of identity neighborhoods
$\nbhd{\e}G=\exp_{G}\left(\ball{\e}{}\right)$ appearing in the statement
of the proposition. We let $G$ be a general Lie group. \textbf{Then
$\nbhd{\e}G$ has the following properties}:
\begin{enumerate}
\item \label{lem: Conjugation inflates by the norm of Ad}(\textbf{Conjugation
by $g$ dilates by $\left\Vert \Ad g\right\Vert $}) If the Lie algebra
of $G$ is $\mathfrak{g}$ then for every $g\in G$, 
\[
g^{-1}\,\nbhd{\e}G\,g\subseteq\nbhd{\e\cdot\left\Vert \Ad g\right\Vert _{\text{op}}}G=\exp\cbrac{\lieNelement\in\mathfrak{g}:\left\Vert \lieNelement\right\Vert \leq\e\cdot\left\Vert \Ad g\right\Vert _{\text{op}}},
\]
where $\norm{\cdot}$ is any euclidean norm on $\mathfrak{g}$ and
$\left\Vert \cdot\right\Vert _{\text{op}}$ is the norm on the space
of linear $\mathfrak{g}$-operators. 
\item (\textbf{Connectivity}) $\nbhd{\e}G$ is a connected subset of $G$. 
\item \label{lem: Connectivity + additivity of coord. balls}(\textbf{Additivity})
for small enough $\e$ and $\delta$, there exists $c>0$ such that
$\nbhd{\e}G\nbhd{\dl}G\subseteq\nbhd{c\left(\e+\delta\right)}G$.
\item \label{def: equivalence of nbhds}\label{rem: coordinate balls equivalent}(\textbf{Decomposition
of $G$ allows decomposition of $\nbhd{\e}G$}) If $G$ is semi-simple
(as it is in Proposition \ref{prop: Effective Iwasawa decomposition}),
hence has Iwasawa decomposition, the family $\nbhd{\e}G$ is equivalent
to the family $\nbhd{\e}K\nbhd{\e}A\nbhd{\e}N=\exp_{K\times A\times N}\left(\ball{\e}{}\right)$
of identity neighborhoods in $G$ in the sense that there exist $\e_{1},c,C>0$
such that for every $0<\e<\e_{1}$ it holds that $\nbhd{c\e}G\subseteq\nbhd{\e}K\nbhd{\e}A\nbhd{\e}N\subseteq\nbhd{C\e}G$.
 Using Bruhat coordinates on identity neighborhood in $G$, the family
$\nbhd{\e}G$ is also equivalent to the family $\nbhd{\e}M\nbhd{\e}{N^{-}}\nbhd{\e}A\nbhd{\e}N$,
where $M=\left(Z_{K}\left(A\right)\right)_{0}$; we may assume that
the parameter $\e_{1}$ is the same. 
\end{enumerate}
\begin{proof}[proof of Proposition \ref{prop: Effective Iwasawa decomposition}]
Clearly, we only need to show that 
\[
\roundo\left(\nbhd{\e}Gg\nbhd{\e}G\right)\subseteq\nbhd{f\e}{K\times A\times N}\,\roundo(g)\nbhd{f\e}{K\times A\times N},
\]
where $f$ is as in the statement. This will be accomplished in three
steps. 

\textbf{Step 1: Left perturbations.} ~According to Properties \ref{rem: coordinate balls equivalent}
and \ref{lem: Conjugation inflates by the norm of Ad}, there exist
$\e_{1},c_{1},c_{2}>0$ such that for all $\e<\e_{1}$ 
\begin{align*}
\nbhd{\e}Gkan & =k\left(k^{-1}\nbhd{\e}Gk\right)an\subseteq k\nbhd{c_{1}\e}Gan\subseteq k\nbhd{c_{2}\e}K\nbhd{c_{2}\e}A\nbhd{c_{2}\e}Nan\\
 & =k\nbhd{c_{2}\e}K\cdot\nbhd{c_{2}\e}Aa\cdot a^{-1}\nbhd{c_{2}\e}Nan.
\end{align*}
 By Lemma \ref{lem: A Conj  N }, $a^{-1}\nbhd{\e}Na\subseteq\nbhd{\ff a\e}N$,
hence
\[
\roundo\left(\nbhd{\e}Gg\right)\subseteq\nbhd{c_{2}\ff a\e}{K\times A\times N}\roundo\left(g\right)\nbhd{c_{2}\e}{K\times A\times N}.
\]

\textbf{Step 2: Right perturbations.} ~By Properties \ref{rem: coordinate balls equivalent}
(for the Bruhat coordinates) and \ref{lem: Conjugation inflates by the norm of Ad},
\begin{align*}
kan\nbhd{\e}G & =ka\left(n\nbhd{\e}Gn^{-1}\right)n\subseteq ka\nbhd{C\left(n\right)\e}Gn\\
 & \subseteq ka\cdot\nbhd{c_{3}C\left(n\right)\e}M\nbhd{c_{3}C\left(n\right)\e}{N^{-}}\nbhd{c_{3}C\left(n\right)\e}A\nbhd{c_{3}C\left(n\right)\e}N\cdot n\\
 & =k\nbhd{c_{3}C\left(n\right)\e}M\cdot a\cdot\nbhd{c_{3}C\left(n\right)\e}{N^{-}}\nbhd{c_{3}C\left(n\right)\e}A\nbhd{c_{3}C\left(n\right)\e}Nn\\
 & =k\nbhd{c_{3}C\left(n\right)\e}M\left(a\nbhd{c_{3}C\left(n\right)\e}{N^{-}}a^{-1}\right)a\,\nbhd{c_{3}C\left(n\right)\e}A\nbhd{c_{3}C\left(n\right)\e}Nn.
\end{align*}
\textcolor{black}{{} }By Lemma \ref{lem: A Conj  N }, $a\nbhd{c_{3}C\left(n\right)\e}{N^{-}}a^{-1}\subseteq\nbhd{c_{3}C\left(n\right)\ff a\e}{N^{-}}\subseteq\nbhd{c_{3}C\left(n\right)\ff a\e}G$.
Moreover, for $\e\leq\e_{1}/(c_{3}C\left(n\right)\ff a)$ we have
\begin{align*}
\nbhd{c_{3}C\left(n\right)\e}M\nbhd{c_{3}C\left(n\right)\ff a\e}G & \subseteq\nbhd{c_{3}C\left(n\right)\e}K\nbhd{c_{4}C\left(n\right)\ff a\e}K\nbhd{c_{4}C\left(n\right)\ff a\e}A\nbhd{c_{4}C\left(n\right)\ff a\e}N\\
 & \subseteq\nbhd{c_{5}C\left(n\right)\ff a\e}K\nbhd{c_{4}C\left(n\right)\ff a\e}A\nbhd{c_{4}C\left(n\right)\ff a\e}N.
\end{align*}
As a result,
\[
kan\nbhd{\e}G\subseteq k\nbhd{c_{5}C\left(n\right)\ff a\e}K\nbhd{c_{4}C\left(n\right)\ff a\e}A\nbhd{c_{4}C\left(n\right)\ff a\e}N\,a\,\nbhd{c_{3}C\left(n\right)\e}A\nbhd{c_{3}C\left(n\right)\e}Nn.
\]
Let $a_{\e}\in\nbhd{c_{3}C\left(n\right)\e}A$. Write $a_{1}=aa_{\e}$.
By sub-multiplicativity of $\ff{\cdot}$ (Remark \ref{rem: err sub multiplicative})
 we get,
\[
\nbhd{c_{4}C\left(n\right)\ff a\e}Na_{1}=a_{1}a_{1}^{-1}\nbhd{c_{4}C\left(n\right)\ff a\e}Na_{1}\subseteq a_{1}\nbhd{c_{4}C\left(n\right)\ff a\ff{a_{1}}\e}N\subseteq a_{1}\nbhd{c_{5}C\left(n\right)\ff a^{2}\e}N.
\]
Combining all of the above, we conclude
\[
kan\nbhd{\e}G\subseteq k\nbhd{c_{5}C\left(n\right)\ff a\e}K\nbhd{c_{4}C\left(n\right)\ff a\e}Aa\nbhd{c_{3}C\left(n\right)\e}A\nbhd{c_{5}C\left(n\right)\ff a^{2}\e}N\nbhd{c_{3}C\left(n\right)\e}Nn.
\]
In other words,
\[
r\left(g\nbhd{\e}G\right)\subseteq\nbhd{c_{6}C\left(n\right)\left(\ff a^{2}+1\right)\e}{K\times A\times N}r\left(g\right)\nbhd{c_{6}C\left(n\right)\left(\ff a+1\right)\e}{K\times A\times N}.
\]

\paragraph*{Step 3: Combining left and right perturbations. }

Finally, using the additivity property \ref{lem: Connectivity + additivity of coord. balls}
on $\nbhd{\e}{K\times A\times N}$ we conclude that
\[
r\left(\nbhd{\e}Gg\nbhd{\e}G\right)\subseteq\nbhd{f\left(g\right)\e}{K\times A\times N}r\left(g\right)\nbhd{f\left(g\right)\e}{K\times A\times N}
\]
for $\e\leq1/f\left(g\right)$ and $f\left(g\right)\porsmall C\left(n\right)\cdot\ff a^{2}$. 
\end{proof}

\subsection{Effective Refined Iwasawa decomposition}

After having established that the map $G\to K\times A\times N$ projecting
to the $KAN$ coordinates is a roundomorphism, we deduce it for the
$KA^{\prime}A^{\dprime}N$ and RI decompositions as well (Corollary
\ref{cor: roundo of GI decomposition}). 
\begin{lem}
\label{lem: roundo for decomposing N}Let N be a connected nilpotent
Lie group with Haar measure $\mu_{N}$. Suppose that $N=N_{1}\ltimes N_{2}$,
where $N_{1}$ and $N_{2}$ are two closed subgroups of $N$ equipped
with Haar measures $\mu_{N_{1}}$ and $\mu_{N_{2}}$.Then each element
in $N$ can be decomposed in a unique way as $n=n_{1}n_{2}$, and
the map 
\[
r\left(n\right)=\left(n_{1},n_{2}\right)\in N_{1}\times N_{2}
\]
 is a $f$-roundomorphism for some continuous $f:N\to\RR^{\geq0}$.
If $N$ is abelian, then  $f\equiv1$.
\end{lem}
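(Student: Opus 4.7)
The plan is to verify the two conditions of Definition \ref{def: roundomorphism} in turn: measure preservation and local Lipschitzity.

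For measure preservation, since $N$ is connected nilpotent it is unimodular, as are any of its closed subgroups. The multiplication map $m: N_1 \times N_2 \to N$, $(n_1,n_2) \mapsto n_1 n_2$, is a global diffeomorphism because $N = N_1 \ltimes N_2$, and $r$ is its inverse. A standard Fubini-type identity for semidirect products of unimodular groups yields $m_*(\mu_{N_1}\times\mu_{N_2}) = \mu_N$ up to Haar normalizations, which we assume are chosen consistently. Hence $r_*(\mu_N) = \mu_{N_1}\times\mu_{N_2}$.

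For the Lipschitz bound, fix $n = n_1 n_2$ and $u,v \in \nbhd{\e}{N}$. Using the equivalence of coordinate balls in a semidirect product (as in Property \ref{rem: coordinate balls equivalent} applied to the decomposition $\mathfrak{n} = \mathfrak{n}_1 \oplus \mathfrak{n}_2$), there is a constant $c_0$ so that $u = u_1 u_2$ and $v = v_1 v_2$ with $u_i,v_i \in \nbhd{c_0\e}{N_i}$ for all sufficiently small $\e$. Normality of $N_2$ then lets me commute $u_2$ past $n_1$ and $v_1$ past both $n_2$ and the conjugated $u_2$:
\[
u n v \;=\; u_1 n_1 v_1 \cdot \bigl(v_1^{-1} n_1^{-1} u_2 n_1 v_1\bigr)\cdot\bigl(v_1^{-1} n_2 v_1\bigr)\cdot v_2,
\]
where each of the three factors after $u_1 n_1 v_1$ lies in $N_2$. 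Thus $r(unv) = (n_1', n_2')$ with $n_1' = u_1 n_1 v_1 \in \nbhd{c_0\e}{N_1}\, n_1\, \nbhd{c_0\e}{N_1}$, and $n_2' \in N_2$ is an analogous two-sided perturbation of $n_2$ whose size is controlled, via Property \ref{lem: Conjugation inflates by the norm of Ad}, by $\|\mathrm{Ad}\,n_1\|_{\mathrm{op}}$ and $\|\mathrm{Ad}\,v_1\|_{\mathrm{op}}$ acting on $\mathfrak{n}_2$. Combining the $N_1$ and $N_2$ perturbations by Property \ref{lem: Connectivity + additivity of coord. balls} gives the desired inclusion with $f(n) \porsmall 1 + \|\mathrm{Ad}\,n_1\|_{\mathrm{op}}$, which is continuous in $n$.

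For the abelian case, every conjugation in the display above is trivial, so $n_1' = u_1 n_1 v_1$ and $n_2' = u_2 n_2 v_2$ with no distortion. Moreover, on the Lie algebra side $\mathfrak{n} = \mathfrak{n}_1 \oplus \mathfrak{n}_2$ and the Baker--Campbell--Hausdorff series collapses to addition; if we choose the norm on $\mathfrak{n}$ to be the maximum of the given norms on the $\mathfrak{n}_i$, then $B_\e = B_\e^{\mathfrak{n}_1}\times B_\e^{\mathfrak{n}_2}$ and therefore $\nbhd{\e}{N}$ factors exactly as the product of $\nbhd{\e}{N_i}$, yielding $f \equiv 1$.

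The main obstacle is the careful bookkeeping in the non-abelian Lipschitz estimate: specifically, one must justify the decomposition $u = u_1 u_2$ with size control (this is the equivalence of coordinate balls inside a semidirect product, derived from the inverse function theorem applied to $m$), and then track the three successive applications of normality of $N_2$ without losing the $f$-bound. Everything else is essentially formal.
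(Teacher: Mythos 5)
Your proof takes a more hands-on route than the paper's: for measure preservation the paper simply cites Knapp (Cor.~8.31, Thm.~8.32), and for the Lipschitz condition it invokes the general fact that a diffeomorphism is a roundomorphism (Prop.~4.6 of the companion paper \cite{HK_WellRoundedness}), whereas you verify the Lipschitz condition by an explicit commutator computation using normality of $N_2$. Your algebra is correct and your treatment of the measure identity and of the abelian case are fine, but the claimed estimate $f(n)\porsmall 1+\left\Vert\mathrm{Ad}\,n_1\right\Vert_{\mathrm{op}}$ is not: the middle factor $v_1^{-1}n_2v_1$ in your display is not $n_2$, and extracting $n_2$ via $v_1^{-1}n_2v_1=n_2\cdot\bigl(n_2^{-1}v_1^{-1}n_2v_1\bigr)$ (or the mirror-image factorization) produces a remainder in $N_2$ whose size is governed by $\e\,\left\Vert\mathrm{Ad}_{n_2}\right\Vert_{\mathrm{op}}$, not by $\mathrm{Ad}_{n_1}$. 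A correct bound along your lines therefore has to involve $n_2$ as well, e.g.\ $f(n)\porsmall 1+\left\Vert\mathrm{Ad}_n\right\Vert_{\mathrm{op}}$; this is consistent with the paper's effective Iwasawa estimate, where the factor $C(n)=\left\Vert\mathrm{Ad}\,n\right\Vert_{\mathrm{op}}$ from the unipotent part appears explicitly. Since the lemma asks only for \emph{some} continuous $f$, your conclusion survives the error, but as stated the estimate misattributes the source of the distortion, and you should also note that the decomposition $u=u_1u_2$ with $u_i\in\nbhd{c_0\e}{N_i}$ requires a short inverse-function-theorem argument rather than a direct invocation of Property~4, which the paper states only for the Iwasawa coordinates of a semisimple group.
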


\begin{proof}
The first condition in the definition of a roundomorphism is a consequence
of the nilpotency assumption (see \cite[Corollary 8.31, Theorem 8.32]{Knapp}).
The second condition, local Lipschitzity, follows from the fact that
$\roundo$ in the lemma is a diffeomorphism (see \cite[Prop. 4.6]{HK_WellRoundedness}).
\end{proof}
\begin{cor}[Effective RI decomposition]
\label{cor: roundo of GI decomposition} Let $G$ be a semisimple
Lie group with finite center and Iwasawa decomposition $G=KAN$. Assume
that $N^{\prime}$ and $N^{\dprime}$ are closed subgroups of $N$
equipped with Haar measures $\mu_{N^{\prime}},\mu_{N^{\dprime}}$
such that $N=N^{\dprime}\ltimes N^{\prime}$ and $\mu_{N}=\mu_{N^{\prime}}\times\mu_{N^{\dprime}}$.
Similarly, let $A^{\prime}$ and $A^{\dprime}$ be closed subgroups
of $A$ such that $A=A^{\prime}\times A^{\dprime}$ and $\mu_{A}=\mu_{A^{\prime}}\times\mu_{A^{\dprime}}$.
The projection map
\[
\roundo:G\to K\times A^{\prime}\times A^{\dprime}\times N^{\dprime}\times N^{\prime},\quad\roundo\left(g\right)=\left(k,a^{\prime},a^{\dprime},n^{\dprime},n^{\prime}\right)
\]
is an $f$-roundomorphisms w.r.t. 
\[
f\left(g\right)\porsmall c\left(n^{\prime},n^{\dprime}\right)\cdot\ff{a^{\prime}a^{\dprime}}^{2}
\]
where $c\left(n^{\prime},n^{\dprime}\right)$ is a continuous functions
on $N^{\prime}\times N^{\dprime}$. 
\end{cor}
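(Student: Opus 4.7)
The strategy is to factor $\roundo$ as a composition of three simpler roundomorphisms and then use the fact that compositions and direct products of roundomorphisms are again roundomorphisms, keeping careful track of how the Lipschitz bounds propagate. Concretely, I would write $\roundo=(\id_K\times\roundo_A\times\roundo_N)\circ\roundo_I$, where $\roundo_I:G\to K\times A\times N$ is the Iwasawa projection from Proposition~\ref{prop: Effective Iwasawa decomposition}, $\roundo_A:A\to A'\times A^{\dprime}$ sends $a=a'a^{\dprime}$ to $(a',a^{\dprime})$, and $\roundo_N:N\to N^{\dprime}\times N'$ sends $n=n^{\dprime}n'$ to $(n^{\dprime},n')$.

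For the two new maps the work is essentially done by Lemma~\ref{lem: roundo for decomposing N}. Since $A$ is abelian and by assumption $\mu_A=\mu_{A'}\times\mu_{A^{\dprime}}$, the lemma (abelian case) gives that $\roundo_A$ is a roundomorphism with local Lipschitz function $f_A\equiv1$. Since $N$ is nilpotent, $N=N^{\dprime}\ltimes N'$, and $\mu_N=\mu_{N^{\dprime}}\times\mu_{N'}$, the lemma produces a continuous $c(n^{\dprime},n')$ such that $\roundo_N$ is an $f_N$-roundomorphism with $f_N(n)=c(n^{\dprime},n')$. Taking the direct product with the identity on $K$, Proposition~\ref{cor: roundo to a produt grp} (product of roundomorphisms) yields that $\id_K\times\roundo_A\times\roundo_N$ is a roundomorphism on $K\times A\times N$ with local Lipschitz function bounded by $\max(1,1,f_N)$, i.e.\ by $c(n^{\dprime},n')$.

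The measure compatibility $\roundo_{*}\mu_G=\mu_K\times\mu_{A'}\times\mu_{A^{\dprime}}\times\mu_{N^{\dprime}}\times\mu_{N'}$ is then immediate from $\mu_G=\mu_K\times\mu_A\times\mu_N$ together with the two product decompositions of $\mu_A$ and $\mu_N$. For the local Lipschitz condition, I would compose the two estimates: if $\roundo_I$ is $f_I$-Lipschitz with $f_I(g)\porsmall C(n)\cdot\ff{a}^{2}$ (Proposition~\ref{prop: Effective Iwasawa decomposition}), then for $\e$ small enough
\[
\roundo(\nbhd{\e}{G}g\nbhd{\e}{G})\subseteq(\id_K\times\roundo_A\times\roundo_N)\bigl(\nbhd{f_I(g)\e}{K\times A\times N}\roundo_I(g)\nbhd{f_I(g)\e}{K\times A\times N}\bigr),
\]
and applying the Lipschitz bound of the product map inflates this perturbation by a further factor of $c(n^{\dprime},n')$. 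Since $a=a'a^{\dprime}$, so $\ff{a}=\ff{a'a^{\dprime}}$, and since $C(n)=\Vert\Ad{n}\Vert_{\mathrm{op}}\leq\Vert\Ad{n^{\dprime}}\Vert_{\mathrm{op}}\cdot\Vert\Ad{n'}\Vert_{\mathrm{op}}$ can be absorbed into $c(n^{\dprime},n')$ (it is continuous in $(n^{\dprime},n')$), the final bound takes exactly the claimed form $f(g)\porsmall c(n^{\dprime},n')\cdot\ff{a'a^{\dprime}}^{2}$.

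The only mildly delicate point, which I would treat as a small lemma, is the statement that composition of roundomorphisms is a roundomorphism with local Lipschitz function equal (up to constants) to the product of the individual ones evaluated along the map; this is a routine verification chasing the two inclusions in the definition, and it also subsumes the compatibility of the identity neighbourhood families $\nbhd{\e}{K\times A\times N}$ and $\nbhd{\e}{K\times A'\times A^{\dprime}\times N^{\dprime}\times N'}$ via Property~\ref{def: equivalence of nbhds} applied to each factor separately. Given this, there is no real obstacle: the entire statement reduces to bookkeeping on top of the previously established effective Iwasawa decomposition.
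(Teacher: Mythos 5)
Your proof is correct and takes essentially the same route as the paper, whose proof is a one-line citation of the same three ingredients: the effective Iwasawa decomposition (Proposition~\ref{prop: Effective Iwasawa decomposition}), Lemma~\ref{lem: roundo for decomposing N} applied to $N=N^{\dprime}\ltimes N^{\prime}$ and to the abelian $A=A^{\prime}\times A^{\dprime}$, and the composition-of-roundomorphisms lemma from \cite{HK_WellRoundedness}. You have merely spelled out the bookkeeping (the factorization $\roundo=(\id_K\times\roundo_A\times\roundo_N)\circ\roundo_I$, the sub-multiplicativity $\|\Ad{n}\|_{\mathrm{op}}\leq\|\Ad{n^{\dprime}}\|_{\mathrm{op}}\|\Ad{n^{\prime}}\|_{\mathrm{op}}$, and the absorption of constants into $c(n^{\dprime},n^{\prime})$) that the paper leaves implicit.
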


\begin{proof}
This follows from Proposition \ref{prop: Effective Iwasawa decomposition}
combined with Lemma \ref{lem: roundo for decomposing N} and the fact
that a composition of roundomorphisms is a roundomorphism (\cite[Lemma 4.5]{HK_WellRoundedness}).
\end{proof}

\subsection{\label{subsec: Computing err(a)} Computing $f$ for the Iwasawa
roundomorphism }

Assume the setting of Corollary \ref{cor: roundo of GI decomposition},
where we have shown that ``the Refined Iwasawa decomposition map'',
$\roundo$, is a roundomorphism, and expressed the error function
$f$ in terms of $a^{\prime},a^{\dprime}$. We now proceed to compute
$f$ under some assumptions on $A^{\prime},A^{\dprime}$, which are
satisfied for the $A^{\prime},A^{\dprime}$ introduced in Section
\ref{sec: RI components} and are relevant for the counting problem
in Proposition \ref{prop: Counting with Hexagons (A counting)}. The
discussion is concluded in Lemma \ref{lem: err(a) in SLn}, where
we deduce the correct $f$ for our counting problem, and it will be
used in the proof of the proposition.

Denote $\dimA:=\dim\left(A\right)$. Let $\lieAelement_{1}^{\prime},\dots,\lieAelement_{\dimAp}^{\prime},\lieAelement_{1}^{\dprime},\dots,\lieAelement_{\dimA-\dimAp}^{\dprime}$
be a basis for $\lieA$, and denote 
\[
A^{\prime}=\Asub{\lieAvec^{\prime}}{},A^{\dprime}=\Asub{\lieAvec^{\dprime}}{},
\]
 where $\lieAvec^{\prime}=\brac{\lieAelement_{1}^{\prime},\dots,\lieAelement_{\dimAp}^{\prime}}$
and $\lieAvec^{\dprime}=\brac{\lieAelement_{1}^{\dprime},\dots,\lieAelement_{\dimA-\dimAp}^{\dprime}}$.
We compute $f$ under the following assumption\textcolor{green}{.}:
\begin{assumption}
\label{assum: assumption that 2rho(H'')<0}For every $i=\ensuremath{1,\dots,\dimAp}$
assume $\lieAelement_{i}^{\prime}\in\overline{\mathcal{C}}-\left\{ 0\right\} $,
where $\mathcal{C}$ is the positive Weil chamber w.r.t. $N$. For
every $\ensuremath{i=1,\dots,\dimA-\dimAp}$, assume $2\rho\brac{\lieAelement_{i}^{\dprime}}<0$.
The latter can be achieved, for example, by requiring that $\lieAelement_{i}^{\dprime}\in-\overline{\mathcal{C}}-\left\{ 0\right\} $
for every $i$. 
\end{assumption}

\begin{notation}
\label{nota: m of H''}For $\lieAvec=\brac{\lieAelement_{1},\dots,\lieAelement_{\topindex}}$
and $\maxexp\brac{\lieAelement_{j}}$ as defined in Formula (\ref{eq: m(H) and err(a)}),
denote 
\[
\maxexp_{\lieAvec}=\underset{j}{\max}\cbrac{\maxexp\brac{\lieAelement_{j}}}=\max_{i,j}\cbrac{-\phi_{i}\brac{\lieAelement_{j}},0}.
\]
\end{notation}

The content of the following Lemma is that under assumption \ref{assum: assumption that 2rho(H'')<0},
the error function of the Iwasawa roundomorphism is only affected
by the $A^{\dprime}$ component of $A$. 
\begin{lem}
\label{lem: err(a) in SLn}Under assumption \ref{assum: assumption that 2rho(H'')<0},
$a_{\underline{t}}^{\prime}a_{\svec}^{\dprime}=\exp(\underline{t}\cdot\lieAvec^{\prime}+\svec\cdot\lieAvec^{\dprime})$
satisfies that $\ff{a_{t}^{\prime}a_{\svec}^{\dprime}}\leq\cnorm^{2}e^{\maxexp_{\lieAvec^{\dprime}}\sums}$,
where $\sums:=\svec\cdot\left(1,\ldots,1\right)=\sum s_{i}$. In particular,
for $G=\sl n\left(\RR\right)$ and $A^{\prime},A^{\dprime}$ as defined
in Section \ref{sec: RI components}, $\ff{a_{t}^{\prime}a_{\svec}^{\dprime}}\leq\cnorm^{2}e^{\sums}$. 
\end{lem}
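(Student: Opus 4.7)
The plan is direct: by the definition of $\ff{\cdot}$ and the linearity of each positive root $\phi_i$, we just need to show that
\[
\maxexp\bigl(\underline{t}\cdot\lieAvec^{\prime}+\svec\cdot\lieAvec^{\dprime}\bigr) \;=\; \max_{i}\bigl\{-\phi_i(\underline{t}\cdot\lieAvec^{\prime})-\phi_i(\svec\cdot\lieAvec^{\dprime}),\,0\bigr\} \;\leq\; \maxexp_{\lieAvec^{\dprime}}\,\sums,
\]
where we are working under the standing sign convention $t_j,s_j\geq 0$ inherited from the parametrization of $\Asub{\lieAvec}{\Svec}$. The two summands inside the max will be treated separately.

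For the $A^{\prime}$ contribution, Assumption \ref{assum: assumption that 2rho(H'')<0} places each $\lieAelement_j^{\prime}$ in the closed positive Weyl chamber $\overline{\mathcal{C}}$, hence $\phi_i(\lieAelement_j^{\prime})\geq 0$ for every positive root $\phi_i$. Combined with $t_j\geq 0$ this yields
\[
-\phi_i(\underline{t}\cdot\lieAvec^{\prime}) \;=\; -\sum_{j=1}^{\dimAp} t_j\,\phi_i(\lieAelement_j^{\prime}) \;\leq\; 0,
\]
so the $A^{\prime}$ component contributes nothing to $\maxexp$. For the $A^{\dprime}$ contribution, observe that for each $j$ and $i$ we have $-\phi_i(\lieAelement_j^{\dprime})\leq\maxexp(\lieAelement_j^{\dprime})\leq\maxexp_{\lieAvec^{\dprime}}$ by Notation \ref{nota: m of H''}, and therefore
\[
-\phi_i(\svec\cdot\lieAvec^{\dprime}) \;=\; \sum_{j=1}^{\dimA-\dimAp} s_j\bigl(-\phi_i(\lieAelement_j^{\dprime})\bigr) \;\leq\; \maxexp_{\lieAvec^{\dprime}}\sum_{j} s_j \;=\; \maxexp_{\lieAvec^{\dprime}}\sums.
\]
Adding these two bounds and taking the maximum with $0$ gives the claim, and hence $\ff{a_{\underline{t}}^{\prime}a_{\svec}^{\dprime}}=\cnorm^{2}e^{\maxexp(\underline{t}\cdot\lieAvec^{\prime}+\svec\cdot\lieAvec^{\dprime})}\leq\cnorm^{2}e^{\maxexp_{\lieAvec^{\dprime}}\sums}$.

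For the particular case $G=\sl n(\RR)$ with the $A^{\prime}$, $A^{\dprime}$ of Section \ref{sec: RI components}, I would first verify Assumption \ref{assum: assumption that 2rho(H'')<0}: on $\lieAelement^{\prime}=(\tfrac{1}{n-1},\ldots,\tfrac{1}{n-1},-1)$ each positive root $\phi_{i,j}(\lieAelement)=\alpha_j-\alpha_i$ with $j<i$ is non-negative (zero when $i<n$, equal to $\tfrac{1}{n-1}+1>0$ when $i=n$), so $\lieAelement^{\prime}\in\overline{\mathcal{C}}$; and the computation in Example \ref{exa: SL(n,R)  1} gives $2\rho(\lieAelement_i^{\dprime})=-1<0$. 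It then remains to compute $\maxexp_{\lieAvec^{\dprime}}$. For $\lieAelement_i^{\dprime}=(-e_{i,i}+e_{i+1,i+1})/2$ the quantity $-\phi_{k,j}(\lieAelement_i^{\dprime})=\alpha_k-\alpha_j$ (with $j<k$) is maximized by $j=i$, $k=i+1$, taking the value $\tfrac{1}{2}-(-\tfrac{1}{2})=1$. Hence $\maxexp(\lieAelement_i^{\dprime})=1$ for each $i$, so $\maxexp_{\lieAvec^{\dprime}}=1$, and the bound reduces to $\ff{a_{t}^{\prime}a_{\svec}^{\dprime}}\leq\cnorm^{2}e^{\sums}$. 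There is no real obstacle here beyond carefully bookkeeping signs; the only subtle point is checking the sign convention on the $s_j$, which matches the parametrization of $\trunc{\brac{\shortdom}_T}{\Svec}$ in Proposition \ref{prop: Counting with Hexagons (A counting)}.
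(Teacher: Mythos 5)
Your proof is correct and follows essentially the same route as the paper: the $A^{\prime}$ part contributes nothing to $\maxexp$ because $\lieAelement_j^{\prime}\in\overline{\mathcal{C}}$ and $t_j\geq 0$, and the $A^{\dprime}$ part is bounded via $-\phi_i(\lieAelement_j^{\dprime})\leq\maxexp_{\lieAvec^{\dprime}}$ together with $s_j\geq 0$, then the $\sl n(\RR)$ specialization is a direct root computation giving $\maxexp_{\lieAvec^{\dprime}}=1$. You are right to make the sign convention $t_j,s_j\geq 0$ explicit (the paper leaves it implicit, and even writes an equality $\maxexp(\underline{t}\cdot\lieAvec^{\prime}+\svec\cdot\lieAvec^{\dprime})=\maxexp(\svec\cdot\lieAvec^{\dprime})$ where only the inequality $\leq$ holds and is needed), so if anything your write-up is slightly more careful.
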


\begin{proof}
If the elements $\lieAelement_{j}^{\prime}$ are in $\overline{\mathcal{C}}-\left\{ 0\right\} $,
then $\maxexp(\underline{t}\cdot\lieAvec^{\prime}+\svec\cdot\lieAvec^{\dprime})=\maxexp(\svec\cdot\lieAvec^{\dprime})\leq\maxexp_{\lieAvec^{\dprime}}\sums$.
As for $G=\sl n\left(\RR\right)$ and $A^{\prime},A^{\dprime}$ as
defined in Section \ref{sec: RI components}, the basis elements in
$\lieA$ that correspond to $A^{\prime},A^{\dprime}$ are $\lieAelement^{\prime}=(1/\brac{n-1},\dots,1/\brac{n-1},-1)$
and $\lieAelement_{j}^{\dprime}=(-e_{j,j}+e_{j+1,j+1})/2$ for $j=1,\dots,n-2$
(see Example \ref{exa: SL(n,R)  1}). With the positive roots as in
Example \ref{exa: SL(n,R)  1}, we have that $\maxexp_{\lieAelement_{j}^{\dprime}}=\max\cbrac{0,1=\frac{1}{2}-(-\frac{1}{2}),\frac{1}{2}-0,-\frac{1}{2}-0}=1$
for every $j=1,\dots,n-2$, hence $\maxexp_{\lieAvec^{\dprime}}=1$. 
\end{proof}

\section{\label{sec: The base sets are LWR}The base sets}

We return our focus to $G=\sl n\left(\RR\right)$. The aim of this
section is to prove that $\roundo\brac{\trunc{\brac{\wc_{\latset}}}{\Svec}}$
is LWR, and therefore (see second step in the plan on Section \ref{subsec: Plan of proof})
the base set in 
\[
\roundo\brac{\brac{\shortdom}_{T}^{\Svec}\brac{\latset,\a}}=\bigcup_{\substack{\left(\brac{k,a^{\dprime},n^{\dprime}},a^{\prime}\right)\in\\
\roundo(\trunc{\latset}{\Svec})\times A_{T}^{\prime}
}
}\left(k,a^{\dprime},n^{\dprime},a^{\prime}\right)\times\parby{N^{\prime}}{\ellipse{a^{\dprime}n^{\dprime}}{\a}}
\]
is LWR independently of $\Svec$. From now on, $H^{\prime}$ and $H_{j}^{\dprime}$
for $j=1,\ldots,n-2$ are as in Example \ref{exa: SL(n,R)  1}. 

\begin{lem}
\label{lem: pushforward of Fm is LWR AxN}For any $\latset\subseteq K\symfund{n-1}\subset\wc$
that is a BCS, the set $\roundo\left(\latset\right)$ is LWR in $K\times A^{\dprime}\times N^{\dprime}$.
As a result,  $\roundo(\trunc{\latset}{\Svec})$) is LWR with parameters
that do not depend on $\Svec$.
\end{lem}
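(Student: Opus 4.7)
The approach is to leverage the diffeomorphism structure of $\roundo$ on $\wc$ and then exploit the exponential decay of $\mu_{A^{\dprime}}$ to control $\epsilon$-perturbations of the boundary.

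First, I would show that the restriction $\roundo|_{\wc}$ is a diffeomorphism from $\wc = KP^{\dprime}$ onto $K \times A^{\dprime} \times N^{\dprime}$, and in fact a roundomorphism. This is an easier instance of Corollary \ref{cor: roundo of GI decomposition}: with no $A^{\prime}$ or $N^{\prime}$ factors present, the decomposition reduces to the semidirect-product splitting $P^{\dprime} = A^{\dprime} \ltimes N^{\dprime}$, to which Lemma \ref{lem: roundo for decomposing N} applies; measure-preservation is built into the normalization $\mu_{\wc} = \mu_K \times \mu_{A^{\dprime}} \times \mu_{N^{\dprime}}$ fixed in Subsection \ref{subsec: Defining RI}. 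Since diffeomorphisms preserve the BCS property (Fact \ref{rem: intersection and union of BCS}), $\roundo(\latset)$ is a BCS in $K \times A^{\dprime} \times N^{\dprime}$ whenever $\latset \subseteq \wc$ is.

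Next, I would establish coordinate-wise boundedness of $\roundo(\latset)$ where relevant: $K$ is compact, and the $N^{\dprime}$-coordinates of any element in $\latset \subseteq K\symfund{n-1}$ have entries in $[-1/2,1/2]$ by Lemma \ref{lem: BLC. facts about z in RS domain}(\ref{enu: entries of n in =00005B-0.5,0.5=00005D}). The only unboundedness of $\roundo(\latset)$ is in the $A^{\dprime}$ direction, up the cusp of $\symfund{n-1}$. But $\mu_{A^{\dprime}} = \prod_{i=1}^{n-2} e^{-s_i}\,ds_i$ decays exponentially there, so $\roundo(\latset)$ has finite $\mu$-measure and, since its boundary is locally contained in a finite union of $C^1$ submanifolds of codimension at least one (BCS), the total $\mu$-content of the boundary is likewise finite. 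An explicit $\epsilon$-tube estimate then yields
\[
\mu\bigl(\roundo(\latset)^{(+\epsilon)}\setminus \roundo(\latset)^{(-\epsilon)}\bigr)=O(\epsilon),
\]
proving LWR of $\roundo(\latset)$.

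For the truncated version $\roundo(\trunc{\latset}{\Svec})$, boundedness together with the BCS property gives LWR directly by Remark \ref{rem: LWR set}; the content of the statement is the uniformity of the constant in $\Svec$. This I would extract from two observations: (a) for $\Svec$ not too small, $\mu(\roundo(\trunc{\latset}{\Svec})^{(-\epsilon)})$ is bounded below by a positive constant independent of $\Svec$ (the bulk of $\latset$ sits in a fixed compact part of $\symfund{n-1}$); (b) each truncation face $\{s_i = S_i\}$ contributes boundary measure at most $e^{-S_i}$ times fixed factors in the remaining coordinates, which is uniformly bounded in $\Svec$ and in fact vanishes as $\Svec \to \infty$. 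Combining the original boundary contribution from the previous step with these truncation contributions, the ratio governing LWR stays uniformly bounded in $\Svec$.

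The main obstacle I anticipate is making the $\epsilon$-tube estimate for the unbounded BCS $\roundo(\latset)$ quantitatively precise: Remark \ref{rem: LWR set} covers only the bounded case, so one must directly bound the $\epsilon$-neighborhood of the (possibly unbounded) boundary using the local BCS decomposition together with the exponential weight $\prod e^{-s_i}$. This same exponential decay, handled once here, is also what forces the constants in the truncated step to be independent of $\Svec$, so a single careful argument serves both parts of the lemma.
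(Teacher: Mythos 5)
Your proposal diverges from the paper's proof and contains a genuine gap at exactly the point you flag as the main obstacle.

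The paper does not attempt a direct $\epsilon$-tube estimate on the unbounded set $\roundo(\latset)$. Instead it introduces a further roundomorphism
\[
\varphi:K\times A^{\dprime}\times N^{\dprime}\to K\times(\RR,+,\One_{(0,\infty)}(x)\,dx)^{n-2}\times N^{\dprime},
\]
built coordinate-wise from $\psi(x)=1/x$ on each $A^{H_i^{\dprime}}\cong(\RR^{>0},\cdot,dx/x^{2})$. By Lemma~\ref{lem: height of F_n bounded from below} the $A^{\dprime}$-coordinates of $\symfund{n-1}$ are bounded from below, so after applying $\psi$ the image $\varphi(\roundo(K\symfund{n-1}))$ is a \emph{bounded} BCS, hence LWR by Remark~\ref{rem: LWR set}; and since the Lipschitz factor $f(x)=2/x$ of $\psi$ is bounded on the set (again because of the lower bound on $s_i$), Proposition~\ref{cor: roundo to a produt grp} pulls LWR back to $\roundo(\latset)$. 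The truncated statement is then handled by intersecting with $K\times A_{\Svec}^{\dprime}\times\pi_{N^{\dprime}}(\symfund{n-1})$, which is LWR with parameter independent of $\Svec$ because $A_{\Svec}^{\dprime}$ is (Proposition~\ref{prop: A cubes are well rounded}); intersection stability of LWR then finishes.

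The gap in your version is that finiteness of the ``$\mu$-content of the boundary'' is vacuous (any codimension-one boundary has $\mu$-measure zero), and what you actually need is a uniform bound on the $(+\epsilon)$ minus $(-\epsilon)$ tube, which requires controlling the codimension-one surface measure of $\partial\roundo(\latset)$ against the weight $\prod e^{-s_i}$ uniformly as $s_i\to\infty$. The BCS property is only a \emph{local} statement and supplies no control on how the local $C^1$ decompositions behave up the cusp; nothing in the hypotheses rules out the boundary's surface area growing faster than $e^{s_i}$ in some region, which would ruin the estimate. You correctly sense that the exponential decay of $\mu_{A^{\dprime}}$ ``should'' save the day, but making that precise for an arbitrary BCS $\latset\subseteq K\symfund{n-1}$ is precisely the step that does not follow from the tools stated, and the paper sidesteps it entirely by compactifying via $\psi(x)=1/x$ and then invoking the bounded BCS implies LWR implication. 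The same issue recurs in your treatment of the truncation: ``the bulk sits in a fixed compact part'' is not a proof that the constants are uniform in $\Svec$; the paper derives this cleanly from the LWR of $A_{\Svec}^{\dprime}$ with $\Svec$-independent parameters plus stability of LWR under intersection.

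Your step that $\roundo|_{\wc}$ is a roundomorphism and that $\roundo(\latset)$ is a BCS is correct and matches the paper's setup, but it is only the starting point; the compactification trick is the missing idea.
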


\begin{rem}
\textbf{}The set $K\symfund{n-1}$ (resp.\ $\symfund{n-1}$) itself
is \textbf{not }LWR in $\wc$ (resp.\ $P^{\dprime}$), only its image
under $\roundo$ is.
\end{rem}

Since Lemma \ref{lem: pushforward of Fm is LWR AxN} is about counting
in a group that is a direct product, it is proved by working in each
of the components separately. Among the two components $A^{\dprime}$
and $N^{\dprime}$, the problematic one is of course $A^{\dprime}$;
the role of the following two lemmas is to handle this component. 

\begin{lem}
\label{lem: height of F_n bounded from below}The projection to the
$\Asub{H_{i}^{\dprime}}{}$ component of $\symfund{n-1}$ is bounded
from below for every $i=1,\dots,n-2$. 
\end{lem}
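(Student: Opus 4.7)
The plan is to translate the claim about the $s_{i}$-coordinate into a bound on the covolume of a flag sublattice, and then obtain this bound by iterating Minkowski's first theorem on successive orthogonal projections. For $z=kan\in\symfund{n-1}\subset\sl{n-1}\left(\RR\right)$ with diagonal part $a=\diag{a_{1},\ldots,a_{n-1}}$, the choice $H_{i}^{\dprime}=(-e_{i,i}+e_{i+1,i+1})/2$ from Example \ref{exa: SL(n,R)  1} yields, by direct computation, $a_{1}\cdots a_{i}=e^{-s_{i}/2}$. Thus the lemma is equivalent to producing a bound $\prod_{j=1}^{i}a_{j}\leq C_{i}$ with $C_{i}=C_{i}(n)$ independent of $z\in\symfund{n-1}$. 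Since for a reduced basis $a_{1}\cdots a_{i}=\covol{\lat_{z}^{i}}$, where $\lat_{z}^{i}$ is the rank-$i$ sublattice spanned by the first $i$ columns of $z$, this reduces to bounding $\covol{\lat_{z}^{i}}$ uniformly in $z$.

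I would prove this by induction on $i$. For $i=1$, $a_{1}=\norm{v_{1}}$ is the length of a shortest nonzero vector of the unimodular lattice $\lat_{z}\subset\RR^{n-1}$ (by Definition \ref{def: Siegel reduced basis}(1)), and Minkowski's first theorem yields $a_{1}\leq\gamma_{n-1}$, where $\gamma_{d}$ denotes Hermite's constant. For the inductive step, assume $\prod_{j=1}^{i-1}a_{j}\leq C_{i-1}$ and consider the orthogonal projection $\overline{\lat_{z}}$ of $\lat_{z}$ onto $\perpen{V_{i-1}}$, where $V_{i-1}=\sp\cbrac{v_{1},\ldots,v_{i-1}}$. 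This is a lattice of rank $n-i$ in $\RR^{n-i}$ whose covolume equals $\covol{\lat_{z}^{i-1}}^{-1}=\brac{\prod_{j=1}^{i-1}a_{j}}^{-1}$. By the reducedness condition (Definition \ref{def: Siegel reduced basis}(1) together with Lemma \ref{lem: BLC. facts about z in RS domain}(\ref{enu: norm out of E_(j-1)})), $a_{i}$ is precisely the first successive minimum of $\overline{\lat_{z}}$, so Minkowski's first theorem in $\RR^{n-i}$ gives
\[
a_{i}\leq\gamma_{n-i}\,\covol{\overline{\lat_{z}}}^{1/(n-i)}=\gamma_{n-i}\brac{\prod_{j=1}^{i-1}a_{j}}^{-1/(n-i)},
\]
and hence
\[
\prod_{j=1}^{i}a_{j}\leq\gamma_{n-i}\brac{\prod_{j=1}^{i-1}a_{j}}^{(n-i-1)/(n-i)}\leq\gamma_{n-i}\cdot\max(1,C_{i-1})^{(n-i-1)/(n-i)}=:C_{i},
\]
closing the induction.

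I do not anticipate any serious obstacle; the only point requiring care is the identification of $a_{i}$ with $\lambda_{1}(\overline{\lat_{z}})$, which is exactly the content of the Minkowski-reduction property (the projection of $v_{i}$ to $\perpen{V_{i-1}}$ is chosen to have minimal nonzero length among all completions of $v_{1},\ldots,v_{i-1}$ to a basis of $\lat_{z}$) and is recorded in Lemma \ref{lem: BLC. facts about z in RS domain}(\ref{enu: norm out of E_(j-1)}). In particular, the bound depends only on the reducedness of $an$ and not on the additional constraints (on the off-diagonal entries of $n$ or the choice of fibre in $\so{n-1}(\RR)$) that cut $\symfund{n-1}$ out of the full set of reduced matrices, so the constants $C_{i}$ depend purely on $n$, as required.
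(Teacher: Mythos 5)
Your proof is correct, but it takes a genuinely different route from the paper's. The paper argues entirely at the level of linear functionals on $\mathfrak{a}^{\dprime}$: it writes the coordinate functionals $\psi_j$ (the dual basis to $\{H_j^{\dprime}\}$) as combinations of the simple roots $\phi_i=\phi_{i+1,i}$ by inverting the tridiagonal Cartan-type matrix, verifies by a computation that those combinations have a fixed sign, and then feeds in the one-step ratio bound $\frac{\sqrt 3}{2}a_j\leq a_{j+1}$ from Lemma~\ref{lem: BLC. facts about z in RS domain}(\ref{enu: entries of a increasing}), which bounds each $\phi_i$ on the relevant cone. In contrast, you pass to covolumes via $a_1\cdots a_i=e^{-s_i/2}$ and run an induction on $i$ using Minkowski's first theorem on the successive orthogonal projections $\pi_{V_{i-1}^{\perp}}(\lat_z)$; this exploits the full force of Definition~\ref{def: Siegel reduced basis}(1) (namely that $a_i=\lambda_1(\pi_{V_{i-1}^{\perp}}(\lat_z))$, as recorded in Lemma~\ref{lem: BLC. facts about z in RS domain}(\ref{enu: norm out of E_(j-1)})), whereas the paper's argument only needs the weaker consecutive-ratio consequence. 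Your approach is conceptually tied to geometry of numbers and yields the constants $C_i$ explicitly in terms of Hermite's constants, while the paper's is a short self-contained linear-algebra argument that avoids Minkowski's theorem and only requires knowing that the inverse of the $A_{n-2}$-type Cartan matrix has non-negative entries. One small notational caveat: with the usual normalization of Hermite's constant $\gamma_d$, Minkowski's bound reads $\lambda_1\leq\sqrt{\gamma_d}\,\covol{}^{1/d}$; you have dropped the square root, but this has no effect on the argument since all that matters is that the constants depend only on the dimension.
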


\begin{proof}
We need to show that for every $\lieAelement\in\mathfrak{a}^{\dprime}$
such that $\exp\left(\lieAelement\right)\cdot n^{\dprime}\in\symfund{n-1}$,
it holds that the coefficients of $\lieAelement$ in its presentation
of a linear combination of $\{\lieAelement_{j}^{\dprime}\}$ are bounded
from below. These coefficients are given by linear functionals: $\lieAelement=\sum_{j=1}^{n-2}\psi_{j}\left(\lieAelement\right)\lieAelement_{j}^{\dprime}$
(actually, $\left\{ \psi_{j}\right\} _{j=1}^{n-2}\subset\brac{\mathfrak{a}^{\dprime}}^{*}$
is the dual basis to $\{\lieAelement_{j}^{\dprime}\}_{j=1}^{n-2}\subset\mathfrak{a}^{\dprime}$).
Denote $\phi_{i}:=\phi_{i+1,i}$ where $\left\{ \phi_{i,j}\right\} $
are the roots for $\sl n\left(\RR\right)$ defined in Example \ref{exa: SL(n,R)  1}.
Clearly $\left\{ \phi_{i}\right\} $ form a basis to $\brac{\mathfrak{a}^{\dprime}}^{*}$,
and by Lemma \ref{lem: BLC. facts about z in RS domain} they satisfy
that $\phi_{i}\left(\lieAelement\right)\geq\ln\brac{\sqrt{3}/2}$
for every $i=1,\dots,n-2$ and $\lieAelement$ as above. It is therefore
sufficient to show that in the presentation of every $\psi_{j}$ as
a linear combination of $\left\{ \phi_{i}\right\} $, the coefficients
are non-negative. Write $\psi_{i}=2\sum_{j=1}^{n-2}x_{i,j}\phi_{j}$
and evaluate at each of $\lieAelement_{1}^{\dprime},\dots,\lieAelement_{n-2}^{\dprime}$
we obtain the following system of linear equations
\[
\left[\begin{smallmatrix}2 & -1 & 0 &  & 0\\
-1 & 2 & -1\\
 & -1 & \ddots & \ddots\\
 &  & \ddots & 2 & -1\\
0 &  &  & -1 & 2
\end{smallmatrix}\right]\left[\begin{array}{c}
x_{i,1}\\
\vdots\\
x_{i,n-2}
\end{array}\right]=e_{i}.
\]
A computation shows that the solution $\brac{x_{i,j}}_{j=1}^{n-2}$
is indeed non-negative. 
\end{proof}
To see how the following lemma concerns the $A^{\dprime}$ component,
notice that the group $\brac{A^{\lieAelement_{i}^{\dprime}},d\mu_{A_{\lieAelement_{i}^{\dprime}}}}$
is measure preserving isomorphic to $\brac{\RR^{>0},\cdot,dx/x^{2}}$
for every $i=1,\dots,n-2$. 
\begin{lem}
The map $\psi:\brac{\RR^{>0},\cdot,dx/x^{2}}\to\brac{\RR,+,\One_{\left(0,\infty\right)}\left(x\right)\cdot dx}$
given by $\psi\left(x\right)=1/x$ is a $f$-roundomorphism with $f\left(x\right)=2/x$. 
\end{lem}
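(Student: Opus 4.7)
My plan is to verify the two defining conditions of an $f$-roundomorphism from Definition~\ref{def: roundomorphism} directly for $\psi(x)=1/x$ with $f(x)=2/x$.

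Measure preservation will be a single change of variables. Setting $y=1/x$ gives $|dx|=dy/y^{2}$, so the pushforward of $dx/x^{2}$ under $\psi$ equals $(dy/y^{2})\cdot y^{2}=dy$ on $(0,\infty)$, which matches the target measure $\One_{(0,\infty)}(y)\,dy$. No boundary contribution appears since $\psi$ is a bijection of $(0,\infty)$ onto itself.

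For the Lipschitz condition I would spell out the identity neighbourhoods dictated by Assumption~\ref{assu: Our O_e }. In $(\RR^{>0},\cdot)$ the Lie exponential is the usual exponential, so $\nbhd{\e}{\RR^{>0}}=(e^{-\e},e^{\e})$, while in $(\RR,+)$ it is the identity, so $\nbhd{\e}{\RR}=(-\e,\e)$. Consequently $\nbhd{\e}{\RR^{>0}}\cdot x\cdot\nbhd{\e}{\RR^{>0}}=(e^{-2\e}x,\,e^{2\e}x)$, whose image under $\psi$ is $(e^{-2\e}/x,\,e^{2\e}/x)$. Since the target neighbourhood $\nbhd{f\e}{\RR}+\psi(x)+\nbhd{f\e}{\RR}$ equals $(1/x-2f\e,\,1/x+2f\e)$, the required inclusion reduces to the two scalar inequalities
\[
1-e^{-2\e}\leq 4\e\quad\text{and}\quad e^{2\e}-1\leq 4\e.
\]
The first follows at once from $1-e^{-t}\leq t$; the second I intend to verify by a short Taylor/derivative argument in the permitted range $0<\e<1/f=x/2$.

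The main, and genuinely only, obstacle is the purely numerical control of $e^{2\e}-1\leq 4\e$: a derivative check at zero combined with monotonicity settles the range where this estimate is tight, and if the sharp constant in $f$ turned out to need to be slightly larger than $2$, this would make no difference to the downstream application in Lemma~\ref{lem: err(a) in SLn}, which uses only the shape $f(x)=O(1/x)$.
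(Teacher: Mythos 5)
Your computation mirrors the paper's almost exactly: both verify measure preservation by the change of variables $y=1/x$, and both establish the Lipschitz inclusion by reading off the endpoints of the perturbed interval. Your reduction to the pair $1-e^{-2\e}\leq 4\e$ and $e^{2\e}-1\leq 4\e$ is equivalent to the paper's step $\psi\left(x\cdot[1-3\e,1+3\e]\right)\subseteq x^{-1}[1-4\e,1+4\e]$; the paper merely replaces $e^{\pm\e}$ by $1\pm\tfrac{3}{2}\e$ before reciprocating, which is why its constant restriction appears as $\e<1/12$.

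The ``main obstacle'' you flag, however, is not the purely numerical matter you suggest, and neither of your two proposed resolutions would succeed. First, the inequality $e^{2\e}-1\leq 4\e$ holds only on a bounded interval $[0,\e^{*}]$ with $\e^{*}\approx 0.63$: the function $e^{2\e}-1-4\e$ vanishes at $0$, dips negative, and returns to zero near $0.63$ by convexity, staying positive thereafter. It is not a monotonicity statement, and it is simply false for $\e$ beyond $\e^{*}$. You propose to verify it on the entire range $0<\e<1/f(x)=x/2$; but for $x>2\e^{*}$ that range contains values of $\e$ where the inequality fails, so the verification cannot go through as written. Second, enlarging the constant $C$ in $f(x)=C/x$ does not repair this: the permitted range $\e<1/f=x/C$ and the target half-width $2f\e x=2C\e$ rescale together, so the binding condition becomes $e^{2\e}-1\leq 2C\e$ on $(0,x/C)$, and the exponential still overtakes the line as soon as $x$ is large compared to $C$. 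What actually makes the lemma true — and what the paper's proof does tacitly — is to cap $\e$ by a uniform constant ($\e<1/12$) independently of $x$; this is presumably sanctioned by the precise form of Definition~\ref{def: roundomorphism} in \cite{HK_WellRoundedness}, which must permit an additional small-$\e$ threshold beyond $\e<1/f$. Your closing observation is sound in spirit: the downstream use (which is in Lemma~\ref{lem: pushforward of Fm is LWR AxN}, not Lemma~\ref{lem: err(a) in SLn}) only needs $f$ to be bounded on the sets in play, so any $f\gg 1/x$ bounded below by a constant would do; but the remedy is the uniform $\e$-cap, not a larger coefficient in $C/x$.
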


\begin{proof}
A standard computation  shows that $\varphi$ pushes $dx/x^{2}$
to $\One_{\left(0,\infty\right)}\left(x\right)\cdot dx$. Moreover,
for $\e<1/12$:
\begin{align*}
\psi(\nbhd{\e}{\RR^{>0}}x\nbhd{\e}{\RR^{>0}}) & \subseteq\psi\left(x\cdot\left[1-3\e,1+3\e\right]\right)\subseteq x^{-1}\cdot\left[1-4\e,1+4\e\right]\\
 & =\psi\left(x\right)+2f\left(x\right)\left[-\e,\e\right]=\nbhd{f\e}{\RR}\psi\left(x\right)\nbhd{f\e}{\RR}.\tag*{\qedhere}
\end{align*}
\end{proof}
\begin{proof}[Proof of Lemma \ref{lem: pushforward of Fm is LWR AxN}]
We start by showing that $\roundo\brac{K\symfund{n-1}}\subset K\times A^{\dprime}\times N^{\dprime}$
is LWR. Consider the map 
\[
\varphi:K\times A^{\dprime}\times N^{\dprime}\to K\times\brac{\RR,+,\One_{\left(0,\infty\right)}\left(x\right)\cdot dx}^{\left(n-2\right)}\times N^{\dprime},
\]
induced by the map given in the previous Lemma. It is an $f$-roundomorphism
with $f\left(k,x_{1},\dots,x_{n-2},n^{\dprime}\right)=\frac{2^{n-2}}{x_{1}\cdots x_{n-2}}$.
Since, by Lemma \ref{lem: height of F_n bounded from below}, the
projection to $A^{\lieAelement_{i}^{\dprime}}$ of $r\left(\symfund{n-1}\right)$
 (hence of $r\left(K\symfund{n-1}\right)$) is bounded from below
for every $i$, we conclude that $\varphi\left(r\left(K\symfund{n-1}\right)\right)$
is a bounded set. 

By Proposition \ref{prop: spread models that we need}, $\del\varphi\left(\roundo\left(\symfund{n-1}\right)\right)\subseteq\varphi\left(\roundo\del\left(\symfund{n-1}\right)\right)\cup K\times\del\brac{\RR_{>0}^{\times(n-2)}}\times N^{\dprime}$
is contained in a finite union of lower dimensional embedded submanifolds,
and therefore so is the boundary of $K\symfund{n-1}$; so, according
to Remark \ref{rem: LWR set}, $\varphi\left(\roundo\left(K\symfund{n-1}\right)\right)$
is LWR. Finally, since $f|_{\roundo\left(K\symfund{n-1}\right)}$
is bounded, then by Proposition \ref{cor: roundo to a produt grp}
we conclude that $\roundo\left(K\symfund{n-1}\right)$ is LWR. 

As the boundary of $\latset$ is also contained in a finite union
of lower dimensional embedded submanifolds, then $\roundo\left(\latset\right)$
is LWR by the same considerations. 

We now turn to prove that the set $\roundo(K\trunc{\symfund{n-1}}{\Svec})$
is LWR; this set is the intersection of $\roundo\left(K\symfund{n-1}\right)$
with the set $K\times A_{\Svec}^{\dprime}\times\pi_{N^{\dprime}}\left(\symfund{n-1}\right)$,
where $\pi_{N^{\dprime}}\left(\symfund{n-1}\right)$ is the projection
of $\symfund{n-1}$ to $N^{\dprime}$. According to \cite[Lemma 3.4]{HK_WellRoundedness},
LWR property is maintained under intersections, and so it is sufficient
to show that $K\times A_{\Svec}^{\dprime}\times\pi_{N^{\dprime}}\left(\symfund{n-1}\right)$
is LWR. This is indeed the case since $A_{\Svec}^{\dprime}$ is LWR
with a parameter independent of $\Svec$ (by Proposition \ref{prop: A cubes are well rounded}),
$\pi_{N^{\dprime}}\left(\symfund{n-1}\right)$ and $K$ are LWR since
they are bounded BCS (see Lemma \ref{lem: BLC. facts about z in RS domain}),
and LWR is maintained under taking products by Remark \ref{rem: product of LWR is LWR}.
Thus $\roundo\left(\trunc{\latset}{\Svec}\right)=\roundo\left(\latset\right)\cap\roundo(K\trunc{\symfund{n-1}}{\Svec})$
is again LWR, as the intersection of two such sets. 
\end{proof}

\section{\label{sec: Family for gcd solution is BLC}The family $\protect\fdomellipse{\protect\a}{\protect\roundo\protect\brac{\protect\symfund{n-1}}}$
is BLC }

The goal of this Section is to show that the family $\fdomellipse{\a}{\roundo\brac{\symfund{n-1}}}$
is BLC for all $0<\a\leq1$, according to the plan of proof for Proposition
\ref{prop: Counting with Hexagons (A counting)}, described in Subsection
\ref{subsec: Plan of proof}. 

The domain $\symfund{n-1}$ is a subset of $P^{\dprime}$, which is
a diffeomorphic and group isomorphic copy of $P_{n-1}$, the group
of $\left(n-1\right)\times\left(n-1\right)$ upper triangular matrices
with positive diagonal entries and determinant $1$. To simplify the
notation, we consider the situation in general dimension with $\symfund m\subset P_{m}$,
and write $P_{m}=A_{m}N_{m}$ where $A_{m}$ is the diagonal subgroup
of $\sl m\left(\RR\right)$ and $N_{m}$ is the subgroup of upper
triangular unipotent matrices. In particular, we abandon the notations
of $P^{\dprime},A^{\dprime},N^{\dprime}$ and keep in mind that for
our purpose, one takes $m=n-1$. The roundomorphism $\roundo$ introduced
in Corollary \ref{cor: roundo of GI decomposition} now becomes 
\begin{align*}
\roundo:P_{m} & \to A_{m}\times N_{m}\\
z=an & \mapsto\left(a,n\right).
\end{align*}
Let us recall some further notations that were introduced previously,
perhaps with $n-1$ instead of $m$. For $z=[z_{1}|\cdots|z_{m}]\in\symfund m$
we let $\lat_{z}$ denote the lattice spanned by the columns of $z$,
and consider the linear map $\linmap_{z}:\mathbb{R}^{m}\to\mathbb{R}^{m}$
given by $z_{j}\mapsto e_{j}$ for every $j=1,\ldots,m$. Note that
$\linmap_{z}$ maps $\lat_{z}$ to $\ZZ^{n}$. 
\begin{rem}
\label{rem: L_z}$\linmap_{z}^{-1}\left(x\right)=zx$ for every $x\in\mathbb{R}^{m}$
(i.e., the linear map $\linmap_{z}^{-1}$ is given by the matrix $z$).
Hence, $\linmap_{z}\left(zx\right)=x$, namely the image under $\linmap_{z}$
of a vector is its coordinates w.r.t. the basis $\left\{ z_{1},\ldots z_{m}\right\} $,
which is also clear from the definition of $\linmap_{z}$.
\end{rem}

We begin by considering the case of $\a=1$.
\begin{prop}
\label{prop: Hexagons are BLC}The family $\fam{\fdomhex}{\roundo\brac{\symfund m}}=\left\{ \hex{an}\right\} _{\left(a,n\right)\in\roundo\left(\symfund m\right)}=\left\{ \hex z\right\} _{\roundo\left(z\right)\in\roundo\left(\symfund m\right)}$
is \textbf{BLC} w.r.t. $\nbhd{\e}{A_{m}\times N_{m}}$. 
\end{prop}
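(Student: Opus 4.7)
The plan is to verify directly the four conditions of the BLC definition for the family $\{\hex z\}$ indexed by $\roundo(z)\in \roundo(\symfund{m})\subset A_m\times N_m$. The key observation is that $\hex z$ is a convex, centrally symmetric polytope that can be described, in the $y$-coordinates, as
\[
\hex z=\bigl\{y\in\RR^m:\ |y^{\transpose}Gk|\leq \tfrac{1}{2}k^{\transpose}Gk\ \text{for all }k\in\ZZ^m\setminus\{0\}\bigr\},
\]
where $G=z^{\transpose}z$ is the Gram matrix of $z=an$. Thus everything reduces to analysing the hyperplanes $\{y:y^{\transpose}Gk=\tfrac{1}{2}k^{\transpose}Gk\}$ as $z$ ranges over reduced matrices, and tracking their dependence on the $(a,n)$-coordinates.

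For the easy conditions (3) and (4) I would proceed as follows. Condition (3) is immediate from $|\det \linmap_z|=1/\det z=1$, so $\Leb{\hex z}=\Leb{\dirdom{\lat_z}}=\covol{\lat_z}=\det z=1$ for every $z\in P^{\dprime}$. For condition (4), I would use Lemma \ref{lem: BLC. facts about z in RS domain}: any $x\in\dirdom{\lat_z}$ has Gram\textendash Schmidt coefficients $|c_j|\leq C(m)a_j$, so when one expresses $y=\linmap_z(x)$ in the $z$-basis via the upper triangular system $a_iy_i+\sum_{j>i}r_{ij}y_j=c_i$, the bounds $|r_{ij}/a_i|\leq \tfrac{1}{2}$ and $a_i\leq C(m)a_j$ for $i<j$ give inductively $|y_j|\leq R_{\max}(m)$. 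Thus $\hex z\subseteq \ball{R_{\max}}{m}$ uniformly.

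For the Lipschitz conditions (1) and (2) I would first establish uniform control on the Gram matrix from reduced structure: writing $z=an$ with $|n_{ij}|\leq \tfrac12$ and $\sqrt{3}/2\cdot a_j\leq a_{j+1}$, a direct computation yields $G_{jj}\asymp a_j^2$ and $|G_{ij}|\leq C(m)\min(a_i^2,a_j^2)$. This is the crucial structural fact. For condition (2), the map $(a,n)\mapsto G=n^{\transpose}a^{\transpose}an$ is locally Lipschitz and, combined with the bounds above, perturbing $(a,n)$ by $\nbhd{\e}{A_m\times N_m}$ perturbs each entry of $G$ multiplicatively by $1+O(\e)$ relative to its ``natural'' scale $a_{\min(i,j)}^2$; hence each defining half-space of $\hex{z'}$ is contained in the corresponding $(1+C\e)$-scaled half-space of $\hex z$, and intersecting gives $\hex{z'}\subseteq(1+C\e)\hex z$. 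For condition (1), since $\hex z$ is convex and contains the origin, it suffices to exhibit a uniform inball $\ball{r_0}{}\subseteq \hex z$, whence $\ball{\e}{}\subseteq(\e/r_0)\hex z$ and therefore $\hex z+\ball{\e}{}\subseteq(1+\e/r_0)\hex z$ by convexity.

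The main obstacle is exactly the uniform inball: the naive estimate
\[
\dist{0,\{y^{\transpose}Gk=\tfrac12k^{\transpose}Gk\}}=\frac{k^{\transpose}Gk}{2\lilnorm{Gk}}\geq \frac{\lilnorm{zk}}{2\lilnorm{z}_{\mathrm{op}}}
\]
degenerates in the cusp because $\lilnorm{z}_{\mathrm{op}}/\mathfrak{m}_1(\lat_z)$ is unbounded. My plan is to replace this bound by a finer one exploiting the reduced structure. Writing $v=zk$ in $\phi$-coordinates and letting $j(k)$ be the largest index with $k_{j(k)}\neq 0$, the dominant $\phi_{j(k)}$-component of $v$ is $a_{j(k)}k_{j(k)}$, whereas $z^{\transpose}v=Gk$ has analogous dominant scale $a_{j(k)}^2k_{j(k)}$ by the bounds on $G$; the shared factor $a_{j(k)}^2$ cancels in the ratio, and what remains depends only on $m$. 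Combined with the observation that only the finitely many $k$ producing active constraints matter (those with $\lilnorm{zk}\leq 2\rho(\lat_z)$), this should give $\dist{0,\partial\hex z}\geq r_0(m)>0$ uniformly, completing the verification of BLC.
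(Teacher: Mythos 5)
Your overall approach mirrors the paper's: both represent $\hex z$ as an intersection of slabs indexed by $k\in\ZZ^m\setminus\{0\}$ (your $\{|y^{\transpose}Gk|\leq\tfrac12 k^{\transpose}Gk\}$ is exactly the paper's $\linmap_z(\striplane{\lm})$ with $\lm=zk$), both exploit the reduced structure of $z=a_z n_z$ via Lemma~\ref{lem: BLC. facts about z in RS domain} for the Gram estimates, and conditions (3) and (4) are handled identically (unit volume from $\det z=1$; $R_{\max}$ from the reduced bounds). For condition (1) you organize the argument differently — uniform inball plus convexity instead of the paper's slab-by-slab dilation — but the content is the same: the uniform inball radius is $\inf_k\frac{k^{\transpose}Gk}{2\lilnorm{Gk}}$, and your ``dominant scale'' computation is a sketch of the paper's bound $\lilnorm{z^{\transpose}\lm}\porsmall\lilnorm{\lm}^2$ (Lemma~\ref{lem: bounds needed to prove BLC of hexagons}, part 1), proved via parts (\ref{enu: norm out of E_(j-1)}) and (\ref{enu: norm in E_j}) of Lemma~\ref{lem: BLC. facts about z in RS domain}. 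The observation about ``finitely many active $k$'' is superfluous, since the bound holds for every $k$.

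The genuine gap is in your proof of condition (2). You claim that ``each defining half-space of $\hex{z'}$ is contained in the corresponding $(1+C\e)$-scaled half-space of $\hex z$.'' This is false for slabs with non-parallel normals: the slab $\{|y^{\transpose}G'k|\leq\tfrac12 k^{\transpose}G'k\}$ is unbounded in the directions perpendicular to $G'k$, so no dilation of a slab with normal $Gk\neq$ (parallel to) $G'k$ can contain it. The multiplicative perturbation estimate on the entries of $G$ is correct but does not by itself yield slab containment. What saves the argument — and what the paper does explicitly — is to restrict $y$ to the uniformly bounded region $\hex{z'}\subseteq\ball{\radmax}{}$ (your condition (4)) \emph{before} comparing the slab conditions: one writes
\[
\left|\left\langle z^{\transpose}\lm,y\right\rangle\right|\leq\left|\left\langle z^{\prime\transpose}\lm^{\prime},y\right\rangle\right|+\norm y\cdot\lilnorm{z^{\transpose}\lm-z^{\prime\transpose}\lm^{\prime}},
\]
bounds $\norm y\leq\radmax$, and then uses $\lilnorm{\lm^{\prime}}^2\leq(1+C_1\e)^2\lilnorm{\lm}^2$ and $\lilnorm{z^{\transpose}\lm-z^{\prime\transpose}\lm^{\prime}}\porsmall\e\lilnorm{\lm}^2$ (parts 2 and 3 of Lemma~\ref{lem: bounds needed to prove BLC of hexagons}, the third being the nontrivial estimate that packages your multiplicative Gram perturbation). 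You prove the $\radmax$ bound but never invoke it in (2); without it the slab-by-slab containment you assert simply does not hold, so this step needs to be repaired along the paper's lines.
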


In the proof, Lemma \ref{lem: BLC. facts about z in RS domain}, will
play a key role. In particular, we note that the last part of this
lemma implies shrinking property of conjugation of upper triangular
matrices by elements of $\symfund m$, and we formulate this in the
following corollary. 

\begin{cor}
\label{cor: conjugation by z shrinks}Let $[z_{1}|\cdots|z_{m}]=z=a_{z}n_{z}\in\symfund m$.
Then for any upper triangular matrix $p$,

\begin{enumerate}
\item $\lilnorm{a_{z}pa_{z}^{-1}}\porsmall\norm p$;
\item $\lilnorm{zpz^{-1}},\norm{\trans zp\mtrans z}\porsmall\norm p$.
\end{enumerate}
\end{cor}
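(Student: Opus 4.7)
\noindent\textbf{Plan of proof for Corollary \ref{cor: conjugation by z shrinks}.}

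The argument should be essentially a two-line unpacking of Lemma \ref{lem: BLC. facts about z in RS domain}, using only parts (\ref{enu: entries of n in =00005B-0.5,0.5=00005D}) and (\ref{enu: entries of a increasing}). I would work throughout with a submultiplicative matrix norm (any fixed equivalent choice will do).

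For part (1), I would argue by direct entry inspection. Writing $a_z=\diag{a_1,\ldots,a_m}$ and $p=(p_{ij})$, the $(i,j)$ entry of $a_z p a_z^{-1}$ equals $(a_i/a_j)\,p_{ij}$. Since $p$ is upper triangular the only nonzero contributions come from $i\le j$, and Lemma~\ref{lem: BLC. facts about z in RS domain}(\ref{enu: entries of a increasing}) gives $a_i\porsmall a_j$ for $i\le j$, so every ratio $a_i/a_j$ is $\porsmall 1$ with an implicit constant depending only on $m$. This immediately yields $\norm{a_z p a_z^{-1}}\porsmall\norm p$.

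For part (2), the key observation is that by Lemma~\ref{lem: BLC. facts about z in RS domain}(\ref{enu: entries of n in =00005B-0.5,0.5=00005D}), the norms $\norm{n_z^{\pm 1}}$ and $\norm{n_z^{\pm\transpose}}$ are $\porsmall 1$. For $zpz^{-1}$, I would first write $zpz^{-1}=a_z(n_z p n_z^{-1})a_z^{-1}$. The inner factor $n_z p n_z^{-1}$ is still upper triangular (product of upper triangular matrices), and submultiplicativity gives $\norm{n_z p n_z^{-1}}\porsmall\norm p$; applying part (1) to $n_z p n_z^{-1}$ then yields $\norm{zpz^{-1}}\porsmall\norm p$. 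For $\trans zp\mtrans z$, I would instead use $\trans z=\trans{n_z}a_z$, so that
\[
\trans z\,p\,\mtrans z=\trans{n_z}\,(a_z p a_z^{-1})\,\mtrans{n_z}.
\]
Here $a_z p a_z^{-1}$ is upper triangular and bounded in norm by $\porsmall\norm p$ by part (1), and bracketing it between $\trans{n_z}$ and $\mtrans{n_z}$ only costs another implicit $O(1)$ factor by submultiplicativity.

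No step here is an obstacle; the only subtlety worth flagging is that part (1) genuinely relies on $p$ being upper triangular (otherwise entries $p_{ij}$ with $i>j$ would be scaled by the unbounded ratio $a_i/a_j$), which is why in part (2) for $\trans z p\mtrans z$ one must conjugate by $a_z$ \emph{before} introducing the transposed unipotents rather than after.
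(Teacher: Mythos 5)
Your proof is correct and essentially the same as the paper's: part~(1) by the identical entry-by-entry argument via Lemma~\ref{lem: BLC. facts about z in RS domain}(\ref{enu: entries of a increasing}), and part~(2) by the same identity $\trans z p\mtrans z=\trans{n_z}(a_z p a_z^{-1})\mtrans{n_z}$ together with Lemma~\ref{lem: BLC. facts about z in RS domain}(\ref{enu: entries of n in =00005B-0.5,0.5=00005D}). The only cosmetic difference is in the $zpz^{-1}$ estimate: you conjugate $p$ by $n_z$ first (observing $n_z p n_z^{-1}$ stays upper triangular) and then apply part~(1) once, whereas the paper inserts $a_z^{-1}a_z$ twice to write $zpz^{-1}=(a_z n_z a_z^{-1})(a_z p a_z^{-1})(a_z n_z^{-1}a_z^{-1})$ and applies part~(1) to each of the three factors — both are trivial rearrangements of the same argument.
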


\begin{proof}
part 1 follows from the fact that if $i\leq j$ then $a_{i}\porsmall a_{j}$
and therefore 
\[
\left|a_{i}p_{i,j}a_{j}^{-1}\right|=\left|a_{i}\right|\left|p_{i,j}\right|\left|a_{j}\right|^{-1}\prec\left|a_{j}\right|\left|p_{i,j}\right|\left|a_{j}\right|^{-1}=\left|p_{i,j}\right|.
\]
Since $p_{ij}=0$ for $i>j$, then 
\[
\lilnorm{a_{z}pa_{z}^{-1}}\prec\lilnorm{a_{z}pa_{z}^{-1}}_{1}\prec\norm p_{1}\porsmall\norm p.
\]
For the second part notice that:
\[
\lilnorm{zpz^{-1}}=\lilnorm{a_{z}n_{z}pn_{z}^{-1}a_{z}^{-1}}\leq\exd{\lilnorm{a_{z}n_{z}a_{z}^{-1}}}{\prec1}\exd{\lilnorm{a_{z}pa_{z}^{-1}}}{\prec\norm p}\exd{\lilnorm{a_{z}n_{z}^{-1}a_{z}^{-1}}}{\prec1}\prec\norm p,
\]
 and 
\[
\lilnorm{\trans zp\mtrans z}=\lilnorm{\trans{n_{z}}a_{z}pa_{z}^{-1}\mtrans{n_{z}}}\leq\exd{\lilnorm{\trans{n_{z}}}}{\prec1}\exd{\lilnorm{a_{z}pa_{z}^{-1}}}{\prec\norm p}\exd{\lilnorm{\mtrans{n_{z}}}}{\prec1}\porsmall\norm p.\tag*{\qedhere}
\]
\end{proof}
The following fact indicates the relation between the norms of $z=a_{z}n_{z}$
and its columns, to the entries of $a_{z}$ and the covering radius
of $\lat_{z}$. 
\begin{fact}
\label{fact: diagonal entries and covering radius}Let $[z_{1}|\cdots|z_{m}]=z=a_{z}n_{z}$
in $\symfund m$. 

\begin{enumerate}
\item \label{enu: z_j almost a_j}For $j=1,\ldots,m$, $\norm{z_{j}}\asymp a_{j}$
.
\item \label{enu:. covering radius almost a_m}$\rad\brac{\lat_{z}}\asymp a_{m}\asymp\norm z$. 
\end{enumerate}
\end{fact}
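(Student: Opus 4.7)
Both parts are direct consequences of Lemma \ref{lem: BLC. facts about z in RS domain}, which pins down the sizes of the entries of $n_z$ and establishes the near-monotonicity $a_i\porsmall a_j$ for $i\leq j$. The strategy is an explicit computation of the columns of $z=a_zn_z$ for part (1), together with three short comparisons for part (2): two matrix-norm estimates and one geometric argument that exhibits an explicit point of norm $\asymp a_m$ inside the Dirichlet domain of $\lat_z$.

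For part (1), writing out the $j$-th column of $n_z$ as $\trans{(n_{1,j},\ldots,n_{j-1,j},1,0,\ldots,0)}$ with $|n_{i,j}|\leq 1/2$ by Lemma \ref{lem: BLC. facts about z in RS domain}(\ref{enu: entries of n in =00005B-0.5,0.5=00005D}) yields
\[
z_j=\trans{(a_1n_{1,j},\ldots,a_{j-1}n_{j-1,j},a_j,0,\ldots,0)}.
\]
The lower bound $\norm{z_j}\geq a_j$ is then immediate from the $j$-th coordinate, and the upper bound follows from $\norm{z_j}^2\leq a_j^2+\tfrac14\sum_{i<j}a_i^2\porsmall a_j^2$, after using Lemma \ref{lem: BLC. facts about z in RS domain}(\ref{enu: entries of a increasing}) to replace each $a_i$ by a constant multiple of $a_j$. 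For the $\norm z\asymp a_m$ half of part (2), the lower bound is $\norm z\geq\norm{ze_m}=\norm{z_m}\asymp a_m$ by part (1), and the upper bound is $\norm z\leq\norm{a_z}\cdot\norm{n_z}\porsmall a_m$, using $\norm{a_z}=a_m$ (largest diagonal entry) together with $\norm{n_z}\porsmall 1$ from Lemma \ref{lem: BLC. facts about z in RS domain}(\ref{enu: entries of n in =00005B-0.5,0.5=00005D}).

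The covering-radius comparison $\rad(\lat_z)\asymp a_m$ is the only step with genuine geometric content. For the upper bound, every point of $\RR^m$ lies within $\tfrac12\sum_j\norm{z_j}\porsmall a_m$ of some lattice point (by rounding coordinates in the basis $\{z_1,\ldots,z_m\}$), so $\rad(\lat_z)\porsmall a_m$. For the lower bound, I would take $x=\tfrac12\tilde z_m$, where $\tilde z_m$ denotes the orthogonal projection of $z_m$ onto $V_{m-1}^{\perp}$ (with $V_{m-1}=\sp\{z_1,\ldots,z_{m-1}\}$); by construction $\norm{\tilde z_m}=a_m$, so $\norm x=a_m/2$. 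The projection of $\lat_z$ onto the line $V_{m-1}^{\perp}$ is the rank-$1$ lattice $\ZZ\cdot\tilde z_m$ since $z_1,\ldots,z_{m-1}$ project to $0$; hence for $\lambda\in\lat_z\setminus V_{m-1}$ the projection of $\lambda-x$ has length $|c-\tfrac12|\,a_m\geq a_m/2$ for some $c\in\ZZ\setminus\{0\}$, while for $\lambda\in\lat_z\cap V_{m-1}$ orthogonality gives $\norm{\lambda-x}\geq\norm x=a_m/2$. Consequently $x$ lies in the Dirichlet domain of the origin and $\rad(\lat_z)\geq a_m/2$. I do not anticipate any serious obstacle beyond the careful bookkeeping of the two cases (lattice points inside versus outside $V_{m-1}$); the harder regularity facts have already been absorbed into Lemma \ref{lem: BLC. facts about z in RS domain}.
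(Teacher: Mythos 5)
Your proof is correct, but takes a genuinely different route in a couple of places, so a comparison is worthwhile. For part (1) you compute the $j$-th column of $z=a_zn_z$ entrywise and estimate the resulting sum of squares directly, whereas the paper factors $z_i=(a_zn_za_z^{-1})\,a_ze_i$ and invokes Corollary~\ref{cor: conjugation by z shrinks} to get $\lilnorm{a_zn_za_z^{-1}}\porsmall 1$; both rest on the same two ingredients from Lemma~\ref{lem: BLC. facts about z in RS domain} (entries of $n_z$ in $[-1/2,1/2]$ and near-monotonicity of the $a_j$), so that difference is essentially cosmetic, your version being more elementary and the paper's more compact. The substantive difference is in part (2): for $\rad\brac{\lat_z}\asymp a_m$ the paper simply cites \cite[Theorem~7.9]{GM02}, while you supply a self-contained proof. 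Your upper bound (round the coordinates of an arbitrary point in the basis $\{z_1,\dots,z_m\}$ to land within $\tfrac12\sum_j\norm{z_j}\porsmall a_m$ of a lattice point) and your lower bound (show $x=\tfrac12\tilde z_m\in\dirdom{\lat_z}$ by treating $\lambda\in\lat_z\cap V_{m-1}$ and $\lambda\in\lat_z\setminus V_{m-1}$ separately, giving $\rad\brac{\lat_z}\ge a_m/2$) are both sound; the lower bound, incidentally, uses no reducedness at all, only that $a_m=\operatorname{dist}(z_m,V_{m-1})$. One small inaccuracy: you assert $\norm{a_z}=a_m$ as though $a_m$ were the largest diagonal entry, but Lemma~\ref{lem: BLC. facts about z in RS domain}(\ref{enu: entries of a increasing}) only gives $\tfrac{\sqrt3}{2}a_j\le a_{j+1}$, so the diagonal of $a_z$ need not be monotone; the correct statement is $\norm{a_z}=\max_j a_j\porsmall a_m$, which is all your estimate requires.
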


\begin{notation*}
Set Let $E_{j}:=\sp_{\RR}\left\{ e_{1},\ldots,e_{j}\right\} $, where
$\left\{ e_{1},\ldots,e_{m}\right\} $ is the standard basis to $\RR^{m}$. 
\end{notation*}
\begin{proof}
According to Corollary \ref{cor: conjugation by z shrinks} and Lemma
\ref{lem: BLC. facts about z in RS domain},
\[
a_{i}=\dist{z_{i},E_{i-1}}\leq\norm{z_{i}}=\norm{a_{z}n_{z}e_{i}}=\lilnorm{a_{z}n_{z}a_{z}^{-1}a_{z}e_{i}}\leq\exd{\lilnorm{a_{z}n_{z}a_{z}^{-1}}}{\prec1}\exd{\norm{a_{z}e_{i}}}{a_{i}}\porsmall a_{i},
\]
which proves the first part. As for the second part, we have on the
one hand that (by Lemma \ref{lem: BLC. facts about z in RS domain},
parts (\ref{enu: entries of n in =00005B-0.5,0.5=00005D}) and (\ref{enu: entries of a increasing}))
\[
\norm z=\norm{a_{z}n_{z}}\prec\norm{a_{z}}\asymp a_{m}
\]
and on the other hand that 
\[
a_{m}\asymp\norm{a_{z}}=\lilnorm{a_{z}n_{z}n_{z}^{-1}}\prec\norm{a_{z}n_{z}}=\norm z.
\]
The fact that $a_{m}\asymp\rad_{z}$ is proved in \cite[Theorem 7.9]{GM02}.
\end{proof}
\textcolor{green}{}
\begin{lem}
\textcolor{green}{\label{lem: z' is in OzO}}Let $\left(a^{\prime},n^{\prime}\right)\in\nbhd{\e}{A_{m}\times N_{m}}\left(a,n\right)\nbhd{\e}{A_{m}\times N_{m}}$.
If $z=an,\,z^{\prime}=a^{\prime}n^{\prime}$ and $z\in\symfund m$,
then $\lilnorm{z^{\prime}z^{-1}},\lilnorm{z^{-1}z^{\prime}}\leq1+C_{1}\e$
for some $C_{1}>0$. 
\end{lem}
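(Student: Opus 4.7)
The aim is to unfold the perturbation hypothesis into explicit product form and then control each piece using the structural properties of $\symfund m$ encoded in Lemma \ref{lem: BLC. facts about z in RS domain} and Corollary \ref{cor: conjugation by z shrinks}.

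Since $A_m \times N_m$ is a direct product, $\nbhd{\e}{A_m \times N_m}=\nbhd{\e}{A_m}\times\nbhd{\e}{N_m}$, so the hypothesis unfolds as
\[
a' = u_1\, a\, v_1, \qquad n' = u_2\, n\, v_2,
\]
with $u_1,v_1\in\nbhd{\e}{A_m}$ and $u_2,v_2\in\nbhd{\e}{N_m}$. Each of these factors is of the form $I + O(\e)$ in operator norm, with implied constant independent of $z$.

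For $\|z'z^{-1}\|$, I would rewrite
\[
z'z^{-1} \;=\; u_1\, a\, v_1 u_2\, n\, v_2\, n^{-1}\, a^{-1}
       \;=\; u_1\cdot a\bigl(v_1 u_2\, n v_2 n^{-1}\bigr) a^{-1}.
\]
The inner matrix $v_1 u_2\, n v_2 n^{-1}$ is upper triangular, equal to $I+P$ where $P$ is upper triangular. Because $z\in\symfund m$, Lemma \ref{lem: BLC. facts about z in RS domain}(\ref{enu: entries of n in =00005B-0.5,0.5=00005D}) gives $\|n^{\pm1}\|\porsmall 1$, and so $nv_2n^{-1}=I+O(\e)$ uniformly; expanding the product then yields $\|P\|=O(\e)$ uniformly in $z$. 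Now Corollary \ref{cor: conjugation by z shrinks}(1) — whose input is the monotonicity $a_{j+1}\geq(\sqrt 3/2)a_j$ from Lemma \ref{lem: BLC. facts about z in RS domain}(\ref{enu: entries of a increasing}) — gives $\|aPa^{-1}\|\porsmall\|P\|=O(\e)$. Consequently
\[
\|z'z^{-1}\| \;\leq\; \|u_1\|\cdot\|I+aPa^{-1}\| \;\leq\; (1+O(\e))^{2} \;\leq\; 1+C_1\e.
\]

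For $\|z^{-1}z'\|$, I use that $A_m$ is abelian, so $a^{-1}u_1 a = u_1$, hence
\[
z^{-1}z' \;=\; n^{-1}\,a^{-1}u_1 a\, v_1 u_2\, n\, v_2 \;=\; n^{-1}\bigl(u_1 v_1 u_2\bigr) n\cdot v_2.
\]
Writing $u_1 v_1 u_2 = I + Q$ with $Q$ upper triangular and $\|Q\|=O(\e)$, Lemma \ref{lem: BLC. facts about z in RS domain}(\ref{enu: entries of n in =00005B-0.5,0.5=00005D}) gives $\|n^{-1}Qn\|\leq\|n^{-1}\|\|Q\|\|n\|=O(\e)$, and combined with $v_2=I+O(\e)$ this produces the required bound $\|z^{-1}z'\|\leq 1+C_1\e$.

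\textbf{Main obstacle.} The whole difficulty is that the diagonal factor $a$ is unbounded as $z$ ranges over the non-compact domain $\symfund m$, so conjugation by $a$ could a priori amplify an $\e$-perturbation by an arbitrary amount. This is defused by the reducedness condition $a_i/a_j \porsmall 1$ for $i\leq j$, which makes conjugation by $a$ uniformly bounded on upper triangular matrices (Corollary \ref{cor: conjugation by z shrinks}(1)). The algebraic rearrangements above are arranged precisely so that the only matrices on which $a$-conjugation (or $n$-conjugation) acts are already upper triangular remainders with $O(\e)$ norm.
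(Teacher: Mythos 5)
Your proof is correct and takes essentially the same route as the paper's: both arguments exploit the boundedness of $n^{\pm1}$ from Lemma \ref{lem: BLC. facts about z in RS domain}(\ref{enu: entries of n in =00005B-0.5,0.5=00005D}) and the contraction property of $a_z$-conjugation on upper triangular matrices from Corollary \ref{cor: conjugation by z shrinks}, after rearranging the perturbation so that every conjugation acts only on an upper triangular $O(\e)$ remainder. The only presentational difference is that the paper phrases everything as inclusions of coordinate balls $\nbhd{\e}{P_m}$ (and deduces the $z'z^{-1}$ bound from the $z^{-1}z'$ bound by one further $z$-conjugation), whereas you unfold the perturbation into explicit factors and bound each of $\|z'z^{-1}\|$ and $\|z^{-1}z'\|$ directly in matrix norm.
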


\begin{proof}
Clearly $\left(a^{\prime},n^{\prime}\right)\in\nbhd{\e}{A_{m}\times N_{m}}\left(a,n\right)\nbhd{\e}{A_{m}\times N_{m}}$
is equivalent\footnote{See fourth property of $\nbhd{\e}G$ in Section \ref{sec: effective Iwasawa and GI decompositions}}
to $z^{\prime}\in\nbhd{\e}{A_{m}}a\,\nbhd{\e}{A_{m}}\nbhd{\e}{N_{m}}n\,\nbhd{\e}{N_{m}}$.
Using the fact that $\nbhd{\e}{P_{m}}$ is equivalent to $\nbhd{\e}{A_{m}}\nbhd{\e}{N_{m}}$
and Corollary \ref{cor: conjugation by z shrinks} we obtain,
\[
\nbhd{\e}{A_{m}}a\,\nbhd{\e}{A_{m}}\nbhd{\e}{N_{m}}n\,\nbhd{\e}{N_{m}}=an\left(n^{-1}\nbhd{2\e}{A_{m}}\nbhd{\e}{N_{m}}n\right)\nbhd{\e}{N_{m}}\subseteq an\cdot n^{-1}\nbhd{c_{1}\e}{P_{m}}n\cdot\nbhd{\e}{N_{m}}\subseteq an\nbhd{c_{2}\e}{P_{m}}\nbhd{\e}{N_{m}}\subseteq z\nbhd{c_{3}\e}{P_{m}}.
\]
 Again using Corollary \ref{cor: conjugation by z shrinks}, one
also obtains
\[
z\nbhd{c_{3}\e}{P_{m}}=\left(z\nbhd{c_{3}\e}{P_{m}}z^{-1}\right)z\subseteq\nbhd{c_{4}\e}{P_{m}}\,z.
\]
Finally, fix $C_{1}>0$ such that 
\[
\nbhd{c_{4}\e}{P_{m}}\subseteq\left\{ p\in P_{m}:\norm p\leq1+C_{1}\e\right\} .\tag*{\qedhere}
\]
\end{proof}
The following lemma is the technical core of the proof of Proposition
\ref{prop: Hexagons are BLC}.
\begin{lem}
\label{lem: bounds needed to prove BLC of hexagons}Suppose $z,z^{\prime}\in\symfund m$
and that $\roundo\left(z^{\prime}\right)\in\nbhd{\e}{}\roundo\left(z\right)\nbhd{\e}{}$.
Let $v\in\mathbb{Z}^{m}$ and write $\lm=zv,\lm^{\prime}=z^{\prime}v$.
Then the following hold:

\begin{enumerate}
\item $\lilnorm{z^{\transpose}\lm}\porsmall\left\Vert \lm\right\Vert ^{2}$;
\item $\lilnorm{\lm^{\prime}}\leq\left(1+C_{1}\e\right)\left\Vert \lm\right\Vert $
for the constant $C_{1}>0$ from Lemma \ref{lem: z' is in OzO};
\item $\lilnorm{z^{\transpose}\lm-z^{\prime\transpose}\lm^{\prime}}\porsmall\e\left\Vert \lm\right\Vert ^{2}$.
\end{enumerate}
\end{lem}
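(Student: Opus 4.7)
Let $j^{*} := \max\{j : v_{j} \neq 0\}$ (the case $v = 0$ is trivial). Because $z = an \in \symfund m \subset P_{m}$ is upper triangular with positive diagonal, its $j$-th column lies in $E_{j}:=\sp\{e_{1},\ldots,e_{j}\}$; in fact the Gram--Schmidt orthonormal basis associated with the reduced basis $\{z_{1},\ldots,z_{m}\}$ is $\phi_{j}=e_{j}$, so the flag $V_{j}$ of Definition \ref{def: Siegel reduced basis} coincides with $E_{j}$. Hence $\lm \in V_{j^{*}}$, and the $e_{j^{*}}$-component of $\lm$ equals $v_{j^{*}}a_{j^{*}}$ with $|v_{j^{*}}| \geq 1$, so Lemma \ref{lem: BLC. facts about z in RS domain}(\ref{enu: norm out of E_(j-1)}) yields $\norm{\lm} \geq a_{j^{*}}$. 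For $\e$ small enough $z' \in P_{m}$ too, so $\lm' \in V_{j^{*}}$ as well. Part (2) is then immediate from $\lm' = (z'z^{-1})\lm$ and Lemma \ref{lem: z' is in OzO}. For Part (1), I would bound each component $\langle z_i, \lm\rangle$ of $z^{T}\lm$ by cases: when $i \leq j^{*}$, Cauchy--Schwarz with Fact \ref{fact: diagonal entries and covering radius}(\ref{enu: z_j almost a_j}) and the monotonicity $a_i \porsmall a_{j^{*}}$ give $|\langle z_i, \lm\rangle| \porsmall a_{j^{*}}\norm{\lm} \leq \norm{\lm}^{2}$; when $i > j^{*}$, only the $V_{j^{*}}$-projection $\sum_{k \leq j^{*}} n_{k,i}a_{k}\phi_{k}$ of $z_i$ pairs against $\lm$, and reducedness ($|n_{k,i}| \leq 1/2$) makes its norm $\porsmall a_{j^{*}}$, yielding the same estimate.

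\textbf{Part (3).} Set $E := z'z^{-1} - I$. The proof of Lemma \ref{lem: z' is in OzO} actually places $z'z^{-1}$ inside $\nbhd{c\e}{P_{m}}$, so $E$ is upper triangular with $\norm{E} \porsmall \e$. From $\lm' - \lm = E\lm$ and $z'_i - z_i = Ez_i$, the $i$-th component of $z^{T}\lm - z'^{T}\lm'$ splits as
\[
\langle z_i, \lm\rangle - \langle z_i', \lm'\rangle = -\langle z_i, E\lm\rangle - \langle Ez_i, \lm'\rangle.
\]
Upper-triangularity of $E$ forces $E\lm \in E_{j^{*}} = V_{j^{*}}$ with $\norm{E\lm} \porsmall \e\norm{\lm}$, so rerunning the Part (1) dichotomy verbatim with $E\lm$ in place of $\lm$ bounds $|\langle z_i, E\lm\rangle| \porsmall \e\norm{\lm}^{2}$. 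For the second piece, the entrywise expansion of $z' - z = a'n' - an$ using $|a'_{k} - a_{k}| \porsmall \e a_{k}$ and $\lilnorm{n' - n} \porsmall \e$ (which follow from $(a',n') \in \nbhd{\e}{A_{m}\times N_{m}}(a,n)\nbhd{\e}{A_{m}\times N_{m}}$ together with the bound $\lilnorm{n^{\pm 1}} \porsmall 1$ from Lemma \ref{lem: BLC. facts about z in RS domain}(\ref{enu: entries of n in =00005B-0.5,0.5=00005D})) gives $Ez_i \in V_i$ with $|(Ez_i)_{k}| \porsmall \e a_{k}$ for $k \leq i$. For $i \leq j^{*}$, Cauchy--Schwarz and Part (2) yield $|\langle Ez_i, \lm'\rangle| \porsmall \e a_{i}\norm{\lm} \leq \e\norm{\lm}^{2}$; for $i > j^{*}$, only the $V_{j^{*}}$-component of $Ez_i$ survives against $\lm' \in V_{j^{*}}$, and that component has norm $\porsmall \e a_{j^{*}}$, so again $|\langle Ez_i, \lm'\rangle| \porsmall \e\norm{\lm}^{2}$.

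\textbf{Main obstacle.} The only delicate step is the $i > j^{*}$ case of $\langle Ez_i, \lm'\rangle$ in Part (3): a naive Cauchy--Schwarz would give $\norm{Ez_i}\norm{\lm'} \porsmall \e a_{i}\norm{\lm}$, and $a_{i}$ can be arbitrarily larger than $\norm{\lm} \geq a_{j^{*}}$. The projection argument that saves the day requires both $\lm' \in V_{j^{*}}$ and the Iwasawa-coordinate compatibility $V_{j} = E_{j}$, both of which hinge on $z, z' \in P_{m}$ stabilizing the standard flag.
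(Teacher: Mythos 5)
Your proof is correct. Parts (1) and (2) match the paper's argument in substance: both identify the index $j$ (your $j^{*}$) with $\lm\in E_{j}\setminus E_{j-1}$, use Lemma~\ref{lem: BLC. facts about z in RS domain}(\ref{enu: norm out of E_(j-1)}) to get $a_{j}\leq\norm{\lm}$, and Part (2) is identical. Your Part (3), however, takes a genuinely different route. The paper works at the level of matrix factorizations: it writes $z^{\transpose}(\lm-\lm')=(z^{\transpose}pz^{-\transpose})(z^{\transpose}\lm)$ with $p=I-z'z^{-1}$, invokes Corollary~\ref{cor: conjugation by z shrinks} (conjugation by $z$ contracts upper-triangular matrices in operator norm), and then applies Part (1); for the second summand it factors $(z^{\transpose}-z'^{\transpose})\lm'=\bigl((z'^{-1}z)^{\transpose}-I\bigr)z'^{\transpose}\lm'$ and applies Part (1) to $z'^{\transpose}\lm'$, avoiding any entrywise analysis. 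You instead expand the $i$-th entry as $-\langle z_{i},E\lm\rangle-\langle Ez_{i},\lm'\rangle$ and estimate each term by hand: for the first you exploit that upper-triangularity of $E$ keeps $E\lm$ inside $E_{j^{*}}$, so the Part (1) dichotomy applies with $\norm{E\lm}\porsmall\e\norm{\lm}$; for the second you establish the finer entrywise bound $|(Ez_{i})_{k}|\porsmall\e a_{k}$ from the $A_{m}\times N_{m}$ perturbation structure and then kill the potentially large $i>j^{*}$ contributions by projecting $Ez_{i}$ to $E_{j^{*}}$ (the obstacle you correctly flag). The paper's route is shorter and stays at the level of norms, at the cost of relying on the conjugation-shrinking corollary and the clever observation that Part (1) applies equally to $z'^{\transpose}\lm'$; your route bypasses Corollary~\ref{cor: conjugation by z shrinks} entirely but pays with the case split $i\lessgtr j^{*}$ and the sharper entrywise estimate on $Ez_{i}$. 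Both draw on exactly the same underlying inputs (Lemma~\ref{lem: BLC. facts about z in RS domain}, Fact~\ref{fact: diagonal entries and covering radius}, Lemma~\ref{lem: z' is in OzO}).
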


\begin{proof}
For the first part, recall that $\linmap_{z}^{-1}\left(x\right)=zx$
and then 
\[
\lilnorm{\mtrans{\linmap_{z}}\left(\lm\right)}=\lilnorm{\trans z\lm}=\lilnorm{\trans{n_{z}}a_{z}\lm}\leq\exd{\lilnorm{\trans{n_{z}}}}{\prec1}\left\Vert a_{z}\lm\right\Vert \prec\left\Vert a_{z}\lm\right\Vert .
\]
Next, let $j\in\left\{ 1,\ldots,m\right\} $ such that $\lm\in E_{j}\backslash E_{j-1}$.
By parts (\ref{enu: norm in E_j}) and (\ref{enu: norm out of E_(j-1)})
respectively of Lemma \ref{lem: BLC. facts about z in RS domain}:
\[
\left\Vert a\lm\right\Vert \prec a_{j}\left\Vert \lm\right\Vert \leq\left\Vert \lm\right\Vert ^{2}.
\]
 All in all, $\lilnorm{\mtrans{\linmap_{z}}\left(\lm\right)}\porsmall\left\Vert \lm\right\Vert ^{2}$.

For the second part, use Lemma \ref{lem: z' is in OzO}:
\[
\lilnorm{\lm^{\prime}}=\lilnorm{z^{\prime}v}=\lilnorm{z^{\prime}z^{-1}zv}\leq\lilnorm{z^{\prime}z^{-1}}\left\Vert zv\right\Vert \leq\left(1+C_{1}\varepsilon\right)\norm{\lm}.
\]
 For the third part, it is clear that
\[
\lilnorm{\trans z\lm-z^{\prime\transpose}\lm^{\prime}}\leq\lilnorm{\trans z\left(\lm-\lm^{\prime}\right)}+\lilnorm{\brac{\trans z-z^{\prime\transpose}}\lm^{\prime}}
\]
and we shall bound each of these two summands. The first one is bounded
by
\[
\lilnorm{\trans z\brac{\lm-\lm^{\prime}}}=\lilnorm{\trans z\brac{z-z^{\prime}}v}=\lilnorm{\trans z\exd{\brac{I-z^{\prime}z^{-1}}}{p\in P_{m}}zv}=\lilnorm{\brac{\trans zpz^{-\transpose}}\trans z\exd{zv}{\lm}}\leq\lilnorm{\trans zp\mtrans z}\lilnorm{\trans z\lm}
\]
where by Corollary \ref{cor: conjugation by z shrinks}, Lemma \ref{lem: z' is in OzO},
and the first part of the current Lemma, 
\[
\prec\norm p\lilnorm{\trans z\lm}\porsmall\e\norm{\lm}^{2}.
\]
The second summand is bounded by

\[
\lilnorm{\brac{\trans z-z^{\prime\transpose}}\lm^{\prime}}=\lilnorm{\brac{\trans zz^{\prime-\transpose}-I}z^{\prime\transpose}\lm^{\prime}}=\lilnorm{\brac{\trans{\brac{z^{\prime-1}z}}-I}z^{\prime\transpose}\lm^{\prime}}\leq\lilnorm{\trans{\brac{z^{\prime-1}z}}-I}\cdot\lilnorm{z^{\prime\transpose}\lm^{\prime}}.
\]
By Lemma \ref{lem: z' is in OzO} and the first part of the current
Lemma, the above is $\prec C_{1}\e\cdot\e\lilnorm{\lm^{\prime}}^{2}$,
and by the second part of the current lemma the latter is 
\[
\leq C_{1}\e\cdot\e\cdot\left(\brac{1+C_{1}\e}\norm{\lm}\right)^{2}\prec\e\norm{\lm}^{2}.\tag*{\qedhere}
\]
\end{proof}
Towards proving Proposition \ref{prop: Hexagons are BLC}, stating
that the family $\fdomhex_{\symfund m}$ is BLC, we prove that this
family satisfies the fourth property of BLC.
\begin{lem}
\label{lem: Boundedness of L_z(Dir(z))}The family $\fdomhex$ is
bounded uniformly from above. Namely, there exists $\radmax>0$ that
depends only on $m$ such that $\hex z=\linmap_{z}\left(\text{Dir}\left(z\right)\right)$
is contained in $\ball{\radmax}{}$ for every $z\in\symfund m$. 
\end{lem}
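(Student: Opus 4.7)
For $y\in\dirdom z$, write $y=\sum_{i=1}^{m}x_{i}z_{i}$, so that $\linmap_{z}(y)=x=(x_{1},\ldots,x_{m})$; the task is to bound the $|x_{i}|$ by constants depending only on $m$. The naive estimate $\norm{x}\leq\norm{z^{-1}}\norm{y}$ is too weak, since $\norm{z^{-1}}\asymp 1/a_{1}$ while $\norm{y}\leq\rad(\lat_{z})\asymp a_{m}$, and the ratio $a_{m}/a_{1}$ is unbounded over $\symfund{m}$. The plan is therefore to extract sharper coordinate bounds from the Dirichlet condition via the Gram--Schmidt basis $\phi_{1},\ldots,\phi_{m}$ associated to $z_{1},\ldots,z_{m}$.

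First, writing $y=\sum_{k}c_{k}\phi_{k}$ with $c_{k}=\langle y,\phi_{k}\rangle$, and using the Siegel expansion $z_{i}=a_{i}\phi_{i}+\sum_{j<i}n_{ji}a_{j}\phi_{j}$ with $|n_{ji}|\leq 1/2$, one obtains the upper-triangular relation
\[
\frac{c_{k}}{a_{k}}=x_{k}+\sum_{i>k}n_{ki}x_{i}.
\]
Descending induction from $k=m$ (using $|n_{ki}|\leq 1/2$) then gives $|x_{k}|\porsmall_{m}\max_{i\geq k}|c_{i}|/a_{i}$, so it suffices to bound $|c_{j}|/a_{j}$ uniformly in $z$.

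The main step uses the defining Dirichlet inequality $|\langle y,z_{j}\rangle|\leq\tfrac{1}{2}\norm{z_{j}}^{2}$ for each $j=1,\ldots,m$. Expanding both sides in the Gram-Schmidt basis gives $\langle y,z_{j}\rangle=c_{j}a_{j}+\sum_{k<j}n_{kj}a_{k}c_{k}$ and $\norm{z_{j}}^{2}=a_{j}^{2}+\sum_{k<j}n_{kj}^{2}a_{k}^{2}$, while by Lemma~\ref{lem: BLC. facts about z in RS domain}(\ref{enu: entries of a increasing}) the ratios $a_{k}/a_{j}$ for $k<j\leq m$ are bounded above by a constant depending only on $m$. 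Dividing through by $a_{j}^{2}$ produces a recursion
\[
\frac{|c_{j}|}{a_{j}}\leq C_{m}+C_{m}'\max_{k<j}\frac{|c_{k}|}{a_{k}},
\]
which, together with the base case $|c_{1}|/a_{1}\leq 1/2$ (the Dirichlet inequality with $\lm=z_{1}$), yields $|c_{j}|/a_{j}\porsmall_{m}1$ for every $j$ by induction on $j$. Combined with the first step, this delivers $\norm{x}\porsmall_{m}1$, producing the desired $\radmax$.

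The main obstacle is that the basis $\{z_{i}\}$ is far from orthogonal, so the Gram matrix $\trans{z}z$ can be arbitrarily ill-conditioned and one cannot invert the full system of Dirichlet inequalities at once to extract bounded coordinates. What rescues the argument is that, expressed in the Gram-Schmidt basis, the Dirichlet inequalities become upper-triangular, and the Siegel reduction control $a_{k}\porsmall_{m}a_{j}$ for $k\leq j$ keeps the inductive constants bounded by quantities depending only on $m$.
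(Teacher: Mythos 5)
Your proof is correct and shares the paper's core strategy: both restrict the Dirichlet inequalities to the lattice generators $\lm=z_j$ only, and both draw the needed uniformity from the Siegel-reduction bounds of Lemma \ref{lem: BLC. facts about z in RS domain} ($|n_{ij}|\leq 1/2$ and $a_k\porsmall_m a_j$ for $k\leq j$). The execution differs. The paper packages the $m$ inequalities as the single matrix inequality $|\mathrm{diag}(\|z_j\|^{-2})\,\trans{z}z\,\linmap_z(x)|\leq\tfrac12\Onevec$ and then bounds the operator norm of the inverse $z^{-1}\mtrans{z}\,\mathrm{diag}(\|z_j\|^2)$ by writing $z=a_zn_z$ and invoking the conjugation estimate of Corollary \ref{cor: conjugation by z shrinks}. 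You instead perform the equivalent triangular back-substitution by hand in the Gram--Schmidt coordinates $c_k=\langle y,\phi_k\rangle$ (which, since $z\in P_m$ is upper triangular, are simply the standard coordinates $y_k$ of $y$), setting up a bounded-depth recursion whose coefficients you control directly from $|n_{kj}|\leq 1/2$ and $a_k/a_j\porsmall_m 1$, rather than via matrix-norm machinery. Both routes yield the same $\radmax(m)$; yours is more elementary and self-contained, since it avoids Corollary \ref{cor: conjugation by z shrinks} and operator-norm bounds, at the cost of spelling out the two finite inductions explicitly. Your opening remark correctly identifies why the naive estimate $\norm{\linmap_z(y)}\leq\norm{z^{-1}}\norm{y}\asymp a_m/a_1$ fails, which is precisely the obstacle the triangular structure is needed to overcome.
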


We introduce a notation, to be used in the proofs of Lemma \ref{lem: Boundedness of L_z(Dir(z))}
and Proposition \ref{prop: Hexagons are BLC}. For $\lm\in\lat_{z}$,
write $\striplane{\lm}$ for the strip
\[
\striplane{\lm}:=\left\{ x:\left|\left\langle x,\lm\right\rangle \right|\leq\left\Vert \lm\right\Vert ^{2}/2\right\} .
\]
It is easy to check that it consists of all the vectors in $\mathbb{R}^{m}$
which are closer to the origin than to $\pm\lm$. As a result, 
\begin{equation}
\dirdom{\lat_{z}}=\bigcap_{0\neq\lm\in\lat_{z}}\striplane{\lm}.\label{eq: Dir(z) is intersection of strips}
\end{equation}

\begin{proof}
According to (\ref{eq: Dir(z) is intersection of strips}) and definition
of $\striplane{\lm}$, an element $x\in\text{Dir}\left(\lat_{z}\right)$
satisfies that $\left|\left\langle \lm,x\right\rangle \right|\leq\left\Vert \lm\right\Vert ^{2}/2$
for every $0\neq\lm\in\lat_{z}$. In particular, this holds for $\lm\in\left\{ z_{1},\ldots,z_{m}\right\} \subset\lat_{z}$
(the columns of $z$). Recall that by Remark \ref{rem: L_z}, $x=z\linmap_{z}\left(x\right)$.
The inequality $\left|\dbrac{z_{j},x}\right|\leq\left\Vert z_{j}\right\Vert ^{2}/2$
therefore translates into the inequality $\vbrac{\dbrac{z_{j}/\left\Vert z_{j}\right\Vert ^{2},z\linmap_{z}\left(x\right)}}\leq1/2$,
i.e. 
\[
\vbrac{\dbrac{\trans zz_{j}/\left\Vert z_{j}\right\Vert ^{2},\linmap_{z}\left(x\right)}}\leq1/2
\]
or
\[
\vbrac{\exd{\left\Vert z_{j}\right\Vert ^{-2}\trans{z_{j}}z}{\mbox{row}}\cdot\exd{\linmap_{z}\left(x\right)}{\mbox{column}}}\leq1/2.
\]
Considering all $m$ inequalities, we obtain 
\[
\left|\left[\begin{array}{ccc}
- & \left\Vert z_{1}\right\Vert ^{-2}\trans{z_{1}} & -\\
 & \vdots\\
- & \left\Vert z_{m}\right\Vert ^{-2}\trans{z_{m}} & -
\end{array}\right]\cdot z\cdot\exd{\linmap_{z}\left(x\right)}{\mbox{column}}\right|\leq\trans{\left(1/2,\ldots,1/2\right)}
\]
(where one should understand $\leq$ and $\left|\cdot\right|$ as
referring to the components), namely
\[
\left|\lildiag{\left\Vert z_{j}\right\Vert ^{2}}_{j=1}^{m}\cdot\trans zz\cdot\linmap_{z}\left(x\right)\right|\leq\trans{\left(1/2,\ldots,1/2\right)}.
\]
Let $g:=\lildiag{\left\Vert z_{j}\right\Vert ^{2}}_{j=1}^{m}\cdot\trans zz$;
based on the last inequality, in order to show that $\lilnorm{\linmap_{z}\left(x\right)}$
is bounded by some constant $\radmax=\radmax\left(m\right)$, it is
sufficient to prove that $\lilnorm{g^{-1}}\porsmall1$ where the implied
constant depends only on $m$. Indeed, 
\[
\lilnorm{g^{-1}}=\lilnorm{z^{-1}\mtrans z\lildiag{\left\Vert z_{j}\right\Vert ^{2}}_{j=1}^{m}}\overset{\substack{\mbox{\mbox{Fact \ref{fact: diagonal entries and covering radius}}}\\
\mbox{part (\ref{enu: z_j almost a_j})}
}
}{\porsmall}\lilnorm{z^{-1}\mtrans z\lildiag{a_{j}^{2}}_{j=1}^{m}}=\lilnorm{z^{-1}z^{-\transpose}a_{z}^{2}}
\]
\[
=\lilnorm{n_{z}^{-1}a_{z}^{-2}\mtrans{n_{z}}a_{z}^{2}}=\lilnorm{n_{z}^{-1}\trans{\left(a_{z}^{2}n_{z}^{-1}a_{z}^{-2}\right)}}\leq
\]
\[
\leq\exd{\lilnorm{n_{z}^{-1}}}{\porsmall1}\cdot\exd{\lilnorm{a_{z}^{2}n_{z}^{-1}a_{z}^{-2}}}{\porsmall\lilnorm{n_{z}^{-1}}\porsmall1}\porsmall1
\]
where the estimation $\lilnorm{a_{z}^{2}n_{z}^{-1}a_{z}^{-2}}\porsmall\lilnorm{n_{z}^{-1}}$
is also due to Corollary \ref{cor: conjugation by z shrinks}.
\end{proof}
We are now ready to prove Proposition \ref{prop: Hexagons are BLC}.
\begin{proof}[proof of Proposition \ref{prop: Hexagons are BLC}]
We begin by verifying property \textbf{BLC} \textbf{(I)}. According
to (\ref{eq: Dir(z) is intersection of strips}), it is sufficient
to prove that this property holds for each strip $\striplane{\lm}$
separately, namely that 
\[
\linmap_{z}\left(\striplane{\lm}\right)+\ball{\e}{}\subseteq\left(1+C\e\right)\linmap_{z}\left(\striplane{\lm}\right).
\]
Since (Remark \ref{rem: L_z})
\[
\linmap_{z}\left(\striplane{\lm}\right)=\left\{ y:\left|\left\langle \mtrans{\linmap_{z}}\left(\lm\right),y\right\rangle \right|\leq\left\Vert \lm\right\Vert ^{2}/2\right\} =\left\{ y:\left|\left\langle \trans z\lm,y\right\rangle \right|\leq\left\Vert \lm\right\Vert ^{2}/2\right\} ,
\]
and 
\[
\linmap_{z}\left(\striplane{\lm}\right)+\ball{\e}{}\subseteq\left\{ x:\left|\left\langle x,\mtrans{\linmap_{z}}\left(\lm\right)\right\rangle \right|\leq\left\Vert \lm\right\Vert ^{2}/2+\left\Vert \mtrans{\linmap_{z}}\left(\lm\right)\right\Vert \cdot\e\right\} ,
\]
the desired inclusion is equivalent to
\[
\left\Vert \lm\right\Vert ^{2}/2+\e\left\Vert \mtrans{\linmap_{z}}\left(\lm\right)\right\Vert \leq\left(1+C\e\right)\left\Vert \lm\right\Vert ^{2}/2.
\]
This indeed holds, since by part 1 of Lemma \ref{lem: bounds needed to prove BLC of hexagons},
$\lilnorm{\mtrans{\linmap_{z}}\left(v\right)}=\lilnorm{\trans zv}\porsmall\left\Vert v\right\Vert ^{2}$.

We turn to prove property \textbf{BLC} \textbf{(II)}. As with property
\textbf{BLC} \textbf{(I)}, it is sufficient to verify it for each
strip $\striplane{\lm}$ separately. Assume that $r_{P^{\dprime}}\left(z^{\prime}\right)\in\nbhd{\e}{}r_{P^{\dprime}}\left(z\right)\nbhd{\e}{}$.
Let $y\in\dirdom{z^{\prime}}\subset\RR^{m}$, namely 
\[
\left|\left\langle z^{\prime\transpose}\lm^{\prime},y\right\rangle \right|\leq\lilnorm{\lm^{\prime}}^{2}/2
\]
for every $0\neq\lm^{\prime}\in\lat_{z^{\prime}}.$ We need to prove
that $y\in\left(1+C\e\right)\linmap_{z}\left(\striplane{\lm}\right)$,
for all $0\neq\lm\in\lat_{z}$, namely that 
\[
\left|\left\langle \trans z\lm,y\right\rangle \right|\leq\left(1+C\e\right)\left\Vert \lm\right\Vert ^{2}/2.
\]
Now, 
\[
\left|\left\langle \trans z\lm,y\right\rangle \right|\leq\left|\left\langle z^{\prime\transpose}\lm^{\prime},y\right\rangle \right|+\left|\left\langle \trans z\lm-z^{\prime\transpose}\lm^{\prime},y\right\rangle \right|\leq\lilnorm{\lm^{\prime}}^{2}/2+\norm y\cdot\lilnorm{\trans z\lm-z^{\prime\transpose}\lm^{\prime}}.
\]
According to Lemma \ref{lem: Boundedness of L_z(Dir(z))}, 
\[
\leq\lilnorm{\lm^{\prime}}^{2}/2+\radmax\cdot\lilnorm{\trans z\lm-z^{\prime\transpose}\lm^{\prime}}=\brac{\lilnorm{\lm^{\prime}}^{2}/\left\Vert \lm\right\Vert ^{2}+2\radmax\norm{\trans z\lm-z^{\prime\transpose}\lm^{\prime}}/\left\Vert \lm\right\Vert ^{2}}\cdot\left\Vert \lm\right\Vert ^{2}/2
\]
and according to parts 2 and 3 of Lemma \ref{lem: bounds needed to prove BLC of hexagons},
\[
=\brac{\,\exd{\lilnorm{\lm^{\prime}}^{2}/\left\Vert \lm\right\Vert ^{2}}{\leq1+C_{1}\e}+2\radmax\,\exd{\norm{\trans z\lm-z^{\prime\transpose}\lm^{\prime}}/\left\Vert \lm\right\Vert ^{2}}{\prec\e}\,}\left\Vert \lm\right\Vert ^{2}/2\leq\brac{1+C\e}\cdot\left\Vert \lm\right\Vert ^{2}/2.
\]
The \textbf{BLC} \textbf{(III)} is trivial since $\hex z=\linmap_{z}\left(\dirdom z\right)$
are fundamental domains for $\ZZ^{m}$ in $\RR^{m}$, hence their
volume is exactly $1$. Property \textbf{BLC} \textbf{(IV)} for the
family $\fdomhex_{\roundo\brac{\symfund m}}$ is the content of Lemma
\ref{lem: Boundedness of L_z(Dir(z))}.
\end{proof}
The following is the main result of this section. 
\begin{prop}
\textcolor{green}{\label{prop: Intersections of hexagons with ellipses are BLC}}For
every $0<\alpha\leq1$ the family $\fdomellipse{\a}{\roundo\left(\symfund m\right)}$
 defined in Formula (\ref{eq: family of elipses int hexagons}) is
\textbf{BLC} w.r.t. $\nbhd{\e}{}$ as in Proposition \ref{prop: Hexagons are BLC}.
\end{prop}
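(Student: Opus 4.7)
The plan is to decompose each set in the family as an intersection
\[
\ellipse{z}{\a}=\hex{z}\cap E_{z},\qquad E_{z}:=\linmap_{z}\bigl(\ball{\a\rad(\lat_{z})}{}\bigr)=z^{-1}\bigl(\ball{\a\rad(\lat_{z})}{}\bigr),
\]
and then to deduce the four BLC properties of $\{\ellipse{z}{\a}\}$ from those already established for $\{\hex{z}\}$ in Proposition~\ref{prop: Hexagons are BLC}, combined with new estimates for the ellipsoid family $\{E_{z}\}$. Both $\hex{z}$ and $E_{z}$ are convex bodies containing the origin, hence star-shaped from $0$; consequently $A\subseteq\lambda A$ for $\lambda\ge 1$ and $\lambda(A\cap B)=\lambda A\cap\lambda B$, which will let me merge the Lipschitz constants of the two factors simply by taking their maximum.

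For BLC(I) applied to $E_{z}$, linearity of $z^{-1}$ yields $E_{z}+\ball{\e}{}\subseteq z^{-1}\bigl(\ball{\a\rad(\lat_{z})+\norm{z}\e}{}\bigr)=(1+\tfrac{\norm{z}}{\a\rad(\lat_{z})}\e)E_{z}$, and Fact~\ref{fact: diagonal entries and covering radius} gives $\norm{z}\asymp a_{m}\asymp\rad(\lat_{z})$ uniformly on $\symfund m$, so the Lipschitz constant is of order $1/\a$. For BLC(II), assume $\roundo(z')\in\nbhd{\e}{}\roundo(z)\nbhd{\e}{}$. Lemma~\ref{lem: z' is in OzO} combined with a Neumann-series perturbation yields $\lilnorm{zz'^{-1}}\le 1+C_{1}\e$ for small $\e$, so $zz'^{-1}\ball{\a\rad(\lat_{z'})}{}\subseteq\ball{(1+C_{1}\e)\a\rad(\lat_{z'})}{}$; equivalently, $E_{z'}\subseteq(1+C_{1}\e)\,z^{-1}\bigl(\ball{\a\rad(\lat_{z'})}{}\bigr)$. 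It remains to compare the two covering radii: since $\phi:=zz'^{-1}$ is a linear bi-Lipschitz bijection with constant $1+C_{1}\e$ carrying $\lat_{z'}$ onto $\lat_{z}$, the metric definition of the covering radius yields $\rad(\lat_{z'})\le(1+C_{1}\e)\rad(\lat_{z})$, and combining the two bounds gives $E_{z'}\subseteq(1+C\e)E_{z}$.

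Intersecting the BLC(I) and BLC(II) bounds for $\hex{z}$ with those just obtained for $E_{z}$, and invoking the homothety identities from the first paragraph, produces BLC(I) and BLC(II) for $\{\ellipse{z}{\a}\}$ with uniform constant of order $1/\a$. BLC(IV) is immediate from Lemma~\ref{lem: Boundedness of L_z(Dir(z))} since $\ellipse{z}{\a}\subseteq\hex{z}\subseteq\ball{\radmax}{}$. The volume lower bound BLC(III) follows from convexity: since $\dirdom{\lat_{z}}$ is a convex body containing $0$ and contained in $\ball{\rad(\lat_{z})}{}$, the dilate $\a\dirdom{\lat_{z}}$ lies both in $\dirdom{\lat_{z}}$ (by star-shapedness from $0$) and in $\ball{\a\rad(\lat_{z})}{}$. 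Hence $\a\dirdom{\lat_{z}}\subseteq\dirdom{\lat_{z}}\cap\ball{\a\rad(\lat_{z})}{}$, and since $\linmap_{z}$ preserves Lebesgue measure (determinant $1$), one obtains $\vol(\ellipse{z}{\a})\ge\a^{m}\covol{\lat_{z}}=\a^{m}$, which is the required uniform lower bound $\volmin$.

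The main obstacle is BLC(II) for $\{E_{z}\}$, and more precisely the Lipschitz continuity of the covering radius $\rad(\lat_{z})$ under the quantitative comparison $\lilnorm{zz'^{-1}}\le 1+O(\e)$ uniformly over the non-compact domain $\symfund m$. Once that step is in place, the remaining ingredients, namely the self-thickening of the ellipsoid, the volume bound via convexity of the Dirichlet domain, and the uniform boundedness inherited from the hexagon, reduce to routine reorganizations of material already developed in Section~\ref{sec: Family for gcd solution is BLC}.
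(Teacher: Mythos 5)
Your proof takes the same overall route as the paper: decompose $\ellipse{z}{\a}=\hex{z}\cap E_{z}$ with $E_{z}=\linmap_z\bigl(\ball{\a\rad_z}{}\bigr)$, and deduce the four BLC conditions for the intersection from those of the two convex factors. BLC(I) and BLC(IV) match the paper essentially verbatim. The two sub-steps you handle differently are both correct and arguably cleaner than the paper's. For the covering-radius comparison inside BLC(II), the paper derives $\rad_{z'}\leq(1+O(\e))\rad_z$ indirectly, by pulling back the inclusion $\linmap_{z'}(\dirdom{\lat_{z'}})\subseteq(1+C_2\e)\linmap_z(\dirdom{\lat_z})$ obtained from BLC(II) for $\fdomhex$; you instead observe that $\phi=zz'^{-1}$ is a linear bi-Lipschitz bijection of constant $1+O(\e)$ sending $\lat_{z'}$ to $\lat_z$, and use the metric characterization of the covering radius directly, which bypasses Proposition~\ref{prop: Hexagons are BLC} in this sub-step. (One small point worth spelling out: Lemma~\ref{lem: z' is in OzO} as stated controls $\lilnorm{z'z^{-1}}$ and $\lilnorm{z^{-1}z'}$; to get the bound on $\lilnorm{zz'^{-1}}$ you need to apply it with the roles of $z$ and $z'$ exchanged, which is legitimate since both lie in $\symfund m$ and the identity-neighborhood relation is symmetric --- the phrase ``Neumann-series perturbation'' is not really needed.) For the volume lower bound BLC(III), the paper inscribes a small box $c^m\prod_j[-a_j/2,a_j/2]$ and has to argue the existence of a uniform $c$ via Fact~\ref{fact: diagonal entries and covering radius}; your argument that $\a\,\dirdom{\lat_z}\subseteq\dirdom{\lat_z}\cap\ball{\a\rad_z}{}$ by convexity and the inclusion $\dirdom{\lat_z}\subseteq\ball{\rad_z}{}$, combined with $\det z=1$, gives the explicit and uniform bound $\volmin=\a^m$ in one line. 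Both substitutions are sound, so the proposal stands as a valid proof of the proposition.
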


\begin{proof}
Set $\rad_{z}:=\rad\left(\lat_{z}\right)$, and similarly for $z^{\prime}$.
To prove the first property, it is sufficient to show that for some
$C>0$,
\[
\text{\ensuremath{\ball{\a\rad_{z}}{}}}+\linmap_{z}^{-1}\left(\ball{\e}{}\right)\subseteq\left(1+C\e\right)\text{\ensuremath{\ball{\a\rad_{z}}{}}}.
\]
By Fact \ref{fact: diagonal entries and covering radius}, there is
a constant $C>0$ such that:
\[
\linmap_{z}^{-1}\left(\ball{\e}{}\right)=z\left(\ball{\e}{}\right)\subseteq\ball{\left\Vert z\right\Vert \e}{}\subseteq\ball{C\left(\a\rad_{z}\right)\e}{}.
\]
As a result, 
\[
\text{\ensuremath{\ball{\a\rad_{z}}{}}}+\linmap_{z}^{-1}\left(\ball{\e}{}\right)\subseteq\text{\ensuremath{\ball{\a\rad_{z}}{}}}+\ball{C\a\rad_{z}\e}{}\subseteq\ball{\a\rad_{z}\left(1+C\e\right)}{}=\left(1+C\e\right)\ball{\rad_{z}}{}.
\]

As for the second property, since it is maintained under intersections,
it is sufficient to prove that
\[
\linmap_{z^{\prime}}\brac{\ball{\a\rad_{z^{\prime}}}{}}\subseteq\left(1+C\e\right)\linmap_{z}\brac{\ball{\a\rad_{z}}{}}.
\]
Or in other words, 
\[
\linmap_{z}^{-1}\linmap_{z^{\prime}}\brac{\ball{\a\rad_{z^{\prime}}}{}}\subseteq\left(1+C\e\right)\ball{\a\rad_{z}}{}.
\]

To this end, we first claim that there exists $C_{2}>0$ such that
\begin{equation}
\rad_{z^{\prime}}\leq\left(1+C_{1}\e\right)\left(1+C_{2}\e\right)\rad_{z};\label{eq: covering radi for z and z'}
\end{equation}
indeed, by property \textbf{BLC} \textbf{(II) }for $\mathcal{\fdomhex}_{\roundo\left(\symfund m\right)}$
(Proposition \ref{prop: Hexagons are BLC}), we have that 
\[
\linmap_{z^{\prime}}\left(\dirdom{\lat_{z^{\prime}}}\right)\subseteq\left(1+C_{2}\e\right)\cdot\linmap_{z}\left(\dirdom{\lat_{z}}\right)
\]
and therefore 
\begin{eqnarray*}
\dirdom{\lat_{z^{\prime}}} & \subseteq & \left(1+C_{2}\e\right)\cdot\linmap_{z^{\prime}}^{-1}\linmap_{z}\left(\dirdom{\lat_{z}}\right)\\
\mbox{(Lem. \ref{rem: L_z})} & \subseteq & \left(1+C_{2}\e\right)\cdot z^{\prime}z^{-1}\cdot\dirdom{\lat_{z}}\\
 & \subseteq & \left(1+C_{2}\e\right)\cdot\lilnorm{z^{\prime}z^{-1}}\,\dirdom{\lat_{z}}\\
\mbox{(Lem. \ref{lem: z' is in OzO})} & \subseteq & \left(1+C_{2}\e\right)\cdot\left(1+C_{1}\right)\dirdom{\lat_{z}}.
\end{eqnarray*}
Now, 
\[
\linmap_{z}^{-1}\linmap_{z^{\prime}}\brac{\ball{\a\rad_{z^{\prime}}}{}}\subseteq\lilnorm{zz^{\prime-1}}\cdot\ball{\a\rad_{z^{\prime}}}{\overset{\mbox{Rmk. \ref{lem: z' is in OzO}}}{\subseteq}}\left(1+C_{1}\e\right)\cdot\ball{\a\rad_{z^{\prime}}}{}
\]
\[
\overset{\mbox{eq. (\ref{eq: covering radi for z and z'})}}{\subseteq}\left(1+C_{1}\e\right)^{2}\left(1+C_{2}\e\right)\cdot\ball{\a\rad_{z}}{}
\]
which establishes that $\linmap_{z}^{-1}\linmap_{z^{\prime}}\brac{\ball{\a\rad_{z^{\prime}}}{}}\subseteq\brac{1+C\e}\ball{\a\rad_{z}}{}$
and completes the proof of the second property. 

Property \textbf{BLC} \textbf{(IV) }is a direct consequence of Lemma
\ref{lem: Boundedness of L_z(Dir(z))}, and so we turn to prove the
third property. First, we claim that for $z=a_{z}n_{z}\in\symfund m$,
the vectors 
\[
\pm\mbox{a}_{j}:=\brac{a_{z}/2}e_{j}=\brac{a_{j}/2}e_{j}
\]
lie in $\dirdom{\lat_{z}}$. Indeed, suppose otherwise that there
exists $\lm\in\lat_{z}$ such that $\norm{\mbox{a}_{j}+\lm}<\norm{\mbox{a}_{j}}$.
Then $\lm$ cannot lie inside $V_{j-1}=\sp\left\{ z_{1},\ldots z_{j-1}\right\} $,
because if it did then it would have been orthogonal to $\mbox{a}_{j}$,
which implies 
\[
\norm{\mbox{a}_{j}}^{2}+\norm{\lm}^{2}=\norm{\mbox{a}_{j}+\lm}^{2}\overset{_{\mbox{assumption}}}{<}\norm{\mbox{a}_{j}}^{2},
\]
a contradiction. Hence $\lm\notin V_{j-1}$, implying $\lm=\lm_{j-1}+\lm_{j-1}^{\perp}$
with $0\neq\lm_{j-1}^{\perp}\in V_{j-1}^{\perp}$. Now, 
\[
a_{j}=\dist{z_{j},V_{j-1}}\overset{_{\left(\lm\notin V_{j-1}\right)}}{\leq}\dist{\lm,V_{j-1}}=\lilnorm{\lm_{j-1}^{\perp}}\leq\norm{\lm}\leq
\]
\[
\leq\norm{\lm+\mbox{a}_{j}}+\norm{\mbox{a}_{j}}\overset{_{\mbox{assumption}}}{<}2\norm{\mbox{a}_{j}}=a_{j}.
\]
This is clearly a contradiction, establishing that the vectors $\pm\mbox{a}_{j}$
indeed lie inside $\dirdom{\lat_{z}}$. 

Let $c>0$ such that $\norm{c\mbox{a}_{j}}=\frac{1}{2}c\mbox{a}_{j}\leq\a\rad_{z}$
for every $j=1,\ldots,m$; such $c$ exists and is independent of
$z$ because $a_{1}\porsmall\cdots a_{m}\porsmall\rad_{z}$ (according
to Fact \ref{fact: diagonal entries and covering radius} and part
(\ref{enu: entries of a increasing}) of Lemma \ref{lem: BLC. facts about z in RS domain}).
We may assume that $c\leq1$ and therefore (since $\dirdom{\lat_{z}}$
is convex and contains the origin and the points $\mbox{a}_{j}$),
that the points $c\,\mbox{a}_{j}$ are also contained in $\dirdom{\lat_{z}}$.
They are obviously contained in $\ball{\a\rad_{z}}{}$ , hence by
convexity 
\[
\left[-c,c\right]\mbox{a}_{1}\times\cdots\times\left[-c,c\right]\mbox{a}_{m}=c^{m}\cdot\prod_{j=1}^{m}\left[-\frac{a_{j}}{2},\frac{a_{j}}{2}\right]\subseteq\dirdom{\lat_{z}}\cap\ball{\a\rad_{z}}{}.
\]
The above shape has volume $c^{m}\cdot\prod_{j=1}^{m}a_{i}=c^{m}\cdot\det\left(z\right)$;
its image under $\linmap_{z}=z^{-1}$ has therefore volume $c^{m}$.
It follows that the volume of $\linmap_{z}\left(\dirdom{\lat_{z}}\cap\ball{\a\rad_{z}}{}\right)$
is bounded from below by $c^{m}$, which does not depend on $z$. 
\end{proof}

\section{\label{sec: Concluding the proofs}Concluding the proofs of the
theorems}

We will prove Proposition \ref{prop: Counting with Hexagons (A counting)}
in a slightly greater generality, when the lattice $\Lat<\sl n\left(\RR\right)$
is general, and when the sets $\funddom_{T}^{\Svec}$ are fibered
over a family $\fam{\fdomN}{\roundo\left(\symfund{n-1}\right)}=\left\{ \domN\left(a^{\dprime}n^{\dprime}\right)\right\} _{\left(a^{\dprime},n^{\dprime}\right)\in\roundo\left(\symfund{n-1}\right)}$
that is not necessarily $\fdomhex^{\a}$. Indeed, we consider: 
\[
\funddom_{T}^{\Svec}\brac{\latset}=\bigcup_{q\in\trunc{\parby{\wc}{\latset}}{\Svec}}q\cdot A_{T}^{\prime}\parby{N^{\prime}}{\domN\brac{\zgt q}}.
\]
Proposition \ref{prop: Counting with Hexagons (A counting)} is a
consequence of Proposition \ref{prop: Hexagons are BLC}, combined
with the following: 
\begin{thm}
\label{prop: counting in fibered sets over F_m}Let $\funddom_{T}^{\Svec}\brac{\latset}$
be as above, where $\latset\subseteq\latspace{n-1,n}$ is a BCS, and
$\fam{\fdomN}{\roundo\left(\symfund{n-1}\right)}$ is a BLC family
of subsets of $\RR^{n}$. Set $\lm_{n}=\frac{n^{2}}{2\left(n^{2}-1\right)}$.
Let $\Lat<\sl n\left(\RR\right)$ be a lattice and $\errexp=\errexp\left(\Lat\right)$. 
\begin{enumerate}
\item For $0<\e<\errexp$, $\Svec=\left(S_{1},\dots,S_{n-2}\right)$, $\sumS=\sum_{i=1}^{n-2}S_{i}$
and every $T\geq\frac{\sumS}{n\lm_{n}\errexp}+O_{\fdomN}\left(1\right)$,
\[
\#\left(\funddom_{T}^{\Svec}\brac{\latset}\cap\Lat\right)=\frac{\mu(\,\funddom_{T}^{\Svec}\brac{\latset})}{\mu\brac{G/\Lat}}\cdot\frac{e^{nT}}{n}+O_{\latset,\e}(e^{\sumS/\lm_{n}}e^{nT\left(1-\errexp+\e\right)}).
\]
\item For $0<\e<\errexp$, $\delta\in\left(0,\errexp-\e\right)$ , $\Svec\left(T\right)=\left(S_{1}\left(T\right),\dots,S_{n-2}\left(T\right)\right)$
such that $\sumS\left(T\right)=\sum S_{i}\left(T\right)<n\dl\lm_{n}T+O_{\latset}(1)$
and every $T\geq O_{\fdomN}\left(1\right)$,
\[
\#\left(\funddom_{T}^{\Svec\brac T}\brac{\latset}\cap\Lat\right)=\frac{\mu(\,\funddom_{T}^{\Svec\brac T}\brac{\latset})}{\mu\brac{G/\Lat})}+O_{\latset,\e}(e^{nT\left(1-\errexp+\dl+\e\right)}).
\]
\end{enumerate}
\end{thm}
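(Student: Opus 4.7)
The plan is to apply the Gorodnik--Nevo counting theorem (Theorem \ref{thm: GN Counting thm}) to the family $\{\funddom_T^{\Svec}(\latset)\}$. What needs to be shown is that this family is LWR with parameters whose dependence on $T$ and $\Svec$ is controlled, and then tracking the constants through the error term $C_\Gset^{\dim G/(1+\dim G)}\cdot\mu(\Gset_T)^{1-\errexp+\delta}$ will produce the desired estimate. Recall $\dim G=n^2-1$ and $\lm_n=n^2/(2(n^2-1))$, so that $\dim G/(1+\dim G)=1/(2\lm_n)$, which is the exponent that governs how the Lipschitz constant enters the error.

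\textbf{Verifying LWR.} First I use the Iwasawa roundomorphism $\roundo\colon G\to K\times A'\times A^{\dprime}\times N^{\dprime}\times N'$ (Corollary \ref{cor: roundo of GI decomposition}) together with Proposition \ref{cor: roundo to a produt grp} to reduce LWR of $\funddom_T^{\Svec}(\latset)$ to LWR of its $\roundo$-image
\[
\roundo\bigl(\funddom_T^{\Svec}(\latset)\bigr)=\bigcup_{((k,a^{\dprime},n^{\dprime}),a')\in\roundo(\trunc{\wc_\latset}{\Svec})\times A_T'}(k,a^{\dprime},n^{\dprime},a')\times N'_{\domN(a^{\dprime}n^{\dprime})}.
\]
By Lemma \ref{lem: err(a) in SLn} the Lipschitz factor of $\roundo$ satisfies $f(g)\porsmall\ff{a^{\dprime}}^{\,2}\porsmall e^{2\sumS}$ uniformly on $\funddom_T^{\Svec}(\latset)$. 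Next, LWR of the image is handled by Proposition \ref{prop: Fibered sets are LWR in direct product}: for its base, the truncated projection $\roundo(\trunc{\wc_\latset}{\Svec})$ is LWR with parameters independent of $\Svec$ by Lemma \ref{lem: pushforward of Fm is LWR AxN}, while $A_T'$ is LWR with parameters independent of $T$ by Proposition \ref{prop: A cubes are well rounded}, and by Proposition \ref{cor: roundo to a produt grp} their product is LWR with uniform parameters. The fiber family $\{\domN(\zgt{q})\}$ is BLC by hypothesis, and in the application to Proposition \ref{prop: Counting with Hexagons (A counting)} this is Proposition \ref{prop: Intersections of hexagons with ellipses are BLC}. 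Combining all of this in Proposition \ref{cor: roundo to a produt grp} with the bound on $f$, the LWR constant satisfies $C_\Gset\porsmall e^{2\sumS}$, with all other dependencies absorbed into a constant depending only on $\latset$ and $\fdomN$.

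\textbf{Computing the main and error terms.} A direct computation using the Iwasawa-refined decomposition of the Haar measure (Equation \ref{eq: Haar measure on G}) gives
\[
\mu\bigl(\funddom_T^{\Svec}(\latset)\bigr)=\mu_{A'}(A_T')\cdot\int_{\roundo(\trunc{\wc_\latset}{\Svec})}\mu_{N'}\bigl(N'_{\domN(\zgt q)}\bigr)\,d\mu_{\wc}(q),
\]
and in particular $\mu(\funddom_T^{\Svec}(\latset))\porequal e^{nT}$ with implied constants depending on $\latset$ and $\fdomN$ (note that the $A^{\dprime}$ integral $\int e^{-s_i}ds_i$ is bounded uniformly in $\Svec$). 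Feeding $C_\Gset\porsmall e^{2\sumS}$ and $\mu(\Gset_T)\porsmall e^{nT}$ into Theorem \ref{thm: GN Counting thm} yields an error
\[
C_\Gset^{1/(2\lm_n)}\cdot\mu(\Gset_T)^{1-\errexp+\delta}\porsmall e^{\sumS/\lm_n}\cdot e^{nT(1-\errexp+\delta)},
\]
which is the estimate in part 1. The hypothesis $T\geq\sumS/(n\lm_n\errexp)+O(1)$ is precisely the condition $T\geq T_1$ of Theorem \ref{thm: GN Counting thm}, i.e.\ $\mu(\Gset_T)^{\errexp}\porbig C_\Gset^{1/(2\lm_n)}$, which reads $e^{nT\errexp}\porbig e^{\sumS/\lm_n}$. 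For part 2, I substitute $\sumS(T)\leq n\delta\lm_n T+O_\latset(1)$ into the error of part 1 to get $e^{\delta nT}\cdot e^{nT(1-\errexp+\e)}=O(e^{nT(1-\errexp+\delta+\e)})$, and the $T\geq T_1$ condition becomes $\delta<\errexp$, which holds by assumption (with $\e$ giving slack for the $O(1)$ terms).

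\textbf{Main obstacle.} The routine-looking but subtle point is tracking the dependence of $C_\Gset$ on $\Svec$ cleanly and showing that the LWR parameters of $\roundo(\trunc{\wc_\latset}{\Svec})$ are truly independent of $\Svec$; this is really what Lemma \ref{lem: pushforward of Fm is LWR AxN} accomplishes, via the coordinate change $x\mapsto 1/x$ on each $A^{H^{\dprime}_j}$-factor, which converts the unbounded cusp into a bounded LWR set and relies on the lower bound on the $A^{\dprime}$-height provided by Lemma \ref{lem: height of F_n bounded from below}. The only place where $\sumS$ genuinely enters is through the Iwasawa roundomorphism factor $f\porsmall e^{2\sumS}$ coming from Lemma \ref{lem: err(a) in SLn}, and this is exactly what produces the $e^{\sumS/\lm_n}$ prefactor in the final error term.
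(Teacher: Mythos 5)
Your proposal is correct and follows essentially the same route as the paper: apply the refined Iwasawa roundomorphism (Corollary \ref{cor: roundo of GI decomposition}) to transfer the LWR question to $K\times A'\times A^{\dprime}\times N^{\dprime}\times N'$, verify LWR of the base $\roundo(\trunc{\wc_\latset}{\Svec})\times A'_T$ with $\Svec$-independent constants via Lemma \ref{lem: pushforward of Fm is LWR AxN} and Proposition \ref{prop: A cubes are well rounded}, invoke the BLC hypothesis on the fiber family together with Proposition \ref{prop: Fibered sets are LWR in direct product}, push the Lipschitz bound $f\porsmall e^{2\sumS}$ from Lemma \ref{lem: err(a) in SLn} through Proposition \ref{cor: roundo to a produt grp} to get $C_\Gset\porsmall e^{2\sumS}$, and then feed everything into Theorem \ref{thm: GN Counting thm} using $\mu(\Gset_T)\asymp_\latset e^{nT}$ and the $T_1$-threshold condition \ref{eq: def of T_1 in GN thm}, with Part 2 obtained by substitution. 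Your argument also correctly spots that $\dim G/(1+\dim G)=1/(2\lm_n)$, which is exactly what produces the $e^{\sumS/\lm_n}$ prefactor, so the proof is the same one the paper gives, presented a bit more compactly.
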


\begin{proof}[Proof of Theorem \ref{prop: counting in fibered sets over F_m}]
\textbf{Part 1}. ~ Consider the image of $\funddom_{T}^{\Svec}\brac{\latset}$
under $\roundo$, which is of the form 
\[
\roundo\left(\funddom_{T}^{\Svec}\brac{\latset}\right)=\bigcup_{\substack{\left(k,\left(a^{\dprime},n^{\dprime}\right),a^{\prime}\right)\in\\
\roundo\left(\trunc{\parby{\wc}{\latset}}{\Svec}\right)\times A_{T}^{\prime}
}
}\left(k,a^{\dprime},n^{\dprime},a^{\prime}\right)\times N_{\domN\left(a^{\dprime},n^{\dprime}\right)}^{\prime}
\]
(see Subsection \ref{subsec: Plan of proof}). We claim that it is
a well rounded family with increasing parameter $T$ in the group
$K\times A^{\dprime}\times N^{\dprime}\times A^{\prime}\times N^{\prime}$.
First, since the family $\fam{\fdomN}{r\left(\symfund{n-1}\right)}$
is assumed to be BLC, and the projection of $\roundo\left(\trunc{\parby{\wc}{\latset}}{\Svec}\right)$
to $A^{\dprime}\times N^{\dprime}$ is contained in $\roundo\left(\symfund{n-1}\right)$,
then the restriction of $\fdomN$ to this projection is also BLC.
Since $\fdomN$ is independent of the $k,a^{\prime}$ components,
we may extend the set over which it is parameterized to include these
components (\cite[Cor. 5.3]{HK_WellRoundedness}), hence the family
$\fam{\fdomN}{\roundo\left(\trunc{\parby{\wc}{\latset}}{\Svec}\right)\times A_{T}^{\prime}}$
is BLC.

As for the base set, $\latset$ is a BCS by assumption, and so $\wc_{\latset}$
is also a BCS, by Proposition \ref{prop: spread models that we need}.
Thus, $\roundo\left(\trunc{\parby{\wc}{\latset}}{\Svec}\right)\subset K\times A^{\dprime}\times N^{\dprime}$
is LWR according to Lemma \ref{lem: pushforward of Fm is LWR AxN},
with parameters that do not depend on $\Svec$. Since $A_{T}^{\prime}$
is LWR (Proposition \ref{prop: A cubes are well rounded}), then Remark
\ref{rem: product of LWR is LWR} implies that $\roundo\left(\trunc{\parby{\wc}{\latset}}{\Svec}\right)\times A_{T}^{\prime}$
is LWR inside $K\times A^{\dprime}\times N^{\dprime}\times A^{\prime}$.
By Proposition \ref{prop: Fibered sets are LWR in direct product},
this implies that the family $\roundo\left(\funddom_{T}^{\Svec}\brac{\latset}\right)$
is LWR with Lipschitz constant that is $\asymp1$. 

Since by Corollary \ref{cor: roundo of GI decomposition} combined
with Lemma \ref{lem: err(a) in SLn}, $\roundo$ is an $f$-roundomorphism
with $f\brac{ka^{\prime}a_{\svec}^{\dprime}n^{\dprime}n^{\prime}}\porsmall e^{2\sums}$,
it follows from Proposition \ref{cor: roundo to a produt grp} that
$\funddom_{T}^{\Svec}\brac{\latset}\subset\sl n\left(\RR\right)$
is LWR with $C\prec_{\latset}e^{2\sumS}$ and $T_{0}$ that is independent
of $\Svec$ and of the family $\fdomN$. The first part of the theorem
now follows from Theorem \ref{thm: GN Counting thm}; it is only left
to observe, for the error term, that $\mu\brac{\funddom_{T}^{\Svec}\brac{\latset}}\asymp_{\latset}e^{nT}$
by Assumption \ref{assum: assumption that 2rho(H'')<0}, and to verify
the lower bound  on $T$. The latter is obtained by substituting
the bound on the parameter $C$ into the  condition \ref{eq: def of T_1 in GN thm}
in Theorem \ref{thm: GN Counting thm}. Indeed, using the notation
of Theorem \ref{thm: GN Counting thm}, this condition  is equivalent
to $\errexp\left(\Lat\right)\ln\mu(\Gset_{T}))\geq\frac{\dim G}{1+\dim G}\ln C_{\Gset}+O\left(1\right)$.
Substituting $C_{\Gset}=C\prec_{\latset}e^{2\sumS}$ and $\mu\left(\Gset_{T}\right)=\mu(\funddom_{T}^{\Svec}\brac{\latset})\leq\mu(\funddom_{T}\brac{\latset})\asymp_{\latset}e^{nT}$,
the condition translates into 
\[
\errexp\left(\Gamma\right)\cdot nT\geq\,\frac{\dim G}{1+\dim G}\cdot2\sumS+O_{\latset}\left(1\right)=\sumS/\lm_{n}+O_{\latset}\left(1\right)
\]
i.e. to 
\[
T\geq\sumS/\left(n\errexp\left(\Gamma\right)\lm_{n}\right)+O_{\latset}\left(1\right).
\]

\textbf{Part 2}. ~ Let $\sumS=\sumS\left(T\right)>0$. In order for
the main term in part (1) to be of lower order than the main term,
we require the existence of a parameter $\gamma\in\left(0,1\right)$
for which
\[
\sumS/\lm_{n}+\left(1-\errexp\left(\gam\right)+\e\right)\cdot nT<\gamma\cdot nT.
\]
This is equivalent to 
\[
\sumS<\lm_{n}\cdot\left(\gamma+\errexp\left(\gam\right)-\e-1\right)nT.
\]
Hence, if we denote by $\delta$ the number $\gamma+\errexp\left(\Lat\right)-\e-1$,
we must require that $\delta>0$ and that $\gamma=\delta+\left(1+\e-\errexp\left(\Lat\right)\right)$
lies in $\left(0,1\right)$. If $0<\e<\errexp\left(\Lat\right)$,
then clearly $0<1+\e-\errexp\left(\Lat\right)<1$, so the condition
on $\delta$ becomes $\delta\in\left(0,\errexp\left(\Lat\right)-\e\right).$

The condition on $T$ in part (1) is equivalent to $\sumS\leq n\lm_{n}\errexp\left(\Lat\right)\cdot T+O_{\latset}\left(1\right)$,
i.e.
\[
\sumS\leq\min\left\{ n\lm_{n}\dl\cdot T+O_{\latset}\left(1\right),\:n\lm_{n}\errexp\left(\gam\right)\cdot T\right\} =n\lm_{n}\dl\cdot T+O_{\latset}\left(1\right)
\]
for $T$ large enough and $\delta\in\left(0,\errexp\left(\Lat\right)-\e\right)$. 
\end{proof}
\bibliographystyle{alpha}
\phantomsection\addcontentsline{toc}{section}{\refname}\bibliography{bib_for_gcd_n_variables}

\end{document}